\newcommand{\rat}{\mathbb{Q}}
\newcommand{\Z}{\mathbb{Z}}
\newcommand{\Q}{\mathbb{Q}}
\newcommand{\R}{\mathbb{R}}
\newcommand{\C}{\mathbb{C}}
\newcommand{\bP}{\mathbb{P}}
\newcommand{\be}{\mathbf{1}}
\newcommand{\bS}{\mathbb{S}}
\newcommand{\bT}{\mathbf{T}}
\newcommand{\AI}{A_\infty}
\newcommand{\CC}{\mathcal{C}}
\newcommand{\consti}{\mathbf{i}\,}
\newcommand{\conste}{\mathbf{e}}
\newcommand{\WT}[1]{\widetilde{#1}}
\newcommand{\UL}[1]{\underline{#1}}
\newcommand{\CO}{\mathcal{O}}
\newcommand{\cA}{\mathcal{A}}
\newcommand{\CT}{\mathcal{T}}
\newcommand{\cF}{\mathcal{F}}
\newcommand{\End}{\mathrm{End}}
\newcommand{\Hom}{\mathrm{Hom}}
\newcommand{\GL}{\mathrm{GL}}
\newcommand{\SL}{\mathrm{SL}}
\newcommand{\Jac}{\mathrm{Jac}}
\newcommand{\Span}{\mathrm{Span}}
\newcommand{\one}{\mathbf{1}}
\newcommand{\One}{\mathbf{1}}
\newcommand{\CF}{\mathrm{CF}}
\newcommand{\Fuk}{\mathrm{Fuk}}
\newcommand{\dg}{\mathrm{dg}}
\newcommand{\MF}{\mathrm{MF}}
\newcommand{\bL}{\mathbb{L}}
\newcommand{\Tw}{\mathrm{Tw}}
\newcommand{\open}{\mathrm{open}}
\newcommand{\mir}{\mathrm{mir}}
\newcommand{\orb}{\mathrm{orb}}
\newcommand{\wrap}{\mathrm{wrap}}
\newcommand{\cO}{\mathcal{O}}
\newcommand{\Sky}{\mathrm{Sky}}
\newtheorem{theorem}{Theorem}[chapter]
\newtheorem{lemma}[theorem]{Lemma}
\theoremstyle{definition}
\newtheorem{definition}[theorem]{Definition}
\newtheorem{example}[theorem]{Example}
\newtheorem{assumption}[theorem]{Assumption}
\newtheorem{prop}[theorem]{Proposition}
\newtheorem{cor}[theorem]{Corollary}
\newtheorem{conjecture}[theorem]{Conjecture}
\theoremstyle{remark}
\newtheorem{remark}[theorem]{Remark}
\numberwithin{section}{chapter}
\numberwithin{equation}{chapter}
\begin{document}

\frontmatter

\title{Noncommutative Homological Mirror Functor}

%    Remove any unused author tags.

%    author one information
\author[Cho]{Cheol-Hyun Cho}
\address{Department of Mathematical Sciences and Research Institute in Mathematics\\ Seoul National University\\ 1 Gwanak-Ro\\ Gwanak-Gu\\Seoul 08826\\ Korea}
\curraddr{}
\email{chocheol@snu.ac.kr}
\thanks{}

%    author two information

\author[Hong]{Hansol Hong}
\address{Department of Mathematics\\Yonsei University\\  
50 Yonsei-Ro \\Seodaemun-Gu \\ Seoul 03722 \\ Korea}
\email{hansolhong@yonsei.ac.kr}

%\author[Hong]{Hansol Hong}
%\address{Department of Mathematics/The Institute of Mathematical Sciences	 \\ The Chinese University of Hong Kong\\Shatin\\New Territories \\ Hong Kong}
%\curraddr{}
%\email{hansol84@gmail.com}
%\thanks{}

%	author three information

\author[Lau]{Siu-Cheong Lau}
\address{Department of Mathematics and Statistics\\ Boston University\\111 Cummington Mall\\ Boston\\ MA 02215\\ USA}
\curraddr{}
\email{lau@math.bu.edu}
\thanks{}

%    \date is required; it is the date received by the editor.
\date{Aug. 2 2016}

\subjclass[2010]{14J33, 53D37}
%    Recognition of the 2010 edition of the Mathematics Subject
%    Classification requires a version of amsbook.cls from July 2009
%    or later.  If "2010" is not recognized, please upgrade.

\keywords{Fukaya category, Homological Mirror Symmetry, Noncommutative algebra, Landau-Ginzburg model, Deformation Quantization}

%%\dedicatory{Dedication text (use \\[2pt] for line break if necessary)}

\maketitle

\tableofcontents

\begin{abstract}
We formulate a constructive theory of noncommutative Landau-Ginzburg models mirror to symplectic manifolds based on Lagrangian Floer theory.  The construction comes with a natural functor from the Fukaya category to the category of matrix factorizations of the constructed Landau-Ginzburg model.  As applications, it is applied to elliptic orbifolds, punctured Riemann surfaces and certain non-compact Calabi-Yau threefolds to construct their mirrors and functors.  In particular it recovers and strengthens several interesting results of Etingof-Ginzburg, Bocklandt and Smith, and gives a unified understanding of their results in terms of mirror symmetry and symplectic geometry.  As an interesting application, we construct an explicit global deformation quantization of an affine del Pezzo surface as a noncommutative mirror to an elliptic orbifold.
\end{abstract}

%    Include unnumbered chapters (preface, acknowledgments, etc.) here.

\subsection*{Acknowledgment}

We express our gratitude to Kenji Fukaya, Conan Leung, Yong-Geun Oh, Hiroshi Ohta and Kaoru Ono for helpful discussions and encouragement. We also thank Raf Bocklandt and Eric Zaslow for explaining their works.  The first author thanks IBS-CGP, Institut Mittag-Leffler for their hospitality where some parts of this work was carried out. The third author thanks Shing-Tung Yau for his continuous support. He is grateful to Emanuel Scheidegger for inviting him to give a lecture series on this topic at University of Freiburg , and to Paul Aspinwall for discussions on the spacetime and worldsheet superpotentials on the B side. The work of C.H. Cho was supported by Samsung Science and Technology Foundation under Project Number SSTF-BA1402-05. The work of S.-C. Lau was supported by Harvard University and Boston University.

%We express our gratitude to Conan Leung, Kenji Fukaya, Yong-Geun Oh, Hiroshi Ohta and Kaoru Ono for helpful discussions and encouragements. The first author thanks IBS-CGP, Institut Mittag-Leffler for their hospitality where some parts of this work was carried out. The third author thanks Shing-Tung Yau for his continuous support. He is grateful to Emanuel Scheidegger for inviting him to give a lecture series on this topic at University of Freiburg , and to Paul Aspinwall for discussions on the spacetime and worldsheet superpotentials on the B side. He also thanks Raf Bocklandt, Eric Zaslow for explaining their works to him. The work of C.H. Cho was supported by Samsung Science and Technology Foundation under Project Number SSTF-BA1402-05. The work of S.-C. Lau was supported by Harvard University and Boston University.
% 
\mainmatter
%    Include main chapters here.
%-----------------------------------------------------------------------
% Beginning of chap1.tex
%-----------------------------------------------------------------------
%
%  AMS-LaTeX sample file for a chapter of a monograph, to be used with
%  an AMS monograph document class.  This is a data file input by
%  chapter.tex.
%
%  Use this file as a model for a chapter; DO NOT START BY removing its
%  contents and filling in your own text.
% 
%%%%%%%%%%%%%%%%%%%%%%%%%%%%%%%%%%%%%%%%%%%%%%%%%%%%%%%%%%%%%%%%%%%%%%%%

%\part{This is a Part Title Sample}

\chapter{Introduction}

Mirror symmetry reveals a deep duality between symplectic geometry and complex geometry.  For a mirror pair of geometries $(X,\check{X})$, homological mirror symmetry conjecture proposed by Kontsevich \cite{Kontsevich-HMS} asserts that the (derived) Fukaya category of Lagrangian submanifolds of $X$ (symplectic geometry) is (quasi-)equivalent to the derived category of coherent sheaves of $\check{X}$ (complex geometry), and vice versa.

Homological mirror symmetry has been proved in several interesting classes of geometries, see for instance \cite{PZ-E,KS-T,Fukaya-Absurf,Se,Se2,Abouzaid-toric,FLTZ,Sheridan11,Sh,CHL13,Keating}.  A common approach to homological mirror symmetry begins with a given pair of mirror geometries.  Then one attempts to find generators of the categories on both sides, and compute the morphisms spaces between the generators case-by-case using highly non-trivial techniques in Lagrangian Floer theory and homological algebra.  Finally one needs to construct isomorphisms between the morphism spaces on both sides, often with the help of homological perturbation lemma.  This would give the required quasi-equivalence between the two categories. 

Strominger-Yau-Zaslow \cite{SYZ} proposed a uniform constructive approach to mirror symmetry.  They conjectured that a Calabi-Yau manifold has a special Lagrangian torus fibration, and its mirror can be constructed by taking dual of all the torus fibers.  Moreover it asserted that there should be a certain kind of Fourier-Mukai transform \cite{LYZ} which can produce a functor for realizing homological mirror symmetry.   In symplectic geometry, Fukaya \cite{Fukaya-family} proposed that a family version of Lagrangian Floer theory applied to the torus fibration would give a functor which establishes a quasi-equivalence between the two categories.  

The groundbreaking works of Kontsevich-Soibelman \cite{KS-T,KS} and Gross-Siebert \cite{GS07} formulated very successful algebraic versions of the SYZ program.  Moreover, the works \cite{auroux07, auroux09, CLL, CLLT12, CCLT12, CCLT13, AAK, Lau13} realized the SYZ construction in different geometric contexts using symplectic geometry.  Family Floer theory was further pursued by Tu \cite{Tu-reconstruction,Tu-FT} and Abouzaid \cite{Abouzaid-family,Abouzaid-faithful} and they studied the functor for torus fibrations without singular fibers.

In \cite{CHL13}, we made a formulation which uses weakly unobstructed deformations of a Lagrangian immersion to construct the mirror geometry (which is a Landau-Ginzburg model).  This is particularly useful when a Lagrangian fibration is not known or too complicated in practice.  In this formulation we can always construct a natural functor from the Fukaya category to the mirror B-model category (namely the category of matrix factorizations for a Landau-Ginzburg model).  The theory was applied to the orbifold $\bP^1_{a,b,c}$ to construct its mirror by using an immersed Lagrangian invented by Seidel \cite{Se}, and we showed that the functor derives homological mirror symmetry for elliptic and hyperbolic orbifolds $\bP^1_{a,b,c}$ (with $\frac{1}{a} + \frac{1}{b} + \frac{1}{c} \leq 1$).  The formulation can be regarded as a generalization of SYZ where Lagrangian immersions are used in place of a torus fibration.  Note that \emph{the original SYZ construction is not directly applicable to $\bP^1_{a,b,c}$}.

The natural functor in \cite{CHL13} can be understood as a \emph{formal version of the family Floer functor} studied by \cite{Fukaya-family,Tu-reconstruction,Tu-FT,Abouzaid-family,Abouzaid-faithful}.  Namely, we fix a reference Lagrangian $\bL$ and a space of odd-degree weakly unobstructed formal deformations $b$ of $\bL$.  The mirror functor, which is essentially a family version of curved Yoneda embedding, is constructed by using the (curved) Floer theory between the family $(\bL,b)$ and other Lagrangians in the Fukaya category.  For Lagrangian fibrations, the singular fibers and wall-crossing phenomenons make it very difficult to construct the family Floer functor in an explicit way.  On the other hand, our formal family theory allows us to start right at an immersed Lagrangian, and there is no wall-crossing occurring in our construction.  As a result the construction can be made very clean and explicit.

The idea of applying Yoneda functor to the study of Fukaya category was introduced in \cite[Section 1]{Kho-Sei} by Khovanov-Seidel.  They asserted that from Fukaya's construction in the exact case, there is an $A_\infty$-algebra $\cA$ associated to a fixed collection of Lagrangians $\bL := \{L_1,\ldots,L_N\}$, and there exists a functor $D^b \Fuk(X) \to D(\cA)$. 

But our family version of construction is somewhat different from these approaches. The non-commutative algebras in this paper are
obtained from Maurer-Cartan solutions rather than the whole Hom space $\textrm{Hom}(\bL,\bL)$.
Also, even though we use compact Lagrangians 
$\bL$ to define our functor in our examples, it can still be used to transform non-compact Lagrangians, which send their infinite dimensional hom spaces (between non-compact Lagrangians) into infinite dimensional hom's between matrix factorizations. For example, this happens in the case of punctured Riemann surfaces, where we use compact Lagrangians to send wrapped Fukaya category to a matrix factorization category  and this gives an embedding in good cases. 
Also we work out the construction of the mirror and functor in detail for possibly curved and non-exact situations.  It is important for us to consider weakly unobstructed odd-degree deformations in order to obtain the mirrors.  Moreover the class of geometries that we are interested in are generally non-Calabi-Yau.  Thus we need to consider curved $\Z_2$-graded Fukaya categories and categories of matrix factorizations in the mirror.

In this paper, we extend our study in \cite{CHL13} to noncommutative deformation directions. Such a generalization is essential and natural from the viewpoint of Lagrangian Floer theory as explained below. 

First, it is important to enlarge the deformation space by noncommutative deformations.  In \cite{CHL13}, we consider the solution space of the weak Maurer-Cartan equation associated to $\bL$ in the sense of \cite{FOOO}.  The solution space can be trivial (namely the zero vector space), which is indeed the case for a generic Lagrangian $\bL$.  The condition that the solution space being non-trivial requires certain symmetry on the moduli
of holomorphic discs bounded by $\bL$.  Roughly speaking it requires that the counting of holomorphic discs through a point $p \in \bL$ is independent of the choice of $p$.  \emph{By using  noncommutative deformations, we can construct a non-trivial solution space in generic situations.}
%We find that noncommutative setting is much more flexible in this regard that the set of weakly unobstructed Lagrangian $\bL$ is much bigger than than that of the commutative ones. In practice,  required weak Maurer-Cartan equations are used as the defining relations of a noncommutative coefficient algebra so that  $\bL$ becomes weakly unobstructed.
As an example, the Seidel Lagrangian in a three-punctured sphere (see Figure \ref{fig:333intronew} in Chapter 5) is generically non-symmetric about the equator.  There is no non-zero commutative solution to the weak Maurer-Cartan equation.  On the other hand the noncommutative solutions form an interesting space $\C\langle x,y,z \rangle / \langle xy - ayx, yz - azy, zx - axz \rangle$.  In Section \ref{sec:deform333}, we construct the Sklyanin algebras as mirrors to the elliptic orbifold $\bP^1_{3,3,3}$ using noncommutative deformations.

Second, allowing noncommutative deformations gives a practical method to globalize our mirror construction.  Namely, the mirror functor constructed in \cite{CHL13} is localized to the Lagrangian immersion $\bL$ fixed in the beginning. In this paper, we take a family of Lagrangian immersions $\bL = \{L_1,\ldots,L_k\}$ and apply them to mirror construction.  Using a collection of Lagrangian immersions can capture more about the global geometry of $X$ and hence producing a more global functor.  \emph{Dimer models in dimension one or skeletons of coamoebas in general dimensions provide methods to produce a collection of Lagrangians} in concrete situations \cite{FHKV,Ishii-Ueda,UY11,UY13,Bocklandt}.  For example,  we will construct the mirror of the elliptic orbifold $\bP^1_{2,2,2,2}$, which cannot be produced by \cite{CHL13} using only one Lagrangian immersion. 
 %We will also need several Lagrangians to geomerically construct the mirror of punctured Riemann surfaces.
 
In order to extend the construction from one Lagrangian to several Lagrangians, the use of noncommutative deformations is natural and unavoidable.  For the collection $\bL = \{L_1,\ldots,L_k\}$, the endomorphism algebra is a path algebra of a directed graph and is naturally noncommutative.  While one could always quotient out the commutator ideals to obtain a commutative space, this generally loses information and the solution space of weak Maurer-Cartan equation would become too small in general.

The construction is briefly summarized as follows.  The odd-degree Floer-theoretical endomorphisms of $\bL$ are described by a directed graph $Q$ (so-called a quiver).  The path algebra $\Lambda Q$ is regarded as the noncommutative space of formal deformations of $\bL$.  Each edge $e$ of $Q$ corresponds to an odd-degree Floer generator $X_e$ and a formal dual variable $x_e \in \Lambda Q$.  Consider the formal deformation $b = \sum_e x_e X_e$.  The obstruction is given by
$$ m_0^b = m(e^b) = \sum_{k \geq 0} m_k (b,\ldots,b). $$

A novel point is \emph{extending the notion of weakly unobstructedness by Fukaya-Oh-Ohta-Ono \cite{FOOO} to the noncommutative setting}.
The corresponding weak Maurer-Cartan equation is
$$ m_0^b = \sum_{i=1}^k W_i(b) \one_{L_i} $$
where $\one_{L_i}$ is the Floer-theoretical unit corresponding to the fundamental class of $L_i$
(we assume that Fukaya $\AI$-category is unital).
  The solution space is given by a quiver algebra with relations $\cA = \Lambda Q / R$ where $R$ is the two-sided ideal generated by weakly unobstructed relations.  The end product is a noncommutative Landau-Ginzburg model
$$ \left( \cA, W = \sum_i W_i \right). $$

We call this to be a generalized mirror of $X$, in the sense that there exists a natural functor from the Fukaya category of $X$ to the category of (noncommutative) matrix factorizations of $(\cA, W)$.  It is said to be `generalized' in two reasons.  First, the construction can be regarded as a generalization of the SYZ program where we replace Lagrangian tori by immersions.  Second, the functor needs not to be an equivalence, and so $(\cA,W)$ needs not to be a mirror of $X$ in the original sense.

\begin{theorem}[Theorem \ref{thm:F}]
There exists an $A_\infty$-functor $\cF^\bL: \Fuk(X) \to \MF(\cA,W)$, which is injective on $H^\bullet(\Hom(\bL, U))$ for any $U$.
\end{theorem}

An important feature is that the Landau-Ginzburg superpotential $W$ constructed in this way is automatically a central element in $\cA$.  In particular we can make sense of $\cA / \langle W \rangle$ as a hypersurface singularity defined by `the zero set' of $W$. 
% Note that given a noncommutative algebra $\cA$, finding a non-zero central element is computationally a highly non-trivial task.
\begin{theorem}[Theorem \ref{thm:central1} and \ref{thm:centralseveral}]
$W \in \cA$ is a central element.
\end{theorem}

Noncommutative formal space dual to an $A_\infty$-algebra was constructed by Kontsevich-Soibelman \cite{KS_Ainfty}.  The quiver algebra with relations $\cA$ constructed in this paper can be regarded as a subvariety in the formal space (since we restrict to weakly unobstructed odd-degree deformations).  Moreover we obtain a Landau-Ginzburg model instead of a CY algebra.  As we have emphasized several times, weak unobstructedness is crucial for the construction of the mirror functor.  In Chapter \ref{sec:ext-mir} we formulate a mirror construction using the whole formal dual space.  The resulting mirror would be a curved dg-algebra.  However it is very difficult to compute in practice, since it involves all the higher $m_k$ operations.

One interesting feature in this approach is multiple mirrors to a single symplectic geometry $X$.  Note that the construction of a Landau-Ginzburg mirror $(\cA_\bL,W_\bL)$ and the associated mirror functor $\mathcal{F}^\bL: \Fuk(X) \to \MF(\cA_\bL,W_\bL)$ depends on the choice of $\bL \subset X$.  For a different choice $\bL'$, the mirror $(\cA_{\bL'}, W_{\bL'})$ becomes different, even though their $B$-model categories $\MF(\cA_\bL,W_\bL)$ and $\MF(\cA_{\bL'}, W_{\bL'})$ can remain equivalent.
%If both functors $\mathcal{F}^\bL, \mathcal{F}^{\bL'}$ are derived equivalences, then we obtain an equivalence between $B$-model derived categories  $\MF(\cA_\bL,W_\bL)$ and $\MF(\cA_{\bL'}, W_{\bL'})$. 
Similar phenomenon occurs in SYZ mirror symmetry for multiple Lagrangian fibrations \cite{LY-ell,LL-twin}.  In Chapter \ref{sec:333} and \ref{sec:4puncture2222}, we will construct a continuous family of Landau-Ginzburg models $(\cA_t,W_t)$ which are mirror to one single elliptic orbifold (with a fixed K\"ahler structure).
%But the point is that by introducing noncommutative setup, it is much easier to find weakly unobstructed Lagrangians as will be illustrated in the examples $\mathbb{P}^1_{3,3,3}$, $\mathbb{P}^1_{2,2,2,2}$and in fact, there are continuous family of weakly unobstructed Lagrangians.

This noncommutative construction has interesting applications even beyond mirror symmetry.  First, it provides a
Floer-theoretical understanding of noncommutative algebras and their central elements,
which are fundamental objects in noncommutative algebraic geometry and mathematical physics. 
For instance, in many situations a noncommutative algebra arises as the Jacobi algebra of a quiver $Q$ with a potential $\Phi$.  %, where the noncommutative relations are exactly the partial derivatives of $\Phi$.
In our formulation, such a $\Phi$ naturally appears as a spacetime superpotential (see Definition \ref{def:Phi}), and its Jacobi algebra forms the solution space to the weak Maurer-Cartan equation.

In practice, a central element of a noncommutative algebra is rather hard to find, even with the help of computer programs.  In our construction we can always show that the Lagrangian Floer potential $W$ is always a central element
(see Theorem \ref{thm:central1}).  As a result, we can compute the central elements using Floer theoretical techniques.
%Here $W$ is defined by a  counting  of $J$-holomorphic discs (decorated by $b$) with boundary on $\bL$ passing through a generic point, and the centrality follows naturally from our construction.

More importantly, our construction provides an interesting link between Lagrangian Floer theory and deformation quantization in the sense of Kontsevich \cite{Kon1}.  The scenario is the following.  Suppose that for a given choice of $\bL$, the constructed mirror quiver algebra $\cA$ can be obtained from a exceptional collection of a variety $\check{X}$.  Then we obtain a commutative mirror $(\check{X},W)$ which is equivalent to the noncommutative model $(\cA,W)$.  Now consider a deformation family of Lagrangians $\bL_t$ with $\bL_0 = \bL$.  The deformations can be obtained by equipping $\bL$ with flat $U(1)$ connections or deforming $\bL$ using non-exact closed one-forms on $\bL$.  Our mirror construction then gives a family of noncommutative Landau-Ginzburg models $(\cA_t,W_t)$.  As a consequence, the noncommutative hypersurfaces $\cA_t / W_t$ should give deformation quantization of the hypersurface $\{W=0\} \subset \check{X}$.  We realize this idea for mirrors of the elliptic orbifolds $\bP^1_{3,3,3}$ and $\bP^1_{2,2,2,2}$.

%Etingof-Ginzburg \cite{EG} studied the deformation quantization of an affine del Pezzo surface $f(x,y,z)=0$ %(which defines an elliptic curve in $\mathbb{P}^2$) with its Poisson bracket induced from $f$. The quantization is provided by the noncommutative Jacobi algebra of $\Phi_t$ quotiented by the ideal generated by a central element $W_t$.  We find that for an elliptic curve quotient $E/\Z_3$, we have a family of reference Lagrangians $\bL_t$, such that Maurer-Cartan equation defines a noncommutative algebra $\mathcal{A}_t=Jac(\Phi_t)$, which becomes the polynomial ring $\mathcal{A}_0= \C[x,y,z]$ for $t=0$. And the potential function $W_t$ is a central element of $\mathcal{A}_t$, and hence we can obtain a deformation quantization $\mathcal{A}_t/(W_t)$, whose classical limit is the affine del Pezzo surface defined by the mirror elliptic curve equation $W_0(x,y,z)=x^3+y^3+z^3 - \sigma xyz=0$ in $\C^3$.  Such a connection to deformation quantization should be a more general phenomenon and we plan to investigate further in the future.

%We apply our construction to various geometries (Section \ref{sec:333}, \ref{sec:4puncture2222}, \ref{sec:Rs} and \ref{sec:CY3}) and obtain interesting noncommutative spaces as their mirrors.  The following gives a brief description of the resulting noncommutative geometries and relations with previous works.

%Our mirror construction is applied to the elliptic orbifold $E / \Z_3 = \bP^1_{3,3,3}$ (where $E$ is the elliptic curve with a non-trivial $\Z_3$ symmetry).
Recall that the elliptic orbifold $X = E / \Z_3 = \bP^1_{3,3,3}$ has a commutative Landau-Ginzburg mirror $(\C^3, W)$ which was constructed in \cite{CHL13} by using the immersed Lagrangian $\bL$ given by Seidel \cite{Se}.  $\bL$ is symmetric under an anti-symplectic involution on $\bP^1_{3,3,3}$.  In Chapter \ref{sec:333}, we break this symmetry and consider a deformation family of immersed Lagrangians $\bL_{h}$ for $h \in (\bS^1)^2$ with $\bL_{h=0} = \bL$.  Applying our mirror construction to a fixed $\bL_{h} \subset X$, we obtain $(\cA_{h},W_{h})$ as the mirror, where $\cA_{h}$ is a Sklyanin algebra and $W_{h}$ is its central element (Theorem \ref{thm:mainthm333}).  For $h=0$, $(\cA_0,W_0) = (\C[x,y,z],W)$ recovers the commutative mirror.  We can show that the hypersurface $\cA_{h} / W_{h}$ gives a deformation quantization of the affine del Pezzo surface $\C[x,y,z] / W$ (Theorem \ref{dq333}), which was studied by Etingof-Ginzburg \cite{EG} using the Poisson bracket induced from the function $W$.  Note that the deformation quantization we construct here is \emph{global}, in the sense that the deformation parameter $h$ belongs to a compact manifold $(\bS^1)^2$.
%Here the index $0$ represents symmetry, and $(-1)$ represents the non-standard spin structure on $\bL$ (or the holonomy $(-1)$ of the flat unitary line bundle on $\bL$).

Sklyanin algebra is one of the most well-studied noncommutative algebras (see for instance \cite{AS87}, \cite{ATV}).  It takes the form
\begin{equation}\label{eqn:introskyw}
%\dfrac{
%\C \{ x,y,z\} }{ a(w) xy + b(w) yx + c(w) z^2 = a(w) yz + b(w) zy + c(w) x^2 = a(w) zx + b(w) xz + c(w) y^2 =0}
\cA := \frac{ \Lambda <x,y,z> }{ \big( axy + byz +cz^2, ayz+bzy+cx^2, azx+bxz+cy^2 \big)}
\end{equation}
for certain fixed $a,b,c \in \C$.  Note that $\cA=\C[x,y,z]$ when $a=-b, c=0$.
%where $a$, $b$, $c$ depend on $w=\lambda + \frac{\tau}{2} t$ holomorphically for a constant $\tau$ determined by the symplectic area of $\mathbb{P}^1_{3,3,3}$. 
As a graded algebra $\cA$ gives a noncommutative deformation of $\mathbb{P}^2$.  Auroux-Katzarkov-Orlov (\cite{AKO06}, \cite{AKO08}) proved that non-exact deformations of the Fukaya-Seidel category associated with
$$\left(\{(x,y,z) \in (\C^*)^3 \mid xyz=1\},f=x+y+z\right)$$ (given by fiberwise compactifications of the Lefschetz fibration of $f$) is equivalent to derived category of modules over a Sklyanin algebra $\cA$.  In parallel to their results, we construct the noncommutative Landau-Ginzburg model $(\cA,W)$ as the mirror of $f = 0$ (or more precisely its $\Z_3$ quotient).

%In particular, in the case of $\mathbb{P}^2$, they consider the compactification of the fiber of the mirror Landau-Ginzburg model (which becomes a torus) and find that the structure coefficients of $m_2$ gives rise to a Sklyanin algebra, which corresponds to the non-toric noncommutative deformation of $\mathbb{P}^2$.  Note that once it is compactified, they consider Fukaya category of a torus, not Fukaya-Seidel category, although the Lagrangians are obtained from the vanishing cycles before the compactification.

Zaslow \cite{Zaslow} and Aldi-Zaslow \cite{AZ} also obtained Sklyanin algebras from elliptic curves in the following way.  An ample line bundle gives an embedding of an elliptic curve to $\bP^2$.  To reconstruct $\bP^2$ (or its noncommutative deformations), they considered the following graded ring %and the conjectural relations 
$$R = \bigoplus_{k=0}^\infty \mathcal{O}_Y(k) = \bigoplus_{k=0}^\infty \Hom ( \mathcal{O}, \mathcal{O}_Y(k)) = \bigoplus \Hom_{\Fuk}(S, \rho^k(S)),$$
where $S$ is the Lagrangian mirror to $\mathcal{O}_Y$ and $\rho$ is the symplectomorphism mirror to the tensor operation $\otimes \mathcal{O}_Y(1)$.
Tensoring $\otimes \mathcal{O}_Y(1)$ is the monodromy around the large complex structure limit point.  In the case of elliptic curve, the mirror symplectomorphism is given by the triple Dehn twist $\rho$.

Aldi-Zaslow \cite{AZ} considered a more general symplectomorphism $\rho$ by composing with a translation.  As a result they obtained a (noncommutative) twisted homogeneous coordinate ring of $\bP^2$ by computing the $m_2$-products of the Fukaya category. They find relations between products of degree-one generators, which gives rise to the Sklyanin algebra relations.

We emphasize that our construction systematically defines the mirror coordinate ring $\cA_{\mathbb{L}_{h}}$ by generators and relations coming from the Maurer-Cartan equations.  Moreover in addition to the Sklyanin algebra $\cA_{\mathbb{L}_{h}}$, we obtain its central element $W_{\mathbb{L}_{h}}$ as the Lagrangian Floer potential for $\mathbb{L}_{h}$.  Indeed they are constructed as a global family over $h \in (\bS^1)^2$.  Furthermore from our general theory there exists a family of $\AI$-functors $\mathcal{F}^{\mathbb{L}_{h}}$ from the Fukaya category to the matrix factorization categories of  $W_{\mathbb{L}_{h}}$.\footnote{In fact the derived category of matrix factorizations obtained in this example are all equivalent to each other by Theorem 3.6.2 of Etingof and Ginzburg \cite{EG},
which are also equivalent to the derived category of coherent sheaves on the mirror elliptic curve.}  Thus our work significantly strengthens the results of Aldi-Zaslow and Auroux-Katzarkov-Orlov.

Analogously, the mirror and its noncommutative deformations of $\bP^1_{2,2,2,2}$ can also be constructed using our method (Chapter \ref{sec:4puncture2222}).  In this case we need to take $\bL = \{L_1,L_2\}$ rather than a singleton.  Thus the mirror quiver consists of two vertices.  By taking $\bL$ to be symmetric under an anti-symplectic involution, we obtain the mirror Landau-Ginzburg model $(\check{X},W)$, where $\check{X}$ is the resolved conifold and $W$ is an explicit holomorphic function on $\check{X}$.  The mirror map can also be derived in this approach, see Theorem \ref{thm:open=mir}.  By considering a deformation family $\bL_h$ which breaks the symmetry of $\bL$, we again obtain a deformation quantization $(\cA_h,W_h)$ of the affine del Pezzo surface $\{W=0\} \subset \check{X}$.  The graded algebra $\cA_h$ can be identified as noncommutative quadrics.  The readers are referred to \cite{BP,VdB,OU} for detail on noncommutative quadrics.

%, and if we restrict ourselves to subalgebra of loops based at one of the vertices, we obtain the function algebra of the (commutative) conifold. (So, the mirror from our construction is a noncommutative resolution of this conifold. See [WHO]?) 
%{\color{red}(to be added) upstair mirror (grading), Ueda's work?, mirror map, evidence that the mirror functor gives an equivalence, etc.}  At the end of the section, we will deform the reference Lagrangian further by translation and holonomy to get an algebra depending on four parameters $a,b,c,d$. We will see that $(a:b:c:d)$ satisfies two quadratic relations which defines a complete intersection in $\mathbb{P}^3$ isomorphic to the mirror elliptic curve of $E$.

The noncommutative construction is applied to punctured Riemann surfaces in Chapter \ref{sec:Rs}.  This is largely motivated from the work of Bocklandt \cite{Bocklandt}, who studied homological mirror symmetry for punctured Riemann surfaces using dimer models. (Lee \cite{HLee16} and Sibilla-Pascaleff \cite{PS16}  proved homological mirror symmetry for punctured Riemann surfaces using different approaches.)
Our construction provides a conceptual explanation on the construction of the mirror objects and also generalize Bocklandt's results.
Given a polygon decomposition of a Riemann surface (with vertices of the decomposition being the punctures), we associate it with a collection of Lagrangian circles and construct a quiver with potential.  When the polygon decomposition is given by a dimer, we will show that our mirror functor in this case agrees with the one constructed combinatorially in \cite{Bocklandt}.  In particular the functor gives an embedding of the wrapped Fukaya category to the category of (noncommutative) matrix factorizations of the dual dimer when it is zig-zag consistent.

%On a punctured Riemann surface, he considered a dimer model $Q$ whose arrows have heads and tails at punctures, and proved that the wrapped Fukaya category of the punctured Riemann surface embeds into a (noncommuative) matrix factorization category constructed from the mirror dimer $Q^\vee$ of $Q$.  In Section \ref{sec:Rs}, we will see that there is a canonically associated immersed Lagrangian (which is a union of circles) to a given dimer $Q$ which precisely produces $Q^\vee$ in the mirror side using the theory developed in this paper. Moreover, we will rediscover the result in \cite{Bocklandt} by showing that our functor agrees on object and morphism level with the correspondence given in \cite{Bocklandt}.

In the last chapter, we apply our general theory to construct noncommutative mirrors of non-compact Calabi-Yau threefolds associated to Hitchin systems constructed by Diaconescu-Donagi-Pantev \cite{DDP}.  We restrict ourselves to the $\mathrm{SL}(2,\C)$ case in this paper, in which the Fukaya category was computed by Smith \cite{Smith}.  Applying the results from \cite{Smith}, we deduce that the mirror functor gives an equivalence between the derived Fukaya category generated by a WKB collection of Lagrangian spheres to the derived category of Ginzburg algebra (Theorem \ref{thm:WKB}).  The mirror functor in this paper is in the reverse direction of the embedding constructed in \cite{Smith}.  By the beautiful work of Bridgeland-Smith \cite{BS}, the functor is a crucial object for studying stability conditions and Donaldson-Thomas invariants for the Fukaya category.  Stability conditions of Fukaya category in the surface case were recently understood by \cite{HKK}.  There is a deep relation between stability conditions and cluster algebras discovered by Kontsevich-Soibelman \cite{KS-stab}.  The theory of cluster algebras was well-studied by Fock-Goncharov \cite{FG1,FG2}.  SYZ mirror symmetry for moduli spaces of Hitchin systems with structure groups $\SL_r$ and $\bP\SL_r$ (which are Langlands dual) was found by Hausel-Thaddeus \cite{HT}.

Finally, we remark that our construction is algebraic and can be applied to several different versions of Lagrangian Floer theory.  We assume that the Fukaya category is unital (for weakly unobstructedness) and
cyclic symmetric on Maurer-Cartan elements (to define spacetime superpotential $\Phi$), and $\CF^\bullet(\bL,\bL)$ is finite dimensional.  In the examples that are considered in this paper, we use a Morse-Bott version of Lagrangian Floer theory \cite{Se}, \cite{Seidel-book}, \cite{Sheridan11} to achieve them.  Unitality and finite dimensionality can be also achieved by considering minimal models.

The organization of the article is as follows.  From Chapter \ref{sec:algpre} to \ref{sec:singlelagfunctor}, we introduce the noncommutative mirror and the functor for a single Lagrangian immersion.  The construction for several Lagrangians is given in Chapter \ref{sec:MFseverallag}.  We discuss the mirror construction for finite group quotient (not necessarily Abelian) and Calabi-Yau grading in Chapter \ref{sec:fingpsymm}.  The extended mirror functor using deformations in all degrees is given in Chapter \ref{sec:ext-mir}.  Applications to concrete geometries are given in Chapter \ref{sec:333}, \ref{sec:4puncture2222}, \ref{sec:Rs} and \ref{sec:CY3}. We thank the anonymous referee for valuable suggestions which improved the exposition of the paper.

\chapter{A-infinity algebra over a noncommutative base}\label{sec:algpre}

We generalize the deformation-obstruction theory of $\AI$-algebra by Fukaya-Oh-Ohta-Ono \cite{FOOO} in noncommutative directions.   In this chapter, after reviewing the filtered $\AI$-category, we introduce noncommutative deformations for $\AI$-algebras, and define nc-weak Maurer-Cartan elements and the nc-potential (which will be also called a \emph{worldsheet superpotential} in this paper).

\section{Review of filtered A-infinity category and weak Maurer-Cartan elements}
We recall the notion of filtered $\AI$-algebra (category) from \cite{fukaya_mirror2}, \cite{FOOO} and set up the notations.
The Novikov ring $\Lambda_0$ is defined as
$$\Lambda_0 = \left\{ \left. \sum_{i=1}^\infty a_i \bT^{\lambda_i} \,\, \right| \,\,\lambda_i \in \R_{\geq 0}, a_i \in \C, \lim_{i \to \infty} \lambda_i = \infty \right\} $$
and its field of fraction $\Lambda_0[\bT^{-1}]$ is called the universal Novikov field, denoted by $\Lambda$.
$$F^\lambda \Lambda = \left\{ \left. \sum_{i=1}^\infty a_i \bT^{\lambda_i} \in \Lambda \,\, \right| \,\, \lambda_i \geq \lambda \right\}$$
defines a filtration on $\Lambda$ (as well as $\Lambda$-modules) and we set $F^+\Lambda = \bigcup_{\lambda >0} F^\lambda \Lambda$.
\begin{definition}
A filtered $\AI$-category $\mathcal{C}$ consists of a collection of objects $Ob(\mathcal{C})$,
torsion-free filtered graded $\Lambda_0$-module $\mathcal{C}(A_1, A_2)$ for each pair of objects $A_1,A_2 \in Ob(\mathcal{C})$,
equipped with a family of degree one operations 
$$m_k: \mathcal{C}[1](A_0,A_1) \otimes \cdots  \mathcal{C}[1](A_{k-1},A_k) \to  \mathcal{C}[1](A_0,A_k)$$
for  $A_i \in Ob(\mathcal{C})$, $k=0,1,2,\cdots$.
Each $\AI$-operation $m_k$ is assumed to respect the filtration, and satisfies the $\AI$-equation:
for $v_i \in \mathcal{C}[1](A_i, A_{i+1})$, 
$$ \sum_{k_1+k_2=n+1} \sum_i (-1)^{\epsilon_1} m_{k_1}(v_1,\cdots, m_{k_2}(v_{i},\cdots, v_{i+k_2-1}),v_{i+k_2},\cdots v_n)=0.$$
We denote by $|v|$ the degree of $v$ and  by $|v|' = |v|-1$ the shifted degree of $v_j$.  Then $\epsilon_1 = \sum^{i-1}_{j=1} (|v_j|')$.
\end{definition}
A filtered $\AI$-category with one object is called an $\AI$-algebra. If $m_{\geq 3} = m_0=0$, it
gives a structure of a filtered differential graded category,
where $m_1$ is the differential and $m_2$ is the composition (up to sign: see (2.3) \cite{CHL13}).

Let $A$ be an $\AI$-algebra. When $m_0 \neq 0$, $m_1$ may not define a differential, which
can be seen in the following $\AI$-equation:
\begin{equation}\label{aieq3}
0 = m_1^2(v_1) + m_2(m_0,v_1) + (-1)^{|v|'} m_2(v_1,m_0) 
\end{equation}
 The obstruction and deformation theory of such $\AI$-algebras have been studied by Fukaya-Oh-Ohta-Ono\cite{FOOO},
 who introduced the notion of weak bounding cochains (weak Maurer-Cartan elements).
 
For this purpose, recall that an element $\be_{A} \in \mathcal{C}^0(A,A)$ is called a {\em unit} if it satisfies
$$\begin{cases}
 m_2(\be_{A},v) = v & v \in  \mathcal{C}(A,A') \\
  (-1)^{|w|}m_2(w,\be_A) = w & w \in  \mathcal{C}(A',A) \\
 m_k(\cdots,\be_A, \cdots) =0 & \textrm{otherwise}.
 \end{cases}
$$ 
Note that if $m_0$ is a constant multiple of a unit, then the latter two terms of \eqref{aieq3} vanishes by the
property of a unit. This happens for $\AI$-algebras of monotone Lagrangians. 
In general, a boundary deformation  of a given $\AI$-algebra via an weak Maurer-Cartan element $b$ can be used to
define a deformed $\AI$-algebra $\{m_k^b\}$ such that $m_0^b$ becomes a multiple of a unit.
 Let us use the notation 
$$b^l = \underbrace{b\otimes \cdots \otimes b}_{l},  \;\; e^b:= 1+ b + b^2 + b^3 + \cdots. $$
\begin{definition}
An element $b \in F^{+}\mathcal{C}^1(A,A)$ is a {\em weak Maurer-Cartan } element if
$m(e^b) := \sum_{k=0}^\infty m_k(b,\cdots, b)$ is a multiple of the unit, i.e.
$$m(e^b) = PO(A,b) \cdot \be_A, \;\;\; \textrm{for some}\; PO(A,b) \in \Lambda$$
Denote by $\widetilde{\mathcal{M}}^+_{weak}(A)$ the set of all weak Maurer-Cartan elements.
\end{definition}
Thus $PO(A,b)$ defines a function from $\widetilde{\mathcal{M}}^+_{weak}(A)$ to $\Lambda$.
The positivity assumption of the energy of $b$ is to make sure that the infinite sum in $m(e^b)$ makes sense.
In many important cases,  $PO(A,b)$ indeed extends to the energy zero elements, and such an extension is
also denoted by $PO(A,b): \widetilde{\mathcal{M}}_{weak}(A) \to \Lambda$.
 
In general one should quotient out gauge equivalence between different Maurer-Cartan elements. 
In the examples we consider in this paper, different immersed generators are not gauge equivalent to each other, and hence
we do not quotient by gauge equivalence.

The notion of weak Maurer-Cartan element $b$, which provides a canonical way to deform the given $\AI$-structure, plays an important role both in Lagrangian Floer theory and mirror symmetry and we refer readers to 
\cite{FOOO} for a systematic introduction.
\begin{definition}\label{def:bdeformAI}
Given $b \in F^{+}\mathcal{C}^1(A,A)$,
we define the deformed $\AI$-operation $m_k^b$ as
$$m_k^b(v_1,\cdots, v_k) = \sum_{l_0,\cdots, l_k \geq 0} m_{k+l_0+\cdots+l_k}(b^{l_0},v_1,b^{l_1},v_2,\cdots, v_k,b^{l_k})$$
$$ = m(e^b,v_1,e^b,v_2,\cdots,e^b, v_k,e^b).$$
Then $\left\{m_k^b \right\}$ defines an $\AI$-algebra.
In general, given $b_0,\cdots, b_k \in F^{+}\mathcal{C}^1(A,A)$,
we define 
$$m_k^{b_0,\cdots,b_k}(v_1,\cdots, v_k)= m(e^{b_0},v_1,e^{b_1},v_2,\cdots,e^{b_{k-1}}, v_k,e^{b_k}).$$
Note that we have $m_k^b = m_k^{b,b,\cdots,b}$.
\end{definition}
Given a weak Maurer-Cartan element $b$, we have $m_0^b =   PO(A,b) \cdot \be_A$, and one can check that $(m_1^b)^2 =0$.
And if $PO(A,b_0) = PO(A,b_1)$, we have $(m_1^{b_0,b_1})^2=0$.

\section{$\AI$-functors and Yoneda embedding}
We briefly recall the notion of $\AI$-functors and Yoneda embedding for $\AI$-categories following Fukaya \cite{fukaya_mirror2}
to fix the convention. We will assume that our $\AI$-category is a filtered $\AI$-category
in the sense of \cite{FOOO}.
For an $\AI$-category $\CC$, let us denote 
$$B_k\mathcal{C}(A,B) = \bigoplus_{A=A_0,\cdots, A_k=B} \CC[1](A_0,A_1) \otimes \cdots \otimes \mathcal{C}[1](A_{k-1},A_k).$$
$$B\CC = \bigoplus_{A,B} B\CC(A,B) = \bigoplus_{A,B} \bigoplus_{k=0}^\infty B_k\CC(A,B).$$
The completion of $B\CC$ is denoted as $\widehat{B}\CC$.
We have the coderivation $\widehat{d}$ on $B\CC$ induced from $\AI$-operations $\{m_k\}$, such that 
the $\AI$-equations can be succintly written as  a codifferential equation $\widehat{d}^2=0$
(this naturally extends to  $\widehat{B}\CC$).
\begin{definition}
Given two $\AI$-categories $\CC_1,\CC_2$, an $\AI$-functor $\mathcal{F}:\CC_1 \to \CC_2$ is given by the map on objects and by collection of maps $\mathcal{F}_k$
for $k \in \mathbb{Z}_{\geq 0}$:
\begin{enumerate}
\item A map between objects $\mathcal{F}: Ob(\CC_1) \to Ob(\CC_2)$.
\item A sequence of degree 0 maps (for morphisms)  for any two objects $A_1,A_2 \in Ob(\CC_1)$
$$\mathcal{F}_k: B_k\CC_1(A_1,A_2) \to \CC_2[1](\mathcal{F}(A_1), \mathcal{F}(A_2))$$
\item These should satisfy the $\AI$-functor equations
$$ \widehat{d} \circ \widehat{\mathcal{F}} = \widehat{\mathcal{F}} \circ \widehat{d}$$
where $\widehat{\mathcal{F}}$ is defined by
$$\widehat{\mathcal{F}}(v_1 \otimes \cdots \otimes v_k) =\sum \mathcal{F}(v_1 \otimes \cdots  \otimes v_{i_1}) \otimes 
\mathcal{F}(v_{i_1+1} \otimes \cdots  \otimes v_{i_2})\otimes
\cdots \otimes
\mathcal{F}(v_{i_j+1} \otimes \cdots \otimes v_k)$$
\end{enumerate}
An $\AI$-functor is said to be {\em strict} if $\mathcal{F}_0 =0$. The $\AI$-functors that we construct in this paper will be always strict.
\end{definition}

Now, let $(\CC, \{m_k\})$ be an $\AI$-category with $m_0=0$.
Given an object $A \in Ob(\CC)$, Fukaya \cite{fukaya_mirror2} defined a hom functor $\textrm{Hom}(A,\cdot)$ so that it becomes
an $\AI$-functor from the $\AI$-category $\CC$ to the differential graded category of chain complexes $\mathcal{CH}$.
Here, an object of  $\mathcal{CH}$ is a chain complex over $\Lambda$, and morphism is given by the maps between
two chain complexes. For a morphism $x$ between chain complexes $(C,d), (C',d')$, 
the differential is defined as $m_1(x) = d' \circ x - (-1)^{|x|} x \circ d$, and the composition $m_2(x,y) = (-1)^{|x|(|y|+1)} y \circ x$.
Putting $m_k=0$ for $k\geq 3$, this defines an $\AI$-category (or dg-category) $\mathcal{CH}$.

The $\AI$-Yoneda embedding was introduced by Fukaya  \cite{fukaya_mirror2} as follows.
 (Here we give a slightly modified version from \cite{CHL13}.)
 \begin{definition}
Given an object $A \in Ob(\CC)$, the $\AI$-functor $\mathcal{F}^A : \mathcal{C} \to \mathcal{CH}$ is given by the following maps:
\begin{enumerate}
\item For $B \in Ob(\CC)$,   the corresponding object $\mathcal{F}^A(B)$ is a chain complex defined as 
$$\mathcal{F}^A(B) = (\CC(A,B),m_1)$$
\item For $B_1,B_2, \cdots, B_{k+1} \in Ob(\CC_1)$,
and morphisms $x_i \in \CC(B_i, B_{i+1})$ for $i=1,\cdots, k$, $\AI$-functor map
 $\mathcal{F}_k^A(x_1,\cdots, x_k) \in \mathcal{CH}(\mathcal{F}^A(B_1), \mathcal{F}^A(B_{k+1}))$
is defined as 
$$\mathcal{F}_k^A(x_1,\cdots, x_k)(y) = (-1)^{|y|'(|x_1|' + \cdots + |x_{k}|')} m_{k+1}(y,x_1,\cdots,x_k)$$
for $y \in \CC(A,B_1)$. (Here $k \geq 1$.)
\end{enumerate}
\end{definition}
Note that $\mathcal{CH}$ has $m_{\geq 3}=0$ (dg-category), and the higher $\AI$-morphisms $\mathcal{F}_k$ are needed in general
to define an $\AI$-functor between an $\AI$-category and a dg-category.

\section{$\AI$-algebra over a noncommutative base and weakly unobstructedness} \label{sec:base}
In this section we perform a base change of an $\AI$-algebra $A$.  Originally $A$ is over the Novikov ring, and we enlarge the base to be a noncommutative algebra.  This is an important step for deformations.

Let $K$ be a noncommutative algebra over $\Lambda_0$.
Consider a filtered $\AI$-algebra $(A,\{m_k\})$ over $\Lambda_0$.
We will consider an induced $\AI$-algebra structure on the completed tensor product $ K \widehat{\otimes}_{\Lambda_0} A$
where we take a completion with respect to the energy, namely the power of the formal variable $\bT$.
% given as follows.
%Suppose we have a basis $\{e_i\}$ of $A$ as a $\Lambda_0$-module.  Then the tensor product is simply $\widehat{\bigoplus} K \cdot e_i$ (the hat in $\widehat{\otimes},\widehat{\oplus}$ means that we take a completion with respect to the energy, namely the power of the formal variable $\bT$).
\begin{definition}\label{defn:nccoeff}
We define an $\AI$-structure on
\begin{equation}\label{eqn:anctensor} 
\tilde{A}_0 :=K \widehat{\otimes}_{\Lambda_0} A  
%A_{nc,0} :=K \widehat{\otimes}_{\Lambda_0} A  =  \widehat{\oplus} K e_i.
\end{equation}
For $f_i \in K, e_i  \in A \; i=1,\cdots, k$,  
the $\AI$-operation is defined as
\begin{equation}\label{eqn:pulloutrule}
m_k(f_1 e_{1},f_2 e_{2},\cdots, f_k e_{k} ) := f_k f_{k-1} \cdots f_2 f_1\cdot m_k(e_{1},\cdots, e_{k}).
\end{equation}
%We can extend linearly to define an operation on $A_{nc,0}$ and also tensor over $\Lambda$ to define the general $\AI$-operations on  $A_{nc} := A_{nc} \otimes \Lambda$.
Then we extend it linearly to define the $A_\infty$-structure on $\tilde{A}_0$ and also tensor over $\Lambda$ to get the $A_\infty$-structure on
$\tilde{A} := \tilde{A}_0 \otimes \Lambda$.
\end{definition}

%In geometric situation, $A$ in \eqref{eqn:anctensor} will be typically defined as a quotient of a free algebra by some ideal which enables us to achieve the weakly unobstructedness (Definition \ref{def:wmc}).

\begin{remark}
The order of $f_i$ in \eqref{eqn:pulloutrule} is crucial for the definition of the mirror functor in Chapter \ref{sec:singlelagfunctor} and \ref{sec:MFseverallag}. Also, the variables $f_i$ have degree zero. Later we consider extended deformations with variables $f_i$ of non-trivial gradings, then there will
be additional signs (see \eqref{eqn:pulloutrule2}, \eqref{eqn:pulloutrule3}).
\end{remark}
\begin{remark}
There is a well-known construction of the dual of an $\AI$-algebra, which is a noncommutative
formal space with a vector field $Q$ satisfying $[Q,Q]=0$ (see Kontsevich-Soibelman \cite{KS_Ainfty}). 
Hence. it is not surprising that we can introduce noncommutative coefficients for an $\AI$-algebra.
The relation between \cite{KS_Ainfty} and our work will become clearer for extended deformations in Chapter \ref{sec:ext-mir}.  

In several recent mirror symmetry literatures, the $\AI$-category or $\AI$-algebra itself is regarded as a part of non-commutative geometry. But the definition of  ``non-commutativity'' in our non-commutative mirror symmetry framework has more classical meaning. Namely, we obtain the classical non-commutative algebras via the study of  weakly unobstructedness of the deformations \eqref{eqn:singlelagwmc} and Maurer-Cartan equations.
\end{remark}

One can easily check that
\begin{lemma}\label{lem:ncainfty}
$(\tilde{A}, \{m_k\})$ satisfies the $\AI$-equation.
\end{lemma}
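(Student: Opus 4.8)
The plan is to verify the $\AI$-relations for $(\tilde{A}, \{m_k\})$ directly from the definition \eqref{eqn:pulloutrule}, reducing everything to the known $\AI$-relations for $(A,\{m_k\})$. Since all operations are $\Lambda$-linear (or $\Lambda_0$-linear) and the completion is taken with respect to energy, it suffices to check the relation on elements of the form $v_j = f_j e_{i_j}$ with $f_j \in K$ and $e_{i_j}$ a basis element of $A$; the general case then follows by linearity, and convergence of the infinite sums is guaranteed by the filtration (the $m_k$ respect the filtration, and $\tilde{A}$ is defined as an energy-completion).

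First I would write out the left-hand side of the $\AI$-equation for $\tilde{A}$ applied to $v_1 = f_1 e_{i_1}, \ldots, v_n = f_n e_{i_n}$. A typical summand is
$$ m_{k_1}^{\tilde{A}}\big(f_1 e_{i_1}, \ldots, m_{k_2}^{\tilde{A}}(f_i e_{i_i}, \ldots, f_{i+k_2-1} e_{i_{i+k_2-1}}), \ldots, f_n e_{i_n}\big). $$
Applying \eqref{eqn:pulloutrule} to the inner operation produces $g \cdot m_{k_2}(e_{i_i}, \ldots, e_{i_{i+k_2-1}})$ where $g = f_{i+k_2-1} \cdots f_{i}$ is the reversed product of the inner coefficients, and $m_{k_2}(e_{i_i},\ldots)$ is an element of $A$, i.e. a $\Lambda_0$-combination of basis vectors $e_m$. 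Then applying \eqref{eqn:pulloutrule} to the outer operation, the coefficient $g$ sits in the $i$-th slot and, together with the remaining $f_j$'s, gets pulled out in reversed order: the total coefficient becomes $f_n \cdots f_{i+k_2} \cdot g \cdot f_{i-1} \cdots f_1 = f_n \cdots f_{i+k_2}\, f_{i+k_2-1} \cdots f_i\, f_{i-1} \cdots f_1 = f_n f_{n-1} \cdots f_1$. The key observation is that this reversed product $f_n f_{n-1} \cdots f_1$ is \emph{independent of the decomposition $k_1 + k_2 = n+1$ and of $i$} — the nesting of the two operations, combined with the "pull out in reversed order" rule, always yields the same ordered product of all the coefficients. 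Therefore every summand of the $\tilde{A}$-relation equals $f_n \cdots f_1$ times the corresponding summand of the $A$-relation (with the same sign $\epsilon_1$, since the $f_j$ have degree zero and so do not affect shifted degrees), and the whole left-hand side equals $f_n \cdots f_1 \cdot \big(\text{$\AI$-relation for } A\big) = 0$.

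The main point requiring care — and the only genuine obstacle — is the bookkeeping of the coefficient order in the nested composition: one must confirm that pulling coefficients out of the inner $m_{k_2}$ first and then out of the outer $m_{k_1}$ reproduces precisely $f_n f_{n-1}\cdots f_1$ regardless of where the inner operation is inserted. This is exactly why the paper emphasizes in the Remark that "the order of $f_i$ in \eqref{eqn:pulloutrule} is crucial": the reversed-order convention is the unique one (up to an overall reversal) making the nesting associative in the required sense. A secondary, purely formal point is to note that the sign $\epsilon_1 = \sum_{j=1}^{i-1} |v_j|'$ computed in $\tilde{A}$ agrees with that in $A$ because the coefficients $f_j$ carry degree zero, so $|f_j e_{i_j}| = |e_{i_j}|$; hence the signs match term by term and no new sign subtleties arise. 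Once these two observations are in place, the identity $0 = f_n\cdots f_1 \cdot 0$ finishes the proof.
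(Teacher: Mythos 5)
Your proposal is correct and follows essentially the same route as the paper: reduce by linearity to inputs of the form $f_j e_{i_j}$, observe that every nested term in the $\AI$-relation for $\tilde{A}$ pulls out the same reversed coefficient $f_n f_{n-1}\cdots f_1$ (the signs being unaffected since the $f_j$ have degree zero), and conclude by the $\AI$-equation for $A$. The paper's proof is just a terser version of this; your extra bookkeeping of the nested pull-out is exactly the detail it leaves implicit.
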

\begin{proof}
From linearity, it is enough to prove it when inputs are multiples of basis elements.
Namely, we consider the expansion of $m(\widehat{m}(f_1 e_{1},f_2 e_{2},\cdots, f_n e_{n}))$  which are given by
$$\sum_{k_1+k_2 =n+1} m_{k_1}(f_1 e_{1},\cdots, m_{k_2}(f_{j+1}e_{j+1}, \cdots ), \cdots, f_n e_{n}).$$
Here, $\widehat{m}$ is the coderivation corresponding to $m$. From  the $\AI$-equation of $A$, we have
$$ f_n f_{n-1} \cdots f_2 f_1 \sum_{k_1+k+2 =n+1}m_{k_1}(e_{1},\cdots, m_{k_2}(e_{j+1}, \cdots ), \cdots, e_{n})=0.$$
\end{proof}
The unit $\be_A$ of $A$ is also the unit of $\tilde{A}$.
Thus the noncommutative version of the weak Maurer-Cartan equation makes sense.
\begin{definition}\label{def:wmc}
An element $b \in F^+\tilde{A}^1$ is said to be a noncommutative weak Maurer-Cartan element if it satisfies
\begin{equation}\label{eqn:singlelagwmc}
m(e^b)= PO(\tilde{A},b) \cdot \be_A  \;\;\; \textrm{for some}\; PO(\tilde{A},b) \in  \Lambda_+K,
\end{equation}
where $m(e^b)$ is defined from the $\AI$-operations of $\tilde{A}$.
The potential  $PO(\tilde{A},b)$ is called a noncommutative potential.
\end{definition}

Later on $PO(\tilde{A},b)$ will also be denoted as $W$ and referred as the worldsheet superpotential.

\chapter{Noncommutative mirror from a single Lagrangian and the centrality theorem} \label{sec:nc-deform}
In the previous chapter, we extend the base of an $A_\infty$-algebra $A$ to be a noncommutative space.  In this chapter, we take the base to be the algebra of odd-degree (or degree-1 in $\Z$-graded case) weakly unobstructed deformations in the dual of $A$.  The construction can be regarded as a noncommutative version of that in \cite{CHL13}.  In \cite{CHL13}, we considered only the
cases that solutions of  Maurer-Cartan equation form a  vector space, but in this paper, we consider a more general setting that
non-trivial Maurer-Cartan equation defines an interesting algebraic variety, which is not necessarily commutative.

The key theorem we derive in this chapter is Theorem \ref{thm:central1}, which states that the potential function $W$ is automatically a central element. This already makes the theory rather interesting since finding central elements of a noncommutative algebra is in general challenging and done by computer calculations, while our construction automatically provides  a central element (which is a non-trivial element in non-Calabi-Yau situation).

For the purpose of mirror construction, the $A_\infty$-algebra is taken to be the Floer chain complex of a single (immersed) Lagrangian in a K\"ahler manifold $X$.  Then the base space together with the potential $W$ we construct here is regarded as a mirror (noncommutative Landau-Ginzburg model) of $X$ in a generalized sense. Namely, we will show in Chapter \ref{sec:singlelagfunctor} that there exists a natural functor from the Fukaya category of $X$ to the category of matrix factorizations of $W$.  Thus symplectic geometry of $X$ is (partially) reflected from the (noncommutative) algebraic geometry of $W$.  However the functor is not necessarily an equivalence in general. 
We remark that the above construction is algebraic, and hence can be applied to any $\AI$-algebras or $\AI$-categories, not necessarily a Fukaya category.

In Chapter \ref{sec:MFseverallag} we shall perform a more powerful construction using finitely many Lagrangians (instead of a single Lagrangian), and the corresponding base space will form a quiver algebra with relations.  This provides a useful approach to make the generalized mirror more global and capture more geometric information of $X$.  It will be used to study the mirror of punctured or orbifold Riemann surfaces and certain non-compact Calabi-Yau threefolds constructed by Smith \cite{Smith}.

We may also consider the whole dual algebra of $A$ instead of taking only odd-degree noncommutative weakly unobstructed deformations.   This would result in curved dg-algebras which will be constructed in Chapter \ref{sec:ext-mir}.  We will mostly stick with the smaller deformation space though, since it can be better understood and is more practical to compute.

\section{Noncommutative family of Maurer-Cartan elements}
Let $\CC$ be an $\AI$-category over $\Lambda$, and we fix an object $\bL$ of $\CC$, which is called a reference object.
We will consider a family of Maurer-Cartan equations for $\bL$ as follows.

Let $\{X_e\}$ be a set of  odd-degree generators in $A:=\textrm{Hom}(\bL,\bL)$, where $e$ runs in a label set $E$ (which we assume to be finite).  One may take all odd-degree generators or its certain subset (to make the deformation space smaller). 
Let $x_e$ be the corresponding formal dual variables, and take $K = \Lambda_0 \ll  x_e : e \in E \gg$
be the tensor algebra on the subspace of $A$ corresponding to $E$.
Then we obtain an $A_\infty$-algebra $\tilde{A}:= K \widehat{\otimes}_{\Lambda_0} A$ with coefficients in the noncommutative ring $K$.

To consider a family of Maurer-Cartan elements, we take the formal sum
$$b = \sum_{e} x_e X_e.$$

For $\Z_2$ grading, we  set $x_e$ to have degree zero so that $b$ has  total degree one. 
When we consider extended deformations later, we set the degree as follows.
\begin{definition} \label{def:deg}
The degree of $x_e$ is defined to be $|x_e| = 1 - |X_e|$, so that $b$ has  total degree one.
\end{definition}

We  find the condition of $\{x_e\}$ that makes $b$ a Maurer-Cartan element
(Def. \ref{def:wmc}).
$$m(e^b) := m_0(1)+ m_1(b) + m_2(b,b) + \cdots = PO(\tilde{A},b) \cdot \one_\bL.$$
Since $m(e^b)$ has even degree, we may write
\begin{equation}\label{eq:wmcbl}
m(e^b) = W_\bL \cdot \one_\bL + \sum_{f} P_{f} X_{f}
\end{equation}
where $\one_\bL$ is the unit of  $\textrm{Hom}(\bL,\bL)$ and $X_f$ runs over all the even-degree generators of 
$\textrm{Hom}(\bL,\bL)$ other than $\one_\bL$ (and $f$ runs in a certain index set $F$).
Now, we will redefine our coefficient ring so that $b$ become Maurer-Cartan elements.
\begin{definition} \label{def:mir-sp}
Define the ring 
$$ \cA := \Lambda_0 \ll x_e : e \in E \gg \big/ \langle P_f: f \in F \rangle,$$
where $\langle P_f: f \in F \rangle$ is a completed two sided ideal generated by $P_f$'s. 
We call $\{P_f\}$ the weakly unobstructed relations or
Maurer-Cartan equations.

The potential $W = W_\bL$ (which is the coefficient of $\one_\bL$ in $m_0^b$) can be regarded as an element in $\cA$.  
%The pair $(\cA,W)$ is called a \emph{generalized mirror} of $X$ (with respect to the choice of $\bL$).
\end{definition}
We regard $W_\bL$ as an element of $\cA$, since the Maurer-Cartan equation holds only over the ring $\cA$.
In geometric situations, the coefficient of the unit $\one_\bL$ in $m(e^b)$ can be read off  by taking a pairing with
a point class of $\bL$. If $b$ is
a Maurer-Cartan element, then this number is independent of the choice of a point.
%\begin{definition}\label{aidef}
%Given an $\AI$-algebra $(A,\{m_k\})$, we define another filtered noncommutative $\AI$-algebra $\tilde{A}^b$ with coefficients in the noncommutative algebra $K = \Lambda_0 \ll x_1,\cdots, x_n \gg$ (see Definition \ref{defn:nccoeff}, \ref{def:bdeformAI})
%with $\AI$-operations $\left\{m_k^b \right\}$.
%\end{definition} 

Classically, Maurer-Cartan elements contain the deformation information of $\bL$. Hence the above construction may be interpreted as considering formal (non-commutative) deformations of $\bL$ and the deformation space is given by the non-commutative algebra $\cA$, together with an invariant $W_\bL(b)$ for each $b$.

\begin{remark}
One can check that given an $\AI$-homomorphism $A \to B$, there is a map of associative algebras $\mathcal{B} \to \mathcal{A}$.
\end{remark}

\section{Noncommutative mirrors using a single immersed Lagrangian} \label{sec:single}
Let us apply the previous algebraic formalism to an immersed Lagrangian.
Let $X$ be a K\"ahler manifold, and $\bL \stackrel{\iota}{\rightarrow} X$ be a compact spin oriented unobstructed Lagrangian immersion with transverse self-intersection points.  (By abuse of notations we also denote its image by $\bL$.)  

By \cite{AJ} (generalizing that of \cite{FOOO} for the case of Lagrangian submanifolds), it is associated with an $A_\infty$-algebra whose underlying vector space is the space of cochains 
$$\CF^\bullet(\bL,\bL) := \CF^\bullet(\bL \times_\iota \bL) = C^\bullet(\bL) \oplus \bigoplus_p \Span \{(p_- ,p_+), (p_+,p_-) \},$$ where $p$ runs over the immersed points and $p_-,p_+$ are its preimages under $\iota$.  We call $(p_-,p_+)$ and $(p_+,p_-)$ \emph{complementary immersed generators}.  The vector space is $\Z_2$-graded in general, and is $\Z$-graded when $X$ is Calabi-Yau and $\bL \subset X$ has Maslov class zero. 

Recall that the Lagrangian Floer complex $CF(L_1,L_2)$ for two different transversally intersecting Lagrangians are generated by their intersection points. Intuitively, at each self intersection point $p$ of the immersed Lagrangian $\bL$, two branches of the Lagrangian $\bL$, say $L_{+},L_{-}$, meet transversally, and two generators $(p_-,p_+)$ and $(p_+,p_-)$ correspond to the Floer generators  $CF(L_-,L_+)$ and $CF(L_+,L_-)$ respectively. 

In fact, as we have seen the construction  is algebraic and hence can be applied to any version of Fukaya $\AI$-category
as defined by Fukaya-Oh-Ohta-Ono \cite{FOOO}, Akaho-Joyce \cite{AJ},  Seidel \cite{Seidel-book}, Seidel-Sheridan \cite{Sheridan11,Sh} or Abouzaid-Seidel \cite{AS10}.  The space of cochains $C^\bullet(\bL)$ (and its $\AI$-operations) depends on the chosen theory.  Here we assume that if necessary, a canonical model is taken so that $C^\bullet(\bL)$ is finite dimensional.  Moreover the $A_\infty$-algebra is assumed to be unital, and the unit is represented by the fundamental class of $\bL$.

%In the examples of this paper, we will fix a Morse function on $\bL$ and take $C^*(\bL)$ to be the space of Morse cochains (namely the formal linear combinations of critical points).  The $A_\infty$ operations $m_k$ are defined by counting pearl trajectories \cite{Seidel}.  This approach to Lagrangian Floer theory has the advantage that $C^*(\bL)$ is finite dimensional.

Now we consider noncommutative formal deformations by the immersed generators.  Let $\{X_e\}$ be the set of all odd-degree immersed generators in $CF^\bullet(\bL, \bL)$ where $e$ runs in a label set $E$ of immersed points (which is a finite set).
One can also include odd degree generators which is not immersed generator, but this case will not be considered in this paper.
As before, let $x_e$ be the corresponding formal dual variables, and take $K = \Lambda_0 \ll  x_e : e \in E \gg$ and
we obtain an $A_\infty$-algebra $\tilde{A} = K \widehat{\otimes}_{\Lambda_0} A$.

%We consider noncommutative formal deformations using formal noncommutative variables as the coefficients of $\mathcal{C}^1(A,A)$, and study its weak Maurer-Cartan elements in terms of additional Calabi-Yau superpotential $\Phi$ (under the assumption of cyclic symmetry).  We assume that the $\AI$-algebra that we consider are either $\Z/2$ or $\Z$-graded.

%
%Recall that  in our previous work \cite{CHL13}, we considered deformations of a given $\AI$-algebra
%to define a localized mirror functor. In this paper, we are interested in noncommutative deformations of a given $\AI$-algebra,
%and hence,  we will only consider noncommutative variables that corresponds to $\mathcal{C}^1(A,A)$.

The formal bounding cochain
$$b = \sum_{e} x_e X_e$$
represents formal smoothing deformation of $\bL$ at each intersection point $e$.

We can consider the deformed $\AI$-algebra  $\tilde{A}^b$ with $\AI$-operations  $\left\{m_k^b \right\}$ from Definition \ref{def:bdeformAI}.
Weak Mauer-Cartan equation for $b$ is the condition for $m_0^b$ to be a multiple of the unit class $\one_\bL$ is the unit of the Floer theory of $\bL$.
From the weak MC equation \eqref{eq:wmcbl}, and Definition \ref{def:mir-sp}, we obtain the non-commutative ring $\cA$
of weak Maurer-Cartan relations.

%Then we consider obstructions as in Definition \ref{def:wmc}.  Consider 
%$$m_0^b = W_\bL \cdot \one_\bL + \sum_{f} P_{f} X_{f}$$
%where $\one_\bL$ is the unit of the Floer theory of $\bL$ which corresponds to the fundamental class, and $X_f$ runs over all the even-degree generators of $\CF^\bullet(\bL \times_\iota \bL)$ other than $\one_\bL$ (and $f$ runs in a certain index set $F$).
%
%\begin{definition} \label{def:mir-sp}
%Define the ring 
%$$ \cA := \Lambda_0 \ll x_e : e \in E \gg \big/ \langle P_f: f \in F \rangle,$$
%where $\langle P_f: f \in F \rangle$ is a completed two sided ideal. 
%We call $\{P_f\}$ the weakly unobstructed relations or
%Maurer-Cartan equations.

\begin{definition}
The potential $W = W_\bL$ (which is the coefficient of $\one_\bL$ in $m_0^b$)  as an element of $\cA$ is called a \emph{generalized mirror} of $X$ (with respect to the choice of an immersed Lagrangian $\bL$).
$W$ is also called the \emph{worldsheet superpotential}. 
\end{definition}
Recall that $m_0$ counts holomorphic discs bounded in $\bL$ passing through a generic point.
The worldsheet superpotential $W_\bL$  counts holomorphic discs bounded in $\bL$ passing through a generic point with boundary insertions of $b$'s. Namely, holomorphic discs are now allowed to turn corners at $X_e$'s and the names of the corners are recorded in the variables $x_e$'s.
We also remark that even in the case of $m_0=0$, the above construction may provide non-trivial $m_0^b$,
and this is the case in most of the Riemann surface examples in this paper.

Note that since $m_0^b$ has degree two, and hence $W$ has degree two.  Later in this section, we show that $W \in \cA$ is always central.  Thus $W$ can be regarded as a function on the noncommutative space defined by $\cA$.

\begin{remark}\label{rmk:wcountst}
In actual computations, we will use the Morse complex of $\bL$ (induced from a Morse function) which contains one maximum point $\one_{L}$ and one minimum point $T_{L}$.  Concretely $W$ counts the number of polygons bounded by $\bL$ whose boundaries pass through $T_{L}$, which is the Poincar\'e dual of $\one_{L}$.
\end{remark}

%Requirement that this deformed $\AI$-algebra is (weakly) unobstructed, or equivalently $m_0^b=0$ (or a multiple of unit) is the (weak) Maurer-Cartan equation. (see Definition \ref{def:wmc}).

\section{Spacetime superpotential and Maurer-Cartan equation}
There is another type of superpotential different from $W$, called the \emph{spacetime superpotential}, which gives an effective way to describe the weakly unobstructed relations under good conditions.  This employs the intersection pairing $\langle \cdot,\cdot \rangle$ on the space $\CF^*(\bL, \bL)$, which is graded commutative (before shifting degrees) and has degree $- \dim \bL$. 

The following cyclic potential is defined as an element in $K$ or (its projection to) the cyclic quotient $K/[K,K]$ by (completed) commutators.
\begin{definition} \label{def:Phi}
The spacetime superpotential is defined to be
\begin{equation}\label{eq:cysp}
\Phi = \Phi_{\mathbb{L}}:=\sum_{k} \frac{1}{k+1} \langle m_k (b,\cdots,b), b \rangle \in K/[K,K] \end{equation}
\end{definition}

We make the following assumption whenever spacetime superpotential $\Phi$ is discussed.
\begin{assumption}
Assume that the $A_\infty$-algebra associated to $\bL$ is cyclic, namely
$$ \langle m_k(v_1,\cdots,v_k), v_{k+1} \rangle = (-1)^{|v_1|'(|v_2|'+ \cdots + |v_{k+1}|')}\langle m_k(v_2,\cdots, v_{k+1}), v_1 \rangle $$
for any $v_1,\cdots, v_{k+1} \in C^*(\bL \times_\iota \bL)$.
\end{assumption}
We remark that cyclic symmetric $\AI$-algebra for a Lagrangian submanifold is constructed by Fukaya \cite{Fukaya_cyclic}.
In fact, for the definition of the spacetime supterpotential (and for the proposition below), the cyclic symmetry of the expression in \eqref{eq:cysp} is enough, which holds all of our examples.

Due to the assumption, every term of $\Phi$ appears with its cyclic permutation in the variables $x_e$.  For such a cyclic element of $\Lambda_0 \langle \langle x_e : e \in E \rangle \rangle$, we have the cyclic derivatives which are defined by
\begin{equation}\label{def:pd}
\partial_{x_e} (x_{l_1} \cdots x_{l_k}) = \sum_{j=1}^k \partial_{x_e}(x_{l_j}) \cdot x_{l_{j+1}} \cdots x_{l_k} x_{l_1} \cdots x_{l_{j-1}}
\end{equation}
where $\partial_{x_e}(x_{l}) = \delta_{e, l}$ (see \cite{RSS} or \cite{DWZ} for more details).

The following proposition is similar to the one for special Lagrangian submanifolds
in Calabi-Yau 3-folds in  \cite{FOOO} (3.6.49) and \cite{Fukaya_cyclic}.

\begin{prop} \label{prop:mccd}
Partial derivatives $\partial_{x_e} \Phi$ for $e \in E$ are (part of) weakly unobstructed relations. 
Furthermore, suppose $\dim_\C X=n$ is odd, and $\bL$ is a Lagrangian sphere with the non immersed sector ($C^\bullet (\bL)$) of $\CF^\bullet(\bL, \bL)$ isomorphic to the Morse complex of $S^n$ equipped with the standard height function. Then the algebra  in Definition \ref{def:mir-sp} equals to 
$$\cA = \Jac (\Phi) = \Lambda_0 \langle \langle x_e : e \in E \rangle \rangle /( \partial_{x_e} \Phi: e \in E).$$
\end{prop}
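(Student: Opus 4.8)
The plan is to identify the coefficient of each even generator $X_f$ in $m_0^b$ with a cyclic derivative of $\Phi$, up to sign and a nonzero scalar. I would begin by using the cyclicity assumption: since the pairing $\langle\cdot,\cdot\rangle$ on $\CF^\bullet(\bL\times_\iota\bL)$ is (graded) nondegenerate, writing $m_0^b = m(e^b) = \sum_k m_k(b,\ldots,b)$ in terms of the basis $\{e_i\}$ dual to itself under the pairing, the coefficient of a given generator $X_f$ is recovered by pairing $m(e^b)$ with the dual generator $X_f^\vee$. Concretely, $P_f$ is (a scalar multiple of) $\sum_k \langle m_k(b,\ldots,b), X_f^\vee\rangle$. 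The second ingredient is the combinatorial identity relating $\Phi = \sum_k \frac{1}{k+1}\langle m_k(b,\ldots,b),b\rangle$ to its cyclic derivatives: since $b = \sum_e x_e X_e$, one has $\partial_{x_e}\Phi = \sum_k \langle m_k(b,\ldots,b), X_e^\vee\rangle$ after using cyclic symmetry to collect the $k+1$ cyclically-equivalent ways the variable $x_e$ (equivalently the generator $X_e$) can appear among the $b$'s — this is exactly the mechanism in \cite[(3.6.49)]{FOOO}, with signs bookkept via the shifted degrees. Comparing the two formulas shows that for each $e\in E$, $\partial_{x_e}\Phi$ is precisely (up to sign) the coefficient of $X_e^\vee$ in $m_0^b$. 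One must then check that $X_e^\vee$ is again an odd-degree generator — indeed the complementary immersed generator — so that $\partial_{x_e}\Phi$ is among the $\{P_f\}$, giving the first assertion.

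For the second assertion I would use the hypothesis that $n = \dim_\C X$ is odd and $\bL$ is a Lagrangian sphere whose Floer complex is the Morse complex of $S^n$ with the standard height function. This complex has exactly two generators: the unit $\one_\bL$ (degree $0$, the maximum) and the point class $T_\bL$ (degree $n$, the minimum), together with whatever immersed generators $\{X_e\}$ there are. The degree-$n$ generator $T_\bL$ is odd (since $n$ is odd) and pairs nontrivially with $\one_\bL$; the immersed generators come in complementary pairs $X_e, X_e^\vee$ which, for a sphere, are forced to have complementary degrees summing to $n$, hence both odd. The point is that the only \emph{even}-degree generators that can appear in $m_0^b$ are the unit $\one_\bL$ itself (whose coefficient is $W$, not a relation) — because every other generator in this complex is odd. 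Wait: one must be careful that $\one_\bL$ is degree $0$ and $T_\bL$ degree $n$ odd, so $T_\bL$ is \emph{not} even and contributes no relation; and the immersed $X_f$'s of even degree are exactly the duals $X_e^\vee$ of the odd $X_e$'s. Thus $F$ is in bijection with $E$ via $f \leftrightarrow e^\vee$, each $P_{e^\vee}$ equals $\pm\partial_{x_e}\Phi$ by the first part, and therefore the ideal $\langle P_f : f\in F\rangle$ coincides with $(\partial_{x_e}\Phi : e\in E)$, giving $\cA = \Jac(\Phi)$ as claimed.

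The main obstacle I expect is the sign bookkeeping and the verification that the pairing correctly "transposes" the $A_\infty$-products with noncommutative coefficients — recall from \eqref{eqn:pulloutrule} that the monomials $f_k\cdots f_1$ appear in reversed order, so the identification of the coefficient of $X_e^\vee$ in $m_0^b$ with $\partial_{x_e}\Phi$ requires matching this reversed-order convention against the cyclic-derivative convention \eqref{def:pd}, which cyclically rotates a word without reversing it. One should check that cyclic symmetry of $\langle m_k(b,\ldots,b),b\rangle$ together with the reversal in \eqref{eqn:pulloutrule} conspires so that $\partial_{x_e}\Phi$, as a noncommutative polynomial, is literally (not just "formally") the $X_e^\vee$-coefficient. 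A secondary subtlety is the degree accounting for the Lagrangian sphere: one must confirm that the immersed generators genuinely pair complementary-degree-to-complementary-degree and that no even immersed generator is "left over" outside the correspondence $e\leftrightarrow e^\vee$ — this uses that the ambient manifold is odd-dimensional so that $\dim\bL = n$ is odd, making the Poincaré duality pairing on $\bL\times_\iota\bL$ swap odd and even degrees. Once these conventions are pinned down the rest is the routine expansion already carried out in \cite{FOOO}.
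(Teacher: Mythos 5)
Your proposal follows essentially the same route as the paper's proof: cyclic symmetry identifies $\partial_{x_e}\Phi$ with $\sum_k\langle X_e, m_k(b,\ldots,b)\rangle$, i.e.\ with the coefficient of the complementary generator $X_{\bar e}$ in $m_0^b$, and in the odd-dimensional sphere case the minimum point has odd degree $n$ and hence cannot occur in the (even-degree) element $m_0^b$, so the even generators other than $\one_\bL$ are exactly the complements of the odd immersed generators, giving the bijection $F\leftrightarrow E$ and $\cA=\Jac(\Phi)$.

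The one thing to fix is that your parity bookkeeping is stated backwards in two places, and as written those sentences contradict the conclusion. In the first paragraph you ask that $X_e^\vee$ be \emph{odd} ``so that $\partial_{x_e}\Phi$ is among the $\{P_f\}$''; but the $P_f$ are by definition the coefficients of \emph{even}-degree generators (and indeed $m_0^b$ has even total degree, so only even generators appear in it). What is actually needed is that $X_{\bar e}$ be even, which is precisely what the hypothesis $\dim_\C X$ odd provides; when $\dim\bL$ is even the complement of an odd generator is odd and the cyclic derivative is simply zero, which is why the first assertion is phrased as ``(part of)'' the relations. Similarly, ``complementary degrees summing to $n$, hence both odd'' is impossible (two odd numbers sum to an even number), and ``every other generator in this complex is odd'' would mean there are no relations at all, contradicting $\cA=\Jac(\Phi)$ unless $\Phi$ had no critical equations. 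Your subsequent ``Wait'' correction --- $T_\bL$ has odd degree $n$ and the even immersed generators are exactly the Poincar\'e duals of the odd ones --- is the statement that the argument really uses, and with that correction the proof coincides with the paper's.
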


\begin{proof}
 From the cyclic symmetry and the definition of $\partial_{x_a}$, we have
\begin{eqnarray*}
\partial_{x_e} \Phi_\mathbb{L} &=& \sum_k \langle m_k (b, \cdots, b),  X_{e} \rangle \\
&=& \langle P_{\bar{e}} \cdot X_{\bar{e}}, \, X_{e} \rangle = \pm P_{\bar{e}}.
\end{eqnarray*}
where $X_{\bar{e}}$ is the complementary immersed generator to $X_e$.
Thus $\partial_{x_e} \Phi_\mathbb{L}$ for each $e$ gives a part of weakly unobstructed relation.

When $\dim_\C X$ is odd, $X_{\bar{e}}$ for every odd-degree immersed generator $X_e$ has even degree. 
Suppose further that $\bL$ is a sphere
and $\CF^\bullet(\bL, \bL)$ is generated by exactly one maximum point and one minimum point. The minimum point $T$ have $\deg T = \dim \bL$ which is odd, and hence cannot be a term in $m_0^b$.  Therefore $m_0^b$ is a linear combination of the even-degree immersed generators, and all the weakly unobstructed relations are $\partial_{x_e} \Phi_\mathbb{L}$ for some $e$.
Hence $\cA = \Jac (\Phi)$ in this case.
\end{proof}
\begin{remark}
The idea of relating critical points of spacetime superpotential and Maurer-Cartan equation appeared
 in \cite[(3.6.49)]{FOOO} for the case of  special Lagrangian submanifolds in Calabi-Yau 3-folds.
\end{remark}

\section{Centrality}

The following centrality theorem is an important feature of our noncommutative mirror construction.  
Note that our approach gives a conceptual way to produce a central element of a non-commutative algebra, and moreover the proof is surprisingly neat which employs weak unobstructedness in a systematic way.

%While constructing central elements is an important task in noncommutative geometry and is highly non-trivial (even with the help of computers), our approach gives a conceptual way to produce central elements.  Moreover the proof is surprisingly neat which employs weak unobstructedness in a systematic way.

\begin{theorem}[Centrality of the potential] \label{thm:central1}
The noncommutative potential $W_\bL \in \cA$ is central, namely
$$W_\bL \cdot x_e =  x_e \cdot W_\bL $$
for all $x_e$.
\end{theorem}
\begin{proof}
The key is that we have $m_0^b = W_{\bL} \one_\bL$ over $\cA$.
Let us denote by $\hat{m}$ the coderivation induced from $m$, and the $\AI$-equation
can be rephrased as $m \circ \hat{m} =0$ on tensor coalgebra of the $A_\infty$-algebra.
Then
\begin{eqnarray*}
0 = m\circ \hat{m} (e^b) &= & m(e^b, \hat{m}(e^b),e^b) \\
&=& m_2(b,W_{\mathbb{L}} \one_\bL) + m_2(W_{\mathbb{L}} \one_\bL,b) \\
&=& \sum_{e} (W_{\mathbb{L}} \cdot x_e - x_e \cdot W_{\mathbb{L}}) X_e.
\end{eqnarray*}
The second and third equalities follows from the property of the unit $\one_\bL$.
\end{proof}

\begin{remark}
Centrality is also needed to have $(m_1^{b,b})^2 =0$.  For any $x$, the centrality implies the cancellation of the last two terms in the following $\AI$-equation 
$$(m_1^{b,b}(x))^2  + m_2(W_\mathbb{L} \be, x) + (-1)^{|x|'} m_2(x,  W_\mathbb{L}\be) = 0.$$
\end{remark}

We conclude this section with the following.

\begin{definition}
A triple $(\Lambda_0 \ll x_e: e \in E \gg, \Phi, W)$ is called a {\em noncommutative Landau-Ginzburg model}
if the noncommutative Jacobi algebra 
$$\cA = \Jac(\Phi) := \Lambda_0 \ll x_e: e \in E \gg \big/ (\partial_{x_e} \Phi)$$
has $W$ in its center.
\end{definition}

\begin{cor}
Under the conditions in Proposition \ref{prop:mccd}, the generalized mirror 
$$(\Lambda_0 \ll x_e: e \in E \gg, \Phi, W)$$ 
associated to $\bL$ defined in Section \ref{sec:single} is a noncommutative Landau-Ginzburg model.
\end{cor}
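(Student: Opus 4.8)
The statement to prove is the final Corollary, which asserts that under the hypotheses of Proposition~\ref{prop:mccd}, the triple $(\Lambda_0 \ll x_e : e \in E \gg, \Phi, W)$ associated to $\bL$ is a noncommutative Landau-Ginzburg model. The plan is essentially to bolt together two results that are already in hand. First, Proposition~\ref{prop:mccd} tells us that, in the situation where $\dim_\C X = n$ is odd and $\bL$ is a Lagrangian sphere whose Floer complex is the Morse complex of $S^n$ with the standard height function, the mirror algebra $\cA$ of Definition~\ref{def:mir-sp} coincides with the Jacobi algebra $\Jac(\Phi) = \Lambda_0 \ll x_e : e \in E \gg / (\partial_{x_e}\Phi : e \in E)$. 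So the first half of the definition of a noncommutative Landau-Ginzburg model — that $\cA$ be presented as $\Jac(\Phi)$ — is met \emph{by definition of $\cA$ together with Proposition~\ref{prop:mccd}}. Second, Theorem~\ref{thm:central1} (Centrality of the potential) asserts that $W = W_\bL \in \cA$ is central. Since the definition of a noncommutative Landau-Ginzburg model requires exactly that $\cA = \Jac(\Phi)$ have $W$ in its center, combining these two inputs yields the claim.

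Concretely, I would carry out the following steps. Step one: invoke the hypotheses — the Corollary states ``under the conditions in Proposition~\ref{prop:mccd}'', so we have $n = \dim_\C X$ odd and $\bL$ a Lagrangian sphere with $\CF^\bullet(\bL,\bL)$ isomorphic to the Morse complex of $S^n$ with its standard height function. Step two: apply Proposition~\ref{prop:mccd} to conclude $\cA = \Jac(\Phi)$, i.e.\ that the weakly unobstructed relations $\{P_f : f \in F\}$ cutting out $\cA$ are precisely the cyclic derivatives $\{\partial_{x_e}\Phi : e \in E\}$. Here one should note that the two-sided ideal in Definition~\ref{def:mir-sp} is the completed ideal generated by the $P_f$, and the proof of Proposition~\ref{prop:mccd} shows that in this case every generator $P_f$ arises as $\partial_{x_e}\Phi$ for an appropriate $e$ (via $X_{\bar e}$ ranging over all even-degree immersed generators, the minimum point being excluded by the degree count), so the two ideals agree. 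Step three: apply Theorem~\ref{thm:central1} to $\cA$ to get $W \cdot x_e = x_e \cdot W$ for all $e$, hence $W$ is central in $\cA$. Step four: observe that Steps two and three together are exactly the definition of a noncommutative Landau-Ginzburg model, so $(\Lambda_0 \ll x_e : e \in E \gg, \Phi, W)$ is one.

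Since essentially all the content is already proved, there is no serious obstacle; the only point requiring a little care is the bookkeeping in Step two, namely checking that the ideal $\langle P_f : f \in F\rangle$ really equals $(\partial_{x_e}\Phi : e \in E)$ as completed two-sided ideals, rather than one being contained in the other. Proposition~\ref{prop:mccd} already addresses this: the proof there shows $\partial_{x_e}\Phi_\bL = P_{\bar e}$, and the degree argument (using that $n$ is odd, so complementary generators $X_{\bar e}$ of odd-degree $X_e$ are even-degree, and that the minimum point $T$ has odd degree $\dim\bL$ and so cannot appear in $m_0^b$) establishes that \emph{every} even-degree immersed generator other than $\one_\bL$ occurs among the $X_{\bar e}$, so the relations $\{P_f\}$ and $\{\partial_{x_e}\Phi\}$ are the same set. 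Thus the Corollary is immediate, and I would present it as a two-line deduction citing Proposition~\ref{prop:mccd} and Theorem~\ref{thm:central1}.
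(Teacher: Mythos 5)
Your proposal is correct and is exactly the deduction the paper intends (the corollary is stated without proof precisely because it follows immediately from Proposition \ref{prop:mccd}, which identifies $\cA$ with $\Jac(\Phi)$ under the stated hypotheses, and Theorem \ref{thm:central1}, which gives centrality of $W$). Your extra care about the equality of the two completed ideals is the right point to check, and it is already handled by the degree argument in the proof of Proposition \ref{prop:mccd}.
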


%Such a mirror depends on the choice of the reference Lagrangian $\mathbb{L}$, and it may be called a localized noncommutative Landau-Ginzburg mirror of $X$, as the mirror functor to be defined in the
In the next chapter we transform Lagrangian submanifolds in $X$ to matrix factorizations of $W$.  Thus symplectic geometry naturally corresponds to noncommutative algebraic geometry in our generalized mirror.

%-----------------------------------------------------------------------
% Beginning of chap1.tex
%-----------------------------------------------------------------------
%
%  AMS-LaTeX sample file for a chapter of a monograph, to be used with
%  an AMS monograph document class.  This is a data file input by
%  chapter.tex.
%
%  Use this file as a model for a chapter; DO NOT START BY removing its
%  contents and filling in your own text.
% 
%%%%%%%%%%%%%%%%%%%%%%%%%%%%%%%%%%%%%%%%%%%%%%%%%%%%%%%%%%%%%%%%%%%%%%%%

%\part{This is a Part Title Sample}

\chapter{The mirror functor from a single Lagrangian} \label{sec:singlelagfunctor}
In this chapter, we construct a canonical $\AI$-functor from the Fukaya category of $X$ to
the category of matrix factorizations of its generalized mirror $(\cA,W)$ constructed in the previous chapter.  %It is called canonical, since such a functor is defined automatically (like Yoneda embedding), using the operations of Fukaya category of $X$. 
It explains the fundamental reason why symplectic geometry of $X$ should correspond to algebraic geometry of its mirror.  Most of the proofs in this chapter is similar to those in \cite{CHL13} in the commutative setting, and so we will be brief.  $\Fuk(X)$ is the Fukaya category of unobstructed Lagrangians with a bounding cochain in this chapter. 
If $L$ is unobstructed,  then, either $L$ has a vanishing $m_0$ or has a bounding cochain $b_0$ such that $m_0^{b_0} =0$. 
For simplicity, we assume that $b_0=0$ (i.e. the first case)  since it is straightforward to modify the construction nontrivial $b_0 \neq 0$ (see \cite{CHL13} for more details).

First let us recall the category of matrix factorizations of a central element $W$ in a ring $\cA$ which is not necessarily commutative.

\begin{definition}
A matrix factorization of $(\cA,W)$ consists of two projective left $\cA$-modules $M_0, M_1$ with
maps $\delta_0:M_0 \to M_1, \delta_1:M_1 \to M_0$ such that 
$$\delta_0 \delta_1 = \delta_1 \delta_0 = W \cdot \mathrm{Id}.$$
Alternatively it can be defined as a $\Z_2$-graded projective $\cA$-module $M = M_0 \oplus M_1$ with an odd endomorphism $\delta = \delta_0 + \delta_1$ satisfying $\delta^2 = W \cdot \mathrm{Id}$.
\end{definition}

\begin{remark}
We may also consider $\Z$-graded matrix factorizations in Calabi-Yau setting.  This will be discussed in Section \ref{sec:CY-grading}.
\end{remark}

\begin{definition} \label{def:MF}
The differential graded category of matrix factorizations of $(\cA,W)$ is defined as follows.
A morphism between two matrix factorizations $(M,\delta_M)$ and $(N,\delta_N)$ is simply an $\cA$-module homomorphism $f:M \to N$.
The morphism space is $\Z_2$-graded and is equipped with a differential $m_1$ (or $d$) defined by
$$m_1(f) = \delta_N \circ f - (-1)^{deg(f)} f \circ \delta_M.$$
The composition of morphisms is denoted by $m_2$.
\end{definition}

Now we are ready to construct the functor.  Recall that $\cA$ is the space of weakly unobstructed odd-degree noncommutative deformations of the reference Lagrangian $\bL$, and $W$ is the counting of holomorphic discs bounded by $\bL$ whose boundary pass through a generic marked point (see Definition \ref{def:mir-sp} and Remark \ref{rmk:wcountst}).

\begin{definition} \label{def:mirMF}
Let $\Fuk(X)$ be the $\Z_2$-graded Fukaya category of unobstructed Lagrangian submanifolds of $X$.  For an object $U$ of $\Fuk(X)$, its mirror matrix factorization of $(\cA,W)$ is defined as 
\begin{equation}\label{eqn:objcorr1}
\cF(U) := \left(\cA \widehat{\otimes}_{\Lambda_0} \CF^{\bullet}((\bL,b), U), d = (-1)^{\deg (\cdot)} m_1^{b,0}(\cdot) \right).
\end{equation}
\end{definition}
\begin{remark}
The above functor can be defined for any $\Fuk_\lambda$ for $\lambda \in \C$ (which maps to $MF(W-\lambda)$), where $\Fuk_\lambda$ is the subcategory
whose objects are weakly unobstructed Lagrangians with potential value $\lambda$. See \cite{CHL13} for more details. The same applies throughout the paper, but we will consider only unobstructed ones just for simplicity.
\end{remark}

There are different ways of realizing Lagrangian Floer theory, and $\CF^{\bullet}((\bL,b), U)$ depends on the choice.  In the theory developed by Fukaya-Oh-Ohta-Ono \cite{FOOO}, $U$ can have clean intersections with $\bL$, and $\CF^{\bullet}((\bL,b), U)$ is the direct sum of the spaces of singular chains (or differential forms) in the intersections.  In the theory of Seidel \cite{Seidel-book}, $U$ is made to have transverse intersections with $\bL$, and $\CF^{\bullet}((\bL,b), U)$ is spanned by the intersection points.  We will fix any one of the choices to carry out the constructions.

Recall that the Floer theory operation $m_1^{b,0}$ on $\cA \otimes_{\Lambda_0} \CF^{\bullet}((\mathbb{L},b), U)$ is
$$ m_1^{b,0}(p) = \sum_{k=0}^\infty m_{k+1}(\underbrace{b,\cdots,b}_k,p)$$
for $p \in \CF^{\bullet}((\mathbb{L},b), U)$.  Intuitively the mirror matrix factorization is counting pseudo-holomorphic strips bounded by $\bL$ and $U$, where arbitrary number of boundary insertions $b$'s are allowed along the upper boundary mapped in $\bL$ (see Figure \ref{fig:stripb}).  The matrix coefficients are noncommutative power series, each term of which records the boundary insertions along the upper boundary. 

Recall that the intersection points $\bL \cap U$ has a well-defined $\Z_2$-grading given by
the signed intersection number.
The differential $m_1^{b,0}$ is  a matrix of decorated counting from even to odd, or odd to even intersection points. 

%for each $p \in \bL \cap U$, one can consider a loop of oriented Lagrangian subspaces of $T_pX$, by first considering any path of Lagrangian subspaces from $T_p\bL$ to $T_pU$, which then is composed with a canonical path from $T_pU$ to $T_p\bL$. The $\Z_2$-Maslov index of this loop is well-defined, and this is the $\Z_2$-grading for each intersection point. 

%Namely, each entry of this matrix is given by the counting of $J$-holomorphic strips in Figure \ref{fig:stripb} which additionally records the boundary insertions along the upper boundary. Thus the matrix entries are noncommutative power series.
\begin{figure}[h]
\begin{center}
\includegraphics[height=0.8in]{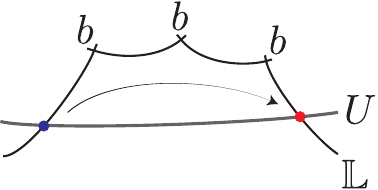}
\caption{Holomorphic strip contributing to $m_1^{b,0}$}\label{fig:stripb}
\end{center}
\end{figure}

The key observation is that the above really defines a matrix factorization of $(\cA,W)$, whose proof is essentially the same as that of \cite[Theorem 2.19]{CHL13}.

\begin{prop}
In Definition \ref{def:mirMF}, 
$$\delta^2 = W_{\mathbb{L}} \cdot \mathrm{Id}.$$
\end{prop}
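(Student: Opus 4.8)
The plan is to verify the matrix factorization equation $\delta^2 = W_\bL \cdot \mathrm{Id}$ directly from the $A_\infty$-relations, exactly mimicking the commutative argument in \cite[Theorem 2.19]{CHL13} but keeping track of the noncommutative coefficients. First I would write out $\delta^2(p) = \delta\big(m_1^{b,0}(p)\big)$, where $\delta(p) = (-1)^{\deg p} m_1^{b,0}(p) = (-1)^{\deg p}\sum_{k\ge 0} m_{k+1}(b,\dots,b,p)$, and expand $m_1^{b,0}\circ m_1^{b,0}$ using the definition of $m^{b,0}_1$. The signs from the two $(-1)^{\deg(\cdot)}$ factors are designed precisely so that $\delta^2$ equals $\pm m_1^{b,0}\circ m_1^{b,0}$ up to the shifted-degree bookkeeping; I would confirm this sign matches the one in the cited reference.

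Next comes the core computation. Applying the $A_\infty$-equation to the word $e^b \otimes p \otimes e^{0}$ — equivalently, looking at $m\circ\hat m$ evaluated on $e^b \otimes p$ (no $b$-insertions on the $U$-side since the second deformation datum is $0$) — yields
\begin{equation*}
\sum m_{k_1}\big(b,\dots,b, m_{k_2}(b,\dots,b,p)\big) + \sum m_{k_1}\big(b,\dots,b,m_{k_2}(b,\dots,b),b,\dots,b,p\big) = 0.
\end{equation*}
The first group of terms is (up to sign) $m_1^{b,0}\circ m_1^{b,0}(p) = \pm\delta^2(p)$. In the second group, the inner operation $m_{k_2}(b,\dots,b)$ is part of $m(e^b) = m_0^b = W_\bL\cdot \one_\bL$ over $\cA$ (here I use that we have passed to the quotient ring $\cA = K/\langle P_f\rangle$, so all the non-unit components $P_f X_f$ vanish). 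Thus the second group collapses to terms of the form $m_{k_1}(b,\dots,b, W_\bL \one_\bL, b, \dots, b, p)$. By the unit axioms for $\one_\bL$, such a term is zero unless there is exactly one $b$ and it sits next to $\one_\bL$ in an $m_2$, or there are no $b$'s and the term is $m_2(W_\bL\one_\bL, p)$; each surviving configuration evaluates via $m_2(\one_\bL, -) = \mathrm{id}$ to $W_\bL \cdot p$ (with the scalar $W_\bL$, now central in $\cA$ by Theorem \ref{thm:central1}, freely movable past the module element). Collecting, the $A_\infty$-equation reads $\pm\delta^2(p) \pm W_\bL\cdot p = 0$, which is the claim after checking the signs cancel correctly.

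The one genuinely noncommutative subtlety — and the step I expect to need the most care — is the \emph{ordering} of the coefficients. In Definition \ref{defn:nccoeff} the rule \eqref{eqn:pulloutrule} reverses the order of the scalar factors $f_k f_{k-1}\cdots f_1$, so when composing $m_1^{b,0}$ with itself one must check that the $x_e$-monomials from the outer $b$-insertions multiply onto the monomials from the inner $b$-insertions in the order consistent with reading boundary insertions along the strip. This is exactly the bookkeeping that makes $\cF(U)$ land in $\mathrm{MF}(\cA,W)$ with $\cA$ acting on the correct side; I would verify it by tracking one term carefully and noting it agrees with the pull-out rule, so that the telescoping of the $A_\infty$-relation goes through verbatim. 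Finally, to even make sense of $\delta^2 = W_\bL\cdot\mathrm{Id}$ as an identity of $\cA$-module maps (rather than just a coincidence of matrix entries), one needs $W_\bL$ central — which is exactly Theorem \ref{thm:central1} — so that multiplication by $W_\bL$ is an $\cA$-module endomorphism; I would cite that theorem at this point to close the argument.
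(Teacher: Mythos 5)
Your proof is correct and takes essentially the same route as the paper: one applies the ($b$-deformed) $A_\infty$-equation to $p$, namely $m_1^{b,0}\circ m_1^{b,0}(p) + m_2^{b,0}(m_{0,\bL}^b,p) \pm m_2^{b,0}(p,m_{0,U}) = 0$, kills the last term by unobstructedness of $U$, uses that $m_{0,\bL}^b = W_\bL\cdot\one_\bL$ over $\cA$ by the very definition of the quotient, and evaluates the middle term by the unit axiom to get $W_\bL\cdot p$, with the $(-1)^{\deg p}$ factors cancelling as you indicate. Your additional remarks on the coefficient-ordering rule \eqref{eqn:pulloutrule} and on centrality (Theorem \ref{thm:central1}) are harmless refinements of the same argument (aside from a slightly garbled description of which unit configurations survive, since $p$ must always be the last input, only $m_2(W_\bL\one_\bL,p)$ contributes), not a different method.
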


\begin{proof}
Recall that $\cA = \Lambda_0 \ll x_e: e \in E \gg \big/ \langle P_f: f \in F \rangle$ where $P_f$ are the coefficients of $X_f$ in $m_0^b$.
Consider the $A_\infty$-equation
$$m_1^{b,0} \circ m_1^{b,0} (p) + m_2^{b,0}(m_{0,\bL}^b, p) + (-1)^{\deg p - 1} m_2^{b,0}(p,m_{0,U}) = 0.$$
Since $U$ is unobstructed, $m_{0,U} = 0$.  Moreover $m_{0,\bL}^b = W \cdot \one_\bL$ as an element in $\cA \otimes_{\Lambda_0} \CF^{\bullet}(\bL,\bL)$ by the definition of $\cA$.
Hence the second term equals to $W \cdot p$.  Hence $\delta^2 (p) = (-1)^{\deg p} (-1)^{\deg p - 1} m_1^{b,0} \circ m_1^{b,0} (p) = W \cdot \one_\bL$.

\end{proof}
The essence of the above proof is the following. In Floer theory, the differential  (in our case, decorated by $b$'s) may not satisfy $\delta^2=0$. The failure of $\delta^2=0$ comes from disc bubblings. The disc bubbling with boundary on $\bL$ (with $b$-decorations) provide the term $W_{\mathbb{L}} \cdot \mathrm{Id}$, whereas the disc bubbling with boundary on $U$ is assumed to vanish.
Hence we obtain the equation $\delta^2 = W_{\mathbb{L}} \cdot \mathrm{Id}.$ Therefore, the above proposition claims that appropriate decorated counting for Floer differential $\delta$ provides a matrix factorization of the superpotential $W_{\bL}$.

%The intersection $\mathbb{L} \cap L'$ is canonically $\Z/2$-graded for oriented Lagrangians, which generates the free $\cA$-modules $P_0, P_1$, and $m_1^{b,0}$ defines the matrix factorization for $L'$.

%In two dimensional examples, matrix factorization can understood intuitively as cutting the polygons into parts. If it is cut as in the Figure (a), then it corresponds to $(1 \times 1)$ matrix factorization whereas  for the case of Figure (b), it corresponds to $(2 \times 2)$ matrix factorizations.

The correspondence between objects defined above extends to be an $A_\infty$-functor $\cF = (\cF_k)_{k \geq 1}^\infty$ as follows.   Given an intersection point $q$ between two Lagrangians $U_1$ and $U_2$ in $\Fuk(X)$, which can be regarded as a morphism from $U_1$ to $U_2$, we have the morphism 
$$(-1)^{(\deg q - 1)( \deg (\cdot) - 1)} m_2^{b,0,0}(\cdot, q): \cF(U_1) \to \cF(U_2).$$
Namely, given an intersection point $p \in \mathbb{L} \cap U_1$, the expression
$$(-1)^{(\deg q - 1)( \deg p - 1)} m_2^{b,0,0}(p,q) = (-1)^{(\deg q - 1)( \deg p - 1)} \sum_{k=0}^\infty m_{k+2}(\underbrace{b,\cdots,b}_k,p,q)$$
defines an element in the underlying module of $\cF(U_2)$.  This defines a map 
$$\cF^{\bL}_1: \Hom (U_1,U_2) \to \Hom(\cF(U_1),\cF(U_2)).$$

The higher parts of the functor is given by $m_{l+1}^{b,0,\cdots, 0}$ up to signs.  Namely, given $(l+1)$-tuple of Lagrangians $U_1, \cdots, U_{l+1}$ ($l \geq 1$) and a sequence of morphisms 
$$ (q_1, \cdots, q_l) \in \Hom (U_1, U_2) \otimes \cdots \otimes \Hom (U_l, U_{l+1}),$$
we define 
$$\cF_l (q_1, \cdots, q_l) \in \Hom (\cF(U_1) , \cF(U_{l+1}))$$
as a map which sends 
$$p \in \mathbb{L} \cap U_1 \mapsto (-1)^{\epsilon} m_{l+1}^{b,0,\cdots, 0} (p, q_1, \cdots, q_l)$$
 where $\epsilon= (\deg p - 1) \left( \sum_{i=1}^l (\deg q_i - 1) \right)$.

The proof of $\cF$ being an $\AI$-functor is essentially the same as Theorem 2.18 \cite{CHL13} and so the detail is omitted here.  The key feature is that the underlying ring is taken to be $\cA = \Lambda_0 \ll x_e: e \in E \gg \big/ \langle P_f: f \in F \rangle$ rather than $\Lambda_0$.  Over this ring we have $m_{0,\bL}^b = W \cdot \one_\bL$.  Moreover $\cF_l(q_1,\ldots,q_l)$ defined above are truly $\cA$-module homomorphisms, since from Equation \eqref{eqn:pulloutrule} we have
$$m_{k+l+1}(b,b,\cdots,b, g \cdot p,q_1,\ldots,q_l) = g \cdot m_{k+l+1}( b,b,\cdots, b, p, q_1,\ldots,q_l). $$
for any $g \in \cA$ (where recall that $b = \sum_{e} x_e X_e$).

\begin{theorem} \label{thm:F}
$\cF = \cF^{\bL} = (\cF_i)_{i=1}^\infty$ defines an $\AI$-functor.
\end{theorem} 

The following proposition transforms Hamiltonian isotopty into homotopy.
\begin{prop}\cite[Proposition 5.4]{CHL-toric}
If $L_1$ and $L_2$ are Hamiltonian isotopic, then the resulting matrix factorizations $\cF^\bL(L_1)$ 
and $\cF^\bL(L_2)$ are homotopic to each other.
\end{prop}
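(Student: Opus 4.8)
The plan is to show that a Hamiltonian isotopy between $L_1$ and $L_2$ produces a closed degree-zero morphism of matrix factorizations in both directions which compose to the identity up to homotopy. Recall that a Hamiltonian isotopy gives, in the Fukaya category, a pair of morphisms $\alpha \in \Hom(L_1, L_2)$ and $\beta \in \Hom(L_2, L_1)$ (quasi-isomorphisms: the images of the point classes under continuation maps, or the units under the quasi-isomorphism $\CF^\bullet(L_1, L_1) \simeq \CF^\bullet(L_1, L_2)$ induced by the isotopy) with $m_1 \alpha = m_1 \beta = 0$ and $m_2(\alpha, \beta)$, $m_2(\beta, \alpha)$ cohomologous to the respective units $\one_{L_1}$, $\one_{L_2}$. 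The point is then to push these through the $A_\infty$-functor $\cF^\bL$ of Theorem \ref{thm:F}.

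First I would apply $\cF^\bL_1$ to get $\cF^\bL_1(\alpha) : \cF^\bL(L_1) \to \cF^\bL(L_2)$ and $\cF^\bL_1(\beta) : \cF^\bL(L_2) \to \cF^\bL(L_1)$. Since $\cF^\bL$ is an $A_\infty$-functor, the first $A_\infty$-functor relation gives $m_1(\cF^\bL_1(\alpha)) = \cF^\bL_1(m_1 \alpha) = 0$, so these are genuine morphisms of matrix factorizations (closed in the dg-category $\MF(\cA,W)$). Next, the second $A_\infty$-functor relation reads
$$ m_2\big(\cF^\bL_1(\alpha), \cF^\bL_1(\beta)\big) - \cF^\bL_1\big(m_2(\alpha,\beta)\big) = \pm\, m_1\big(\cF^\bL_2(\alpha,\beta)\big) \pm \cF^\bL_2(m_1\alpha, \beta) \pm \cF^\bL_2(\alpha, m_1\beta), $$
and the last two terms vanish because $m_1\alpha = m_1\beta = 0$. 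Hence $m_2(\cF^\bL_1(\alpha),\cF^\bL_1(\beta))$ equals $\cF^\bL_1(m_2(\alpha,\beta))$ up to the boundary $m_1(\cF^\bL_2(\alpha,\beta))$. Since $m_2(\alpha,\beta) = \one_{L_1} + m_1(\gamma)$ for some $\gamma$, and $\cF^\bL$ sends the unit to (something homotopic to) $\mathrm{Id}_{\cF^\bL(L_1)}$ — this uses (homotopy) unitality of the functor, which follows from the corresponding property of $\cF$ established in the single-Lagrangian case paralleling \cite{CHL13}, \cite{CHL-toric} — we conclude $\cF^\bL_1(\beta) \circ \cF^\bL_1(\alpha)$ is homotopic to the identity on $\cF^\bL(L_1)$. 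Symmetrically for $\cF^\bL(L_2)$. Therefore $\cF^\bL_1(\alpha)$ is a homotopy equivalence of matrix factorizations, which is exactly the claim.

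The main obstacle is the unitality bookkeeping: one must verify that $\cF^\bL$ carries the Floer-theoretic unit $\one_{L_1}$ to a morphism homotopic to $\mathrm{Id}_{\cF^\bL(L_1)}$ in $\MF(\cA,W)$, rather than to $\mathrm{Id}$ on the nose, and then chase the homotopies through the $A_\infty$-functor relation carefully, tracking the Koszul signs $\epsilon$ appearing in the definition of $\cF_l$. In the Morse-Bott model one can often arrange $m_2^{b,0,0}(\cdot, \one_{L_1}) = \mathrm{Id}$ strictly (strict unitality of the underlying $A_\infty$-structure, as assumed in the excerpt), in which case the argument simplifies and $\cF^\bL_1(\alpha), \cF^\bL_1(\beta)$ are strict homotopy inverses. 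I would also remark that an alternative, slightly softer route is to invoke the general principle that any $A_\infty$-functor sends isomorphisms in the cohomology category to isomorphisms: $\alpha$ is an isomorphism in $H^\bullet(\Fuk(X))$ because $L_1, L_2$ are Hamiltonian isotopic, hence $\cF^\bL_1(\alpha)$ is an isomorphism in $H^\bullet(\MF(\cA,W))$, i.e. a homotopy equivalence; this is essentially the same computation packaged abstractly, and either presentation completes the proof.
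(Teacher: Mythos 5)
Your argument is correct and is essentially the same as the proof the paper defers to \cite{CHL-toric}: push the continuation quasi-isomorphisms $\alpha\in\Hom(L_1,L_2)$, $\beta\in\Hom(L_2,L_1)$ through the $A_\infty$-functor $\cF^\bL$ and use the $A_\infty$-functor equations together with unitality to see that $\cF^\bL_1(\alpha)$ and $\cF^\bL_1(\beta)$ are homotopy inverse closed morphisms in $\MF(\cA,W)$. The only bookkeeping point is that, by the strict unit axioms assumed here, $\cF^\bL_1(\one_{L_i})$ is $\pm\mathrm{Id}$ on the nose (the higher terms $m_{k+2}(b,\ldots,b,\cdot,\one_{L_i})$ vanish), so your unitality step goes through and the possible sign does not affect the conclusion.
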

The proof is the same as in \cite{CHL-toric} and we omit the proof.

The following theorem shows that our functor is injective on a certain class of Hom spaces related to $\bL$.
\begin{theorem}[Injectivity]\label{thm:injthmsingle}
If the $\AI$-category is unital, then the  mirror $\AI$ functor $\mathcal{F}^\bL$ is injective
on $H^\bullet(\Hom(\bL, U))$ (and also on  $\Hom(\bL, U)$) for any Lagrangian $U$.
\end{theorem}
\begin{proof}
We construct a right inverse $\Psi$ to $\mathcal{F}^\bL_1$.
Consider the matrix factorizations $\mathcal{F}(\bL), \mathcal{F}(U)$ corresponding to $\bL, L$ respectively.
Recall that 
$$\mathcal{F}(\bL) = \big( \mathcal{A} \otimes_{\Lambda_0} \CF^{\bullet}((\bL,b), \bL), m_1^{b,0} \big).$$
Let $\phi$ be a morphism in $\Hom(\mathcal{F}(\bL), \mathcal{F}(U))$.
Since $\be_\bL$ is an element of  $\CF^{\bullet}((\bL,b), \bL) =\CF^{\bullet}(\bL, \bL) $, we may take $\phi(\be_\bL)$
which lies in $\mathcal{A} \otimes_{\Lambda_0} \CF^{\bullet}(\bL, U)$.
We define $\Psi(\phi)$ as
$$\Psi(\phi) = \big( \phi (\be_\bL) \big)|_{b=0} \in \CF^{\bullet}(\bL, U).$$
We first show that $\Psi$ is a chain map:
\begin{eqnarray*}
\Psi(m_1(\phi)) &=& \Psi(m_1^{b,0}\circ \phi)) - (-1)^{|\phi|} \Psi(\phi \circ m_1^{b,0})\\
&=& \big( m_1^{b,0}(\phi(\be_\bL)) \big)|_{b=0} -  (-1)^{|\phi|} \big(  \phi ( m_1^{b,0} (\be_\bL)) \big)|_{b=0} \\
&=& \big( m_1^{0,0}(\phi(\be_\bL)|_{b=0}) \big) - (-1)^{|\phi|}\big(  \phi (-b) \big)|_{b=0} \\
&=& m_1^{0,0}( \Psi (\phi)).
\end{eqnarray*}
Now, let us show that $\Psi$ is the right inverse of $\mathcal{F}^\bL_1$:
$$\big( \Psi \circ \mathcal{F}^\bL_1  \big) (p) =  \big( \mathcal{F}^\bL_1(p) (\be_\bL) \big)|_{b=0}
= \big( m_2^{b,0,0}(\be_\bL, p) \big)|_{b=0} = m_2^{0,0,0}(\be_\bL,p) =p.$$
\end{proof}

%-----------------------------------------------------------------------
% Beginning of chap1.tex
%-----------------------------------------------------------------------
%
%  AMS-LaTeX sample file for a chapter of a monograph, to be used with
%  an AMS monograph document class.  This is a data file input by
%  chapter.tex.
%
%  Use this file as a model for a chapter; DO NOT START BY removing its
%  contents and filling in your own text.
% 
%%%%%%%%%%%%%%%%%%%%%%%%%%%%%%%%%%%%%%%%%%%%%%%%%%%%%%%%%%%%%%%%%%%%%%%%

%\part{This is a Part Title Sample}

\chapter{Elliptic curves and deformation quantizations} \label{sec:333}
In this chapter, we apply the general construction in Chapter \ref{sec:nc-deform} and \ref{sec:singlelagfunctor} to the elliptic curve quotient $E/\Z_3 = \bP^1_{3,3,3}$ and obtain a family of noncommutative mirrors.  As explained in the Introduction, Zaslow \cite{Zaslow} and Aldi-Zaslow \cite{AZ} obtained noncommutative mirrors of the elliptic curve by computing the Floer operations for a Lagrangian section and its Dehn twists.  In this paper we provide a general systematic construction of noncommutative mirrors.  (In Section \ref{sec:4puncture2222} we work out a similar construction for $\bP^1_{2,2,2,2}$ which involves quiver algebras.)

We consider a family of reference Lagrangians $(\mathbb{L}_t,\lambda)$, which produces the mirror functors from $\Fuk (\bP^1_{3,3,3})$ to a family of noncommutative Landau-Ginzburg models. The resulting LG-models turn out to be (3-dimensional) Sklyanin algebras with central elements.  Surprisingly, the quotients of the Sklyanin algebras by the central elements gives rise to the deformation quantization of affine Del Pezzo surface $W_0(x,y,z) =0$ in $\C^3$ where $W_0$ \eqref{eqn:mir333t=0} is the  commutative potential obtained from $(\mathbb{L}_{t=0}, \lambda=-1)$.

The main result of the chapter can be summarized as follows.

\begin{theorem}\label{thm:mainthm333}
There is a $T^2$-family $(\mathbb{L}_t,\lambda)$ of Lagrangians decorated by flat $U(1)$ connections in $\bP^1_{3,3,3}$
for $(t-1) \in \R/2\Z$ and $\lambda \in U(1)$ whose corresponding generalized mirror $(\cA_{(\lambda,t)},W_{(\lambda,t)})$ satisfies the following.
\begin{enumerate}
\item The noncommutative algebras $\cA_{(\lambda,t)}$ are Sklyanin algebras, which are  of the form
\begin{equation}\label{eq:skly}
\cA_{(\lambda,t)} := \frac{ \Lambda <x,y,z> }{ \big( axy + byx +cz^2, ayz+bzy+cx^2, azx+bxz+cy^2 \big)}
\end{equation}
for $a=a(\lambda,t), b=b(\lambda,t), c=c(\lambda,t) \in \Lambda$. We have $\cA_0:= \cA_{(-1,0)} = \Lambda[x,y,z].$
% For $t=0$ and $\lambda=-1$, we have $a = -b, c=0$ and hence it reduces to the commutative polynomial ring
%$$\cA_0:= \cA_{(-1,0)} = \Lambda[x,y,z].$$
\item $W_{(\lambda,t)}$ lies in the center of $\cA_{(\lambda,t)}$ for all $(\lambda,t)$. We denote $W_0 = W_{(-1,0)}$.
\item The coefficients $(a:b:c)$ are given by theta functions, which define an embedding $T^2 \to \mathbb{P}^2$ onto the mirror elliptic curve 
$$\check{E} = \{(a:b:c) \in \mathbb{P}^2 \mid W_0(x,y,z)=0\}.$$
\item For each $(\lambda,t)$, there exists a $\Z_2$-graded $\AI$-functor 
$$ \cF^{(\bL_t,\lambda)}: \Fuk(\bP^1_{3,3,3}) \to \MF(\cA_{(\lambda,t)}, W_{(\lambda,t)}).$$
Upstairs there is a $\Z$-graded $\AI$-functor
$$ \cF^{(\tilde{\bL}_t,\lambda)}: \Fuk^\Z(E) \to \MF^{\Z}(\hat{\cA}_{(\lambda,t)}, \hat{W}_{(\lambda,t)}).$$
When $t=0, \lambda=-1$, they give derived equivalences.
\item The graded noncommutative algebra $\cA_{(\lambda,t)} / \langle W_{(\lambda,t)} \rangle$ is a twisted homogeneous coordinate ring of $\check{E}$.
\item The family of noncommutative algebras  $\cA_{(\lambda,t)} / \langle W_{(\lambda,t)} \rangle$ near $t=0, \lambda=-1$
gives a quantization of the affine del Pezzo surface defined by $W_0(x,y,z)=0$ in $\C^3$.
\end{enumerate}
\end{theorem}
$\Fuk^\Z$ denotes the Fukaya category equipped with $\Z$-grading. The category $\MF^{\Z}(\hat{\cA}_{(\lambda,t)}, \hat{W}_{(\lambda,t)})$ and the functor upstairs are to be defined in \ref{sec:MFupstairs}.  The grading on $\cA_{(\lambda,t)}$ is simply given by $\deg x = \deg y = \deg z =1$.

%\begin{remark}
%The noncommutative algebras $\cA_{(\lambda,t)}$ are called Sklyanin algebras in literature.
%Forgetting the grading, $\cA_{(\lambda,t)} / \langle W_{(\lambda,t)} - c \rangle$ can be viewed as noncommutative deformations of the affine Del Pezzo surfaces $\{W_{(-1,0)} = c\} \subset \C^3$. See \cite{EG} for more details.
%\end{remark}

\begin{remark}
When $t=0$ and $\lambda = -1$, we have $a = -b, c=0$ so that the relations in \eqref{eq:skly} become commutators among three variable. Thus the mirror constructed is commutative as in \cite{CHL13}. 
%Equivalently, it is proved in \cite{CHL13} that  $\bL$ is weakly unobstructed  in $\bP^1_{3,3,3}$ for commutative variables $x,y,z$.  $W_{(-1,0)}$ is indeed convergent over $\C$, and hence the Novikov field $\Lambda$ can be replaced by $\C$.
\end{remark}

\section{Deforming the reference Lagrangian} \label{sec:deform333}
%In this section, we compute the noncommutative mirror of $\mathbb{P}^1_{3,3,3}$, the orbifold sphere with three $\Z_3$-singular points.
%$\mathbb{P}^1_{3,3,3}$ can also be defined as a $\mathbb{Z}_3$-quotient of the elliptic curve $E_\tau:=\C / \Z + \tau \Z$ ($\tau = e^{2 \pi i / 3}$). For details on $\mathbb{P}^1_{3,3,3}$, we refer readers to \cite{CHL13} where its homological mirror symmetry and the related (upstair) mirror symmetry of the elliptic curve were studied in \cite{CHL13}. 

Let $E$ be an elliptic curve $\mathbb{C}/\mathbb{Z} \oplus \mathbb{Z}\langle e^{2\pi i/3} \rangle$, and $\Z_3$ act on $\mathbb{C}$ (and $E$) by $2 \pi/3$ rotation around the origin.
In the quotient orbifold $E/\Z_3 = \bP^1_{3,3,3}$, we consider the family  of Lagrangians $\bL_t$ for $ -1<t<1$.
A reference Lagrangian $\bL_t$ (for a fixed $t$) will be the projection of vertical line $\widetilde{\bL}_{t,0}$ in $\mathbb{C}$ passing through the point $\left( \frac{-t+1}{4},0 \right)$. (See (a) of Figure \ref{fig:333intronew} for the original Lagrangian $\widetilde{\bL}_{0,0}$ (red dotted line)
and its translated copy $\widetilde{\bL}_{t,0}$ (blue solid line).)
Applying $\Z_3$-action to $\widetilde{\bL}_{t,0}$, we obtain $\widetilde{\bL}_{t,1}, \widetilde{\bL}_{t,2}$ which intersect each other at three points. (Assume $t \neq \pm 1/3$ to avoid triple intersections.) They project down to a single immersed circle $\bL$ in the quotient $E/\Z_3$ with three immersed points Let us denote the associated degree odd immersed Floer generators by $X, Y, Z$. (See (b) of Figure \ref{fig:333intronew}.)

\begin{figure}
\begin{center}
\includegraphics[height=2.2in]{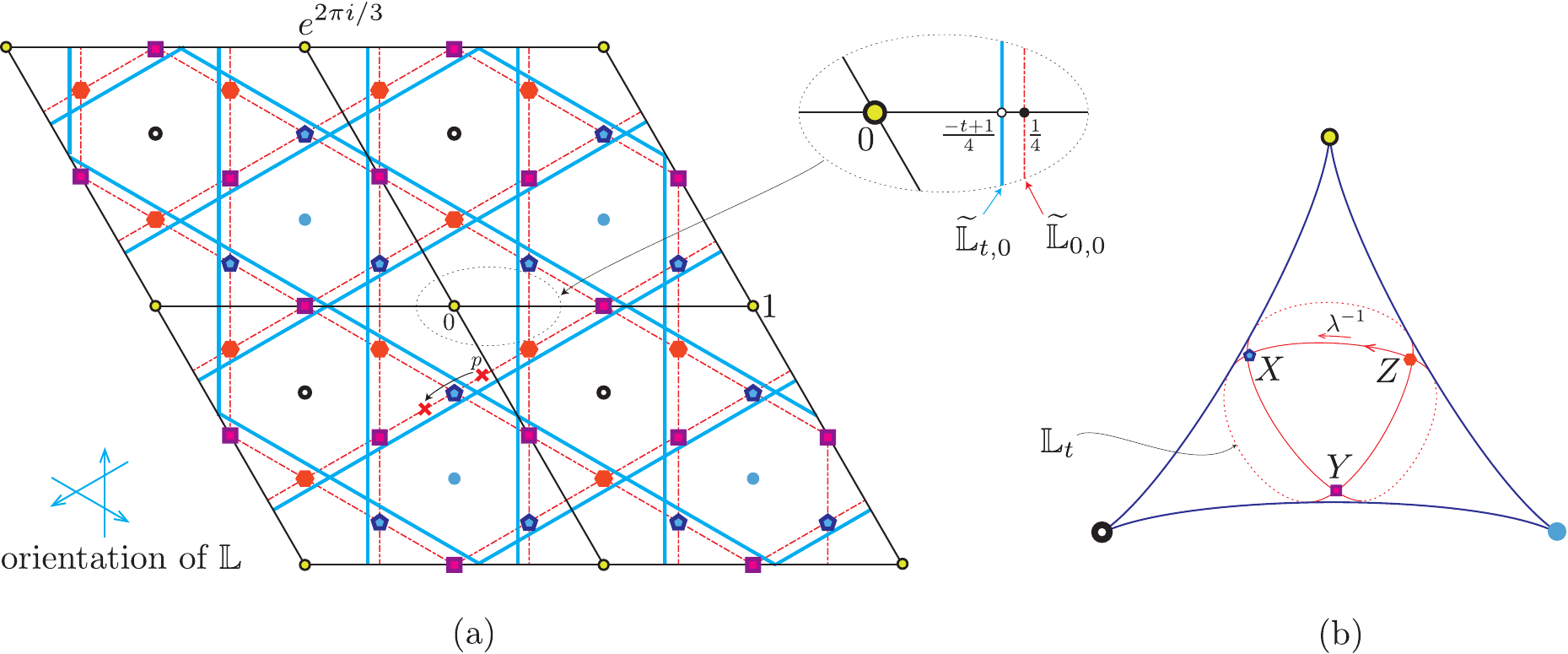}
\caption{(a) $(E, \widetilde{\bL})$ and  (b) $(\mathbb{P}^1_{3,3,3}, \bL)$}\label{fig:333intronew}
\end{center}
\end{figure}

An additional holonomy parameter $\lambda$ is introduced as follows.
We further equip the Lagrangian $\mathbb{L}_t$ with a flat line bundle whose holonomy is $\lambda=e^{2\pi i s} \in U(1)$. We represent this flat bundle by the trivial line bundle on $\mathbb{L}_t$ together with the flat connection whose parallel transport is proportional to the length of a curve. More precisely, if a curve in $\bL$ has reverse orientation to $\bL$ and length $l$, the parallel transport along this curve is given by the multiplication of $\lambda^{\frac{l}{l_0}}=e^{\frac{2 \pi i s l}{l_0}}$ where $l_0$ denotes the total length of $\mathbb{L}_t$.

We will apply our construction in Chapter \ref{sec:nc-deform} and \ref{sec:singlelagfunctor} using each $(\mathbb{L}_t, \lambda)$ as a reference Lagrangian. For this, we use the Morse-Bott type construction of $\AI$-category for surfaces following Seidel \cite{Se}, Sheridan \cite{Sheridan11} and we refer readers to these for more details (see also \cite{CHL13}).
In fact, we make the following changes (from that of \cite{CHL13}) for the computations.
First, we will choose a Morse function on $\mathbb{L}_t$ to have one maximum and one minimum on each segment of $\mathbb{L}_t$ (here, immersed points of $\mathbb{L}_t$ divides $\mathbb{L}_t$ into 6 segments). 
Second, instead of representing the non-standard spin structure of $\mathbb{L}_t$ by a point in  $\mathbb{L}_t$, we will consider 
a flat line bundle with holonomy $(-1)$, which is evenly distributed along $\mathbb{L}_t$. These changes will make the computations to be more symmetric.

Given the Lagrangian $\bL_t$, we study the Maurer-Cartan equation of $b$ where $b$ is given by the formal linear combination of immersed generators $b= xX + yY + zZ$.  Note that in the case of $t=0$, the Figure \ref{fig:333intronew}  becomes symmetric. Namely, the configuration of $\widetilde{\bL}_0$ is invariant under the reflection along the line which passes through three fixed points of $\Z_3$-action.

It was shown in  \cite{CHL13} that if we further set $\lambda =-1$ for $\bL_0$, any such $b$ satisfies the weak Maurer-Cartan
equation (where $x,y,z$ are regarded as commutative variables). 
More precisely, when $t=0$ and $\lambda=-1$ (which is  gauge equivalent to choosing a non-trivial spin structure on $\bL$), the following is proved in \cite{CHL13}.
\begin{theorem}[\cite{CHL13}]\label{thm33313}
When $\bL_0$ is equipped with a non-trivial spin structure, $b = xX +yY+zZ$ is a weak Maurer-Cartan solution for any $x,y,z \in \C$.  
The mirror LG superpotential $W_0$, after a rescaling on $x,y,z$, takes the form
\begin{equation}\label{eqn:mir333t=0}
W_0=x^3+y^3+z^3 - \sigma(q_\orb) xyz
\end{equation}
where $q_\orb=\bT^{\omega( \bP^1_{3,3,3}) }$ is the K\"ahler parameter of $\bP^1_{3,3,3}$ and $\sigma(q_\orb)$ is the inverse mirror map
\begin{equation} \label{eqn:sigma}
\sigma(q_\orb) = -3 - \left( \frac{\eta(q_\orb)}{\eta(q_\orb^9)} \right)^3.
\end{equation}
$\eta$ above denotes the Dedekind eta function.
\end{theorem}
Weakly unobstructedness of $( \bL_0, \lambda=-1)$ is mainly due to the symmetry of $\bL_0$ under the anti-symplectic involution together with certain sign computations.
The potential $W_0$ was computed by counting infinite series of triangles passing through a given point class.
(In fact, with our new formulation of non-standard spin structure,  the coordinate change $y \to \tilde{y}$ in \cite{CHL13} is not necessary. More details will be given below.) 

After deformation of $(\bL_0, \lambda=-1)$, the anti-symplectic involution no longer preserves $\bL_t$ and $b$ does {\em not} satisfy the weak Maurer-Cartan equation for $(\bL_t, \lambda)$ anymore in the sense of \cite{CHL13}.
In this article, we allow $x,y,z$ to be noncommutative formal variables (subject to weak Maurer-Cartan relations) so that $b$  still solves the noncommutative version of weak Maurer-Cartan equation for $(\bL_t, \lambda)$. This enables us to construct a noncommutative Landau-Ginzburg model for any $t, \lambda$.

\section{Sklyanin algebras as Weak Maurer-Cartan relations}\label{subseck:weakMC333hol}
In this section we show that the weak Maurer-Cartan relations of $(\bL_t,\lambda)$ produce Sklyanin algebras,
which are important examples of noncommutative algebraic geometry (see for instance \cite{ATV, AV, AS87}).  It was discussed in \cite{AKO08, AZ} that they are closely related with mirrors of elliptic curves and $\bP^2$.
\begin{definition}[Sklyanin algebra]
The Sklyanin algebra is defined as
$$ \Sky_3 (a,b,c) := \frac{ \Lambda <x,y,z> }{ \big( axy + byz +cz^2, ayz+bzy+cx^2, azx+bxz+cy^2 \big)} $$
for $a,b,c \in \Lambda$.
\end{definition}

%$\mathbb{L}_t$ divides $\mathbb{P}^1_{3,3,3}$ into five polygons, which are two triangles (called {\em minimal triangles} of $\mathbb{L}_t$) of different sizes and three hexagons of the same size.
%Let $\alpha$, $\beta$ and $h$ denote the (symplectic) areas of two minimal triangles of $\mathbb{L}_t$ and the hexagons, respectively (see shaded polygons in Figure \ref{fig:p333deform}).

%For instance, three types of triangles contribute to $h_{\bar{X}}$ which give the terms $yz$, $zy$ and $x^2$, respectively. (Note that $yz$ and $zy$ should be distinguished.) For each type, one can find two sequences of triangles, which can be distinguished by boundary orientations. Negatively oriented triangles (bounding $\mathbb{L}_t$) have bigger areas than those for $\mathbb{L}_0$, and positively oriented ones are smaller than the corresponding triangles for $\mathbb{L}_0$. More careful observation shows that their side lengths are changed by $+3t$ and $-3t$, respectively.

%\begin{defn}
%We define $A(\lambda,t), B(\lambda,t), C(\lambda,t)$ from the relation
%$$h_{\bar{X}} = A(\lambda,t) yz + B(\lambda,t) zy + C(\lambda,t) x^2.$$
%\end{defn}
%\subsubsection{Weak Maurer-Cartan relations} \label{sec:MC333}

Let us now compute weak Maurer-Cartan relations explicitly. Formally,  we need to compute $m_0^b = m_1 (b) + m_2 (b,b) + \cdots$ for $(\mathbb{L}_t,\lambda)$ where  $m_0^b$ takes the form $h_{\bar{X}} \bar{X} + h_{\bar{Y}} \bar{Y} + h_{\bar{Z}} \bar{Z} + W \one_\bL$. 
%Then $h_{\bar{X}} = h_{\bar{Y}} = h_{\bar{Z}} =0$ are the weakly unobstructed relations. 

\begin{prop}
Weak Maurer-Cartan relations for general $\lambda, t$ are given by $h_{\bar{X}} =h_{\bar{Y}} = h_{\bar{Z}} = 0$, where
\begin{equation}\label{eqn:firstcompABC}
h_{\bar{X}} = A yz +B zy + C x^2, 
h_{\bar{Y}} = A zx + B xz +C y^2, 
h_{\bar{Z}} = A x y + B yx + C z^2,
\end{equation}
and
\begin{equation}\label{eqn:333ABCbef}
A = \lambda^{ \frac{1+3t}{2}}  \left(\sum_{k \in \Z} \lambda^{3k} q_0^{(6k+1+3t)^2} \right),
B =  \lambda^{ \frac{1+3t}{2}} \left(\sum_{k \in \Z} \lambda^{3k+2} q_0^{(6k+5+3t)^2} \right), 
C=  \lambda^{ \frac{1+3t}{2}} \left(\sum_{k \in \Z} \lambda^{3k+1} q_0^{(6k+3+3t)^2} \right).
\end{equation}
%\begin{equation}\label{eqn:firstcompABC}
%\begin{array}{lcl}
%h_{\bar{X}} &=& \lambda^{ \frac{1+3t}{2}} \left( \left(\sum_{k \in \Z} \lambda^{3k} q_0^{(6k+1+3t)^2} \right) yz + \left(\sum_{k \in \Z} \lambda^{3k+2} q_0^{(6k+5+3t)^2} \right) zy + \left(\sum_{k \in \Z} \lambda^{3k+1} q_0^{(6k+3+3t)^2} \right) x^2 \right) \\
%h_{\bar{Y}} &=&  \lambda^{ \frac{1+3t}{2}} \left( \left(\sum_{k \in \Z} \lambda^{3k} q_0^{(6k+1+3t)^2} \right) zx + \left(\sum_{k \in \Z} \lambda^{3k+2} q_0^{(6k+5+3t)^2} \right) xz + \left(\sum_{k \in \Z} \lambda^{3k+1} q_0^{(6k+3+3t)^2} \right) y^2 \right)\\
%h_{\bar{Z}} &=& \lambda^{ \frac{1+3t}{2}} \left( \left(\sum_{k \in \Z} \lambda^{3k} q_0^{(6k+1+3t)^2} \right) xy + \left(\sum_{k \in \Z} \lambda^{3k+2} q_0^{(6k+5+3t)^2} \right) yx + \left(\sum_{k \in \Z} \lambda^{3k+1} q_0^{(6k+3+3t)^2} \right) z^2 \right)
%\end{array}
%\end{equation}
Here, $q_0 = \exp(-\textrm{Min})$ where $\textrm{Min}$ is the symplectic area of the minimal non-trivial triangle bounded by $\bL_0$. (Note that we use $e^{-1}$ instead of the Novikov parameter $\bT$.)
\end{prop}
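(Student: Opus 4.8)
The plan is to compute $m_0^b = \sum_{k \ge 0} m_k(b,\dots,b)$ for $b = xX + yY + zZ$ directly and extract its components. Since $x,y,z$ have degree $0$ and $X,Y,Z$ have degree $1$, the element $m_0^b$ has even degree, so it lies in the span of the even generators of $\CF^\bullet(\bL_t \times_\iota \bL_t)$, namely $\one_\bL$ together with the complementary immersed generators $\bar X, \bar Y, \bar Z$; the minimum point $T_\bL$, being of odd degree, cannot occur. Writing $m_0^b = h_{\bar X}\bar X + h_{\bar Y}\bar Y + h_{\bar Z}\bar Z + W\one_\bL$, Definition \ref{def:mir-sp} identifies the weak Maurer-Cartan relations with $h_{\bar X} = h_{\bar Y} = h_{\bar Z} = 0$, so everything comes down to evaluating these three coefficients. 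This is the noncommutative analogue of the computation carried out for $t=0,\lambda=-1$ in Theorem \ref{thm33313}, now performed without the help of the anti-symplectic involution.

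First I would reduce to a combinatorial count. Since $\bL_t$ is a union of three straight geodesics, each $m_k$-operation is computed by counting rigid immersed holomorphic polygons bounded by $\bL_t$ (as in \cite{Se,AJ,CHL13}), and such polygons lift to honest immersed polygons in the universal cover $\C$ whose sides lie on the three families of parallel lines covering $\bL_t$. The coefficient $h_{\bar X}$ counts the polygons whose output corner is of type $\bar X$ and whose remaining corners are inputs of type $X$, $Y$ or $Z$. There are no bigons (two non-parallel lines meet only once), and a grading/rigidity count rules out polygons with more than three sides, so only triangles with exactly two input corners contribute. A triangle with output corner $\bar X$ has its two adjacent sides on the two lines meeting at the chosen lift of the immersed point $X$ and its third side on a translate of the third family; these translates form a $\Z$-indexed sequence of nested triangles. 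Sorting by the corner word read along the boundary — $(Y,Z)$, $(Z,Y)$ or $(X,X)$ — one obtains three $\Z$-families, giving the shape $h_{\bar X} = A\,yz + B\,zy + C\,x^2$ of \eqref{eqn:firstcompABC}, with $A$, $B$, $C$ the generating series of these families.

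Next I would compute the weight of each triangle $T$, which is $\pm\,\bT^{\omega(T)}$ times the holonomy of the flat $U(1)$-connection around $\partial T$ (with the length-proportional convention fixed in Section \ref{sec:deform333}). The area $\omega(T)$ is pure planar geometry: the line $\widetilde{\bL}_{t,0}$ passes through $\big(\tfrac{1-t}{4},0\big)$, so varying $t$ shifts every triangle's linear size affinely, and within a family the linear size runs through an arithmetic progression; since the area of a triangle of fixed shape is quadratic in its linear size, $\omega(T)$ for the $k$-th member is $(6k + c_0 + 3t)^2\,\mathrm{Min}$ with $c_0 \in \{1,5,3\}$ for the three families, which gives the exponents of $q_0 = e^{-\mathrm{Min}}$ in \eqref{eqn:333ABCbef} (the exponents grow quadratically, so each of $A,B,C$ converges, as a Novikov-valued or theta-type series). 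The holonomy weight is linear in the linear size — half the perimeter is traversed against the orientation of $\bL_t$ — producing the factors $\lambda^{3k}$, $\lambda^{3k+2}$, $\lambda^{3k+1}$ together with the common prefactor $\lambda^{(1+3t)/2}$. Finally, the $\Z_3$-rotation symmetry of $\bL_t$ cyclically permutes the three line families and the generators $X,Y,Z$ and $\bar X,\bar Y,\bar Z$ simultaneously, so applying it to the formula for $h_{\bar X}$ yields those for $h_{\bar Y}$ and $h_{\bar Z}$ with the same $A$, $B$, $C$.

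I expect the main obstacle to be the two bookkeeping steps hidden inside the enumeration: showing the triangle list is exhaustive — that no higher polygons (hexagons and the like) or degenerate configurations contribute, which needs a careful index/convexity argument in the style of \cite{Se,AJ} — and tracking the signs coming from the spin structure and orientation conventions, as in \cite{CHL13}, so that the signed count assembles into exactly $A$, $B$, $C$ rather than sign-twisted variants. Once the enumeration and signs are pinned down, the area and holonomy exponents are a routine computation in the plane.
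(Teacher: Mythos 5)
Your plan is essentially the paper's own argument: the paper verifies this proposition by the same direct enumeration of holomorphic triangles contributing to each monomial of $m_0^b$, with areas quadratic in the size parameter giving the exponents $(6k+c+3t)^2$, holonomy factors proportional to boundary length whose sign is determined by the triangle's orientation relative to $\bL_t$, and the two $k\geq 0$ families merging into a single sum over $k\in\Z$ exactly as in \eqref{eqn:Ztrick}. The paper likewise leaves the exhaustiveness of the triangle count and the sign bookkeeping as a ``direct but tedious calculation,'' so your sketch (including the $\Z_3$-symmetry shortcut for deducing $h_{\bar{Y}}$ and $h_{\bar{Z}}$ from $h_{\bar{X}}$) follows the same route at a comparable level of detail.
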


The above proposition can be checked by direct but tedious calculations, and we  provide a brief sketch and omit the details.
For example,  the coefficient of $yz$ in $h_{\bar{X}}$ comes from the contribution of $m_2(zZ,yY)$ with an output $\bar{X}$, which means that we count triangles with corners $Z,Y,X$ in a counter-clockwise order, with weights given by its areas and flat connections.
The contribution comes in two (infinite) sums  of triangles (which is combined nicely)
%\begin{equation}\label{eqn:Ztrick}
%\sum_{k \geq 0} \lambda^{-3k-3} q_0^{(6k+5-3t)^2}+ \sum_{k \geq 0} \lambda^{3k} q_0^{(6k+1+3t)^2} = \sum_{k \in \Z} \lambda^{3k} q_0^{(6k+1+3t)^2}.
%\end{equation}

\begin{equation}\label{eqn:Ztrick}
\sum_{k \geq 0} \left(\lambda^{-1}\right)^{3k+\frac{5}{2}-\frac{3t}{2}} q_0^{(6k+5-3t)^2}+ \sum_{k \geq 0} \lambda^{3k+\frac{1}{2} + \frac{3t}{2} } q_0^{(6k+1+3t)^2} = \sum_{k \in \Z} \lambda^{3k+\frac{1+3t}{2}} q_0^{(6k+1+3t)^2}.
\end{equation}
Observe that the (exponent of) holonomy part is proportional to the boundary length of a triangle. The reason for having $\lambda^{-1}$ in the first term but $\lambda^{+1}$ in the second is because the orientations of triangles in two sequences are opposite to each other. 

The triangle for the $k$-th term in the first summand of \eqref{eqn:Ztrick} is shown in Figure \ref{fig:333wmck}. As the boundary length of the triangle is $\left(3k + \frac{2}{5} - \frac{3t}{2} \right)l_0$ ($l_0=$ total length of $\bL_t$) and the holonomy effect is proportional to length, we obtain the first factor $\left(\lambda^{-1}\right)^{3k + \frac{2}{5}- \frac{3t}{2} }$. Also, it is easy to see from the ratio between edge lengths that the area of this triangle is $(6k+5-3t)^2$-times the area of the minimal $xyz$-triangle for $\bL_0$, and so comes the second factor $q_0^{(6k+5-3t)^2}$.

\begin{figure}[h]
\begin{center}
\includegraphics[height=1.5in]{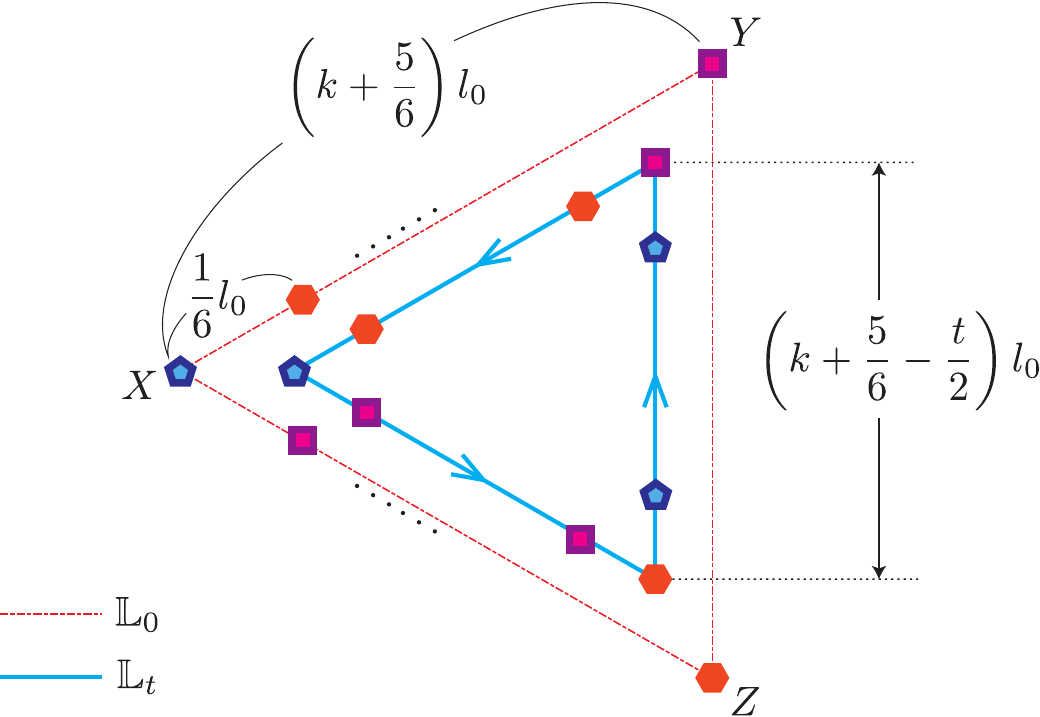}
\caption{A triangle contributing to $m_2 (zZ,yY)$}\label{fig:333wmck}
\end{center}
\end{figure}

%Set $A,B,C$ to be the coefficient power series in \eqref{eqn:firstcompABC}. i.e.
%%
%%Now set $\tilde{y}:=\lambda \cdot y$ and $\tilde{Y} = \lambda \bar{Y}$.  Then  $h_{\tilde{Y}}=\lambda^{-1} h_{\bar{Y}}$, and hence
%\begin{equation*}
%\begin{array}{lcl}
%h_{\bar{X}} &=& A yz +B zy + C x^2 \\
%h_{\bar{Y}} &=& A zx + B xz +C y^2\\
%h_{\bar{Z}} &=& A x y + B yx + C z^2.
%\end{array}
%\end{equation*}
As dividing $A,B$,$C$ by a common factor does not affect the ideal generated by weak Maurer-Cartan relations, we set 
\begin{equation}\label{eqn:333abcre}
a= q_0^{-(3t+1)^2} \lambda^{-\frac{1+3t}{2}} \cdot A, \quad b= q_0^{-(3t+1)^2} \lambda^{-\frac{1+3t}{2}} \cdot B,\quad c=q_0^{-(3t+1)^2} \lambda^{-\frac{1+3t}{2}} \cdot C
\end{equation}
which are indeed theta functions in a holomorphic variable $u$ defined as follows.

As in Theorem \ref{thm33313} we put $q_{\orb}=q_0^8$ which is the exponentiated symplectic area of $\mathbb{P}^1_{3,3,3}$ (here we put $\bT = e^{-1}$), and $e^{2 \pi i \tau}:=q_{\orb}^3$, where the right hand side is the exponentiated symplectic area of the elliptic curve $E$. (The dependence of $\tau$ on $q_{\orb}$ is the inverse mirror map.) We write $\lambda = e^{2 \pi i s}$ (recall $\lambda \in U(1)$), and define 
\begin{equation} \label{eq:w}
u:= s+ \tau\frac{t}{2} + \frac{\tau}{6}.
\end{equation}
Since $s$ and $t$ have periodicity $1$ and $2$ respectively, $u$ can be thought of as a (holomorphic) coordinate function on $E_\tau:= \mathbb{C} / \mathbb{Z} \oplus \mathbb{Z} \langle \tau \rangle$.

\begin{prop}
%In terms of the coordinates $w$ on $E_\tau$, 
The function $a,b,c$ in Equation \eqref{eqn:333abcre} are holomorphic functions in the coordinate $u$ on $E_\tau$, and can be expressed as
\begin{equation}\label{eqn:thetaabc}
%\begin{array}{lclcl}
%a(w) &=&  \sum_{k\in \Z}  e^{\pi i (3 \tau) k^2}  e^{2\pi i  k (3w)} &=& \theta \left[0,0]\right (3w,3\tau)\\
%b(w) &=& \sum_{k\in \Z}  e^{\pi i (3 \tau) (k+\frac{2}{3})^2}  e^{2\pi i  (k+\frac{2}{3}) (3w)}&=&\theta \left[\frac{2}{3},0\right] (3w,3\tau)  \\
%c(w) &=&  \sum_{k\in \Z}  e^{\pi i (3 \tau) (k+\frac{1}{3})^2}  e^{2\pi i  (k+\frac{1}{3}) (3w)} &=& \theta \left[\frac{1}{3},0\right] (3w,3\tau).
%\end{array}
a(u) = \Theta_0 (u, \tau)_3, \quad b(u) = \Theta_2 (u, \tau)_3, \quad c(u) = \Theta_1 (u, \tau)_3.
\end{equation}
In particular $a,b,c$ are convergent over $\C$. Here, $\Theta_j (u, \tau)_s := \theta \left[ \frac{j}{s}, 0 \right] ( s u, s \tau)$ (see appendix for more details).
\end{prop}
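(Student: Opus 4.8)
The plan is to show that the three series $A,B,C$ in \eqref{eqn:333ABCbef}, after dividing out the common factor $q_0^{(3t+1)^2}\lambda^{(1+3t)/2}$ as in \eqref{eqn:333abcre}, are precisely the classical theta constants $\Theta_0,\Theta_2,\Theta_1$ of order $3$ evaluated at the point $u$ defined in \eqref{eq:w}. First I would substitute the identifications $\bT = e^{-1}$, $q_{\orb} = q_0^8$, $e^{2\pi i\tau} = q_{\orb}^3 = q_0^{24}$, and $\lambda = e^{2\pi i s}$ into the series for $A$. A single term is $\lambda^{3k}q_0^{(6k+1+3t)^2}$ up to the overall prefactor; expanding the square gives $(6k+1+3t)^2 = 36k^2 + 12k(1+3t) + (1+3t)^2$, so $q_0^{(6k+1+3t)^2} = q_0^{36k^2}\,q_0^{12k(1+3t)}\,q_0^{(1+3t)^2}$. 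The factor $q_0^{(1+3t)^2}$ is exactly cancelled by the normalization in \eqref{eqn:333abcre}, and $q_0^{36k^2} = e^{-36k^2} = e^{2\pi i\tau\cdot\frac{36k^2}{24}\cdot(-\frac{1}{2\pi i}\cdot 2\pi i)}$ — more cleanly, $q_0^{36k^2} = (q_0^{24})^{3k^2/2}= e^{2\pi i\tau\cdot \frac{3}{2}k^2}= e^{\pi i (3\tau) k^2}$, matching the quadratic-in-$k$ factor $e^{\pi i (s\tau) k^2}$ in a weight-$s$ theta function with $s=3$. Similarly $\lambda^{3k}q_0^{12k(1+3t)} = e^{2\pi i k(3s)} e^{2\pi i\tau k \cdot \frac{12(1+3t)}{24}} = e^{2\pi i k\cdot 3(s + \tau\frac{1+3t}{6})} = e^{2\pi i (3k)\cdot 3u/3}\cdot(\text{check})$; the point $u = s + \tau\frac{t}{2} + \frac{\tau}{6} = s + \tau\frac{1+3t}{6}$ is rigged exactly so that the linear-in-$k$ exponent becomes $e^{2\pi i k\cdot 3u}$. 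Thus $A/(\text{prefactor}) = \sum_k e^{\pi i(3\tau)k^2}e^{2\pi i(3u)k} = \theta(3u,3\tau) = \Theta_0(u,\tau)_3$, using the definition $\Theta_j(u,\tau)_s = \theta\big[\tfrac{j}{s},0\big](su,s\tau)$ recalled in the statement.

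Next I would run the identical computation for $B$ and $C$. For $C$, the $k$-th term carries $\lambda^{3k+1}q_0^{(6k+3+3t)^2}$; writing $6k+3+3t = 6(k + \tfrac12) + 3t$ (equivalently shifting the summation index) produces a theta function with a half-integral characteristic in the first slot, and after the same expansion and cancellation of $q_0^{(3t+1)^2}$ — note $(6k+3+3t)^2 - (6k+1+3t)^2$ contributes only an overall $k$-independent and a linear shift — one gets $\sum_k e^{\pi i(3\tau)(k+\frac13)^2}e^{2\pi i(3u)(k+\frac13)}$ up to a unit, which is $\theta\big[\tfrac{1}{3},0\big](3u,3\tau) = \Theta_1(u,\tau)_3$. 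For $B$, the term $\lambda^{3k+2}q_0^{(6k+5+3t)^2}$ with $6k+5+3t = 6(k+\tfrac23)+3t$ gives the characteristic $\tfrac{2}{3}$, hence $\Theta_2(u,\tau)_3$. The crucial bookkeeping is to verify that the \emph{same} normalization factor $q_0^{-(3t+1)^2}\lambda^{-(1+3t)/2}$ simultaneously clears all three series — this works because in each of $A,B,C$ the ``central'' exponent is $(1+3t)^2$, $(5+3t)^2$, $(3+3t)^2$, and the differences $(5+3t)^2-(1+3t)^2 = 8(3+3t)$ and $(3+3t)^2-(1+3t)^2 = 4(2+3t)$ are absorbed into the theta characteristics and the linear term in $u$, not into the normalization; I would check this arithmetic carefully since it is where an off-by-a-unit error would hide.

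Finally, convergence over $\C$ is automatic once the identification with theta functions is established: for $\lambda\in U(1)$ we have $|s|$ bounded, and $\mathrm{Im}\,\tau > 0$ since $q_{\orb}\in(0,1)$ (the K\"ahler parameter has positive symplectic area), so $u$ lies in the fundamental domain and $\theta(3u,3\tau)$ converges absolutely; the factor $q_0^{(3t+1)^2}$ is a genuine positive real number and the only role of the Novikov variable $\bT$ was as a formal placeholder that we have now specialized to $e^{-1}$. Holomorphicity in $u$ follows because each $a,b,c$ is, by the above, a uniformly convergent series of holomorphic functions of $u$ on compact subsets of $E_\tau$; and well-definedness on $E_\tau = \C/(\Z \oplus \Z\langle\tau\rangle)$ follows from the standard quasi-periodicity of $\theta\big[\tfrac{j}{s},0\big](su,s\tau)$ under $u \mapsto u+1$ and $u\mapsto u+\tau$ (the quasi-periodicity factors are common to $a,b,c$ and hence do not affect the projective point $(a:b:c)$, which is what is used in part (3) of Theorem \ref{thm:mainthm333}).

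\textbf{Main obstacle.} The genuine difficulty is not conceptual but is the precise matching of exponents and characteristics: one must confirm that the shift $u = s + \tau\frac{t}{2}+\frac{\tau}{6}$ — rather than some other affine function of $s,t,\tau$ — is exactly what converts $\sum_k \lambda^{3k}q_0^{(6k+1+3t)^2}$ (and its two siblings) into honest order-$3$ theta constants \emph{with the correct characteristics} $0,\tfrac13,\tfrac23$ and with a \emph{single} common normalizing factor pulled out. I expect this to require carefully completing the square $36k^2 + 12k(1+3t) = 36\big(k + \frac{1+3t}{6}\big)^2 - (1+3t)^2$ and tracking how the $-(1+3t)^2$ term, the $\lambda$-powers, and the index shifts in $B,C$ recombine; it is a finite but error-prone computation, and it is the only place where the argument could go wrong.
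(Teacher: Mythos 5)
Your proposal is correct and takes essentially the same route as the paper's own proof in Appendix \ref{appsubsec:333theta}: write each exponent as $(6(k+\tfrac{j}{3})+(1+3t))^2$, pair the index shift with $\lambda^{3k+j}=\lambda^{3(k+j/3)}$, pull out the common factor $q_0^{(1+3t)^2}\lambda^{(1+3t)/2}$, and use $q_0^{36}=e^{\pi i (3\tau)}$, $\lambda^3(q_0^{36})^{t+\frac13}=e^{2\pi i (3u)}$ to read off $\theta\left[\tfrac{j}{3},0\right](3u,3\tau)$ with characteristics $0,\tfrac13,\tfrac23$. Only note the small arithmetic slips in your written decompositions for $B$ and $C$ (e.g.\ $6k+5+3t=6(k+\tfrac23)+1+3t$, not $6(k+\tfrac23)+3t$); with the correct split the computation closes exactly, with no extra unit factor, and your claimed characteristics and convergence argument stand.
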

\begin{proof}
See Appendix \ref{appsubsec:333theta}.
\end{proof}
Such theta functions were also obtained in \cite{AZ} in a different setting.

Note that a priori the weakly unobstructed relations from Lagrangian Floer theory are only defined around the large volume limit $\tau \to i \infty$.  It follows from the above proposition that they are well-defined over the global moduli.
We will prove that $(a:b:c)$ satisfies the mirror elliptic curve equation $W_0=0$ later.

%\begin{equation}\label{eqn:333abc}
%\begin{array}{lcl}
%a(\lambda,t)&:=& q_0^{-(3t+1)^2} \lambda^{-\frac{1+3t}{2}} A(\lambda,t) \\
%b(\lambda,t)&:=&q_0^{-(3t+1)^2}  \lambda^{-\frac{1+3t}{2}}  B(\lambda,t) = \sum_{k\in \Z}  (q_0^{36})^{(k+\frac{2}{3})^2}  \left( \lambda^3 (q_0^{36})^{(t+\frac{1}{3})} \right)^{ (k + \frac{2}{3})}  \\
%c(\lambda,t)&:=& q_0^{-(3t+1)^2}  \lambda^{-\frac{1+3t}{2}}  C(\lambda,t) = \sum_{k\in \Z}  (q_0^{36})^{(k+\frac{1}{3})^2}  \left( \lambda^3 (q_0^{36})^{(t+ \frac{1}{3})} \right)^{(k + \frac{1}{3})} .
%\end{array}
%\end{equation}
%which will be expressed as theta functions soon.  

Consequently, the mirror noncommutative algebra is the Sklyanin algebra with coefficients $a,b,c$ and, 
at $u_0:=\frac{1}{2} + \frac{\tau}{6}$ (i.e. $t=0$ and $\lambda=-1$) 
%corresponding to the immersed Lagrangian in Theorem \ref{thm33313}, 
the algebra recovers the usual polynomial algebra, since $a(u_0) = - b(u_0)$ and $c (u_0)=0$. This finishes the proof of (1) of Theorem \ref{thm:mainthm333}.

We finish the section by explaining briefly a geometric meaning of the weal Maurer-Cartan relations.
Intuitively speaking, weak Maurer-Cartan relations are needed to ensure the well-definedness of the potential $W$.
Namely, if $W$ is given by the counting of polygons passing through a generic point $p$, then we want the counting
to be independent of the position of $p$. In actual computation of $W$ (Section \ref{subsec:wspot333}), we will use a different type of representatives of a generic point, but here we temporarily choose a point $p$ as in Figure \ref{fig:333intronew} to illustrate the geometric meaning of weak Maurer-Cartan relation.

%We choose a point $p$ on $\bL_0$ (red dotted line in Figure \ref{fig:333intronew} (a)).
When $p$ moves from one segment to the other along a straight line, passing through a corner labelled $X$, the set of polygons
that intersect $p$ before, get changed into another family of polygons afterward and their difference gives the coefficient $h_{\bar{X}}$ below. For $\bL_0$, these two sets are related by anti-symplectic involution at $X$, and actually the signed counting remains invariant. But  if we consider $\bL_t$ (blue lines), and move $p$ as before, it is easy to see that the corresponding two set of polygons cannot be identified. For example, two small blue triangles sharing a vertex $X$, have different areas, and hence cannot be identified.
But in a noncommutative setting,  the total difference across $X$, given by  $h_{\bar{X}}$, will define a weak Maurer-Cartan relation which is set to be zero after taking quotient.

\section{Noncommutative potential}\label{subsec:wspot333}

We next compute the (noncommutative) potential function $W_{(\lambda,t)}$ for $(\mathbb{L}_t,\lambda)$,
by suitable counts of (holomorphic) triangles. From the weak Maurer-Cartan equation, the potential function $W_{(\lambda,t)}$
can be read off by intersecting the contributing holomorphic polygons (in this case triangles) with the point class.

 We will pick a representative of the point class of $\mathbb{L}_t$ and we count the number of triangles passing through this representative (hence one triangle can contribute several times depending on the position of intersection).
Note that the configuration of Lagrangians in the universal cover changes for various choice of $t$'s (even topologically).
Let us first assume $-\frac{1}{3}< t <\frac{1}{3}$ so that the topological type of $\bL_t$ remains the same as in Figure \ref{fig:333intronew}.

To represent the point class, we put a marked point on each minimal segment of $\mathbb{L}_t$ bounded by two self intersection points. Since we have six such segments, each marked point represents $1/6$ of the point class, i.e. $\frac{1}{6} PD(\be_\bL)$. Thus total contribution of a triangle is $\frac{1}{6}$ of the number of marked points, or equivalently, $\frac{1}{6}$ of the number of segments in the boundary of the triangle. See Figure \ref{fig:onesixth}.

\begin{figure}[h]
\begin{center}
\includegraphics[height=1.5in]{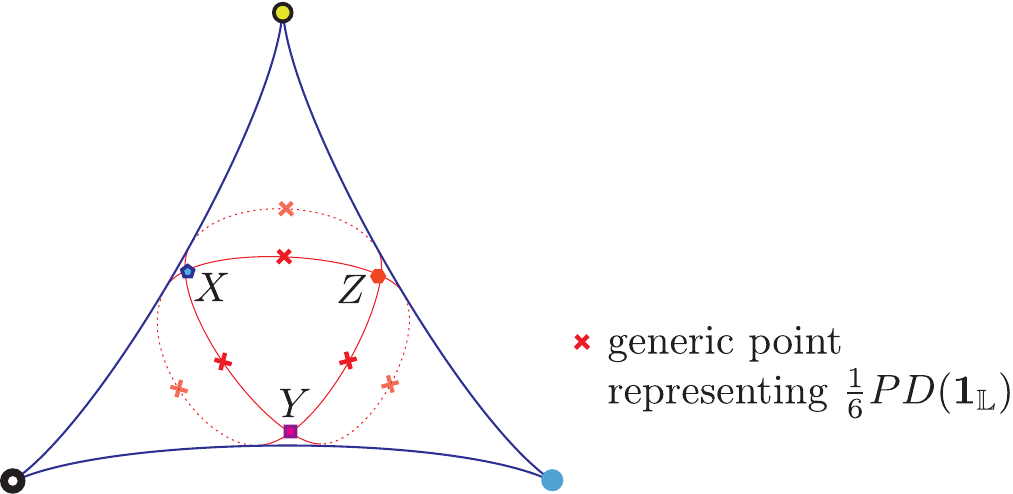}
\caption{Each time a polygon passes through $\color{red} \times$, it constributes by $\frac{1}{6}$.}\label{fig:onesixth}
\end{center}
\end{figure}

\begin{remark}
The choice of the Morse function (hence the marked points for the point class) here is different from that of \cite{CHL13}, but the resulting potential for the case of $t=0$ and $\lambda=-1$  is the same as the one in \cite{CHL13}.
\end{remark}

Direct counting gives the following $x^3,y^3,z^3$-terms in the potential. 
\begin{equation}\label{eqn:potxcubeterm}
- \lambda^{\frac{1+3t}{2}}  \left( \sum_{k \in \Z} \left(k + \frac{1}{2}\right) \lambda^{3k+1} q_0^{(6k+3+3t)^2} \right) (x^3 +y^3+z^3) 
=  -  \dfrac{q_0^{(3t+1)^2} \lambda^{\frac{1+3t}{2}}}{6\pi i}  \left(  c'(u)  +  \pi i c(u) \right) (x^3 +y^3+z^3).
\end{equation}
In actual counting, each coefficient is given as two sums, both of which are taken over $\{ k \geq 0 \}$ contributed by two sequences of triangles. We apply the same trick as in \eqref{eqn:Ztrick} to reduce to the above expression. 

For example, $x^3$-term comes from the coefficients of $\be_\bL$ in $m_3 (xX,xX,xX)$, and the triangles with all corners labelled by $X$ contribute to it. Therefore, the counting is similar to that for $C$ \eqref{eqn:333ABCbef}, but the only difference is that we count the number of marked points (representing $1/6 PD(\be_\bL)$) this time. 
The $k$-th $x^3$-triangle has $6k+3$ minimal segments (with end points at self intersections) in one edge whose counting gives rise to $(6k+3) (\frac{1}{6} \be_\bL) = (k+ \frac{1}{2}) \be_\bL$. There are three such edges, and hence, a priori, we get $(3k+ \frac{3}{2}) \be_\bL$. However, each $x^3$-triangle has $\Z_3$-rotation symmetry, so in view of the orbifold $\mathbb{P}^1_{3,3,3}$, the total contribution should be $\frac{1}{3} \times (3k+ \frac{3}{2}) \be_\bL = (k+ \frac{1}{2}) \be_\bL$.

Observe that each summand in \eqref{eqn:potxcubeterm} is the same as the summand in $C$ \eqref{eqn:333ABCbef} except $\left( k +\frac{1}{2} \right)$ factor in the coefficient. This is why the derivative $c'(u)$ appears on the right hand side of \eqref{eqn:potxcubeterm}.

%\begin{equation*}
%\begin{array}{c}
%\sum_{k \geq 0 } (-1)^{k+1} \left( (k+1)  q_{\alpha}^{s(3k+2)} q_{h}^{s(3k+1)}  q_{\beta}^{s(3k)}  + k q_{\alpha}^{s(3k)} q_{h}^{s(3k+1)} q_{\beta}^{s(3k+2) }   \right)  (x^3 - y^3 + z^3) \\
%= \sum_{k \geq 0 } ((k+1) \mathfrak{d}(3k+1) + k \mathfrak{i} (3k+1)) (x^3 - y^3 + z^3).
%\end{array}
%\end{equation*}

%Terms obtained as cyclic permutations of the word $xyz$ are more delicate to handle since we need to separate the counting of {\color{red}$\times$}'s for each edge. 

Likewise, the triangles with $X$, $Y$ and $Z$ as corners in counterclockwise order contribute
\begin{equation*}
-  \lambda^{\frac{1+3t}{2}} \sum_{k \in \Z}  \left( k+\frac{5}{6} \right) \lambda^{3k+2} q_0^{(6k+5+3t)^2} =- \dfrac{q_0^{(3t+1)^2} \lambda^{\frac{1+3t}{2}}}{6\pi i}  \left(  b'(u)  +  \pi i b(u) \right)
%, - \sum_{k \in \Z}  (k+1) \lambda^{3k+2} q_0^{(6k+5+3t)^2}, - \sum_{k \in \Z}  (k+1) \lambda^{3k+2} q_0^{(6k+5+3t)^2}
\end{equation*}
to the coefficients of $zyx, xzy, yxz$ respectively.
%%OLDOLD%%
%\begin{equation*}
%\begin{array}{lcl}
%zyx 
%&\leftrightarrow&  \sum_{k \geq 0} (-1)^{k+1} \left( (k+1) q_{\alpha}^{s(3k+1)} q_h^{s(3k)} q_\beta^{s(3k-1)}  + k q_\alpha^{s(3k+1)} q_h^{s(3k+2)} q_\beta^{s(3k+3)}  \right) \\
%&=&  - \sum_{k \geq 0} (  (k+1) \mathfrak{d} (3k) + k \mathfrak{i} (3k+ 2) )\\
%&=&  - \sum_{k \geq 0} (  k \mathfrak{d} (3k) + (k+1) \mathfrak{i} (3k+ 2) ) - \overbrace{\sum_{k \geq 0} (   \mathfrak{d} (3k) -  \mathfrak{i} (3k+ 2) )}^a \\
%xzy&\leftrightarrow&  - \sum_{k \geq 0} (  k \mathfrak{d} (3k) + (k+1) \mathfrak{i} (3k+ 2) )\\
%yxz &\leftrightarrow&  - \sum_{k \geq 0} (  k \mathfrak{d} (3k) + (k+1) \mathfrak{i} (3k+ 2) )
%\end{array}
%\end{equation*}
(Recall that $m_3 (xX, yY, zZ) = zyx\cdot m_3(X,Y,Z)$ according to our convention. See \eqref{eqn:pulloutrule}.) %Here, ``$\star \leftrightarrow \diamond$" represents that the coefficient of $\star$ in $W$ is $\diamond$.

On the other hand, triangles with $X$, $Y$ and $Z$ as corners in clockwise order contribute
\begin{equation*}
-\lambda^{\frac{1+3t}{2}}  \sum_{k \in \Z} \left( k  +\frac{1}{6} \right) \lambda^{3k} q_0^{(6k+1+3t)^2}= -  \dfrac{q_0^{(3t+1)^2} \lambda^{\frac{1+3t}{2}}}{6\pi i}  \left(  a'(u)  +  \pi i a(u) \right)
% - \sum_{k \in \Z}  k \lambda^{3k} q_0^{(6k+1+3t)^2}, - \sum_{k \in \Z}  k \lambda^{3k} q_0^{(6k+1+3t)^2} \\
\end{equation*}
to the coefficients of $xyz, zxy, yzx$ respectively.

If $t \leq \frac{-1}{3}$ or $t \geq \frac{1}{3}$, topological type of $\bL$ changes, but we still put one marked point at each minimal segment bounded by self intersection points, and each marked point represents $\frac{1}{N} \be_\bL$ where $N$ is the total number of minimal segments that form $\bL_{t}$. It is easy to check that $N=3$ for $t=\pm \frac{1}{3}$ and $N=6$ otherwise.

In each case, one can obtain the the following three coefficients for $W_{\lambda,t}$ by counting triangles in a similar fashion:
$$-  \dfrac{q_0^{(3t+1)^2} \lambda^{\frac{1+3t}{2}}}{6\pi i}  \left(  a'(u)  +  r \pi i a(u) \right) ,\quad  - \dfrac{q_0^{(3t+1)^2} \lambda^{\frac{1+3t}{2}}}{6\pi i}  \left(  b'(u)  +  r \pi i b(u) \right),\quad- \dfrac{q_0^{(3t+1)^2} \lambda^{\frac{1+3t}{2}}}{6\pi i}  \left(  c'(u)  +  r \pi i c(u) \right)$$
where
\begin{equation*}
r= \left\{
\begin{array}{ccc}
-1 &\mbox{for} &-1<t< -\frac{1}{3}\\
0 &\mbox{for} &  t=-\frac{1}{3} \\
1 &\mbox{for} & -\frac{1}{3} < t < \frac{1}{3} \\
2 &\mbox{for} & t= \frac{1}{3} \\
3 &\mbox{for} & \frac{1}{3} < t < 1
\end{array}
\right. .
\end{equation*}

By using weak Maurer-Cartan relations,
%and simplified expressions of $a,b,c$ in \eqref{eqn:thetaabc}, 
we obtain the following formula which is valid for all $t$ (i.e. the formula is valid for any  topological type of $\bL_t$).

\begin{prop}\label{eq:nclgw333}
The noncommutative LG superpotential $W_{(\lambda,t)} \in \Sky(a(\lambda,t),b(\lambda,t),c(\lambda,t))$ for $(\mathbb{L}_t,\lambda)$ is given as 
\begin{equation}\label{eq:noncomm333pot}
\begin{array}{lcl}
W_{(\lambda,t)} &=& - \frac{ q_0^{(3t+1)^2} \lambda^{\frac{1+3t}{2}}}{6 \pi i} \left( a'(u) (xyz + zxy + yzx) +   b'(u)  (zyx + xzy + yxz) + c'(u) (x^3 + y^3 + z^3) \right) .
%W_{(\lambda,t)} &=& e^{2 \pi i w} c'(w) (x^3 + \tilde{y}^3 + z^3)  \\
%&&+ (e^{2 \pi i w} a'(w) - \pi i a(w)) (x\tilde{y}z + zx\tilde{y} + \tilde{y}zx) + 3 \pi i a(w) x\tilde{y} z\\
%&&+ (e^{2 \pi i w} b'(w) + \pi i b(w)) (z\tilde{y} x + xz \tilde{y} + \tilde{y}xz) - 3 \pi i b(w) z\tilde{y}x.
\end{array}
\end{equation}
%(Recall $\tilde{y} = \lambda \cdot y$.)  
In particular $W_{(\lambda,t)}$ is convergent over $\C$.
\end{prop}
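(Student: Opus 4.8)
The plan is to assemble the superpotential $W_{(\lambda,t)}$ directly from the three families of contributing triangles computed in this subsection, and then to use the weak Maurer-Cartan relations $h_{\bar X}=h_{\bar Y}=h_{\bar Z}=0$ to rewrite the expression in a form that is manifestly independent of the topological type of $\bL_t$. First I would organize the count: marked points sit on each of the $N$ minimal boundary segments of $\bL_t$, so a triangle whose boundary consists of $\ell$ minimal segments contributes $\frac{\ell}{N}\,\be_\bL$ to the point class. This produces a coefficient of $x^3+y^3+z^3$ equal to $-\frac{q_0^{(3t+1)^2}\lambda^{(1+3t)/2}}{6\pi i}\bigl(c'(u)+r\pi i\, c(u)\bigr)$, and similarly coefficients $-\frac{q_0^{(3t+1)^2}\lambda^{(1+3t)/2}}{6\pi i}\bigl(a'(u)+r\pi i\, a(u)\bigr)$ and $-\frac{q_0^{(3t+1)^2}\lambda^{(1+3t)/2}}{6\pi i}\bigl(b'(u)+r\pi i\, b(u)\bigr)$ for the two cyclic families of $XYZ$-triangles, with $r$ the integer from the table. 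The differentiation in $u$ is exactly where the factor $(k+\text{const})$ in the disc count gets converted into a $u$-derivative of the relevant theta series, using the chain rule through \eqref{eq:w} together with the $\Z$-summation trick \eqref{eqn:Ztrick} to package the two $\{k\ge 0\}$ sums into a single sum over $k\in\Z$.

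Next I would eliminate the $r$-dependence. The point is that the three families of quadratic monomials $(yz,zy,x^2)$, $(zx,xz,y^2)$, $(xy,yx,z^2)$ appearing in $h_{\bar X},h_{\bar Y},h_{\bar Z}$ are, up to the common normalizing factor $q_0^{(3t+1)^2}\lambda^{(1+3t)/2}$, exactly $(a,b,c)$ times the cyclic combinations of the cubic monomials that show up in $W_{(\lambda,t)}$ — concretely $x\cdot h_{\bar Y}$, $y\cdot h_{\bar Z}$, $z\cdot h_{\bar X}$ (and their mirror-orientation analogues) reproduce $a(u)(xyz+zxy+yzx)+b(u)(zyx+xzy+yxz)+c(u)(x^3+y^3+z^3)$ after summing. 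Hence modulo the ideal generated by the weak Maurer-Cartan relations, the term $r\pi i\bigl(a(u)(\cdots)+b(u)(\cdots)+c(u)(\cdots)\bigr)$ in the naive potential is zero in $\Sky_3(a,b,c)$, and only the $a'(u),b'(u),c'(u)$ terms survive. This yields \eqref{eq:noncomm333pot} uniformly in $t$. Convergence over $\C$ is then immediate: $a(u),b(u),c(u)$ are theta functions (Proposition on \eqref{eqn:thetaabc}), hence so are their $u$-derivatives $a'(u),b'(u),c'(u)$, and these converge for all $\tau$ in the upper half plane, not merely near $\tau\to i\infty$.

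The main obstacle I anticipate is bookkeeping rather than conceptual: getting the signs, the holonomy weights $\lambda^{\bullet}$ (whose exponents must track boundary length, including the sign flip between the two oppositely-oriented triangle sequences as in \eqref{eqn:Ztrick}), and the fractional shifts in the exponent of $q_0$ to line up so that, after the $\Z$-resummation, each coefficient is genuinely $a'(u)+r\pi i\,a(u)$ (and cyclic) with the \emph{same} prefactor $-\frac{q_0^{(3t+1)^2}\lambda^{(1+3t)/2}}{6\pi i}$ as in the stated formula. A secondary subtlety is verifying that the naive-to-reduced passage really only changes $W$ by an element of the two-sided ideal $\langle h_{\bar X},h_{\bar Y},h_{\bar Z}\rangle$ and not by something outside it; this requires checking that the combination being subtracted is a sum of terms of the form (monomial)$\cdot h_{\bar\ast}$ or $h_{\bar\ast}\cdot$(monomial), which is exactly the structure exhibited above. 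Since the text already grants the relations \eqref{eqn:firstcompABC} and the theta identities \eqref{eqn:thetaabc}, the argument reduces to these two finite verifications plus the elementary observation that differentiating a convergent theta series yields a convergent theta series.
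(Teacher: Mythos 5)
Your proposal follows essentially the same route as the paper: count triangles with marked points carrying weight $\frac{1}{N}\be_\bL$ per minimal segment, obtain the naive coefficients $-\frac{q_0^{(3t+1)^2}\lambda^{\frac{1+3t}{2}}}{6\pi i}\bigl(a'(u)+r\pi i\,a(u)\bigr)$ (and its $b$, $c$ analogues), and then discard the $r$-dependent part modulo the weak Maurer--Cartan ideal, with convergence coming from the theta-function expressions \eqref{eqn:thetaabc} exactly as you say. The one concrete slip is in your ideal-membership check: the products $x\cdot h_{\bar{Y}}$, $y\cdot h_{\bar{Z}}$, $z\cdot h_{\bar{X}}$ produce monomials such as $xzx$, $x^2z$, $xy^2$, which do not occur in the expression you want to absorb. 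The correct combination, using \eqref{eqn:firstcompABC}, is
$x\,h_{\bar{X}}+y\,h_{\bar{Y}}+z\,h_{\bar{Z}} = A(xyz+zxy+yzx)+B(zyx+xzy+yxz)+C(x^3+y^3+z^3)$
(equivalently $h_{\bar{X}}x+h_{\bar{Y}}y+h_{\bar{Z}}z$); with this correction your argument that the $r\pi i\,(a,b,c)$-part lies in the two-sided ideal, hence vanishes in $\Sky_3(a,b,c)$, goes through and supplies the justification the paper leaves implicit in the phrase ``by using weak Maurer--Cartan relations.'' One cosmetic point: the overall constant you quote, $-\frac{q_0^{(3t+1)^2}\lambda^{\frac{1+3t}{2}}}{6\pi i}$, matches the paper's intermediate counts rather than the prefactor $-\frac{\pi i\,q_0^{(3t+1)^2}\lambda^{\frac{1+3t}{2}}}{6}$ displayed in \eqref{eq:noncomm333pot}; this normalization discrepancy is present in the source itself and does not affect the structure of the argument.
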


As a direct consequence from Theorem \ref{thm:central1}, 
\begin{cor}[(2) of Theorem \ref{thm:mainthm333}] \label{cor:central333}
$W_{(\lambda,t)}$ given above is a central element of $\Sky(a(\lambda,t),b(\lambda,t),c(\lambda,t))$.
\end{cor}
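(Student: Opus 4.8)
The plan is to deduce the statement directly from the general centrality result, Theorem \ref{thm:central1}, once the computations of \S\ref{subseck:weakMC333hol} and \S\ref{subsec:wspot333} are recognized as an instance of the construction in \S\ref{sec:single}. First I would observe that for the immersed circle $\bL_t$ in the surface $\bP^1_{3,3,3}$, the even-degree part of the (Morse--Bott) model of $\CF^\bullet(\bL_t \times_\iota \bL_t)$ is spanned by the unit $\one_\bL$ together with the three even immersed generators $\bar{X},\bar{Y},\bar{Z}$ complementary to $X,Y,Z$. Hence the obstruction has the shape
$$ m_0^b = W_{(\lambda,t)}\, \one_\bL + h_{\bar{X}}\, \bar{X} + h_{\bar{Y}}\, \bar{Y} + h_{\bar{Z}}\, \bar{Z}, $$
so Definition \ref{def:mir-sp} produces exactly $\cA = \Lambda_0 \ll x,y,z \gg / \langle h_{\bar{X}}, h_{\bar{Y}}, h_{\bar{Z}} \rangle$ with distinguished potential $W = W_{(\lambda,t)}$. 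By the Proposition of \S\ref{subseck:weakMC333hol} the relations $h_{\bar{X}},h_{\bar{Y}},h_{\bar{Z}}$ are, after dividing by the common nonzero scalar in \eqref{eqn:333abcre}, precisely the defining relations of $\Sky_3(a,b,c)$; since $a,b,c$ are convergent theta functions the completed two-sided ideal coincides with the one generated by these three elements, so $\cA_{(\lambda,t)} = \Sky_3\big(a(\lambda,t),b(\lambda,t),c(\lambda,t)\big)$.

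Next I would check that the explicit expression \eqref{eq:noncomm333pot} really represents the class of the Floer-theoretic $W_{(\lambda,t)}$ in this quotient. The raw triangle count of \S\ref{subsec:wspot333} yields coefficients of $x^3,y^3,z^3$, of $xyz,zxy,yzx$, and of $zyx,xzy,yxz$ that depend on the topological type of $\bL_t$ through the integer $r$; the passage to the $r$-independent formula \eqref{eq:noncomm333pot} uses only the weak Maurer--Cartan relations $h_{\bar{X}}=h_{\bar{Y}}=h_{\bar{Z}}=0$, i.e. the Sklyanin relations themselves. Therefore \eqref{eq:noncomm333pot} and the coefficient of $\one_\bL$ in $m_0^b$ determine the same element of $\cA_{(\lambda,t)}$, and centrality of one is equivalent to centrality of the other.

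Finally, I would invoke Theorem \ref{thm:central1} verbatim: writing $\hat m$ for the coderivation induced by $m$ and expanding $m\circ\hat m(e^b)=0$, the identity $m_0^b = W_{(\lambda,t)}\,\one_\bL$ in $\cA_{(\lambda,t)}$ together with the unit axioms leaves only
$$ m_2(b, W_{(\lambda,t)}\one_\bL) + m_2(W_{(\lambda,t)}\one_\bL, b) = \sum_{x_e \in \{x,y,z\}} \big(W_{(\lambda,t)}\, x_e - x_e\, W_{(\lambda,t)}\big)\, X_e = 0, $$
which forces $W_{(\lambda,t)}\,x_e = x_e\,W_{(\lambda,t)}$ for $x_e \in \{x,y,z\}$; since these generate $\cA_{(\lambda,t)}$, the element $W_{(\lambda,t)}$ is central. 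As a sanity check one could instead reduce the commutators $[W_{(\lambda,t)},x]$, $[W_{(\lambda,t)},y]$, $[W_{(\lambda,t)},z]$ modulo the three Sklyanin relations directly, but this is precisely the heavy hand-computation that Theorem \ref{thm:central1} is designed to bypass. I do not expect a genuine obstacle here; the only real bookkeeping point is the identification $\cA_{(\lambda,t)} = \Sky_3(a,b,c)$ with its ideal, which is already supplied by \S\ref{subseck:weakMC333hol}.
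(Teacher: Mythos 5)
Your proposal is correct and takes essentially the same route as the paper: the paper proves this corollary simply by citing the general centrality result (Theorem \ref{thm:central1}), which applies once the computations of the preceding subsections identify $\cA_{(\lambda,t)}$ with $\Sky_3(a,b,c)$ and $W_{(\lambda,t)}$ with the coefficient of $\one_\bL$ in $m_0^b$. Your additional checks (the scaling of the relations and the use of the Maurer--Cartan relations to pass to the $r$-independent formula) are exactly the bookkeeping the paper carries out in \S\ref{subseck:weakMC333hol} and \S\ref{subsec:wspot333}, so nothing is missing.
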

Historically computer program has been used to
find  centers in these  noncommutative algebras, but our method provides a more conceptual approach.

\section{$(a,b,c)$ and the mirror elliptic curve}
We use a geometric idea to prove that 
\begin{theorem}[(3) of Theorem \ref{thm:mainthm333}]\label{abceq}
The coefficients $a(\lambda,t),b(\lambda,t),c(\lambda,t)$ of Maurer-Cartan relations (using the coordinate \eqref{eq:w})
give an embedding  $E_\tau \to \bP^2$ whose image is isomorphic to the mirror elliptic curve  $\check{E} = \{W_0 = 0\}$. In particular, we have
$$ a(\lambda,t)^3 + b(\lambda,t)^3+ c(\lambda,t)^3 - \sigma(q_\orb) a(\lambda,t)b(\lambda,t)c(\lambda,t)=0,$$
where $\sigma(q_\orb)$ is given in Theorem \ref{thm33313}.
\end{theorem}
\begin{remark}
From the theta function expression of $(a(\lambda,t),b(\lambda,t),c(\lambda,t))$, we know that it defines a projective embedding onto the Hesse elliptic curve $\{a^3 + b^3 + c^3 - \gamma abc = 0\} \subset \mathbb{P}^2$ for some $\gamma \in \C$ (See for e.g., \cite[Theorem 3.2]{Dol}).
The above theorem identifies this with the mirror elliptic curve, namely we have $\gamma = \sigma$.
\end{remark}

\begin{remark}
The above also works for $\Lambda$-valued noncommutative deformations.  Namely, we can take flat $\Lambda^*$-connections parametrized by $\lambda \in \Lambda^*$ (in place of flat $U(1)$-connections) on the family of Lagrangians $\bL_t$ (for $t \in \R/2\Z$).  Flat $\Lambda^*$-connections were used in the work of Abouzaid-Smith \cite{AS-torus}.

We have the global parameter $U = T^{A_E t/2} \cdot \lambda \in \Lambda^* / \langle T^{A_E} \rangle$ which plays the role of $e^{2\pi i u}$ (see Equation \eqref{eq:w}), where $A_E$ is the area of the elliptic curve $E$.  Expressions in \eqref{eqn:333ABCbef} for $A,B,C$ with $q_0 = T^{A_E/24}$ (and $\lambda \in \Lambda^*$) are still valid for the weakly unobstructed relations, and Expression \eqref{eqn:333abcre} for $a,b,c$ still makes sense and can be written in terms of the global parameter $U$.  (Novikov convergence for $A,B,C$ is automatic since for each $t$ there are only finitely many polygons bounded by $\bL_t$ with areas smaller than a fixed number.)  $(a,b,c)$ gives a map $\Lambda^* / \langle T^{A_E} \rangle \to \Lambda\bP^2=(\Lambda^3-\{0\})/\Lambda^\times$.  Since the proof to the above Theorem \ref{thm:mainthm333} is purely in Lagrangian Floer theory, it still works to prove that $(a,b,c)$ satisfies the same equation (with $q_{\mathrm{orb}} = T^{A_E/3}$).  As in \cite[Definition 6.3]{AS-torus}, under this map $\Lambda^* / \langle T^{A_E} \rangle$ should be identified with the variety defined by $W_0=0$ in $\Lambda\bP^2$.
\end{remark}

\begin{proof}
The main idea of the proof is to relate the Maurer-Cartan coefficients  $a(\lambda,t),b(\lambda,t),c(\lambda,t)$ with
the coefficients of a particular matrix factorization of the mirror potential $W_0$.

This is based on the following geometric observation.
\begin{lemma}
Consider a Lagrangian $\mathbb{L}_{3t}$, equipped with a flat $U(1)$ connection with holonomy $\lambda_0 = -1$ with a small $t \neq 0$.
Mirror functor $\mathcal{F}^{(\bL_0, -1)}$ (as in \cite[Section 6.3]{CHL13}) maps $\bL_{3t}$ with holonomy $\lambda_0$ to a $(3\times 3)$ matrix factorization
$$M_0 \cdot M_1 = M_1 \cdot M_0 = W_0 \cdot I_{3\times3},$$
where $M_0$ (the matrix with linear terms) is of the form
\begin{equation}\label{abcm0}
-M_0 = \left(\begin{array}{ccc} Ax & Bz & Cy \\ Cz & Ay & Bx \\ By & C x & A z\end{array}\right).
\end{equation}
(``$-$" in \eqref{abcm0} is due to our sign convention \eqref{eqn:objcorr1}.)
Then, we have 
\begin{equation}\label{abcabc}
(A,B,C) = \big( A(\lambda_0,t),B(\lambda_0,t),C(\lambda_0,t)\big).
\end{equation}
\end{lemma}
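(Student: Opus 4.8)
The plan is to compute the matrix factorization $\mathcal{F}^{(\bL_0,-1)}(\bL_{3t})$ straight from Definition \ref{def:mirMF} / \eqref{eqn:objcorr1} and to recognize its linear part as the weak Maurer--Cartan data of $(\bL_t,\lambda_0)$. By construction the underlying $\C[x,y,z]$-module is $\C[x,y,z]\otimes_{\Lambda_0}\CF^\bullet((\bL_0,b),\bL_{3t})$ with differential $m_1^{b,0}$, and the part $M_0$ which is linear in $x,y,z$ is the matrix of $p\mapsto m_2^{b,0}(p)\big|_{\mathrm{lin}}=\sum_e x_e\, m_2(X_e,p)$. Hence each entry of $M_0$ counts holomorphic triangles in $\bP^1_{3,3,3}$ with one corner at an immersed point $X,Y$ or $Z$ of $\bL_0$ (the $b$-insertion), an input corner in $\bL_0\cap\bL_{3t}$ and an output corner in $\bL_0\cap\bL_{3t}$, weighted by symplectic area and by the holonomies of $\bL_0$ and of $\bL_{3t}$ (both equal to $\lambda_0=-1$). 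On the other hand, by \eqref{eqn:firstcompABC}--\eqref{eqn:333ABCbef} the coefficients $A,B,C$ of the weak Maurer--Cartan relations of $(\bL_t,\lambda_0)$ count holomorphic triangles bounded by $\bL_t$ with corners at $X,Y,Z$, weighted by area and by the single holonomy $\lambda_0$. The task is to match these two counts.

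The geometric core is that, in the universal cover $\C$, the two families of triangles are identified by one translation. Lifting a triangle that contributes to $M_0$, its two $\bL_0$-edges lie on two \emph{distinct} branches $\widetilde{\bL}_{0,i},\widetilde{\bL}_{0,j}$ and its $\bL_{3t}$-edge on a branch $\widetilde{\bL}_{3t,k}$ with $k\notin\{i,j\}$ (a triangle cannot have two parallel edges, and $\widetilde{\bL}_{0,i}\parallel\widetilde{\bL}_{3t,i}$), so after applying the $\Z_3$-action (rotation through $2\pi/3$) we may assume $(i,j,k)=(0,1,2)$. A short computation with the explicit rotated-vertical line equations then produces a translation $v$ of $\C$ carrying the triple $\{\widetilde{\bL}_{0,0},\widetilde{\bL}_{0,1},\widetilde{\bL}_{3t,2}\}$ onto $\{\widetilde{\bL}_{t,0},\widetilde{\bL}_{t,1},\widetilde{\bL}_{t,2}\}$ (one finds $v=-\tfrac{t}{2}e^{\pi i/3}$, but only its existence is used); notice that the factor $3$ in ``$\bL_{3t}$'' is exactly what makes the three incidence conditions simultaneously solvable. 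I would also record here that $\bL_0$, $\bL_t$ and $\bL_{3t}$ all have the same total length --- their lifts are vertical geodesics in $E$ of a common period independent of the defining parameter --- which is what makes the holonomy normalizations agree.

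Transporting along $v$ then concludes. Since $v$ is an area- and orientation-preserving diffeomorphism of $\C$ compatible with the projections to the relevant quotients, it induces a bijection, preserving areas and orientations, between the triangles counted by $M_0$ and those counted by the products $m_2(X_e,X_f)$ appearing in $h_{\bar X},h_{\bar Y},h_{\bar Z}$ of \eqref{eqn:firstcompABC}: an immersed corner of $\bL_0$ maps to an immersed corner of $\bL_t$ among $\{X,Y,Z\}$, while the input/output points of $\bL_0\cap\bL_{3t}$ map to the complementary generators $\{\bar X,\bar Y,\bar Z\}$; in particular the $6=3+3$ generators of $\CF^\bullet((\bL_0,b),\bL_{3t})$ are matched with the six immersed generators of $\bL_t$, together with their $\Z_2$-gradings, which pins down the $3\times 3$ shape. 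Because every edge carries holonomy $\lambda_0=-1$ on both sides and the normalizing lengths coincide, corresponding triangles carry equal weights; re-summing the two opposite-orientation infinite sub-series exactly as in the ``$\Z$-trick'' \eqref{eqn:Ztrick} identifies the entry of $M_0$ produced by the insertion $X_e$ with the matching coefficient among $A(\lambda_0,t),B(\lambda_0,t),C(\lambda_0,t)$. Arranging these coefficients according to the three possible index triples yields the displayed form \eqref{abcm0} (with the overall sign dictated by the convention in \eqref{eqn:objcorr1}), hence $(A,B,C)=\big(A(\lambda_0,t),B(\lambda_0,t),C(\lambda_0,t)\big)$.

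The step I expect to be the real work --- though it is bookkeeping rather than a new idea --- is setting up the dictionary between the six points of $\bL_0\cap\bL_{3t}$ and the six immersed generators of $\bL_t$ so that the three coefficients land in exactly the cyclic positions of \eqref{abcm0}, and checking that the orientation/spin signs transported by $v$ are compatible with the sign conventions both of the functor $\mathcal{F}^{(\bL_0,-1)}$ and of the Maurer--Cartan equation --- in particular, verifying that the energy-zero part of $m_1^{b,0}$ on $\CF^\bullet((\bL_0,b),\bL_{3t})$ vanishes, so that $M_0$ is genuinely linear. Once the generator dictionary and the common length $l_0$ are fixed, equality of weights is immediate from the area- and holonomy-preservation of $v$.
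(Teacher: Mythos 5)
Your proposal is correct and follows essentially the same route as the paper: first identify the shape of the matrix factorization (six intersection points, with the $b$-deformed strips contributing linearly to $M_0$ being triangles), then match the triangle counts by observing that the three lines $(\bL_{0,0},\bL_{0,1},\bL_{3t,2})$ used for the matrix factorization are carried by a single translation onto the lines $(\bL_{t,0},\bL_{t,1},\bL_{t,2})$ used for the Maurer--Cartan coefficients, the factor $3$ being exactly what makes this translation exist. Your explicit solution $v=-\tfrac{t}{2}e^{\pi i/3}$ and the remark about equal total lengths/holonomy normalization are consistent with (and slightly more detailed than) the paper's argument, which likewise defers the final corner-labeling and sign check.
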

\begin{proof}[proof of Lemma]
We first review the computation of the matrix factorization which is essentially the same as the one for the long diagonal  in \cite{CHL13}, which is the Floer differential on $\CF ((\mathbb{L}_0,b), (\bL_{3t},\lambda))$
Note that  there are six intersection points between $\mathbb{L}_0$ and $\mathbb{L}_{3t}$.  The ($b$-deformed) holomorphic strips from odd generators (between $\mathbb{L}_0$ and $\mathbb{L}_{3t}$) to even generators are all triangles, and that from even to odd generators are all trapezoids.  Since two vertices of these strips are inputs and outputs, entries of $M_0$ consist of linear terms in $x,y$ and $z$, and those of $M_1$ are all quadratic,
and one can check that $M_0$ is of the form given in \eqref{abcm0}. 

Then, the relation \eqref{abcabc} can be checked by hand. We only give a few helpful comments, and leave it as an exercise.
Consider the  vertical Lagrangian $\bL_{t,0}$  and its $\Z_3$-rotation images $\bL_{t,1}, \bL_{t,2}$. If one takes two
of these Lagrangians, say $\bL_{t,0}$ and $\bL_{t,1}$, then it is easy to observe that they are just simple translates of 
$\bL_{0,0}, \bL_{0,1}$. Then one can check that the decomposition of the elliptic curve by  
three lines $(\bL_{t,0}, \bL_{t,1}, \bL_{t,2})$  and that by the three lines $(\bL_{0,0}, \bL_{0,1}, \bL_{3t,2})$  
are the same (related by a simple translation). 
The former  is used for Maurer-Cartan relations for $\bL_{t}$ and the latter  is used for Matrix Factorization of $\bL_{3t}$ with reference $\bL_0$. This is the reason why we take $\bL_{3t}$ in this lemma. 
One can check the labeling of corners to show the rest of the assertion.
% See in Figure \ref{fig:p333mfcoeff} for the comparison of minimal triangles.
\end{proof}

Now to finish the proof of the theorem, we take the determinant of the equation $M_0 \cdot M_1 = W_0 \cdot \mathrm{Id}$.  Thus $\det (M_0)$ is a cubic polynomial that divides $W_0^3$. Since $W_0$ is irreducible, we should have $\det (M_0) = k \cdot W_0$ for some constant $k \not= 0$.  Then 
\begin{equation} \label{eq:det(P)}
\det (M_0) = -ABC ( x^3 + y^3 + z^3) + (A^3 + B^3 + C^3) xyz = k \cdot W_{0},
\end{equation}
or equivalently, $\frac{A^3 + B^3 + C^3}{ABC} =\sigma$, which implies that $W_0(A,B,C)=0$ for $(\lambda_0,t)$ with $t$ small.
As $A,B,C$ are holomorphic functions (after overall scaling), this shows that in fact $A,B,C$ satisfy the above equation even for large $t$ and all $\lambda$.
%and this proves the claim.
% On the other hand, as $(a:b:c)$ gives an embedding of $E_\tau = \C^2 / (\Z \oplus \Z \langle \tau \rangle)$ into $\mathbb{P}^2$, we have an isomorphism of two complex curves, $E_\tau = \check{E}$. (Recall $\check{E}$ is the mirror elliptic curve of $E$ defined as $\{W_0 =0 \} \subset \mathbb{P}^2$.)  
%
\end{proof}
\section{Relation to the quantization of an affine del Pezzo surface}\label{subsec:defquant333}
Recall that deformation quantization of a (commutative) Poisson algebra is a formal deformation into a noncommutative associative unital algebra whose commutator in the first order is  the Poisson bracket. 

For any $\phi \in \C[x,y,z]$, the following brackets on coordinated functions extends to a Poisson struncture on $\C[x,y,z]$:
%there exists a Poisson structure whose brackets on coordinate functions are given by
\begin{equation}\label{eqn:poipotphi}
\{x,y\} = \frac{\partial \phi}{\partial z}, \quad\{y,z\} = \frac{\partial \phi}{\partial x}, \quad  \{z,x\} = \frac{\partial \phi}{\partial y}.
\end{equation}
One can check that $\phi$ itself Poisson commute with any other element, and hence the above Poisson structure
descend to the quotient $\C[x,y,z]/(\phi)$ by the principal ideal generated by $\phi$, which is denoted as $\mathcal{B}_\phi$.

Our theory provides the quantization of the affine del Pezzo surface in the sense of \cite{EG}.
We write $v=u -u_0$ so that now $v=0$ corresponds to the commutative point  (see the last paragraph of \ref{subseck:weakMC333hol}).
\begin{theorem} \label{dq333}
The family of noncommutative algebra $\mathcal{A}_{v}/(W_{v})$ near $v=0$ gives a quantization of the affine del Pezzo surface, given by the mirror elliptic curve equation $W_0(x,y,z)=0$ in $\C^3$ in place of $\phi$ in \eqref{eqn:poipotphi}.
\end{theorem}
\begin{proof}
As $v \to 0$, we have $$\mathcal{A}_{v}/(W_{v}) \to \C[x,y,z]/(W_0).$$
So it remains to check that the first order term of the commutator of $A_{v}$ is equivalent to the Poisson bracket induced by $W_0$.
We may write $a(v)$ as ( similarly for $b(v),c(v)$)
$$a(v) = a(0) + a'(0)v + o(v).$$
(Here, $a(0)$ and $a'(o)$ means $a(v)$ and $a'(v)$ evaluated at $v=0$.)
Since $a(0) = -b(0)$ and $c(0) =0$, the Sklyanin algebra relation becomes
$$ -r (xy-yx) + v ( a'(0)xy+b'(0)yx + c'(0) z^2) + o(v) =0.$$
where we set $r:=b(0) =-a(0) \in \C$. Therefore,
$$ \lim_{v \to 0} \frac{xy - yx}{v} = \frac{1}{r} ( a'(0)xy+b'(0)yx + c'(0) z^2).$$
(On the right hand side, $x,y,z$ are commutative variables since the algebra itself is commutative at $v=0$.)
By Proposition \ref{eq:nclgw333}, $a'(0)xy+b'(0)yx + c'(0) z^2$ is a multiple of $\frac{\partial W_0}{\partial z}$, so the result follows.
\end{proof}

Etingof-Ginzburg \cite{EG} constructed a deformation quantization of such  affine del Pezzo surfaces via different method,
but  the resulting noncommutative algebra is of the same form.
Namely,  in \cite{EG}, the deformation quantization of $\C[x,y,z]/(\phi)$ is obtained in a two step process, where
first, $\C^3$ with the Poisson structure given by $\phi$ is deformed into a Calabi-Yau algebra $A$, and the Poisson center
$\phi$ then gets deformed into a central element $\Psi$ of this algebra $A$.
The Calabi-Yau algebra $A$ is defined as a Jacobi algebra of some superpotential $\Phi$, and the existence
of the center $\Psi$ uses a deep theorem of Kontsevich on deformation quantization \cite{Kon1}.

In our approach, $\Phi$ is given by the spacetime superpotential which is related to the Maurer-Cartan equation,
and the central element $W$ is obtained as a worldsheet superpotential, where both $\Phi$ and $W$ are
obtained by the elementary counting of triangles in elliptic curves.

%Consider the quotient of the Sklyanin algebra $\cA_{(\lambda,t)}=\Sky_3 (a(\lambda,t),b(\lambda,t),c(\lambda,t))$ by the central element $W_{(\lambda,t)}$. The graded Sklyanin algebra (with $\deg x = \deg y = \deg z =1$) is known as a noncommutative deformation of $\mathbb{P}^2$.  Now $W_{(\lambda,t)}$ is of degree 3, and so the quotient algebra is a noncommutative analogue of an elliptic curve. 

It is known by \cite{ATV, St1} that the quotient algebra of Sklyanin algebra by its central element can be written as a homogeneous
coordinate ring. In our setting, it can be written as follows.
\begin{theorem}[\cite{ATV, St1}] \label{thm:tw-ring333}
Let $W$ be a non-zero central element of the Sklyanin algebra $\Sky_3(a,b,c)$.  Then $\cA_{(\lambda,t)}/ (W_{(\lambda,t)}) =\Sky_3 (a,b,c) / ( W )$ is isomorphic to the twisted homogeneous coordinate ring 
$$\bigoplus_{m\geq 1} \Gamma(\check{E},\mathcal{L} \otimes \sigma^* \mathcal{L} \otimes \cdots \otimes (\sigma^{m-1})^* \mathcal{L}),$$
where $\check{E}$ is the mirror elliptic curve defined by $\{W_0 = 0\} \subset \bP^2$, $\mathcal{L}$ is $\mathcal{O}(1)$ and $\sigma$ is an automorphism of
$\check{E}$ given by the group structure of $\check{E}$.
\end{theorem}

\begin{theorem}[Theorem 3.6.2 of \cite{EG}]
We have a derived equivalence for each $(\lambda,t)$
\begin{equation}\label{eqn:qequiEG}
D^b \mathrm{Coh} ( \check{E} ) \simeq D^b \MF^\Z (\cA_{\bL_{(\lambda,t)}}, W_{(\lambda,t)}).
\end{equation}
\end{theorem}

It is show in \cite{EG} that both categories in \eqref{eqn:qequiEG} are equivalent to a certain triangulated category canonically constructed from the twisted homogenous coordinate ring of $\check{E}$.
It is important in the above two theorems that the coefficients $(a,b,c)$ of the Sklyanin algebra satisfies 
the mirror elliptic curve equation (Theorem \ref{abceq}).

%From Corollary \ref{cor:central333}, $W_{(\lambda,t)}$ is a central element of $\cA_{(\lambda,t)}=\Sky_3 (a(\lambda,t),b(\lambda,t),c(\lambda,t))$.  Moreover from Equation \eqref{eq:det(P)}, $E_{(a(\lambda,t),b(\lambda,t),c(\lambda,t))} = \check{E} = \{W_0 = 0\}$.  Hence the above theorem applies and gives (5) of Theorem \ref{thm:mainthm333}, and we have $D^b \mathrm{Coh} (\check{E}) \simeq D^b \MF^\Z (\cA_{(\lambda,t)}, W_{(\lambda,t)})$.

Recall that $\mathbb{P}^1_{3,3,3} = [E/ \Z_3]$.  
From the construction to be given in Chapter \ref{sec:fingpsymm}, we will have an upstair functor from the Fukaya category of $E$ to the {\em graded} matrix factorization category of $W_{(\lambda,t)}$. (See Proposition \ref{prop:F-graded}.)  In the commutative case (where $(\lambda,t)=(-1,0)$), upstair functor gives an equivalence.

\begin{prop} The $\Z$-graded functor
\begin{equation}\label{eqn;upsfunt333}
\mathcal{F}^{\tilde{\bL}_0} : \Fuk^\Z(E) \to MF^\Z (\cA_0,W_0)
\end{equation}
induces a derived equivalence.  ($\cA_0=\Lambda[x,y,z]$.)
\end{prop}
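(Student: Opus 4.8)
The plan is to realise $\mathcal{F}^{\tilde{\bL}_0}$ as (essentially) a composite of two already-known equivalences and then verify the standard criterion that a cohomologically full and faithful $A_\infty$-functor with split-generating image is a derived equivalence.  On the $B$-side, since $W_0 = x^3+y^3+z^3-\sigma(q_\orb)\,xyz$ is homogeneous of degree $3$ in three variables, the Gorenstein parameter vanishes and Orlov's theorem --- in the form already quoted in \eqref{eqn:qequiEG} (Theorem 3.6.2 of \cite{EG}) at $(\lambda,t)=(-1,0)$ --- gives $D^b\MF^\Z(\cA_0,W_0)\simeq D^b\mathrm{Coh}(\check E)$, with the line bundle $\mathcal{O}_{\check E}(1)$ corresponding to a matrix factorization whose first component is linear in $x,y,z$.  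On the $A$-side, Polishchuk--Zaslow homological mirror symmetry \cite{PZ-E} gives $D^b\Fuk^\Z(E)\simeq D^b\mathrm{Coh}(\check E)$.  So it suffices to check that $\mathcal{F}^{\tilde{\bL}_0}$ is cohomologically full and faithful on a split-generating subcategory of $\Fuk^\Z(E)$ and that the image split-generates $\MF^\Z(\cA_0,W_0)$; the derived equivalence then follows formally once the target is passed to its idempotent completion.

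Concretely, I would take as generators $\tilde{\bL}_0$ together with finitely many of its images $\rho^k(\tilde{\bL}_0)$ under the mirror symplectomorphism $\rho$ (the triple Dehn twist), which split-generate $D^\pi\Fuk^\Z(E)$ by \cite{PZ-E,Se}.  By the construction of Section \ref{sec:fingpsymm} (Proposition \ref{prop:F-graded}), $\mathcal{F}^{\tilde{\bL}_0}$ is the $\Z$-graded lift of the $\Z_2$-graded functor $\cF^{(\bL_0,-1)}\colon\Fuk(\bP^1_{3,3,3})\to\MF(\C^3,W_0)$, and the matrix factorization $M_0$ (with entries linear in $x,y,z$) computed in the proof of Theorem \ref{abceq} exhibits $\mathcal{F}^{\tilde{\bL}_0}(\tilde{\bL}_0)$, up to shift, as the Orlov-image of $\mathcal{O}_{\check E}(1)$; since $\rho$ is mirror to $-\otimes\mathcal{O}_{\check E}(1)$, the $\rho^k(\tilde{\bL}_0)$ map to the powers $\mathcal{O}_{\check E}(k+1)$, which split-generate $D^b\mathrm{Coh}(\check E)$, giving essential surjectivity.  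For faithfulness I would invoke Theorem \ref{thm:injthmsingle}: $\mathcal{F}^{\tilde{\bL}_0}$ is injective on $H^\bullet(\Hom(\tilde{\bL}_0,U))$ for every $U$; to upgrade this injectivity to an isomorphism on the generating subcategory I would compare dimensions, matching $\dim_\C HF^\bullet((\tilde{\bL}_0,b),U)$ on the $A$-side with $\dim_\C\mathrm{Hom}^\bullet_{D^b\mathrm{Coh}(\check E)}(\mathcal{O}_{\check E}(1),\mathcal{F}(U))$ via Riemann--Roch on $\check E$.

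An alternative, purely formal route is descent from $\bP^1_{3,3,3}=[E/\Z_3]$: the functor $\cF^{(\bL_0,-1)}$ is a derived equivalence by \cite{CHL13}, the pair $(\Fuk^\Z(E),\MF^\Z(\cA_0,W_0))$ is obtained from $(\Fuk(\bP^1_{3,3,3}),\MF(\C^3,W_0))$ by the cover/orbifold construction of Section \ref{sec:fingpsymm} (which simultaneously refines the $\Z_2$-grading to a $\Z$-grading), and, granting that this construction carries derived equivalences to derived equivalences, $\mathcal{F}^{\tilde{\bL}_0}$ is one as well.  In either route, the hard part will be the passage from Theorem \ref{thm:injthmsingle} (faithfulness on $\Hom(\tilde{\bL}_0,-)$) to full faithfulness: this is precisely where one must either import the explicit holomorphic-triangle counts on $E$ underlying the computation in \cite{CHL13}, or run a careful Euler-characteristic and dimension argument on $\check E$, and it is also the step at which the compatibility of the $\Z$-grading on $\Fuk^\Z(E)$ (fixed by a choice of grading structure, i.e.\ a trivialization of the squared canonical bundle of $E$) with the internal polynomial $\Z$-grading $\deg x=\deg y=\deg z=1$ on $\MF^\Z(\cA_0,W_0)$ has to be verified.
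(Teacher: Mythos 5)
Your overall shape (send generators to generators, then check cohomological full faithfulness) is right, but both of your routes leave unproven exactly the step that carries the content, and Route 1 rests on a concrete misidentification. The matrix $M_0$ appearing in the proof of Theorem \ref{abceq} is \emph{not} the image of the reference Lagrangian: it is the mirror matrix factorization of the deformed Lagrangian $\bL_{3t}$ (with holonomy $-1$) under $\mathcal{F}^{(\bL_0,-1)}$. The image of $\bL_0$ itself is the wedge--contraction type matrix factorization $\xi$ of \cite{Dy} (this is how the $\Z_2$-graded equivalence of \cite[Theorem 7.24]{CHL13} is proved), so your identification of $\mathcal{F}^{\tilde{\bL}_0}(\tilde{\bL}_0)$ with the Orlov image of $\cO_{\check{E}}(1)$, and hence the claim that the Dehn-twist images $\rho^k(\tilde{\bL}_0)$ go to $\cO_{\check{E}}(k+1)$, is unjustified; nothing in the paper shows that $\mathcal{F}^{\tilde{\bL}_0}$ intertwines the triple Dehn twist with $-\otimes\cO_{\check{E}}(1)$ (the Aldi--Zaslow discussion is a different construction). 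Moreover, with $\rho^k(\tilde{\bL}_0)$ as generators, Theorem \ref{thm:injthmsingle} (or \ref{thm:inj}) only controls morphisms \emph{out of the reference} $\tilde{\bL}_0$, so injectivity between $\rho^j(\tilde{\bL}_0)$ and $\rho^k(\tilde{\bL}_0)$ for $j\neq 0$ is not covered, and the Riemann--Roch dimension count you propose is precisely the computation one wants to avoid. Route 2 simply ``grants'' that the cover construction of Section \ref{sec:fingpsymm} carries derived equivalences to derived equivalences, but that is the nontrivial point: $\MF^\Z(\cA_0,W_0)$ contains fractional grading shifts invisible downstairs, and a $\Z_2$-graded equivalence does not formally induce a $\Z$-graded one.

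The paper closes these gaps with two specific inputs you did not use. First, the generators are not Dehn-twist images but the three graded lifts $\bL^{1},\bL^{g},\bL^{g^2}$ with phases $\theta_k=\frac12+\frac{2k}{3}$, i.e.\ the components of the reference $\tilde{\bL}_0$ itself; these split-generate $D\Fuk^\Z(E)$, and since they are components of the reference, their morphisms and images are already controlled by the downstairs equivalence of \cite{CHL13} combined with the grading statement of Proposition \ref{prop:F-graded}. Second, their images are $\xi$, $\xi[-\frac23]$, $\xi[-\frac43]$, and the split generation of $\MF^{\Z}(\cA_0,W_0)$ by these fractional shifts of $\xi$ is exactly Tu's theorem \cite[Theorem 6.2]{Tu2}; this replaces your appeal to Orlov's theorem and powers of $\cO_{\check{E}}(1)$. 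To salvage your route you would have to prove the intertwining of $\rho$ with $-\otimes\cO_{\check{E}}(1)$ under the functor and establish full faithfulness between all pairs $\rho^j(\tilde{\bL}_0)$, $\rho^k(\tilde{\bL}_0)$, neither of which follows from the results quoted in the paper.
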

\begin{proof}
Recall that in \cite[Theorem 7.24]{CHL13}  we have proved that the $\Z_2$-graded functor gives a derived equivalence
from  $D^b \Fuk (\bP^1_{3,3,3})$ to $D^b \MF (\cA_0, W_0)$, by showing that the functor takes the split generator $\bL_0$ to a
split generator which is a wedge-contraction type matrix factorization $\xi$ \cite{Dy}.
We will show in Proposition \ref{prop:F-graded} that the corresponding functor upstairs preserves the $\Z$-gradings.  The derived Fukaya category of the torus $E$
is split generated by the three Lagrangian lifts $\bL^{1}, \bL^{g}, \bL^{g^2}$ for $\Z_3=\{1,g,g^2\}$ with gradings given by
$$\theta_0=\frac{1}{2},\quad \theta_1=\frac{1}{2} + \frac{2}{3},\quad \theta_2=\frac{1}{2} + \frac{4}{3}.$$
Under the mirror functor, they are sent to $\xi, \xi[-\frac{2}{3}], \xi[-\frac{4}{3}]$ 
respectively, which split-generate the category of graded matrix factorizations $MF^\Z (\cA_0,W_0)$ by Tu \cite[Theorem 6.2]{Tu2}.  Hence the functor derives an equivalence.
\end{proof}

By composing the upstair functor with \eqref{eqn:qequiEG}, we obtain
\begin{cor}\label{cor:etatwt}
There exists a family of functors parametrized by $(\lambda,t)$
\begin{equation}\label{eqn:upstairLGCY}
D^b \Fuk^\Z (E) \longrightarrow D^b \MF^\Z (\cA_{(\lambda,t)}, W_{(\lambda,t)}) \stackrel{\simeq}{\longrightarrow} D^b \mathrm{Coh} (\check{E}).
\end{equation}
\end{cor}
We expect that these functors are all equivalent to each other, and in particular equivalent to the commutative one when $(\lambda,t)=(-1,0)$.  This is still unknown to us for general $(\lambda,t)$ since noncommutative matrix factorizations are not well understood yet. In the commutative case of $\bL_0$, Lee \cite{swlee} showed that \eqref{eqn:upstairLGCY} agrees with the equivalence constructed by Polishchuk-Zaslow \cite{PZ-E} by further analyzing our construction in \cite{CHL13}.

%-----------------------------------------------------------------------
% Beginning of chap1.tex
%-----------------------------------------------------------------------
%
%  AMS-LaTeX sample file for a chapter of a monograph, to be used with
%  an AMS monograph document class.  This is a data file input by
%  chapter.tex.
%
%  Use this file as a model for a chapter; DO NOT START BY removing its
%  contents and filling in your own text.
% 
%%%%%%%%%%%%%%%%%%%%%%%%%%%%%%%%%%%%%%%%%%%%%%%%%%%%%%%%%%%%%%%%%%%%%%%%

%\part{This is a Part Title Sample}

\chapter{Mirror construction using several Lagrangians and quiver algebras} \label{sec:MFseverallag}

In this chapter, we take the reference $\bL$ to be a set of Lagrangians, instead of a single Lagrangian immersion.  Allowing $\bL$ to be a set of Lagrangians provides a way to construct global mirrors. For instance, one may take $\bL$ to be a set of generators of the Fukaya category.  If we take their union as a single Lagrangian and carry out the construction in Chapter \ref{sec:nc-deform}, the noncommutative algebra obtained does not know about the sources and targets of the morphisms since it does not record the individual branches.  In this section we use quiver algebra to take care of this point.

The result of the construction is a quiver $Q$ with relations $R$, together with a central element $W$ of the quiver algebra with relations $\Lambda Q /R$. Under good conditions, the relations can be obtained as partial derivatives of a cyclic  element $\Phi$ in the path algebra $\Lambda Q$.  We will regard $(\Lambda Q / R, W)$ as a generalized mirror of $X$, in the sense that there exists a natural functor $\Fuk (X) \to \MF(\Lambda Q / R, W)$.

%We may define the mirror using a family of Lagrangians, and such a construction in good cases naturally yields a quiver $Q$, quiver algebra with relations together with  a central element $W$.
%In some cases, relations is a Jacobi relation from a potential function $\Phi$.

%Let $X$ be a symplectic manifold, and $\{L_1, \dots, L_k\}$ be a set of spin Lagrangian submanifolds of $X$. Suppose that any two Lagrangians in this family intersect transversely with each other. We regard $\mathbb{L} = \cup_{1 \leq i \leq k} L_i$ as an immersed Lagrangian. As before, a generalized mirror of $X$ will be constructed using the formal deformation theory associated $\mathbb{L}$. The space of noncommutative deformations can be naturally described in terms of a quiver $Q^\mathbb{L}$ which is roughly defined as follows.

The quiver $Q = Q^{\mathbb{L}}$ is a finite oriented graph, whose vertices correspond to elements in $\mathbb{L}$, and whose arrows  correspond to odd-degree immersed generators of $\bL$.
The path algebra of $Q$ is taken to be the base of the Fukaya subcategory $\mathbb{L}$.  Alternatively $\mathbb{L}$ can be associated with an $\AI$-{\em algebra} over the semi-simple ring $\Lambda^\oplus$ \cite{Se2}, which will be explained shortly. The occurrence of idempotents $\pi_i's$ is a key feature when we use a family of Lagrangians as a reference, and the subalgebra generated by $\pi_i's$ is isomorphic to $\Lambda^\oplus$. The path algebra of $Q^\mathbb{L}$ can be regarded as an algebra over this semi-simple ring. 

%
%In this case, it is natural to introduce the (noncommutative) quiver algebra
%as noncommutative coefficients of an $\AI$-algebra of the family $\mathbb{L}$.
%Here, the family $\mathbb{L}$ defines Fukaya subcategory, but formally, it is better to consider them as
%an $\AI$-algebra  with a semi-simple coefficient ring (following Seidel \cite{Seidel-book}), which will be explained shortly.

%Let us give a rough description first.
%A quiver $Q^{\mathbb{L}}$ is a finite oriented graph, whose set of vertices $Q_0^{\mathbb{L}} = \{1,\cdots, k\}$
%(corresponding to each Lagrangian in $\mathbb{L}$), and the set of arrows $Q_1^{\mathbb{L}}$ corresponds to
%odd degree intersections between each Lagrangians together with 
%the trivial arrows on each vertices, which are idempotents $\pi_i$'s.

The relations $R$ of the quiver algebra come naturally from the weakly unobstructed condition on the formal deformations of $\mathbb{L}$.  Thus $\Lambda Q / R$ is essentially the space of weakly unobstructed noncommutative deformations of $\bL$.  As before, the counting of holomorphic discs passing through a generic marked point on each member of $\bL$ defines several superpotentials $W_i$.
Then $$W_{\mathbb{L}} := \sum_i W_i \in \Lambda Q / R.$$
We will show that the centrality theorem continues to hold in this setting, namely the potential $W_{\mathbb{L}}$ defined above lies in the center of $\Lambda Q / R$.

%Let us explain the detailed construction.
\section{Path algebra and the semi-simple ring}
We first construct the quiver $Q^\mathbb{L}$ in detail.
Let $X$ be a K\"ahler manifold, and $\bL = \{L_1, \dots, L_k\}$ be a set of spin Lagrangian immersions of $X$ with transverse self-intersections.  (In case there is no confusion, $L_i$ may refer to the immersion map, the domain or the image of the immersion, depending on situations.) Suppose that any two Lagrangians in this family intersect transversely with each other. We regard $\bigcup_{1 \leq i \leq k} L_i$ as an immersed Lagrangian. 
%As before, the mirror of $X$ (localized at $\mathbb{L}$) will be obtained using the formal deformation theory associated $\mathbb{L}$. The space of deformation parameters can be naturally described in terms of a quiver $Q^\mathbb{L}$.

\begin{definition}\label{def:mirrorquiverseveral}
$Q = Q^\mathbb{L}$ is defined to be the following graph.  
Each vertex $v_i$ of $Q$ corresponds to a Lagrangian $L_i \in \bL$.  Thus the vertex set is
$$Q^\mathbb{L}_0 = \{v_1,\cdots, v_k\}.$$
Each arrow from $v_i$ to $v_j$ corresponds to odd-degree Floer generator in $\CF^\bullet (L_i, L_j)$ (which is an intersection point between $L_i$ and $L_j$).
In particular for $i=j$, we have loops at $v_i$ corresponding to
the odd-degree immersed generators of $L_i$.

$Q$ will be sometimes called the \emph{endomorphism quiver} of $\bL$.
\end{definition}
\begin{figure}[htp]
\begin{center}
\includegraphics[scale=0.6]{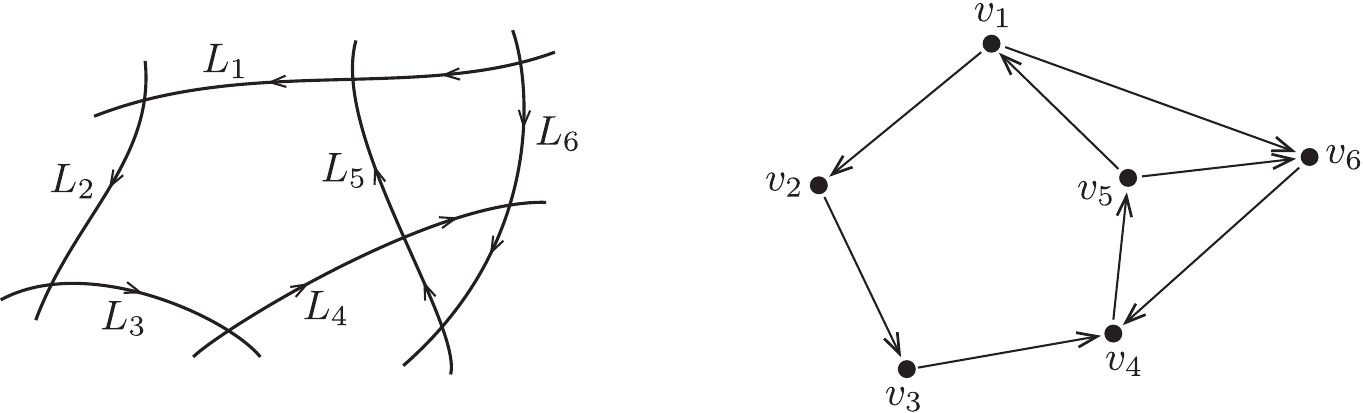}
\caption{Endomorphism quiver $Q^\mathbb{L}$}\label{fig:lagrquiv}
\end{center}
\end{figure}

Given a quiver $Q$, we can define a path algebra $\Lambda Q$ spanned by paths in $Q$.  We will use the following convention for the path algebra.

\begin{definition}
For a quiver $Q$, with the set of vertices $Q_0$ and the set of arrows $Q_1$,  we have maps
$h, t : Q_1 \to Q_0$ which assign head and tail vertices to each arrow respectively. 
Each vertex $v$ corresponds to a trivial path $\pi_v$ (of length zero).  
A sequence of arrows $p = a_{k-1}a_{k-2}\cdots a_1 a_0$ is called to be a path of length $k$ if 
$$ h (a_{i}) = t (a_{i+1}), \;\; \textrm{for each} \;\; i.$$
We set $h(p):=h(a_{k-1}), t(p):= t(a_{0})$, and we call $p$ a cyclic path if $h(p) = t(p)$.
We can concatenate two paths $p$ and $q$ to get the path $pq$ if $t(q)=h(p)$. Otherwise, we set $pq =0$.
\end{definition}

\begin{definition}
A path algebra $\Lambda_0 Q$ over the ring $\Lambda_0$ for a quiver $Q$ is defined as follows. As a $\Lambda_0$-module $\Lambda_0 Q$ is spanned by paths in $Q$.
The product is defined by linearly extending the concatenation of paths.  We take a completion $\widehat{\Lambda_0 Q}$ of $\Lambda_0 Q$ using a filtration defined by the minimal energy of the coefficients of elements in $\Lambda_0 Q$. In fact, all quiver path algebra in this paper will be the completed ones, and we will simply write
them as $\Lambda_0 Q$ without the hat notation.
We set $\Lambda Q := \Lambda \otimes_{\Lambda_0} \Lambda_0 Q$.
\end{definition}

For a vertex $v$, $\pi_v$ is an idempotent in $\Lambda Q^{\mathbb{L}}$.  They generates a semi-simple ring $\Lambda^\oplus$ defined below and
$\Lambda Q^{\mathbb{L}}$ can be regarded as a bimodule over $\Lambda^\oplus$.
%where $\Lambda^\oplus$ is defined as follows:
\begin{definition}
The ring $\Lambda^\oplus$ is defined as
\begin{equation}\label{eqn:semisimNov}
\Lambda^\oplus = \Lambda \pi_1 \oplus \cdots \oplus \Lambda \pi_k,
\end{equation}
with $\pi_i \cdot \pi_i = \pi_i$ and $\pi_i \cdot \pi_j = 0$ for $i \neq j$.
\end{definition}

It is well-known that a category is like an algebra over a semi-simple ring.  For instance, Seidel used the semi-simple ring $\Lambda^\oplus$ to regard an $\AI$-category as an $\AI$-algebra.
We have a  $\Lambda^\oplus$-bimodule 
$$\mathcal{C} = \cF(\mathbb{L}, \mathbb{L}):=  \bigoplus_{i,j}  \cF(L_i, L_j),$$
where $\cF(L_i,L_j) = \Span_{\Lambda} (L_i \cap L_j)$ for $i\not=j$ and $\cF(L_i,L_i) = \CF^\bullet (L_i \times_\iota L_i)$, and
the module structure is given by
$$\pi_i \cdot \mathcal{C} \cdot \pi_j = \cF(L_i, L_j).$$
Then, the tensor product of $\mathcal{C}$ over $\Lambda^\oplus$  boils down to composable morphisms and can be identified as
$$\mathcal{C}^{\otimes r} = \bigoplus_{i_1,\cdots, i_k} \cF(L_{i_1},L_{i_2}) \otimes_{\Lambda} 
\cdots \otimes_\Lambda  \cF(L_{i_{k-1}}, L_{i_k}).$$
%For example, 
%$$\cF(L_i, L_j) \otimes_{\Lambda^\oplus} \mathcal{C} = \cF(L_i, L_j)\cdot e_j \otimes_{\Lambda^\oplus} \mathcal{C} 
%= \CF(L_i, L_j) \otimes_{\Lambda^\oplus} e_j \cdot \mathcal{C} $$ In this way, 

In this setting, $\mathcal{C}$ is said to be unital if each $\CF(L_i,L_i)$ has a unit $\be_{L_i}$ and
if $\be_{\mathbb{L}} := \oplus \be_{L_i}$ becomes a unit of $\mathcal{C}$.

%Later, we will identify these $e_i$'s with idempotents in the path algebra of a certain quiver. We will see that there is a natural way to obtain idempotents of  the path algebra from the Floer theory over $\Lambda^\oplus$.

\section{Mirror construction}
As in Section \ref{sec:base}, first we perform a base change for the $A_\infty$-algebra:
%Given two $\Lambda^\oplus$-bimodules, $\widehat{\Lambda Q^{\mathbb{L}}}$ and $\CF(\mathbb{L}, \mathbb{L})$, we can take a tensor product over $\Lambda^\oplus$ to get 
$$\tilde{A}^{\bL} :=  \Lambda Q^{\mathbb{L}} \widehat{\otimes}_{\Lambda^\oplus} \CF(\mathbb{L}, \mathbb{L}).$$
Due to the bimodule structure, an expression $p X_e := p \otimes X_e$ for a path $p$ and $X_e \in \CF(L_i,L_j)$ is non-zero if and only if $t(p) = i$.
We use Definition \ref{defn:nccoeff} to extend the $\AI$-structure on $\CF(\mathbb{L}, \mathbb{L})$ to $\tilde{A}^{\bL}$.

Denote the formal variable in $\Lambda Q$ associated to each arrow $e$ of $Q$ by $x_e$, and denote the corresponding odd-degree immersed generator in $\CF(\bL,\bL)$ by $X_e$.  Now take the linear combination
\begin{equation*}
%\label{def:bnc}
b = \sum_e x_e X_e \in \tilde{A}^{\bL}. 
\end{equation*}
As in Definition \ref{def:deg}, $\deg x_e := 1 - \deg X_e$ so that $b$ has degree one.  In particular $\deg x_e$ is even. 
We define nc-weak Maurer-Cartan relations in the following way (assuming the Fukaya category $\mathcal{C}$ is unital).

\begin{definition} \label{def:mir-several}
The coefficients $P_f$ of the even-degree generators $X_f$ of \, $\CF(\bL,\bL)$ (other than the fundamental classes $\one_{L_i}$) in
$$m_0^b = m(e^b) = \sum_i W_i \one_{L_i} + \sum_f P_f X_f$$
are called the nc-weak Maurer-Cartan relations.  Let $R$ be the completed two-sided ideal generated by $P_f$.  Then define the noncommutative ring
$$\cA := \Lambda Q/ R.$$
$W_\bL = \sum_i W_i$ is called the worldsheet superpotential of \, $\bL$.
\end{definition}
We regard $\cA$ to be the space of noncommutative weakly unobstructed deformations of $\bL$.  
Instead of working on $\tilde{A}^\bL = \Lambda Q^{\mathbb{L}} \widehat{\otimes}_{\Lambda^\oplus} \CF(\mathbb{L}, \mathbb{L})$,
we define $\cA = \Lambda Q^{\mathbb{L}} / R$ as above and work on
$$A^\bL = \cA \widehat{\otimes}_{\Lambda^{\oplus}} \CF(\mathbb{L}, \mathbb{L}).$$ 
Now, the induced $\AI$-structure on  $A^\bL$ satisfies
\begin{equation}\label{eqn:defpotseveral}
m_0^b = W_\mathbb{L} \cdot \one_\mathbb{L} = \sum_i W_i \cdot \one_{L_i} \in \cA \widehat{\otimes}_{\Lambda^\oplus} \CF (\bL,\bL)
\end{equation}
where $W_\bL = \sum_i W_i$ and $\one_\mathbb{L} := \sum_i \one_{L_i}$.  We have $W_i = W_\mathbb{L} \cdot \pi_i$ where $\pi_i$ is the idempotent corresponding to the vertex $v_i$ (which is just the trivial path at $v_i$).
Geometrically $W_i$ counts the holomorphic polygons bounded by $\bL$ passing through a generic marked point in $L_i$.

$(\cA, W)$ is called a \emph{generalized mirror} of $X$, and we will construct a natural mirror functor from $\Fuk(X)$ to $\MF(\cA,W)$.

%
%{\color{red} Is there any relation between $PO(\mathbb{L},b)_i$?}
%
%
%
%For the $\AI$-algebra $\mathcal{C}$ over $\Lambda^\oplus$, we can take the tensor product over $\Lambda^\oplus$
%$$\mathcal{C}_{nc}:= \Lambda Q \widehat{\otimes}_{\Lambda^\oplus} \mathcal{C}.$$
%From the tensor product structure over $\Lambda^\oplus$,  any non-trivial term $p \otimes x$
%with $p \in \Lambda Q, x \in \CF(L_i, L_j)$, satisfies $h(p) = v_i$.
%We can define an $\AI$-algebra structure on $\mathcal{C}_{nc}$ by pulling out noncommutative coefficients as in the
%Definition \ref{defn:nccoeff}.
%Then, if we take an weak Maurer-Cartan element $b$ of $\mathcal{C}_{nc}$,  the corresponding weakly unobstructed potential $PO=PO_1+\cdots+PO_n$  become an element of a path algebra  $\widehat{\Lambda Q}$

\section{Centrality and the spacetime superpotential}
As in the construction for a single Lagrangian, the worldsheet superpotential $W_\bL$ constructed in our scheme is automatically central.  The proof is similar and we include it here for readers' convenience.

\begin{theorem}[Centrality]\label{thm:centralseveral}
The worldsheet superpotential $W_\mathbb{L} \in \cA$ given in Definition \ref{def:mir-several} is central.
\end{theorem}

\begin{proof}
Over $\cA$, we have $m_0^b = W_{\bL} \one_\bL = \sum_i W_i \one_{L_i}$.
Therefore
\begin{eqnarray*}
0 = m\circ \hat{m} (e^b) &= & m(e^b, \hat{m}(e^b),e^b) \\
&=& m_2(b,\sum_i W_i \one_{L_i}) + m_2(\sum_i W_i \one_{L_i},b) \\
&=& \sum_{i,e} \left( W_i x_e \cdot m_2(X_e, \one_{L_i}) + x_e W_i \cdot m_2(\one_{L_i},X_e) \right) \\
&=& \sum_{i,e} \left( W_i x_e \cdot \delta_{t(e),v_i}) - x_e W_i \cdot \delta_{v_i,h(e)} \right) X_e \\
&=& \sum_{e} (W_{\mathbb{L}} \cdot x_e - x_e \cdot W_{\mathbb{L}}) X_e.
\end{eqnarray*}
where 
%$h(e),t(e)$ denote the head and tail vertices of the edge $e$ respectively, and 
$\delta_{vw} = 1$ if $v=w$ and zero otherwise.
\end{proof}

In good situations the quiver algebra $\cA$ can be neatly written in the form of the Jacobi algebra with respect to a spacetime superpotential $\Phi_{\mathbb{L}} \in  \Lambda Q$.  Recall that the Jacobi algebra of a quiver $Q$ with a cyclic potential $\Phi$ in the path algebra is defined as follows:
\begin{definition}\label{defn:jacobiquiver1}
Let $Q$ be a quiver and $\Phi \in \Lambda Q$ be cyclic.  The Jacobi algebra $\Jac (Q, \Phi)$ is defined as
$$ \Jac (Q,\Phi) = \frac{\Lambda Q}{(\partial_{x_e} \Phi: e \in E)}$$
where $x_e$ are generators of the path algebra corresponding to the arrows of $Q$, partial derivatives $\partial_{x_e}$ are defined in \eqref{def:pd}, and $(\partial_{x_e} \Phi: e \in E)$ denotes the completed two sided ideal generated by $\partial_{x_e} \Phi$.
\end{definition}

%Hence, if the Maurer-Cartan equations are equivalent to the critical point equation of $\Phi_\mathbb{L}$ \eqref{eq:cysp} (the former implies the latter, but not vice versa in general), then the quotient by them exactly gives rise to the Jacobi algebra of $\Phi_\mathbb{L}$. This happens, for example, in the following situation.

Definition \ref{def:Phi} and Proposition \ref{prop:mccd} for the spacetime superpotential $\Phi = \Phi_\bL$ extends naturally to the current setting.  The proof is exactly the same and hence omitted.

\begin{prop}\label{prop:jac6}
Partial derivatives $\partial_{x_e} \Phi$ for $e \in E$ are noncommutative weak Maurer-Cartan relations.  Suppose $\dim_\C X$ is odd. If further every $L_i \in \bL$ is a Lagrangian sphere equipped with the standard height function, then
$$\cA = \Jac (Q, \Phi).$$
\end{prop}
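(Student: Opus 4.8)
The plan is to run the path-algebra analogue of the proof of Proposition~\ref{prop:mccd}, the only genuinely new ingredient being the presence of the idempotents $\pi_i$ in $\Lambda Q$. First I would record that, reading Definition~\ref{def:Phi} in the quiver setting, $\Phi_\bL = \sum_k \frac{1}{k+1}\langle m_k(b,\dots,b),b\rangle$ is a \emph{cyclic} element of $\Lambda Q$, so the cyclic derivative $\partial_{x_e}$ of \eqref{def:pd} acts on it. Because every variable $x_e$ has even degree (Definition~\ref{def:deg}), no sign subtleties arise, and the cyclicity assumption on $\CF(\bL,\bL)$ yields, exactly as in Proposition~\ref{prop:mccd},
$$\partial_{x_e}\Phi_\bL = \sum_k \langle X_e, m_k(b,\dots,b)\rangle = \langle X_e, m_0^b\rangle = P_{\bar e},$$
where $X_{\bar e}$ is the unique generator pairing nontrivially with $X_e$ under the intersection pairing. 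This establishes the first assertion: each $\partial_{x_e}\Phi_\bL$ is one of the nc-weak Maurer--Cartan relations of Definition~\ref{def:mir-several}.

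For the second assertion I would argue that, when $n := \dim_\C X$ is odd and each $L_i\cong S^n$ carries the standard height function, the even-degree generators of $\CF(\bL,\bL)$ other than the units $\one_{L_i}$ are \emph{exactly} the complementary generators $X_{\bar e}$, $e\in E$. Indeed the intersection pairing has degree $-n$ with $n$ odd, so $\deg g$ and $\deg \bar g$ always have opposite parity; the odd-degree generators of $\CF(\bL,\bL)$ are the arrows of $Q$ (the odd immersed/intersection generators) together with the minima $T_{L_i}$ of degree $n$, and the latter are paired with the units $\one_{L_i}$. Since $m_0^b$ has even degree, it has no component along the odd-degree generators $X_e$ or $T_{L_i}$, so $m_0^b = \sum_i W_i \one_{L_i} + \sum_{e} P_{\bar e} X_{\bar e}$; combined with the identity above this identifies the ideal $R$ of Definition~\ref{def:mir-several} with $(\partial_{x_e}\Phi_\bL : e\in E)$, hence $\cA = \Lambda Q / R = \Jac(Q,\Phi)$ by Definition~\ref{defn:jacobiquiver1}.

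The main thing to be careful about --- and the only real departure from the single-Lagrangian case --- is the bookkeeping with idempotents: one must check that cutting a cyclic path of $\Lambda Q$ at an occurrence of $x_e$ produces an element of the correct corner $\pi_{h(e)}(\Lambda Q)\pi_{t(e)}$, that $X_e\in\CF(L_i,L_j)$ pairs only with its complement $X_{\bar e}\in\CF(L_j,L_i)$ (so $\bar e$ reverses the arrow), and that the pull-out rule \eqref{eqn:pulloutrule} together with the $\Lambda^\oplus$-bimodule structure make the equality $\partial_{x_e}\Phi_\bL = P_{\bar e}$ consistent head-to-tail, in particular that mismatched concatenations vanish on both sides. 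Once this is verified, every step of the proof of Proposition~\ref{prop:mccd} transcribes verbatim.
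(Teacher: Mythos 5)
Your argument is correct and follows essentially the same route as the paper, which simply transcribes the proof of Proposition \ref{prop:mccd} to the quiver setting: cyclicity gives $\partial_{x_e}\Phi = P_{\bar e}$, and for odd $\dim_\C X$ with sphere branches the degree/parity count shows the only even-degree generators besides the units $\one_{L_i}$ are the $X_{\bar e}$, so $R = (\partial_{x_e}\Phi)$. The extra idempotent bookkeeping you flag is exactly the (routine) verification the paper leaves implicit.
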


%
%
%For an arrow $a$ of $Q^\mathbb{L}$, we will write $X_a$ (resp. $\bar{X}_a$) to denote the odd (resp. even) degree immersed generator from the intersection point corresponding to $a$. 

%{\color{red}
%\begin{remark}(??)
% In Calabi-Yau case, more careful observation tells us that $\hat{k Q^\mathbb{L}}$ is a subalgebra of so called the {\em Ginzburg algebra} which is  dual to the tensor coalgebra corresponding to the $A_\infty$-algebra $\mathcal{C}$. We revisit this point of view in Section \ref{sec:TQginz}.
%\end{remark}
%}

\section{The mirror functor}\label{sec:FuktoMF}

In this section, we construct an $A_\infty$-functor
\begin{equation}\label{eqn:LMfunct}
\cF = \cF_\mathbb{L} : \Fuk (X) \to \MF(\Jac(Q^\mathbb{L}, \Phi_{\mathbb{L}}); W_{\mathbb{L}}).
\end{equation}
The construction is essentially the same as the one given in Chapter \ref{sec:singlelagfunctor}, except that now we are working over the semi-simple ring $\Lambda^\oplus$.  We give a detailed description of the object level functor $(\cF_\mathbb{L})_0$ which gives a concrete manifestation of the role of $\Lambda^\oplus$. 

Recall that we have 
$A^\bL = \cA \widehat{\otimes}_{\Lambda^{\oplus}} \CF(\mathbb{L}, \mathbb{L})$ and $\cA = \Lambda Q^{\mathbb{L}} / R$, where $R$ denotes the ideal generated by weak Maurer-Cartan relations.
Then the object level correspondence
$$ \mathrm{Ob} (\Fuk (X) ) \to \mathrm{Ob} (\MF (W)) $$
is defined by sending an unobstructed Lagrangian $U$ to the following projective $\Z_2$-graded $\cA$-module with an odd endomorphism $\delta_U$.

\begin{equation}\label{eqn:defPL}
\cF(U) := (\mathcal{A} \otimes_{\Lambda^{\oplus}} \CF(\mathbb{L}, U), \delta) \cong \left(\bigoplus_i \bigoplus_{v \in L_i \cap U} \cA \cdot \pi_i, \delta \right) 
\end{equation}
where $\Lambda^\oplus$-modules structure on $\CF(\bL,U)$ is defined by
$\pi_i \cdot v = 1$ for $v \in L_i \cap U$ and $\pi_i \cdot v= 0$ otherwise.
In the last identification, $\pi_i$ is the idempotent corresponding to the vertex $v_i$ so that $\cA \cdot \pi_i$ consists of (classes of) paths which start from the vertex $v_i$.

%\begin{equation}\label{eqn:defPL}
%\cF(U) := \left(\bigoplus_i \bigoplus_{v \in L_i \cap U} \cA \cdot \pi_i, \delta \right)
%\end{equation}
%where $\pi_i$ is the idempotent corresponding to the vertex $v_i$.  More concretely $\cA \cdot \pi_i$ consists of (classes of) paths which start from the vertex $v_i$.

%A significant difference from the single Lagrangian case is that we impose the relation to \eqref{eqn:defPL}
%\begin{equation}\label{eqn:defPL1}
%e_j \cdot \langle v \rangle = \delta_{ij} \langle v \rangle 
%\end{equation}
%for $v \in L_i \cap L$ so that $P_L$ admits a $\Lambda^\oplus$-module which is consistent with the geometric situation. For example, if $L$ intersects only one of branches of $\mathbb{L}$, say $L_i$, then $\pi_j \cdot P_L=0$ unless $j=i$. 
The $\Z_2$-grading naturally comes from the standard Lagrangian Floer theory.
For $v \in \mathbb{L} \cap U$, $\delta(v)=(-1)^{\deg v - 1} m_1^{b,0}$ which counts holomorphic strips bounded by $\mathbb{L}$ and $U$.  This defines $\delta$ by $A$-linearity.

%More precisely, if there is a holomorphic strip $u$ with a configuration as in (a) of Figure \ref{}, we get a term $x_{a_1} \cdots x_{a_n} y'$. Now for $b \in Jac(Q^\mathbb{L}, \Phi_\mathbb{L})$, $\delta (x_b y)$ will have a term $x_b x_{a_1} \cdots x_{a_n} y'$ by the linearity of $\delta$ which also fits into geometric intuition as shown in (b) of Figure \ref{}.

\begin{prop}
$\cF(U)$ defined above is a matrix factorization of $(\cA,W_\mathbb{L})$, namely $\delta^2 = W \cdot \mathrm{Id}$.
\end{prop}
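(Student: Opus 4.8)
The plan is to mimic the single-Lagrangian argument of Section \ref{sec:singlelagfunctor} (the proposition there preceding Theorem \ref{thm:F}), paying the extra attention required for the idempotents $\pi_i$ coming from the semi-simple ring $\Lambda^\oplus$. First I would check that $\cF(U)$ is a well-formed object at all: the identification $\cA \otimes_{\Lambda^\oplus} \CF(\bL,U) \cong \bigoplus_i \bigoplus_{v \in L_i \cap U} \cA\pi_i$ exhibits the underlying module as a direct summand of a free $\cA$-module, hence projective, and the $\Z_2$-grading is the Floer grading, so $\delta = \delta_0 + \delta_1$ is odd and raises the Floer degree by one as a matrix factorization requires. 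The key point for $\delta := (-1)^{\deg(\cdot)-1} m_1^{b,0}$ to be an honest left-$\cA$-module map is that, by the pull-out rule \eqref{eqn:pulloutrule}, a module coefficient $g \in \cA$ placed on the last ($U$-side) input is pulled out to the \emph{left} of all the $x_e$'s produced by the $b$-insertions, i.e. $m_{k+1}(b,\dots,b,g\cdot v) = g\cdot m_{k+1}(b,\dots,b,v)$, so that $\delta(g\cdot v) = g\cdot\delta(v)$.

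Next comes the computation of $\delta^2$. I would apply the $A_\infty$-relation for the deformed operations $\{m_k^{b,0,\dots,0}\}$ to a single input $v \in \CF(\bL,U)$, which, exactly as in the single-Lagrangian case, reads
$$m_1^{b,0}\circ m_1^{b,0}(v) + m_2^{b,0}(m_{0,\bL}^b, v) + (-1)^{\deg v - 1}\, m_2^{b,0}(v, m_{0,U}) = 0.$$
The third term vanishes since $U$ is unobstructed, $m_{0,U} = 0$. For the second term I would invoke \eqref{eqn:defpotseveral}, which says that over $\cA = \Lambda Q^\bL / R$ we have $m_{0,\bL}^b = \sum_i W_i \one_{L_i}$ — this is precisely the point of quotienting by the ideal $R$ generated by the remaining coefficients $P_f$. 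Expanding $m_2^{b,0}\big(\sum_i W_i\one_{L_i}, v\big) = \sum_{k,i} m_{k+2}(b,\dots,b, W_i\one_{L_i}, v)$ and using unitality to kill every $k \geq 1$ term leaves $\sum_i W_i\, m_2(\one_{L_i}, v)$; if $v \in L_j \cap U$ this collapses to $W_j\cdot v = (W_\bL\pi_j)\cdot v = W_\bL\cdot v$. Hence $m_1^{b,0}\circ m_1^{b,0}(v) = -W_\bL\cdot v$, and tracking the signs from $\delta = (-1)^{\deg(\cdot)-1} m_1^{b,0}$ in the same way as in the single-Lagrangian case (so that $\delta^2(v) = -m_1^{b,0}\circ m_1^{b,0}(v)$) yields $\delta^2(v) = W_\bL\cdot v$, as desired.

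The computation is essentially routine once the single-Lagrangian case and the centrality theorem (Theorem \ref{thm:centralseveral}) are in hand; the only real point requiring care is the interaction of the idempotents with the $A_\infty$-unit — namely checking that $m_2(\one_{L_i}, v)$ vanishes unless $v$ begins on $L_i$, and that $\pi_j$ then acts as the identity on $\cA\pi_j \ni W_\bL\pi_j$ — together with making sure, via the pull-out rule, that every noncommutative coefficient lands on the correct (left) side so that $\delta$ is genuinely left-$\cA$-linear. I expect this idempotent bookkeeping, rather than any analytic or homological-algebraic difficulty, to be the main (and only) obstacle.
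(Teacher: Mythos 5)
Your proposal is correct and follows essentially the same route as the paper: apply the deformed $A_\infty$-relation to $v$, kill the $m_{0,U}$ term by unobstructedness, use $m_{0,\bL}^b = \sum_i W_i \one_{L_i}$ over $\cA$ together with unitality, and then let the idempotents collapse $\sum_j W_j m_2(\one_{L_j},v)$ to $W_i\cdot v = W_\bL\cdot v$ for $v \in L_i\cap U$, with the same sign bookkeeping $\delta^2 = -m_1^{b,0}\circ m_1^{b,0}$. The extra checks you include (projectivity of the module and left-$\cA$-linearity of $\delta$ via the pull-out rule) are not in the paper's proof of this proposition but are consistent with how the paper handles them elsewhere.
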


\begin{proof}
%The main idea is essentially the same as one given in Section \ref{sec:singlelagfunctor}, except for the semisimple ring $\Lambda^\oplus$. 
%Recall that $(\mathbb{L},b)$ is weakly unobstructed with a potential $m^b_{0,A} = PO(A,b)\cdot e$, and $B \in Ob(\CC_{m_0=0})$ is unobstructed with $m^0_{0,B}=0$.
%By the filtered $\AI$-(bimodule) equation for 
%$ m_1^{b,0}$, we have
%\begin{equation*}
% m_1^{b,0} \circ m_1^{b,0} (x) + m_2^{b,0}(m^b_{0,A}, x) + (-1)^{deg' x} m_2^{b,0}( x, m^0_{0,B})=0.
%\end{equation*} 
%$$m_1^{b,0} \circ m_1^{b,0} (x) = - m_2^{b,0}(m^b_{0,A}, x) = - W \cdot x.$$
%Unraveling the sign convention \eqref{eq:dgsign}, if we set $ d(x) = (-1)^{|x|} m_1^{b,0}(x)$, 
%then 
%$$d^2(x) = d(d(x)) = (-1)^{|x|}(-1)^{|x|+1} m_1^{b,0} \circ m_1^{b,0} (x) = -(-W \cdot x) = W \cdot x$$
%
For $v \in L_i \cap U$, we have
\begin{eqnarray*}
\delta^2 (v) &=& (-1)^{|v|}(-1)^{|v|+1} m_1^{b,0} \circ m_1^{b,0} (v)\\
&=& m_2^{b,b,0}(m^b_{0,\mathbb{L}}, v) \\
&=& \sum_{j=1}^{k} m_2 ( W_j {\bf 1}_{L_j} , v)
\end{eqnarray*}
by the property of the unit. Since $v \in L_i \cap U$, $\pi_j \cdot v =0$ for $j \neq i$ so that
$$\sum_{j} m_2 (W_j {\bf 1}_{L_j} , v) = W_i \cdot m_2 ({\bf 1}_{L_i} , v) = W_i \cdot v.$$ 
Using the same property again, we have
$$W_i \cdot v = (W_1 \pi_1 + \cdots +W_k \pi_k) \cdot v = W \cdot v$$ 
and hence $\delta^2 v = W \cdot v$.
\end{proof}

The rest of the construction of $\cF_\mathbb{L}$ is similar to that in Chapter \ref{sec:singlelagfunctor} and we do not repeat here. We finish the section with the proof of the injectivity of our functor restricted to $\Hom(\bL,U)$ for any $L \in \Fuk(X)$ which is a generalization of Theorem \ref{thm:injthmsingle}.

\begin{theorem}[Injectivity] \label{thm:inj}
If the $\AI$-category is unital, then the  mirror $\AI$ functor $\mathcal{F}^\bL$ is injective on $H^\bullet(\Hom(\bL, U)$ (and also on  $\Hom(\bL, U)$) for any $U$.
\end{theorem}
\begin{proof}
The only difference is that now we are working over the semi-simple ring $\Lambda^\oplus$. Recall that
$\cF(U) =  (\mathcal{A} \otimes_{\Lambda^{\oplus}} \CF(\mathbb{L}, U), (-1)^{\deg (\cdot)} m_1^{b,0} (\cdot))$.
For a morphism $\phi \in \Hom(\mathcal{F}(\bL), \mathcal{F}(U))$, we define $\Psi (\phi)$ by 
$$\Psi (\phi) := \phi(\be_{\bL})|_{b=0} = \sum_{i=1}^k \phi ( \be_{L_i})|_{b=0} \in \CF(\bL, U) = \oplus_{i=1}^k \CF(L_i, U)$$
which is a chain map by the same argument as in the proof of Theorem \ref{thm:injthmsingle}. 

Now, for $p \in L_i \cap U (\subset \bL \cap U)$, 
$$\big( \Psi \circ \mathcal{F}^\bL_1  \big) (p) =  \big( \mathcal{F}^\bL_1(p) (\be_\bL) \big)|_{b=0}
= \big( m_2^{b,0,0}(\be_\bL, p) \big)|_{b=0} = m_2^{0,0,0}(\be_{L_i},p) =p$$
which implies that $\Psi$ is a right inverse of $\mathcal{F}^{\bL}_1$.
\end{proof}

%-----------------------------------------------------------------------
% Beginning of chap1.tex
%-----------------------------------------------------------------------
%
%  AMS-LaTeX sample file for a chapter of a monograph, to be used with
%  an AMS monograph document class.  This is a data file input by
%  chapter.tex.
%
%  Use this file as a model for a chapter; DO NOT START BY removing its
%  contents and filling in your own text.
% 
%%%%%%%%%%%%%%%%%%%%%%%%%%%%%%%%%%%%%%%%%%%%%%%%%%%%%%%%%%%%%%%%%%%%%%%%

%\part{This is a Part Title Sample}

\chapter{Finite group symmetry and graded mirror functors}\label{sec:fingpsymm}
In this chapter, we consider the noncommutative mirror construction for a K\"ahler manifold with a finite group symmetry
and define $\Z$-graded mirror functors in the Calabi-Yau cases.  The former is in parallel with the commutative construction in \cite[Section 5]{CHL13}. One advantage of the current setup is that it naturally allows non-Abelian groups. 

Finite group symmetry was used in the study of mirror symmetry since the earliest work of Candelas et.al. \cite{candelas91}.  It plays an important role in recent proofs of homological mirror symmetry for Fermat-type hypersurfaces by Seidel and Sheridan \cite{Se2},\cite{Se},\cite{Sh}.  Quotienting out a finite group symmetry often reduces to simpler geometries (for instance orbifold projective spaces) where homological mirror symmetry can be proved.  Then one can go back to the original manifold by incorporating the equivariant and/or $\Z$-graded constructions. 

\begin{example}
$\mathbb{P}^1_{3,3,3}$ is the $(\Z_3)^2$ quotient of the hypersurface 
$$M=\left\{[x,y,z] \in \mathbb{P}^2 \mid x^3+y^3+z^3=0 \right\}$$
where $(\Z_3)^2 \cong (\Z_3)^{3} / (1,1,1)$ acts on $[x,y,z]$ by componentwise multiplication.
The quotient of $M$ by $\Z_3$ (generated by $(1,-1,0)$) is the elliptic curve $E$ with complex multiplication by the cube root of unity.  By the equivariant construction in this section, a $\Z_2$-graded mirror of the Fukaya category of $E$ (resp.  $M$) is given as the $\Z_3$-equivariant (resp.  $(\Z_3)^2$-equivariant) matrix
factorization category of $W_{3,3,3}$. On the other hand both $M$ and $E$ are Calabi-Yau.
But  $\Z_3$-action on $E$ (whose quotient is $\mathbb{P}^1_{3,3,3}$ ) does not preserve the holomorphic volume form.  Thus, the mirror functor from orbi-sphere is not $\Z$-graded, but
upstairs functors can be made $\Z$-graded. Namely, from our graded construction in this section, we obtain a $\Z$-graded mirror of the Fukaya category of $E$ (resp.  $M$), which is given as the $\Z$-graded (resp. $\Z$-graded $\Z_3$-equivariant) matrix factorization category of $W_{3,3,3}$.
\end{example}

\begin{example}\label{ex:mainfgs}
Consider the superpotential $W = xyz$ defined on $\C^3$.  This is mirror to the pair-of-pants (see \cite{Se},  \cite{AAEKO}).  Let $Q$ be the quiver with only one vertex and three arrows labeled by $x,y,z$.  Take $\cA = \Lambda Q / R = \C[x,y,z]$ where $R$ is generated by the commutator relations $xy-yx,xz-zx,yz-zy$.  Then $\C^3 = \mathrm{Spec} \, \cA$.

First take the following $\Z_2$-grading.  The variables $x,y,z$ have degree $0$.  (The dual generators $X,Y,Z$ have degree $1$ so that $xX+yY+zZ$ has degree $1$.)  
%Let $M_0 = M_1 = \cA$ regarded as free $\cA$-modules.  Then the left multiplications $x: M_0 \to M_1[1]$ and $yz: M_1[1] \to M_0$ 
Let $M_0 =\cA $ and $ M_1 = \cA [-1]$ regarded as free $\cA$-modules.  Then the left multiplications $x: M_0 \to M_1$ and $yz: M_1 \to M_0$ (which are of degree $1$ due to the shifting) give a $\Z_2$-graded matrix factorization of $W$.

We can refine it by taking a fractional grading of $\cA$, where $x,y,z$ have degree $2/3$.  This is mirror to the fact that the pair-of-pants equals to a $\Z_3$-quotient of a punctured elliptic curve (which is $\Z$-graded).  Then $W$ has degree $2$. (See Example \ref{ex:mainref2} below.) The previous factorization  can be modified so that 
the following left multiplications 
$$x: \cA \to \cA[-1/3], \quad yz:  \cA[-1/3] \to \cA$$
%$$x: M_0 \to M_1[1/3], \quad yz:  M_1[1/3] \to M_0$$
both are degree one maps. In this way,  they give a $\frac{1}{3} \cdot \Z$-graded matrix factorization of $W$.

Equivalently we can make it integrally graded by multiplying the fractional degree by $3/2$.  Then $x,y,z$ have degree 1 and $W$ has degree 3.  We take $x: \cA \to \cA[-2]$
%$x: M_0 \to M_1[2]$ 
which has degree $3$ and $yz: \cA[-2] \to \cA$
%$yz: M_1[2] \to M_0$ 
which has degree $0$.  This gives a graded matrix factorization in the sense of Orlov.

The above gives the $\Z$-graded Landau-Ginzburg model for the quotient $(\C^3/\Z_3,W=xyz)$, which is mirror to the punctured elliptic curve as a 3-to-1 cover of the pair-of-pants.  We can introduce a quiver in order to describe $\Z_3$-equivariant modules.  The quiver is depicted in Figure \ref{fig:quiver-C3modZ3}, where the arrows are labeled by $x_i,y_i,z_i$ for $i \in \Z_3$ (see Example \ref{ex:mainref1} for more details).  $\Z_3$-equivariant modules are given as modules over $\hat{\cA}$ which is the path algebra of $Q \# \Z_3$ quotient by the commutator relations $x_{i+1}y_{i}-y_{i+1}x_i,x_{i+1}z_i-z_{i+1}x_i,y_{i+1}z_i-z_{i+1}y_i$ for $i\in \Z_3$.  The lifted superpotential is $\hat{W} = \sum_{i \in \Z_3} x_i y_{i+1} z_{i+2}$.  We have the corresponding matrix factorization $(z_2y_1+ z_0 y_2 + z_1y_0  ) \cdot (x_0+ x_1 + x_2)$.
%$(z_3 y_2 + z_1y_3 + z_2y_1) \cdot (x_1 + x_2 + x_3)$.  
It can be made into $\Z$-graded by taking $\deg x_i = \deg y_i=\deg z_i=0$ for $i=0,1$ and $\deg x_2 = \deg y_2=\deg z_2=2$.  $W$ has degree $2$ and the differential in the matrix factorization has degree $1$ (by suitable degree shiftings). See Examples \ref{ex:intdegups} and \ref{ex:intdegups1} for more details.
\end{example}

\begin{figure}[h]
\begin{center}
\includegraphics[scale=0.45]{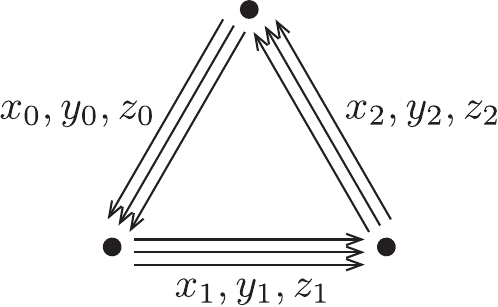}
\caption{The quiver corresponding to $\C^3/\Z_3$.}\label{fig:quiver-C3modZ3}
\end{center}
\end{figure}

The above example manifests some interesting aspects of group action and grading, and we will study them in a general framework.  Proposition \ref{prop:hatA} provides the mirror quotient geometry.  The mirror $B$-model category is given in Proposition \ref{prop:MF}.  If in addition the manifold is Calabi-Yau and the group action preserves the holomorphic volume form up to scaling, then the mirror quotient geometry is equipped with a natural $\Z$-grading and the functor respects the grading (Proposition \ref{prop:F-graded}).

\section{K\"ahler manifolds with finite group symmetry}
Let $(\widetilde{X},\omega)$ be a K\"ahler manifold where $\omega$ denotes a K\"ahler form.  Suppose we have a finite group $G$ acting on $\widetilde{X}$ such that for each $g \in G$, $g^* \omega = \omega$.  Then $X = \widetilde{X}/G$ is a K\"ahler orbifold.  At this point we \emph{do not assume that $G$ is Abelian.}

Let $\bL = \bigcup_i L_i \subset X^0$ be a compact spin oriented weakly unobstructed immersed Lagrangian with clean self-intersections, where 
$$X^0 := (X - \{p \in X: G_p \not= \{1\}\})/G,$$
and assume that each $L_i$ lifts to a smooth compact unobstructed Lagrangian submanifold $\tilde{L}^1_i \subset \widetilde{X}$ (and we fix this choice of lifting).  We have $|G|$ choices of a lifting, which are denoted as $\bL^g := g \cdot \bL^1$ where $\bL^1 = \bigcup_i \tilde{L}^1_i$.  The union $\bigcup_g \bL^g$ is denoted as $\tilde{\bL}$.  By assumption $\bL^g$ intersects with $\bL^h$ cleanly for $g \not= h$ (and it is assumed that $\bL^g \not= \bL^h$).  We assume that the Fukaya $\AI$-category of $\widetilde{X}$
%whose objects are Lagrangian submanifolds in $X^0$ including the objects $\bL$, 
is strictly $G$-equivariant (i.e. we have $m_k(gw_1,\cdots, gw_k) = gm_k(w_1,\cdots,w_k)$ for any $g\in G$) and unital.  Then the Fukaya category of $X$ is defined as the $G$-invariant part of the Fukaya category of $\widetilde{X}$.

We carry out the construction described in Chapter \ref{sec:MFseverallag} for $\bL \subset X$ downstairs and obtain $(\cA, W)$ where $\cA = \Lambda Q/R$ is a quiver algebra and $W$ is a central element of $\cA$.  We have a functor $\cF:\Fuk(X) \to \MF(\cA,W)$.  Similarly we can carry out the mirror construction upstairs for $\tilde{\bL} \subset \widetilde{X}$.  The aim is to understand the relation between the mirror and the functor of $\widetilde{X}$ upstairs and that of $X$ downstairs.

\subsection{Action by the character group}\label{sec:dualgponmir}

We show that there exists a canonical action of $\hat{G}$ on $\cA$, where $\hat{G} = \Hom(G,U(1))$ denotes the character group of $G$.  Moreover we have $W \in \cA^{\hat{G}}$, the $\hat{G}$-invariant subalgebra.  When $G$ is Abelian, the $\hat{G}$ quotient $(\cA^{\hat{G}},W)$ is essentially the mirror of $\widetilde{X}$.  (See \ref{sec:G-MF} for the precise mirror category.)  

On the other hand taking the character group of $G$ loses information when $G$ is non-Abelian.  In the next subsection we formulate the notion of a `formal dual group quotient' (Definition \ref{def:formal-quot}).  We shall formulate the mirror of $\widetilde{X}$ as a formal dual-$G$ quotient of $(\cA,W)$ for a general finite group $G$.

\begin{definition}
Fix a smooth lift $\bL^1 \subset \widetilde{X}$ of $\bL \subset X$.  An immersed generator $x$ of $\bL$ is lifted to a Floer morphism from $\bL^1$ to $\bL^g$ for some $g\in G$.  This defines a map $\mathfrak{f}: x \mapsto g$ from the set of immersed generators of $\bL$ to $G$. 
\end{definition}

$\mathfrak{f}$ extends to a map $\Lambda Q \to G$.  A loop in $Q$ is mapped to $1 \in G$.

\begin{remark}
Denote the lift of $x$ by $\hat{x}$.  If we change the lift to $h\cdot\bL^1$ instead of $\bL^1$, then $x$ is lifted to $h\cdot\hat{x}$ instead, which is a morphism from $h\cdot\bL^1$ to $h\cdot g\cdot \bL=(h\cdot g\cdot h^{-1}) \cdot (h\cdot\bL^1)$.  Thus the function $\mathfrak{f}$ changes to $h \cdot \mathfrak{f} \cdot h^{-1}$.  Throughout the section we have fixed the choice of lift $\bL^1$.
\end{remark}

\begin{definition} \label{lem:g_x}
We define an action of $\hat{G}$ on the path algebra $\Lambda Q$ as follows.
Let $\chi \in \hat{G}$, and $x_p\ldots x_1 \in \Lambda Q$.  Let $g_i = \mathfrak{f}(x_i)$.  Then
\begin{equation} \label{eq:G-on-x}
\chi \cdot x_p\ldots x_1 := \chi(g_1)\ldots \chi(g_p) x_p\ldots x_1
\end{equation}
which extends linearly to give an action of $\hat{G}$ on $\Lambda Q$.
\end{definition}

The following proposition ensures that $(\cA,W)$ is preserved by the action of $\hat{G}$.  The mirror of $\tilde{X}$ can be taken to be the $\hat{G}$ quotient of $(\cA,W)$ when $G$ is Abelian.

\begin{prop}\label{prop:G-on-A}
$\hat{G}$ acts on each weakly unobstructed relation by an overall $U(1)$-scaling.  Thus the two-sided ideal $R$ generated by the weakly unobstructed relations remains invariant under $\hat{G}$, and hence the action of $\hat{G}$ descends to $\cA = \Lambda Q/R$.  Moreover $W \in \Lambda Q/R$ is invariant under $\hat{G}$.
\end{prop}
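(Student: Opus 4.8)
The plan is to show that the $\hat{G}$-action on $\Lambda Q$ defined by \eqref{eq:G-on-x} scales each weakly unobstructed relation $P_f$ by a single character value, so that the ideal $R = \langle P_f : f \in F\rangle$ is stable, and then observe that $W$ sits in degree $1 \in G$ and hence is fixed. The conceptual point I would exploit is that the function $\mathfrak f$ of Lemma \ref{lem:g_x}, extended multiplicatively to paths, endows $\Lambda Q$ with a $G$-grading (into $G$-graded pieces, indexed by the group element $\mathfrak f(x_{i_1})\cdots \mathfrak f(x_{i_p}) \in G$), and the $\hat G$-action is exactly the action coming from this $G$-grading: $\chi$ acts on the $g$-homogeneous component by the scalar $\chi(g)$. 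So it suffices to prove that each $P_f$ is $G$-homogeneous.

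First I would recall how the $P_f$ arise: they are the coefficients of the even-degree generators $X_f$ in $m_0^b = m(e^b) = \sum_k m_k(b,\dots,b)$, where $b = \sum_e x_e X_e$. A typical monomial appearing in $P_f$ comes from a term $m_k(X_{e_1},\dots,X_{e_k})$ that has $X_f$ as an output, contributing (by the pull-out rule \eqref{eqn:pulloutrule}) the path $x_{e_k}\cdots x_{e_1}$ (up to a Novikov coefficient). Now lift to $\widetilde X$: the generator $X_{e_i}$ corresponds to an intersection point in $\bL^1 \cap \bL^{g_i}$ with $g_i = \mathfrak f(X_{e_i})$, and $X_f$ corresponds to an intersection point in $\bL^1 \cap \bL^{g_f}$. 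A holomorphic polygon contributing to $m_k(X_{e_1},\dots,X_{e_k})$ with output $X_f$, once lifted, has boundary on a chain of lifts $\bL^{h_0}, \bL^{h_1}, \dots, \bL^{h_k}$ with $h_0 = 1$, consecutive corners forcing $h_i = h_{i-1} g_i$ (after translating each corner to the fixed lift by the $G$-equivariance of the $m_k$'s), and the output corner forcing $h_k = g_f$. Hence $g_1 g_2 \cdots g_k = g_f$ in $G$, i.e. $\mathfrak f(x_{e_1})\cdots \mathfrak f(x_{e_k}) = g_f$, so every monomial in $P_f$ lies in the same $G$-homogeneous component, labeled by $g_f$. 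Therefore $\chi \cdot P_f = \chi(g_f)\, P_f$, an overall nonzero scalar. Consequently $\chi \cdot R = R$ and the $\hat G$-action descends to $\cA = \Lambda Q/R$.

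Finally, for $W$: it is the coefficient of $\one_{L_i}$'s in $m_0^b$, i.e. the count of polygons bounded by $\bL$ through a generic point, and each monomial of $W$ is a \emph{cyclic} path (a path from $v_i$ to $v_i$) built out of the $x_e$; being a cyclic path, its lift closes up, forcing $\mathfrak f(x_{e_1})\cdots \mathfrak f(x_{e_k}) = 1 \in G$. Equivalently, this is the statement in Lemma \ref{lem:g_x} that loops map to $1$, applied to the cyclic words occurring in $W$. Hence $\chi \cdot W = \chi(1)\, W = W$, so $W \in (\Lambda Q/R)^{\hat G}$.

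The one step that needs care — and which I regard as the main obstacle — is the bookkeeping in the lifting argument: checking that, after using the strict $G$-equivariance of the $m_k$ to move each corner of a lifted polygon to the fixed reference lift $\bL^1$, the group elements multiply in exactly the order matching the monomial $x_{e_k}\cdots x_{e_1}$ read off by the pull-out rule \eqref{eqn:pulloutrule}, and that the output corner indeed contributes $g_f$ on the nose (not $g_f^{-1}$ or a conjugate). This is a purely combinatorial consistency check about how corners of a holomorphic polygon with boundary on $\tilde{\bL} = \bigcup_g \bL^g$ are indexed, and once it is set up correctly the $G$-homogeneity of $P_f$ and the $\hat G$-invariance of $W$ follow immediately. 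Everything else is formal, given Lemma \ref{lem:g_x} and the equivariance hypothesis on the Fukaya $\AI$-category stated in this subsection.
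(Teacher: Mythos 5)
Your main argument for the scaling of the relations is the same as the paper's, just with the bookkeeping made explicit: the paper's proof simply asserts that, since the output generator $\bar X$ corresponds to a morphism from $\bL^1$ to $\bL^g$, the coefficient $P_{\bar X}$ is scaled by $\chi(g)$, and your lifting argument with the chain of lifts $\bL^{h_0},\dots,\bL^{h_k}$ is exactly the computation behind that assertion (it is the same correspondence of moduli spaces that the paper spells out later in the proof of Proposition \ref{prop:hatA}, using the strict $G$-equivariance of the $m_k$). The ordering subtlety you single out as the main obstacle is in fact harmless for this proposition: since $\hat G=\Hom(G,U(1))$ takes values in an abelian group, $\chi(g_1\cdots g_k)$ is independent of the order of the factors, so all you need is that every monomial of $P_f$ carries the same group element, which your corner analysis gives.

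The weak point is your justification of the last claim, the invariance of $W$. It is not true that a cyclic path in $Q$ is sent to $1\in G$ by the multiplicative extension of $\mathfrak f$: in the paper's own application to $\bP^1_{3,3,3}$ the quiver has a single vertex, so every monomial is a cyclic path, yet the monomials $yz$, $zy$, $x^2$ of a weakly unobstructed relation all map to $g^2\neq 1$ (which is exactly why those relations are scaled nontrivially, as the proposition permits). So the step ``being a cyclic path, its lift closes up'' proves too much; closure of the lift is not a consequence of cyclicity in $Q$, and the ``loops map to $1$'' clause of Lemma \ref{lem:g_x} cannot be read as a statement about arbitrary cyclic paths. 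The correct reason --- and the one the paper's proof uses --- is already contained in your own output-corner computation: the monomials of $W_i$ arise from polygons whose output is $\one_{L_i}$, and the unit is a morphism from a lift $\bL^{h}$ to the same lift, i.e.\ its associated group element is $1\in G$; specializing your identity $\mathfrak f(x_{e_1})\cdots\mathfrak f(x_{e_k})=g_f$ to $g_f=1$ gives $\chi\cdot W=W$. With that one-line replacement your proof coincides with the paper's.
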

\begin{proof}
Consider each term $P_{\bar{X}} \cdot \bar{X}$ of $m_0^{(\bL,b)}$ where $\bar{X}$ is an even-degree morphism.\footnote{In Chapter \ref{sec:333} and \ref{sec:4puncture2222}, $\bar{X}$ is used to denote the even-degree complementary generator to an odd-degree immersed generator $X$.  In general complementary generators can have either even or odd degrees depending on the dimension of the Lagrangian.
In this section $\bar{X}$ just refers to an arbitrary even-degree generator.}  Each $P_{\bar{X}}$ is a weakly unobstructed relation.   Note that by assumption $m_0^{(\bL,b)}$ is $G$-invariant.  Fix a smooth lift $\bL^1$ of $\bL$.  $\bar{X}$ corresponds to certain morphism from $\bL^1$ to $\bL^g$ for some $g \in G$.  Each term in $P_{\bar{X}}$ corresponds to a polygon with ordered inputs $x_1,\ldots,x_l$ and the output $\bar{X}$, which has a lift as it is null-homologous. Observe that $\mathfrak{f}(x_l)\ldots \mathfrak{f}(x_1)=\mathfrak{f}(\bar{X})=g$.  Hence $\chi \cdot P_{\bar{X}} = \chi(g)P_{\bar{X}}$ for all $\chi \in \hat{G}$.  Moreover when $\bar{X}$ is the unit of a branch $L_i$ for some $i$, we have $g = 1$ and hence $\chi(g)=1$.  Thus $W$ is invariant under $\hat{G}$.
\end{proof}

\subsection{Formal dual group quotient of an LG model}
The mirror of the geometry $\widetilde{X}$ upstairs should be given by the quotient of the LG model $(\cA,W)$ of $X$ by the `dual group' of $G$.  When $G$ is Abelian, its dual is simply the character group $\hat{G}$, and we have the quotient of $(\cA,W)$ by $\hat{G}$ from the last section.  However we do not have a direct definition of the dual group when $G$ is non-Abelian.  Nevertheless we can formally define the dual group quotient of $(\cA,W)$ using G-grading and smash product specialized to quivers.

First we need an action on $(\cA,W)$.  It is given by a $G$-grading on the quiver algebra $\cA = \Lambda Q / R$ (\cite{GrM}) such that $W$ has degree $1 \in G$.  Concretely it is a function $\mathfrak{f}$ from the set of arrows of $Q$ to $G$ satisfying the properties in the following definition.  We call it to be a formal dual group action on $(\cA,W)$.

\begin{definition}[Formal dual group action] \label{def:dual-group}
Let $Q$ be a quiver and $\cA = \Lambda Q / \langle p_1,\ldots,p_K \rangle$ be a path algebra with relations.   A $G$-grading on the algebra $\cA$ is given by a function $\mathfrak{f}$ from the set of arrows of $Q$ to $G$ (which induces a map from the set of paths to $G$) such that 
 relation $R$ is generated by homogeneous elements ( i.e. relations $p_l$ can be chosen so that for each $l$, each term of the relation $p_l$ is mapped to the same $g_l \in G$),

A formal dual-$G$ action on $(\cA,W)$ is a $G$-grading on the algebra $\cA$
such that $W$ has grading $1 \in G$ (i.e. it has a representative in $\Lambda Q$ whose every term is mapped to $1 \in G$).
\end{definition}

When $G$ is Abelian, the function encodes an action of the character group $\hat{G}$ on the path algebra of $Q$, namely $\chi \in \hat{G}$ acts by $\chi \cdot x = \chi (\mathfrak{f}(x)) x$ for each edge $x$.  We may also denote $\mathfrak{f}(x)$ by $g_x$.  In our context, the function is given by Definition \ref{lem:g_x} from the $A$-side geometry.

The proof of Proposition \ref{prop:G-on-A} gives the following.

\begin{prop}
Let $(\cA,W)$ be a generalized mirror of $X$ constructed in Chapter \ref{sec:MFseverallag}.
The function given in Definition \ref{lem:g_x} defines a formal dual-$G$ action on $(\cA, W)$.
\end{prop}

%\begin{prop}
%A formal dual group action on $\cA$ induces an action of the character group $\hat{G}$ on $\cA$.  Moreover $W$ is invariant under $\hat{G}$ if it is invariant under the formal dual group action.
%\end{prop}
%\begin{proof}
%A character $\chi$ acts on each generator $x$ of $\cA$ by $\chi \cdot x = \chi(\mathfrak{f}(x)) x$.  By definition for the relations $P_i$, $\chi \cdot P_i = c P_i$ for some $c \in U(1)$, and hence the action on $\cA$ is well-defined.  The last statement is also clear from definition.
%\end{proof}

Then we define the quotient algebra $\hat{\cA}$ essentially as the smash product of $\cA$ with $G$.  In the following we specialize to our case of quiver algebras and make a concrete description in Definition \ref{def:formal-quot}.

\begin{definition} \label{Q*G}
Given a quiver $Q$ and a formal dual group action $\mathfrak{f}$ of $G$ on its path algebra, define the quiver $Q \# G$ as follows.  The number of vertices in $Q \# G$ is $|G|$ times the number of vertices in $Q$.  Each vertex is labeled as $v^g$, where $v$ is a vertex in $Q$ and $g \in G$.  The number of arrows in $Q \# G$ from $u^{g_1}$ and $v^{g_2}$ is defined as the number of arrows $x$ in $Q$ from $u$ to $v$ with $\mathfrak{f}(x) = g_2 g_1^{-1}$.  Each corresponding arrow in $Q \# G$ is labeled by $x^{g_1}$ (whose source vertex is $u^{g_1}$).  
\end{definition}

\begin{example}\label{ex:mainref1}
In the case of Example \ref{ex:mainfgs}, the downstair quiver $Q$ has one vertex $v$ with three arrows $x,y,z$ together with commuting relations among three arrows. The corresponding path algebra with relations is nothing but $\C[x,y,z]$. 
\begin{figure}[h]
\begin{center}
\includegraphics[scale=0.45]{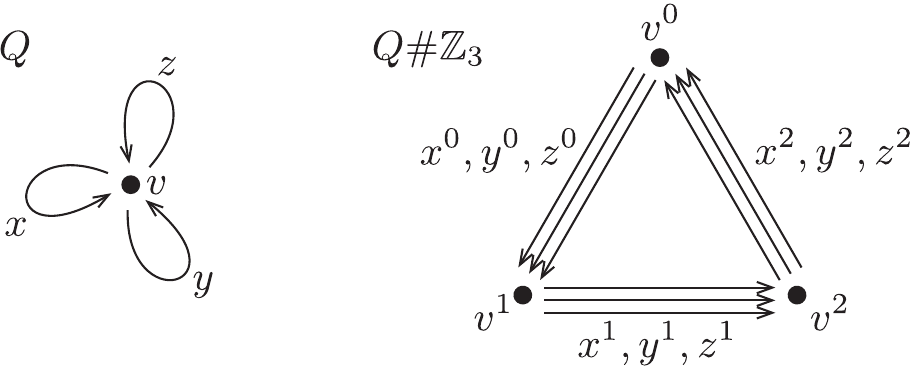}
\caption{The quiver $Q$ corresponding to $\C^3$ and $Q \# \Z_3$.}\label{fig:quiver-C3modZ3ref}
\end{center}
\end{figure}

We consider the $\Z_3$-grading induced by $\mathfrak{f}(x) = \mathfrak{f}(y) = \mathfrak{f}(z) = 1 \in \Z_3$. Then $Q \# \Z_3$ has three vertices $v^0,v^1,v^2$, and there are three arrows from $v^{i}$ to $v^{i+1}$ labeled by $x^i,y^i,z^i$ for $i \in \Z_3$. See Figure \ref{fig:quiver-C3modZ3ref}.
\end{example}

There is a canonical correspondence between paths in $Q \# G$ and that in $Q$.

\begin{definition} \label{def:lift-path}
Fix $g \in G$.  For a path $x_k \ldots x_1$ in $Q$, define a path in $Q \# G$ to be $x_k^{g_k} \ldots x_2^{g_2}x_1^{g_1}$, where $g_1 = g$ and $\mathfrak{f}(x_i) = g_{i+1} g_i^{-1}$ for $i = 1,\ldots,k-1$.  It gives a map $L_g: \Lambda Q \to \Lambda (Q \# G)$ by extending linearly.  

Moreover we have a canonical map $\Lambda (Q \# G) \to \Lambda Q$ by forgetting the group elements, namely 
$$x_k^{g_k} \ldots x_2^{g_2}x_1^{g_1} \mapsto x_k \ldots x_1$$
if $\mathfrak{f}(x_i) = g_{i+1} g_i^{-1}$ for $i = 1,\ldots,k-1$, and zero otherwise.
\end{definition}

Now we define the formal dual $G$-quotient of $(\cA,W)$.

\begin{definition} \label{def:formal-quot}
Let $Q$ be a quiver and $\cA = \Lambda Q / \langle P_1,\ldots,P_K \rangle$ be a path algebra with relations.  Suppose there is a formal dual group action $\mathfrak{f}$ on $\cA$ by $G$.  Then we define the quotient to be the algebra $\hat{\cA} = \Lambda (Q \# G) \big/ \langle L_g(P_i): g \in G, i = 1,\ldots,K \rangle$, where $L_g: \Lambda Q \to \Lambda (Q \# G)$ is given in Definition \ref{def:lift-path}.

If further we have $W \in \cA$ which is invariant under the action, define $\hat{W} := \sum_{g\in G} W^g$ where $W^g$ are the lifts of $W$.
\end{definition}

\begin{prop}
In the above definition, if $W$ is a central element of $\cA$, then $\hat{W}$ is a central element of $\hat{\cA}$.
\end{prop}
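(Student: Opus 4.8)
The plan is to deduce centrality of $\hat W$ in $\hat{\cA}$ from centrality of $W$ in $\cA$ by transporting the defining relations through the path‑lifting map of Definition \ref{def:lift-path}. Since $\hat{\cA}$ is generated over $\Lambda$ by the arrows $x^g$ of $Q\#G$ together with the idempotents $\pi_v^g$, and commuting with the idempotents is immediate, it suffices to check $\hat W\cdot x^g = x^g\cdot\hat W$ for every arrow $x^g$, say $x^g\colon v_i^{g}\to v_j^{g'}$ with $g'=g\,\mathfrak f(x)$, where $x$ is the arrow of $Q$ below it. First I would do the bookkeeping: because $W$ is invariant under the formal dual group action, $\mathfrak f$ sends every term of each disc‑counting summand $W_k$ to $1\in G$, so each lift $W_k^h$ is a sum of cyclic paths based at the single vertex $v_k^h$. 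Hence in $\hat W=\sum_{h\in G}\sum_k W_k^h$ exactly one summand can be concatenated with $x^g$ on each side, giving
$$\hat W\cdot x^g \;=\; W_i^{g}\cdot x^g,\qquad x^g\cdot\hat W \;=\; x^g\cdot W_j^{g'}.$$
The identical computation downstairs shows that centrality of $W$ amounts to $W_i\,x - x\,W_j\in R$ in $\Lambda Q$ for every arrow $x\colon v_i\to v_j$, where $R$ is the ideal defining $\cA$.

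Next I would record two properties of the lift. For fixed $g\in G$ let $L_g\colon\Lambda Q\to\Lambda(Q\#G)$ be the $\Lambda$‑linear map lifting each path at level $g$ as in Definition \ref{def:lift-path}; it is well defined since paths form a basis. From the construction one checks the twisted multiplicativity
$$L_g(pq)\;=\;L_g(p)\cdot L_{g\,\mathfrak f(p)}(q)$$
for composable paths $p,q$, where $\mathfrak f(p)$ is the value on $p$ of the induced map $\{\text{paths}\}\to G$. Moreover, since all monomials of a relation $P_f$ share the same tail vertex and the same $\mathfrak f$‑value (Definition \ref{def:dual-group}), $L_h(P_f)=P_f^{h}$ is precisely the lifted relation generating the ideal of $\hat{\cA}$, and so $L_g$ carries $R=\langle P_f\rangle$ into $\hat R:=\langle P_f^h : h\in G,\ f\rangle$: a term $\alpha P_f\beta$ is sent to $L_g(\alpha)\cdot P_f^{\,g\mathfrak f(\alpha)}\cdot L_{g\mathfrak f(\alpha)\mathfrak f(P_f)}(\beta)\in\hat R$.

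Then I would combine the two steps. Applying $L_g$ to the downstairs identity $W_i x - x W_j\in R$ and using twisted multiplicativity on the composable products $W_i x$ and $x W_j$, together with $\mathfrak f(W_i)=1$, the left‑hand side lifts to $W_i^{g}x^g - x^g W_j^{g'}$, while the right‑hand side lands in $\hat R$ and hence vanishes in $\hat{\cA}$. By the first step this is exactly $\hat W\cdot x^g - x^g\cdot\hat W=0$ in $\hat{\cA}$; with the trivial check on idempotents, $\hat W$ is central. The main obstacle is not conceptual but combinatorial: one must track carefully which group‑level each sub‑path is lifted at under concatenation, so that twisted multiplicativity and the statement "relations lift to relations" are applied with the correct decorations, and one should note that invariance of $W$ is exactly what makes every $W_k^h$ based at one vertex. (Alternatively, once Proposition \ref{prop:hatA} identifies $(\hat{\cA},\hat W)$ with the generalized mirror obtained by running the construction of Section \ref{sec:MFseverallag} on $\widetilde{\bL}\subset\widetilde X$, centrality of $\hat W$ follows immediately from Theorem \ref{thm:centralseveral}; the argument above is the self‑contained algebraic version.)
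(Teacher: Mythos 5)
Your proof is correct and follows essentially the same route as the paper: reduce to a single arrow $x^g$, use invariance of $W$ to see that exactly one summand $W^{h}$ of $\hat W$ survives on each side, and then lift the downstairs identity $Wx = xW$ (i.e.\ $Wx - xW \in R$) to the corresponding identity upstairs. The only difference is that you spell out what the paper's phrase ``lifting this by the group element $g$'' leaves implicit --- the twisted multiplicativity of the lifting map of Definition \ref{def:lift-path} and the fact that it carries the ideal $R$ into the lifted ideal --- which is a welcome but not essentially new addition (modulo your left-to-right versus the paper's right-to-left concatenation convention, which only permutes the group labels).
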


\begin{proof}
We need to prove that $x^g \cdot \sum_{g'} W^{g'} = \left( \sum_{g''} W^{g''} \right) \cdot x^g$ for all $x^g$ upstairs.  Since $W$ is invariant, each term in $W^{g'}$ (which is a path in $Q\#G$) has its final vertex to be $v^{g'}$ for some $v$.  Thus for the left hand side, only the term with $g' = g$ survives.  Similarly for the right hand side, $W^{g''} \cdot x^g \not=0$ only when $\mathfrak{f}(x) = g^{-1} g''$.  Thus the left hand side is $x^g \cdot W^g$ and the right hand side is $W^{g''} \cdot x^g$ where $\mathfrak{f}(x) = g^{-1} g''$.  The initial vertex is $u^g$ and the final vertex is $v^{g''}$ for both sides, for some $u$ and $v$.

Downstairs we already know from Theorem \ref{thm:centralseveral} that $x \cdot W = W \cdot x$.  Lifting this by the group element $g$, we obtain $x^g \cdot W^g = W^{g''} \cdot x^g$ as desired.
\end{proof} 

Now we give a description of the category $\MF (\hat{\cA}, \hat{W})$ from the formal dual group quotient as follows.

\begin{definition} \label{def:A*G}
$\cA \# G$ is defined to be the category whose set of objects is given by $Q_0 \times G$, where $Q_0$ denotes the vertex set of $Q$.  We denote the objects by $A_{(v,g)}$.  The morphism space
$\Hom_{\cA \# G} (A_{(v_1,g_1)},A_{(v_2,g_2)})$ is defined to be the subspace of $\cA$ spanned by paths $x_p \ldots x_1$ in $Q$ from $v_1$ to $v_2$ with
\begin{equation} \label{eq:A*G}
\mathfrak{f}(x_p) \ldots \mathfrak{f}(x_1) = g_2 g_1^{-1}.
\end{equation}

The category $\Tw(\cA\#G,W)$ consists of objects of the form 
$$\left(\bigoplus_{i \in I} A_{(v(i),g(i))}[\sigma_i],\delta\right)$$
where $\sigma_i \in \Z_2$, $\delta$ is an odd-degree endomorphism of $\bigoplus_{i \in I} A_{(v(i),g(i))} [\sigma_i]$ satisfying $\delta^2 = W \cdot \mathrm{Id}$.  It is an analog of twisted complexes.

The morphism space from $\left(\bigoplus_{i \in I} A_{(v(i),g(i))}[\sigma_i],\delta_1 \right)$ to $\left(\bigoplus_{k \in K} A_{(u(k),f(k))}[\tau_k],\delta_2 \right)$ is defined as
$$\bigoplus_{i,k} \mathrm{Hom}_{\cA\# G}(A_{(v(i),g(i))},A_{(u(k),f(k))})[\tau_k-\sigma_i]$$ equipped with a differential $d$ defined by $d u = \delta_2 \circ u - (-1)^{\deg u} u \circ \delta_1$.
\end{definition}

\begin{remark}
In other words, objects in $\cA \# G$ are simply vertices of $Q \# G$.  The morphism space $\Hom_{\cA \# G} (A_{(v_1,g_1)},A_{(v_2,g_2)})$ is $v_2^{g_2} \cdot \hat{\cA} \cdot v_1^{g_1}$ spanned by paths from $v_1^{g_1}$ to $v_2^{g_2}$.
\end{remark}

\begin{prop} \label{prop:MF}
$\MF (\hat{\cA}, \hat{W})$ is isomorphic to $\Tw(\cA\#G,W)$.
\end{prop}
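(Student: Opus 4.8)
The plan is to exhibit a dg equivalence directly, by matching the indecomposable projective $\hat{\cA}$-modules with the objects of $\cA\#G$ and then transporting the matrix-factorization structure through the forgetful map $\Lambda(Q\#G)\to\Lambda Q$ of Definition \ref{def:lift-path}. On objects, I would send $A_{(v,g)}$ to the indecomposable projective $\hat{\cA}\cdot\pi_{v^g}$, and more generally send the twisted complex $\left(\bigoplus_{i\in I}A_{(v(i),g(i))}[\sigma_i],\delta\right)$ to $\left(\bigoplus_{i\in I}\hat{\cA}\cdot\pi_{v(i)^{g(i)}}[\sigma_i],\delta\right)$, keeping the same $\Z_2$-grading and (under the identification of Hom spaces below) the same $\delta$. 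Since $\hat{\cA}$ is a quiver algebra with relations, every finitely generated projective $\hat{\cA}$-module is a direct summand of a finite direct sum of the $\hat{\cA}\cdot\pi_{v^g}$, so working with split-closed twisted complexes (as is standard) this assignment is essentially surjective onto $\MF(\hat{\cA},\hat{W})$, and it remains to check full faithfulness on morphism complexes.

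For morphisms, I would use the standard identification of $\Hom_{\hat{\cA}}(\hat{\cA}\pi_{v_1^{g_1}},\hat{\cA}\pi_{v_2^{g_2}})$ with the ``corner'' $\pi_{v_1^{g_1}}\hat{\cA}\pi_{v_2^{g_2}}$, the span of paths in $Q\#G$ running between $v_2^{g_2}$ and $v_1^{g_1}$, modulo the ideal generated by the lifted relations $P^g_i$. By Definition \ref{def:lift-path} a path in $Q\#G$ is uniquely determined by its underlying path in $Q$ together with the group element at its source; chasing this through shows that such paths in $Q\#G$ correspond precisely to paths $x_p\cdots x_1$ in $Q$ between $v_2$ and $v_1$ with $g_2\,\mathfrak{f}(x_1)\cdots\mathfrak{f}(x_p)\,g_1^{-1}=1$, i.e. exactly the spanning set of $\Hom_{\cA\#G}(A_{(v_1,g_1)},A_{(v_2,g_2)})$ in Definition \ref{def:A*G}. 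Moreover, because by Definition \ref{def:dual-group} all terms of each relation $P_i$ carry the same group element, the lifted relations $P^g_i$ match the relations defining $\cA$ under the forgetful map, so the two Hom spaces are canonically isomorphic; this isomorphism is compatible with composition (concatenation of paths, the group labels composing automatically) and with the $\Z_2$-grading, and the differentials $m_1(u)=\delta_2\circ u-(-1)^{\deg u}u\circ\delta_1$ on both sides then agree term by term, giving full faithfulness.

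It remains to see that the condition $\delta^2=\hat{W}\cdot\mathrm{Id}$ on $\bigoplus_i\hat{\cA}\pi_{v(i)^{g(i)}}$ is equivalent to $\delta^2=W\cdot\mathrm{Id}$ on $\bigoplus_i A_{(v(i),g(i))}$. Here I would invoke centrality of $\hat{W}$ (established just above): $\hat{W}\cdot\mathrm{Id}$ on $\hat{\cA}\pi_{v^g}$ is multiplication by $\hat{W}\pi_{v^g}=\pi_{v^g}\hat{W}$. Since $W$ is invariant under the formal dual group action, $\hat{W}=\sum_{g}W^g$ with each $W^g$ a sum of cyclic paths whose endpoints lie over $g$, so $\pi_{v^g}\hat{W}=\pi_{v^g}W^g$, and the forgetful map carries this to $\pi_v W=W\pi_v$; as $W$ is central in $\cA$, this is just the action of $W$ on $\Hom_{\cA\#G}(\cdot,A_{(v,g)})$. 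Hence $\hat{W}$ acting on the free module corresponds exactly to $W\cdot\mathrm{Id}$ on the twisted complex, and the two matrix-factorization conditions coincide. The main obstacle I anticipate is precisely this last bookkeeping with the group labels — establishing cleanly that the forgetful map identifies $\pi_{v_1^{g_1}}\hat{\cA}\pi_{v_2^{g_2}}$ (with its lifted relations) with the span of $Q$-paths satisfying \eqref{eq:A*G} (with the relations of $\cA$), and that multiplication by $\hat{W}$ becomes multiplication by $W$ — which is combinatorial but relies essentially on the uniqueness of lifts, the invariance of $W$, and the group-homogeneity of each relation $P_i$.
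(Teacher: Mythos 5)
Your proof is correct and follows essentially the same route as the paper's: identify the summands $A_{(v,g)}$ with the indecomposable projectives $\hat{\cA}\cdot\pi_{v^g}$, match the Hom spaces via the unique lifting of $Q$-paths satisfying \eqref{eq:A*G} to $Q\#G$-paths, and observe that under this identification $\hat{\delta}^2=\hat{W}$ corresponds to $\delta^2=W$. Your extra remarks (split-closure for essential surjectivity, group-homogeneity of the relations, and the bookkeeping $\pi_{v^g}\hat{W}=\pi_{v^g}W^g$) only make explicit steps the paper leaves implicit.
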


\begin{proof}
An object in $\MF (\hat{\cA}, \hat{W})$ is a projective $\hat{\cA}$-module of the form $\bigoplus_{i \in I} P_{v(i)^{g(i)}}[\sigma_i]$ together with an odd endomorphism $\hat{\delta}$ with $\hat{\delta}^2 = \hat{W}$, where $\sigma_i \in \Z_2$ and $P_{v^g}$ is the projective $\hat{\cA}$-module $v^g \cdot \hat{\cA}$ which is spanned by all paths in $Q\#G$ ending at $v^g$ (recall that vertices of $Q\#G$ are labeled by $v^g$).  The morphism space from $\left( \bigoplus_{i \in I} P_{v(i)^{g(i)}}[\sigma_i],\delta_1 \right)$ to $\left( \bigoplus_{k \in K} P_{u(k)^{f(k)}}[\tau_k],\delta_2 \right)$ is
$$ \bigoplus_{i,k} \Hom (P_{v(i)^{g(i)}}, P_{u(k)^{f(k)}})[\tau_k - \sigma_i] $$
where $\Hom (P_{v^{g}}, P_{u^{f}})$ is spanned by all paths starting from $v^g$ to $u^{f}$.

We define the object in $\Tw(\cA\#G)$ corresponding to $\bigoplus_{i \in I} P_{v(i)^{g(i)}}[\sigma_i]$ to be $\bigoplus_{i \in I} A_{(v(i),g(i))}[\sigma_i]$.  We need to identify $\Hom_{\cA\# G}(A_{(v,g)},A_{(u,f)})$ with $\Hom (P_{v^g}, P_{u^f})$.  Given a path $x_p \ldots x_1$ in $Q$ from $v$ to $u$ and the group element $f \in G$, by Definition \ref{def:lift-path} we have a path $x_p^{g_p} \ldots x_1^{g_1}$ in $Q\#G$ from $v^{g}$ to $u^{g_{p+1}}$, where $g_1=g$, $\mathfrak{f}(x_i) = g_{i+1} g_i^{-1}$ for $i=1,\ldots,p$.  Thus $g_{p+1} = f$ if and only if $\mathfrak{f}(x_p) \ldots \mathfrak{f}(x_1) = f g^{-1}$.  

From the above identification between the morphism spaces, the endomorphism $\hat{\delta}$ upstairs corresponds to an endomorphism $\delta$ of $\bigoplus_{i \in I} A_{(v(i),g(i))}[\sigma_i]$.  From the identification between paths upstairs and downstairs by Definition \ref{def:lift-path}, it is clear that $\hat{\delta}^2 = \hat{W}$ if and only if $\delta^2 = W$.
Thus the two categories are isomorphic.
\end{proof}

\subsection{The mirror and the functor upstairs} \label{sec:mir-up}
Now we carry out the mirror construction upstairs for $\tilde{\bL} \subset \widetilde{X}$ and identify it with the formal dual group quotient $(\hat{A},\hat{W})$ defined in the previous subsection.

First of all, it is clear from definition that

\begin{prop}
The quiver $\hat{Q}$ formed by odd-degree generators of $\tilde{\bL}$ equals to $Q \# G$, where $Q$ is the quiver of $\bL$ and the $G$-grading $\mathfrak{f}$ in Definition \ref{Q*G} is taken to be the one given in Definition \ref{lem:g_x}.
\end{prop}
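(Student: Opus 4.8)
The plan is to unwind both quiver definitions and match them via the covering-space structure relating $\widetilde{X}$ and $X$. Since $\bL \subset X^0 = (X - \{G\text{-fixed points}\})/G$, the restriction of the projection $\widetilde{X} \to X$ over $X^0$ is an honest free $G$-cover, and $\tilde{\bL} = \bigcup_{g \in G} \bL^g$ is exactly the total preimage of $\bL$, with $\bL^g = g \cdot \bL^1$ for the fixed lift $\bL^1$. Writing $\bL = \{L_1,\ldots,L_k\}$ and $\bL^1 = \{\tilde L^1_1,\ldots,\tilde L^1_k\}$, the connected components of $\tilde{\bL}$ are precisely the $\tilde L^g_i := g \cdot \tilde L^1_i$, naturally indexed by pairs $(i,g) \in Q_0 \times G$. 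Hence the vertex set of the endomorphism quiver $\hat{Q}$ of $\tilde{\bL}$ is $Q_0 \times G$, which matches the vertex set of $Q\#G$ (whose vertices are labeled $v_i^g$); I would identify $v_i^g$ with $\tilde L^g_i$ once and for all.

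For the arrows, recall that an arrow of $\hat{Q}$ from $\tilde L^{g_1}_u$ to $\tilde L^{g_2}_v$ is by definition an odd-degree immersed generator in $\CF^\bullet(\tilde L^{g_1}_u, \tilde L^{g_2}_v)$. The covering projection $\widetilde{X} \to X$ carries each such generator to an odd-degree immersed generator of $\bL$ from $L_u$ to $L_v$, i.e.\ an arrow $x$ of $Q$, and this assignment is onto because every immersed generator of $\bL$ downstairs has exactly $|G|$ preimages upstairs. The $\Z_2$-grading is preserved under the projection and (by $G$-equivariance of the Fukaya category) under the $G$-action, so odd-degree generators correspond to odd-degree generators in both directions. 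Thus I reduce to counting, for a fixed arrow $x : u \to v$ of $Q$, how many of its $|G|$ lifts lie in $\CF^\bullet(\tilde L^{g_1}_u, \tilde L^{g_2}_v)$.

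Here I would invoke Lemma \ref{lem:g_x}: by definition of $\mathfrak{f}$ the generator $x$ lifts to a generator of $\CF^\bullet(\tilde L^1_u, \tilde L^{\mathfrak{f}(x)}_v)$, and translating by $h \in G$ and using $G$-equivariance, its $|G|$ distinct lifts are precisely the generators of $\CF^\bullet(\tilde L^h_u, \tilde L^{h\,\mathfrak{f}(x)}_v)$, one for each $h \in G$. Such a lift lies in $\CF^\bullet(\tilde L^{g_1}_u, \tilde L^{g_2}_v)$ exactly when $h = g_1$ and $h\,\mathfrak{f}(x) = g_2$, i.e.\ $\mathfrak{f}(x) = g_1^{-1} g_2$. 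Consequently the arrows of $\hat{Q}$ from $u^{g_1}$ to $v^{g_2}$ are in canonical bijection with the arrows $x$ of $Q$ from $u$ to $v$ with $\mathfrak{f}(x) = g_1^{-1} g_2$, each having source vertex $u^{g_1}$ — which is exactly Definition \ref{Q*G} of $Q\#G$ for this $\mathfrak{f}$. Combined with the identification of vertex sets, this yields $\hat{Q} = Q\#G$.

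I expect the only genuinely delicate point to be the bookkeeping of direction conventions: checking that the group element attached to the lift of $x$ starting from $\tilde L^{g_1}_u$ comes out as $g_1^{-1}g_2$ rather than, say, $g_2 g_1^{-1}$ or $g_2^{-1}g_1$, so that it matches the convention fixed in Definition \ref{Q*G}. This is entirely a matter of tracing how $\mathfrak{f}$ was normalized in Lemma \ref{lem:g_x} and how $\bL^g = g\cdot\bL^1$ sits over $\bL$; everything else is an immediate consequence of $\widetilde{X} \to X$ being a free $G$-cover over $X^0$ together with $G$-equivariance of the Floer theory.
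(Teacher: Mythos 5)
Your proposal is correct and follows essentially the argument the paper has in mind: the paper states this proposition as immediate from the definitions, and your verification is exactly the expected unwinding, using the translation identity $h\cdot(\bL^1\cap\bL^{g}) = \bL^h\cap\bL^{hg}$ already recorded in the proof of Lemma \ref{lem:g_x} together with Definition \ref{Q*G}. Your sign/direction bookkeeping ($g_2 = g_1\,\mathfrak{f}(x)$, i.e.\ $\mathfrak{f}(x)=g_1^{-1}g_2$) matches the paper's convention, so there is nothing to add.
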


Then we consider the weakly unobstructed relations and worldsheet superpotential.  By using the correspondence between holomorphic polygons bounded by $\tilde{\bL}$ upstairs and those bounded by $\bL$ downstairs, one easily obtains the following proposition.

\begin{prop} \label{prop:hatA}
Let $P_{\bar{X}} \in \Lambda Q$ be the weakly unobstructed relations in the definition of $\cA$, where $\bar{X}$ runs over the even-degree generators of $\bL$ other than the fundamental classes of $L_i$.  Then the weakly unobstructed relations for $\tilde{\bL}$ are given by $P^g_{\bar{X}}$ which are the lifting of $P_{\bar{X}} \in \Lambda \hat{Q}$ by $g \in G$ given in Definition \ref{def:lift-path}.  Thus the quiver algebra with relations upstairs is given by $\hat{\cA} = \Lambda \hat{Q} \big/ \langle P^g_{\bar{X}} \rangle$.

Let $W \in \cA$ be the worldsheet superpotential for $\bL \subset X$.  Then the worldsheet superpotential upstairs for $\tilde{\bL} \subset \widetilde{X}$ is given by $\hat{W} = \sum_{g} W^g \in \hat{\cA}$.
\end{prop}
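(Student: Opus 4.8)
The plan is to compute the obstruction $m_0^{(\tilde{\bL},\tilde{b})}$ for the reference $\tilde{\bL}\subset\widetilde{X}$ directly, re-express it through the downstairs obstruction $m_0^{(\bL,b)}$ by invoking the strict $G$-equivariance of the Fukaya $\AI$-structure assumed in this section, and then read off the weakly unobstructed relations and the worldsheet superpotential. First I would fix notation. Every component of $\tilde{\bL}$ is a translate $\bL^g = g\cdot\bL^1$, and by Lemma \ref{lem:g_x} together with Definition \ref{Q*G} the odd-degree immersed generators of $\tilde{\bL}$ are exactly the arrows $X_e^g$ of $Q\#G = \hat{Q}$: such a generator lies in $\CF^\bullet(\bL^g,\bL^{g\cdot\mathfrak{f}(e)})$ and can be written $X_e^g = g\cdot X_e^1$, where $X_e$ is the downstairs generator and $X_e^1\in\CF^\bullet(\bL^1,\bL^{\mathfrak{f}(e)})$ its chosen lift. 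Writing $x_e^g$ for the dual path variable, the deformation upstairs is $\tilde{b} = \sum_{g\in G}\sum_{e} x_e^g X_e^g$, and the statement reduces to showing that $m_0^{\tilde{b}} = \sum_k m_k(\tilde{b},\ldots,\tilde{b})$ is the lift, in the sense of Definition \ref{def:lift-path}, of $m_0^b = \sum_i W_i\one_{L_i} + \sum_{\bar{X}}P_{\bar{X}}\bar{X}$.

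The main step is to evaluate a single term $m_k(X_{e_1}^{g_1},\ldots,X_{e_k}^{g_k})$. Composability of the inputs forces the group labels to chain up, $g_{i+1} = g_i\cdot\mathfrak{f}(e_i)$, which is precisely the rule in Definition \ref{def:lift-path}; putting $g := g_1$ and $h_i := \mathfrak{f}(e_1)\cdots\mathfrak{f}(e_{i-1})$ (so $h_1 = 1$), one has $X_{e_i}^{g_i} = g\cdot X_{e_i}^{h_i}$. By strict $G$-equivariance $m_k(g w_1,\ldots,g w_k) = g\cdot m_k(w_1,\ldots,w_k)$, hence
$$ m_k(X_{e_1}^{g_1},\ldots,X_{e_k}^{g_k}) = g\cdot m_k(X_{e_1}^{h_1},\ldots,X_{e_k}^{h_k}). $$
The chain on the right involves only operations among the objects $\bL^{g}$ in $\Fuk(\widetilde{X})$ with first input attached to $\bL^1$; by the definition of $\Fuk(X)$ as the $G$-invariant part of $\Fuk(\widetilde{X})$ (equivalently, via the covering $\widetilde{X}\to X$ over $X^0$, where $\bL$ lives), these compute the downstairs operations $m_k(X_{e_1},\ldots,X_{e_k})$ on $\bL$. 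Therefore $m_k(X_{e_1}^{h_1},\ldots,X_{e_k}^{h_k})$ is the $\bL^1$-lift of the downstairs value, and summing over $k$ and over all composable tuples while tracking the $g$'s gives that $m_0^{\tilde{b}}$ is $\sum_{g\in G}$ of the lift of $m_0^b$ by $g$ in the sense of Definition \ref{def:lift-path}.

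It then remains to extract the coefficients. The unit $\one_{L_i}$ has $\mathfrak{f}$-value $1$ and lifts to $\one_{L_i^g}$, while each non-fundamental even generator $\bar{X}$ lifts to $\bar{X}^g$, so
$$ m_0^{\tilde{b}} = \sum_{g,i} W_i^g\,\one_{L_i^g} + \sum_{g,\bar{X}} P_{\bar{X}}^g\,\bar{X}^g. $$
By Definition \ref{def:mir-several} applied to $\tilde{\bL}$, the coefficients $P_{\bar{X}}^g$ of the non-fundamental even generators are the nc-weak Maurer-Cartan relations, giving $\hat{\cA} = \Lambda\hat{Q}\big/\langle P_{\bar{X}}^g : g\in G,\ \bar{X}\rangle$, and the coefficient of $\one_{L_i^g}$, summed over $i$ and $g$, is the worldsheet superpotential $\hat{W} = \sum_{g}W^g$ with $W^g = \sum_i W_i^g$; these agree with $\hat{\cA}$ and $\hat{W}$ of Definition \ref{def:formal-quot} for the formal dual group action $\mathfrak{f}$ of Lemma \ref{lem:g_x}. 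The main obstacle is the geometric input invoked in the second paragraph: one must verify that the moduli of holomorphic polygons upstairs and downstairs are matched by an energy-, index- and orientation-preserving correspondence, i.e. that the $\AI$-structure on $\widetilde{X}$ genuinely refines the one defining $\Fuk(X)$ on the relevant moduli — this is exactly where the $G$-equivariant choice of perturbation data used to set up both Fukaya categories is essential, and the rest is bookkeeping already packaged into Definitions \ref{def:lift-path} and \ref{Q*G}.
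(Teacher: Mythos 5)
Your proof is correct and follows essentially the same route as the paper: both rest on the observation that the $\AI$-operations (equivalently, the holomorphic polygons) contributing to $m_0^{\tilde{b}}$ upstairs, once the lift of the first input is fixed, correspond exactly to those contributing to $m_0^{b}$ downstairs, with the group labels of the inputs forced to chain precisely as in Definition \ref{def:lift-path}. The paper states this directly as an isomorphism of the relevant moduli spaces of stable polygons, whereas you derive it from the assumed strict $G$-equivariance together with the definition of $\Fuk(X)$ as the $G$-invariant part of $\Fuk(\widetilde{X})$ --- the same geometric content, packaged algebraically.
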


\begin{proof}
The weakly unobstructed relations for $\tilde{\bL}$ are given by counting holomorphic polygons with output $\bar{X}^g$.
The moduli space of stable holomorphic polygons in $\widetilde{X}/G$ with inputs $(X_1,\ldots,X_k)$ and the output $\bar{X}$ is isomorphic to the moduli space of stable discs in $\widetilde{X}$ with the corresponding inputs $(X_1^{g_1},\ldots,X_k^{g_k})$ and the output $\bar{X}^g$, where $g_1 = g$ and $g_{x_i} =g_{i+1} g_i^{-1}$ for $i=1,\ldots,k-1$.  Thus the counting with output $\bar{X}^g$ equals to $P^g_{\bar{X}}$.

Similarly by considering holomorphic discs with output $\one_i^g$ (i.e. those passing through Poincar\'e dual of $\one_i^g$) instead of $\bar{X}^g$, we obtain that the superpotential for $\widetilde{X}$ is $\sum_{g} W^g$.
\end{proof}

In conclusion, we have a mirror functor $\Fuk (\widetilde{X}) \to \MF (\hat{\cA}, \hat{W}) = \Tw(\cA\#G,W)$ upstairs.

\subsection{Abelian case: $\hat{G}$-equivariant matrix factorizations} \label{sec:G-MF}

%We have a central element $W$ of a quiver algebra with relations $\cA = \Lambda Q/R$, with a $\hat{G}$-action which acts on each generator $x \in \cA$ by a $U(1)$-scaling, and such that $W$ is invariant.  

%When $\hat{G}$ is Abelian, we define the category of $\hat{G}$-equivariant matrix factorizations $\MF_{\hat{G}}(\cA,W)$ in this subsection.

Let's examine the above notions in the case that $G$ is Abelian.

\begin{prop}
Suppose $G$ is Abelian.  The action of $\hat{G}$ on the path algebra of $Q$ by a $U(1)$-scaling on each generator $x$ is equivalent to a formal dual group action of $G$.
\end{prop}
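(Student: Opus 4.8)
The plan is to exploit Pontryagin duality for the finite Abelian group $G$: the canonical evaluation map $G \to \hat{\hat{G}}$, $g \mapsto (\chi \mapsto \chi(g))$, is an isomorphism. One direction of the equivalence is already contained in the proposition following Definition \ref{def:dual-group}: given a formal dual group action $\mathfrak{f}$, the rule $\chi \cdot x := \chi(\mathfrak{f}(x))\,x$ (extended multiplicatively over paths and linearly) defines an action of $\hat{G}$ on $\Lambda Q$ that scales each generator by an element of $U(1)$, and it descends to $\cA$ since each $\chi$ scales the relation $P_l$ by $\chi(g_l)$. So the real work is in the converse, which is where I would spend the effort.

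For the converse, I would start from an action of $\hat{G}$ on $\Lambda Q$ for which each arrow $x$ is an eigenvector, $\chi \cdot x = c_x(\chi)\,x$ with $c_x(\chi) \in U(1)$, and which moreover respects the relations in the strong sense that $\chi \cdot P_l \in \C^\times \cdot P_l$ for every $l$ and $\chi$ (this compatibility is automatic in the mirror construction by Proposition \ref{prop:G-on-A}; I would flag at the outset that ``respecting the relations'' must mean this term-by-term scaling — exactly the data packaged in Definition \ref{def:dual-group} — and not merely preservation of the two-sided ideal). Associativity and unitality of the $\hat{G}$-action force $c_x(\chi_1\chi_2) = c_x(\chi_1)c_x(\chi_2)$ and $c_x(1)=1$, so $c_x$ is a character of $\hat{G}$; by Pontryagin duality there is a unique $\mathfrak{f}(x) \in G$ with $c_x(\chi) = \chi(\mathfrak{f}(x))$ for all $\chi$. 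This defines $\mathfrak{f}$ on arrows, hence on paths via the product of the $\mathfrak{f}(x_i)$ (order irrelevant since $G$ is Abelian, matching the convention of Definition \ref{def:dual-group}).

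The remaining step is to verify that this $\mathfrak{f}$ is a formal dual group action, i.e.\ that all terms of a given relation $P_l$ are sent to one element $g_l \in G$. Writing $P_l = \sum_j t_j$ as a $\Lambda$-combination of pairwise distinct monomials $t_j$ of $\Lambda Q$, we get $\chi \cdot P_l = \sum_j \chi(\mathfrak{f}(t_j))\,t_j$; comparing coefficients of distinct monomials in the identity $\chi\cdot P_l = c_l(\chi)P_l$ gives $\chi(\mathfrak{f}(t_j)) = c_l(\chi)$ independently of $j$, and since this holds for all $\chi \in \hat{G}$, injectivity of $G \to \hat{\hat{G}}$ forces $\mathfrak{f}(t_j)$ to be independent of $j$; call this common value $g_l$. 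Finally one checks that the two constructions are mutually inverse, which is immediate since $\mathfrak{f}(x)$ is uniquely pinned down by $\chi\cdot x = \chi(\mathfrak{f}(x))\,x$ together with injectivity of $G\to\hat{\hat{G}}$. The only non-formal ingredient is Pontryagin duality — used first to recover the single group element $\mathfrak{f}(x)$ from the one-parameter family $\{c_x(\chi)\}_{\chi}$, and again to upgrade ``$P_l$ is scaled'' to ``all terms of $P_l$ share the same $\mathfrak{f}$-value'' — so I expect the main (mild) obstacle to be pinning down the correct formulation of the hypothesis rather than any computation.
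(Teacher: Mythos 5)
Your proof is correct and follows essentially the same route as the paper's: both directions hinge on Pontryagin duality $\widehat{\hat{G}} \cong G$ applied to the character $\chi \mapsto c_x(\chi)$ attached to each arrow, with the converse direction simply writing down $\chi \cdot x = \chi(\mathfrak{f}(x))\,x$. Your additional check that the term-by-term scaling of each relation $P_l$ forces all of its monomials to share the same $\mathfrak{f}$-value is a more careful treatment of a point the paper's brief proof leaves implicit, but it is the same underlying argument rather than a different approach.
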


\begin{proof}
Given  the action of $\hat{G}$, for each arrow $x$ of the quiver, we have $\chi \cdot x = \mathfrak{f}(\chi) x$ where $\mathfrak{f}(\chi) \in U(1)$ for every $\chi \in \hat{G}$.  This defines a character of $\hat{G}$.  Since $G$ is Abelian, $\widehat{\hat{G}} = G$ and hence it defines an element $g_x \in G$.
Conversely given a function $\mathfrak{f}$ from the arrow set of $Q$ to $G$, we have the action of $\chi$ by $\chi \cdot x = \chi(\mathfrak{f}(x)) \cdot x$.
\end{proof}

We have learned from Proposition \ref{prop:MF} that there is a natural identification between $\MF(\hat{\cA},\hat{W})$ and $\Tw(\cA\#G,W)$, no matter whether $G$ is Abelian or not.  On the other hand, when $G$ is Abelian, we already know that the geometry upstairs is essentially the $\hat{G}$-quotient of $(\cA,W)$, and the mirror category should consist of $\hat{G}$-equivariant matrix factorizations.  This motivates the following definition.

\begin{definition}
Suppose $G$ is Abelian.  The category of $\hat{G}$-equivariant matrix factorizations is defined as $\MF_{\hat{G}}(\cA,W) := \Tw(\cA\#G,W)$.
\end{definition}

The objects in $\Tw(\cA\#G,W)$ can be identified with $\hat{G}$-equivariant twisted modules over $\cA$, and so the above is the same as the standard definition of $\hat{G}$-equivariant matrix factorizations.  Intuitively speaking, the objects in $\MF_{\hat{G}}(\cA,W)$ are endomorphisms $\delta$ on orbi-bundles over the quotient geometry described by the invariant subalgebra $\cA^{\hat{G}}$ with $\delta^2 = W$.
We can describe the morphism spaces more explicitly as follows.

\begin{prop}
Suppose $G$ is Abelian.  The morphism space
$\Hom_{\cA \# G} (A_{(v_1,h_1)},A_{(v_2,h_2)}) $
is given by the invariant subspace of $v_2 \cdot \cA \cdot v_1$ under the $(h^{-1}_1,h_2)$-twisted action of $\hat{G}$.  The twisted action of $\chi \in \hat{G}$ on a generator $x_1 \ldots x_p$ is defined as
$$ \chi \cdot_{(h^{-1}_1,h_2)} (x_1\ldots x_p) := \chi(h^{-1}_1 h_2 \mathfrak{f}(x_1) \ldots \mathfrak{f}(x_p)) x_1 \ldots x_p. $$
\end{prop}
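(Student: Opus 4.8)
The plan is to realize the twisted $\hat G$-action as a \emph{diagonal} action with respect to a natural $G$-grading on $\cA$, and then to invoke nondegeneracy of the pairing between a finite abelian group and its character group.

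First I would upgrade the $\hat G$-action of Proposition~\ref{prop:G-on-A} to a $G$-grading. On the free path algebra $\Lambda Q$ declare an arrow $x$ to have $G$-degree $\mathfrak{f}(x)\in G$ and a path $x_p\cdots x_1$ to have $G$-degree $\mathfrak{f}(x_1)\cdots\mathfrak{f}(x_p)$; since $G$ is abelian this is well defined and makes $\Lambda Q=\bigoplus_{\gamma\in G}(\Lambda Q)_\gamma$ a $G$-graded algebra. By the defining property of a formal dual group action (equivalently, by Proposition~\ref{prop:G-on-A}) each weakly unobstructed relation $P_f$ is $G$-homogeneous, so $R$ is a $G$-homogeneous two-sided ideal and $\cA=\Lambda Q/R$ inherits a grading $\cA=\bigoplus_{\gamma\in G}\cA_\gamma$. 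In particular $v_2\cdot\cA\cdot v_1=\bigoplus_{\gamma}v_2\cdot\cA_\gamma\cdot v_1$, and unwinding the group condition in Definition~\ref{def:A*G} (namely $h_2\,\mathfrak{f}(x_1)\cdots\mathfrak{f}(x_p)\,h_1^{-1}=1$, i.e.\ $G$-degree equal to $h_2^{-1}h_1$) I would identify
$$\Hom_{\cA\#G}\big(A_{(v_1,h_1)},A_{(v_2,h_2)}\big)=v_2\cdot\cA_{h_2^{-1}h_1}\cdot v_1.$$

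Next I would match this with the twisted-invariant subspace. By the formula in the statement, $\chi\cdot_{(h_1^{-1},h_2)}$ acts on a $G$-homogeneous element of $v_2\cdot\cA_\gamma\cdot v_1$ by multiplication by the scalar $\chi(h_1^{-1}h_2\gamma)\in U(1)$; hence the twisted action is diagonal for the $G$-grading, the invariant subspace is the direct sum of the invariant graded pieces, and the $\gamma$-piece is invariant precisely when $\chi(h_1^{-1}h_2\gamma)=1$ for every $\chi\in\hat G$. Since $G$ is finite abelian, the pairing $G\times\hat G\to U(1)$ is nondegenerate (characters separate points, $\widehat{\hat G}\cong G$), so this forces $h_1^{-1}h_2\gamma=1$, i.e.\ $\gamma=h_2^{-1}h_1$. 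Therefore the twisted-invariant subspace of $v_2\cdot\cA\cdot v_1$ equals $v_2\cdot\cA_{h_2^{-1}h_1}\cdot v_1$, which by the previous paragraph is $\Hom_{\cA\#G}(A_{(v_1,h_1)},A_{(v_2,h_2)})$, as asserted.

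The only genuinely delicate point is the well-definedness of the $G$-grading on the quotient algebra $\cA$ — that is, the $G$-homogeneity of $R$ — but this is exactly the content of Proposition~\ref{prop:G-on-A}, so once it is cited the remaining argument is the elementary semisimplicity/Pontryagin-duality computation above. A minor bookkeeping caveat is that the product $\mathfrak{f}(x_1)\cdots\mathfrak{f}(x_p)$ appears with the arrows in opposite orders in Definition~\ref{def:A*G} and in the twisted-action formula; this is harmless because $G$ is abelian, but it should be recorded explicitly so that the degree matching is unambiguous.
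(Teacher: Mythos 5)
Your proof is correct and takes essentially the same route as the paper's (much terser) argument: both reduce the condition $h_2\,\mathfrak{f}(x_1)\cdots\mathfrak{f}(x_p)\,h_1^{-1}=1$ of Definition \ref{def:A*G} to the statement that all characters of the finite abelian group $G$ vanish on $h_1^{-1}h_2\,\mathfrak{f}(x_1)\cdots\mathfrak{f}(x_p)$, i.e.\ to the twisted $\hat{G}$-invariance condition, using that characters separate points. Your packaging via a $G$-grading on $\cA$ merely makes explicit the eigenspace decomposition (and the homogeneity of the relations, which is Proposition \ref{prop:G-on-A}) that the paper's two-line proof leaves implicit.
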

\begin{proof}
Since $G$ is Abelian, Condition \ref{eq:A*G} is equivalent to
$\chi(h^{-1}_1 h_2 \mathfrak{f}(x_1) \ldots \mathfrak{f}(x_p)) = 1$ for all $\chi$.  

Thus $\Hom_{\cA \# G} (A_{(v_1,g_1)},A_{(v_2,g_2)})$ admits the above alternative description.
\end{proof}

In the case when the quiver $Q$ has only one vertex and $G = \Z_d$, the definition agrees with the one given in \cite{CT,AAEKO}.  By the construction in Section \ref{sec:FuktoMF}, we have a functor from $\Fuk(\widetilde{X})$ to $\MF_{\hat{G}}(\cA,W)$.

\section{Calabi-Yau manifolds with finite-group symmetry} \label{sec:CY-grading}

%%%%%%% Can also just consider Kaehler without Calabi-Yau condition.  Then have equivariant MF rather than graded MF in noncommutative setting.

Let $(\widetilde{X},\omega,\Omega)$ be a Calabi-Yau manifold, where $\omega$ denotes a K\"ahler form and $\Omega$ denotes a holomorphic volume form\footnote{We do not require the metric condition $\omega^n = c \Omega \wedge \bar{\Omega}$ for a constant $c \in \C$.}.  Suppose we have a finite group $G$ acting on $\widetilde{X}$ such that for each $g \in G$, $g^* \omega = \omega$ and $g^* \Omega = c_g \Omega$ for some $c_g \in U(1)$.
$X = \widetilde{X}/G$ is a K\"ahler orbifold, while the Calabi-Yau form $\Omega$ does not necessarily descend to $\widetilde{X}/G$.  Let $m$ be the smallest integer such that $c_g^m = 1$ for all $g \in G$.

The association $g \mapsto c_g$ defines a homomorphism $G \to U(1)$, and we denote its kernel by $K$.  The image of $G$ is $\Z_m$ in $U(1)$, and hence we have the exact sequence
$$ 0 \to K \to G \to \Z_m \to 0. $$

As in the previous subsection, let $\bL = \bigcup_i L_i \subset X^0$ be a compact spin oriented weakly unobstructed immersed Lagrangian with clean self-intersections, where $X^0 := (X - \{p \in X: G_p \not= \{1\}\})/G$, and assume that each $L_i$ lifts to a smooth compact unobstructed Lagrangian submanifold $\tilde{L}^1_i \subset \widetilde{X}$ (and we fix this choice of lifting).  We have $|G|$ choices of a lifting, which are denoted as $\bL^g := g \cdot \bL^1$.  The union $\bigcup_g \bL^g$ is denoted as $\tilde{\bL}$.  By assumption $\bL^g$ intersects with $\bL^h$ cleanly for $g \not= h$.  We additionally assume that each $\tilde{L}^1_i$ has zero Maslov class, such that it can be equipped with a $\Z$-grading with respect to $\Omega$.

Downstairs we have constructed a functor $\cF$ from the $\Z_2$-graded Fukaya category of $X$ to the $\Z_2$-graded category of (noncommutative) matrix factorizations of the superpotential $W \in \cA = \Lambda Q/R$ associated to $\bL$. (The objects in $\Fuk(X)$ are Lagrangian immersions in $X^0$ which lift to compact unobstructed Lagrangian submanifolds in $\widetilde{X}$.  Morphism spaces are the $G$-invariant parts of the corresponding morphism spaces in $\mathrm{Fuk}(\widetilde{X})$.  $A_\infty$ operations $m_k$ for $\mathrm{Fuk}(\widetilde{X})$ restrict to give an $A_\infty$ structure on $\Fuk(X)$.)  Upstairs we have defined a functor $\tilde{\cF}$ from the $\Z_2$-graded Fukaya category of $\widetilde{X}$ to the $\Z_2$-graded category $\Tw(\cA\#G, W)$.

In this section, we will equip both sides downstairs with $\frac{1}{m} \Z$-gradings and enhance $\cF$ to a $\frac{1}{m} \Z$-graded functor (Proposition \ref{prop:F^1/m}).  Moreover, by studying the relation between the fractional grading of $X$ and the integer grading of $\widetilde{X}$, we will define a $\Z$-grading on the category of equivariant matrix factorizations $\Tw(\cA\#G, W)$ and enhance $\tilde{\cF}$ to a $\Z$-graded functor (Proposition \ref{prop:F-graded}). 

$\Z$-graded matrix factorizations were introduced by Orlov \cite{Orlov}.  The formulation we make here is similar to that in C\u{a}ld\u{a}raru-Tu \cite{CT} when $G = \Z_d$ and $\cA$ is the usual polynomial ring.  Fractional grading has been used by Abouzaid et al. \cite{AAEKO} for the mirror symmetry of cyclic covers of a punctured sphere.

\subsection{Fractional grading downstairs}

The fractional grading on $\Fuk(X)$ is defined as follows.  Consider $\Omega^{\otimes m}$ on $\widetilde{X}$ which is invariant under $G$, namely $g^* \Omega^{\otimes m} = \Omega^{\otimes m}$.  

\begin{definition}
A $\frac{1}{m} \Z$-graded immersed Lagrangian in $X^0$ is an immersed Lagrangian $L$ equipped with a phase function $\theta_L: \tilde{L} \to \R$ such that $\Omega^{\otimes m}(T_p \tilde{L}) = \conste^{\pi \consti \theta_L(p)}$ for all $p \in \tilde{L}$ (where $\tilde{L}$ denotes the normalization of $L$).  

$L$ is called to be a special Lagrangian with respect to $\Omega^{\otimes m}$ if it can be equipped with a constant phase function.
\end{definition}

\begin{definition}
Let $S$ be a clean intersection between two $\frac{1}{m} \Z$-graded Lagrangians submanifold $L_1$ and $L_2$.  The degree of $\one_S$, the fundamental class of $S$ as a morphism from $L_1$ to $L_2$, is defined as
\begin{equation} \label{eq:deg^{1/m}}
\deg^{1/m}(\one_S) := \frac{1}{m}(\theta_{L_2}(p) - \theta_{L_1}(p) + \angle^m(\widearc{L_2L_1})|_p) \in \frac{1}{m} \Z
\end{equation}
where $p \in S$ is any chosen point, $\pi \cdot \angle^m(\widearc{L_2L_1})|_p$ is the phase angle of the positive-definite path $\widearc{L_2L_1}$ from $T_pL_2$ to $T_pL_1$ measured by $\Omega^{\otimes m}(p)$.

For a general degree $l$ element  $a \in H^l(S)$, $\deg^{1/m}(a) := \deg^{1/m}(\one_S) + l$.
\end{definition}

Note that the definition is independent of the choice of $p$ since all quantities involved are continuous in $p$ while the whole expression is a rational number.

The following relates the $\Z_2$ grading coming from the orientation and the fractional grading.

\begin{lemma} \label{lem:parity}
If $m$ is odd, then the parity of a morphism $a$ (under the $\Z_2$ grading) equals to $m \cdot \deg^{1/m}(a) \mod 2$.  If $m$ is even, then $m \cdot \deg^{1/m}(a)$ is always even.
\end{lemma}
\begin{remark}
If $m$ is even, $\Omega^{\otimes m} (T_p\tilde{L})$ becomes independent of the orientation of $L$.
So, the second statement of the lemma is natural.
\end{remark}

\begin{proof}
Taking a lifting of $L_i$ in $\tilde{X}$, and correspondingly a lifting $\tilde{a}$ of the morphism $a$.  $(\theta_{L_i} + 2k_i \pi)/m$ for some $k_i \in \Z$ gives a $\Z$-grading of the liftings under $\Omega$.  
Note that $m \cdot \deg^{1/m}(a) = m \cdot \deg^{\Z}(\tilde{a}) + 2k$ under the $\Z$-grading for some $k \in \Z$.  This is always even if $m$ is even.  

The parity of $a$ equals to that of $\tilde{a}$ under the $\Z_2$ grading, which equals to $\deg^{\Z}(\tilde{a})$.  If $m$ is odd, the parity equals to $m \cdot \deg^{\Z}(\tilde{a}) + 2k$.
\end{proof}

\begin{prop} \label{prop:deg^1/m}
Suppose $L$ is an immersed special Lagrangian with respect to $\Omega^{\otimes m}$ in $X^0$.  Then the degree of $\one_S$ as a morphism from one branch $L_1$ to another branch $L_2$ at a clean self-intersection $S$ is given by $\angle(\widearc{L_2L_1})$, where $\pi \angle(\widearc{L_2L_1})$ is the phase angle of the positive-definite path $\widearc{L_2L_1}$ from $L_2$ to $L_1$ at any point in $S$ measured by $\Omega$\footnote{Note that $\angle(\widearc{L_2L_1})$ is well-defined since the phase angles measured by $\Omega$ and $g^* \Omega = \conste^{-\pi \consti \alpha_g} \Omega$ are the same for any $g$.}.
\end{prop}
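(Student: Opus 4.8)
The plan is to unwind the definition \eqref{eq:deg^{1/m}} of $\deg^{1/m}(\one_S)$ using the special hypothesis, and then to compare the phase angle measured by $\Omega^{\otimes m}$ with the one measured by $\Omega$.

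First I would use that $L$ is special to choose its phase function $\theta_L \colon \tilde{L} \to \R$ to be a single constant. The two branches $L_1$ and $L_2$ meeting along $S$ are the images under $\iota$ of two open subsets of the normalization $\tilde{L}$, and in \eqref{eq:deg^{1/m}} the functions $\theta_{L_1}$, $\theta_{L_2}$ are the restrictions of $\theta_L$ to those subsets. Hence $\theta_{L_2}(p) - \theta_{L_1}(p) = 0$ for every $p \in S$, and \eqref{eq:deg^{1/m}} reduces to $\deg^{1/m}(\one_S) = \tfrac{1}{m}\,\angle^m(\widearc{L_2L_1})|_p$.

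Next I would show $\angle^m(\widearc{L_2L_1})|_p = m\,\angle(\widearc{L_2L_1})|_p$. The key point is that the positive-definite (short) path $\widearc{L_2L_1}$ joining $T_pL_2$ to $T_pL_1$ is determined by the linear symplectic and complex structures on $T_pX^0$ alone and makes no reference to a holomorphic volume form. Since $p \in S$ is an honest (non-orbifold) point of $X^0$, near $p$ one may pick a local branch on which $\Omega$ is defined, and then $\Omega^{\otimes m}$ is its $m$-th tensor power; consequently the $\Omega^{\otimes m}$-phase of any Lagrangian plane $\Lambda$ along the path is the $m$-th power of its $\Omega$-phase, so a continuous lift of the $\Omega^{\otimes m}$-phase along $\widearc{L_2L_1}$ equals $m$ times a continuous lift of the $\Omega$-phase. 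Therefore the accumulated phase changes satisfy $\pi\,\angle^m = m\,\pi\,\angle$ along $\widearc{L_2L_1}$, which is the claimed identity. Combining the two steps yields $\deg^{1/m}(\one_S) = \angle(\widearc{L_2L_1})|_p$.

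Finally I would record well-definedness: independence of the choice of $p \in S$ follows from the continuity argument already invoked for $\deg^{1/m}$, while independence of the chosen local branch of $\Omega$ holds because replacing $\Omega$ by $g^*\Omega = \conste^{-\pi\consti\alpha_g}\Omega$ (or by any nonzero constant multiple) shifts every argument by the same global constant and hence leaves the phase change along a path unchanged. I do not expect a genuine obstacle here; the only point needing a little care is the intrinsic, $\Omega$-independent nature of the positive-definite path used in the second step, so that passing to $\Omega^{\otimes m}$ merely scales the accumulated phase by $m$.
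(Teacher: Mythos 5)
Your proposal is correct and follows essentially the same route as the paper: use specialness to make the phase function constant so that $\theta_{L_2}(p)-\theta_{L_1}(p)=0$ in \eqref{eq:deg^{1/m}}, and then identify $\frac{1}{m}\angle^m(\widearc{L_2L_1})=\angle(\widearc{L_2L_1})$ because the $\Omega^{\otimes m}$-phase along the positive-definite path is $m$ times the $\Omega$-phase. The paper states this more tersely, while you spell out the lifting argument and the independence of the local branch of $\Omega$ (which the paper relegates to the footnote), but there is no substantive difference.
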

\begin{proof}
Since $L$ is special, at each self-intersection point, the $\frac{1}{m}\Z$-gradings of the two branches are equal.  Then by Equation \ref{eq:deg^{1/m}}, $\deg^{1/m}(\one_S) = \frac{1}{m}(\theta_{L_2} - \theta_{L_1} + \angle^m(\widearc{L_2L_1})) = \frac{1}{m} \angle^m(\widearc{L_2L_1}) = \angle(\widearc{L_2L_1})$.
\end{proof}

Now let $\Fuk^{\frac{1}{m}\Z} (X)$ be the full subcategory of $\Fuk (X)$ whose objects are oriented and $\frac{1}{m} \Z$-graded.  Then the morphism spaces are $\frac{1}{m} \Z$-graded, namely a transverse intersection point between $L_1$ and $L_2$ (regarded as a morphism from $L_1$ to $L_2$) can be associated with a fraction in $\frac{1}{m} \Z$.  Moreover from the orientations we have a $\Z_2$ grading on the morphism spaces.

\begin{prop}
The $A_\infty$-multiplications $m_k$ has degree $2-k$ under the $\frac{1}{m}\Z$ grading.
\end{prop}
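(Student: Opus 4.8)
The plan is to deduce the degree statement from the index theorem for Cauchy--Riemann operators on a disc with totally real boundary conditions, exactly as in the proof that $m_k$ has degree $2-k$ for the ordinary $\Z$-graded Fukaya category (compare \cite{Seidel-book}), the only genuinely new point being the bookkeeping of the fractional phase functions defined by $\Omega^{\otimes m}$.

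First I would reduce to the ``corner gradings''. In the Morse--Bott model the morphism space between two $\frac{1}{m}\Z$-graded Lagrangians is a direct sum, over the clean intersection components $S$, of the cochains $C^\bullet(S)$, and the $\frac{1}{m}\Z$-grading splits as $\deg^{1/m}(\one_S)$ plus an internal (Morse or de Rham) degree along $S$. The operations $m_k$ act on the internal degree by fibre integration over the moduli spaces, which shifts it by the dimension of the corresponding fibre; this part of the degree bookkeeping is standard and insensitive to the orbifold structure. Hence it suffices to show that for a rigid holomorphic $(k+1)$-gon with boundary on $L_0,\dots,L_k$, inputs the generators $\one_{S_1},\dots,\one_{S_k}$ and output $\one_{S_0}$, one has
$$\deg^{1/m}(\one_{S_0}) - \sum_{j=1}^{k}\deg^{1/m}(\one_{S_j}) = 2-k.$$

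The second step is the dimension formula: up to the universal constant $k-2=\dim\mathcal{M}_{0,k+1}$ coming from the automorphisms of the disc with $k+1$ boundary marked points, the virtual dimension of the moduli space of such polygons equals the relevant Maslov index of the configuration, and that index is, by the usual excision/gluing argument, a sum of local contributions at the $k+1$ corners plus the Maslov number of a totally real loop bundle along the boundary loop $L_0\to L_1\to\cdots\to L_k\to L_0$. Here I would verify that, with the phase function $\theta_L$ normalized by $\Omega^{\otimes m}(T\tilde L)=\conste^{\pi\consti\,\theta_L}$, the local contribution at the corner $S$ between two branches is precisely $\deg^{1/m}(\one_S)$ as in \ref{eq:deg^{1/m}}: the phase functions $\theta_{L_i}$ telescope around the boundary loop, so that although each $\deg^{1/m}(\one_{S_j})$ is only $\frac1m\Z$-valued, the fractional parts cancel and $\sum_j\deg^{1/m}(\one_{S_j})-\deg^{1/m}(\one_{S_0})$ is the honest integral Fredholm index. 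Setting the virtual dimension to zero for rigid configurations then yields the displayed identity, and hence $m_k$ has degree $2-k$ for the $\frac1m\Z$-grading.

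I expect this last bookkeeping step to be the main obstacle: since only $\Omega^{\otimes m}$ (and, for $m$ odd, strictly only $\Omega^{\otimes 2m}$ as a function on the unoriented Lagrangian Grassmannian) descends to $X^0$, the individual quantities $\theta_L$ and $\angle^m$ are $\frac1m\Z$-valued rather than integral, and one must check that these fractional contributions telescope exactly around every holomorphic polygon --- equivalently, that $\Omega^{\otimes m}$ determines an admissible $\frac1m\Z$-grading structure on $X^0$ for which the index theorem applies verbatim. If one prefers to avoid this computation, an alternative is to lift each polygon to $\widetilde X$, where $\Omega$ is an honest holomorphic volume form and the dimension formula has a genuinely integral shape, and then push the identity back down, keeping track of the characters $c_g$ and of the relation $g_{\mathrm{out}}=g_1\cdots g_k$ among the group labels attached to the corners by Lemma \ref{lem:g_x}.
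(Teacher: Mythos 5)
Your argument is correct and is essentially the proof the paper intends: the paper itself only states that the proposition ``can be proved from \cite{Seidel-book} as in \cite{Se}'', and your sketch --- reducing to the corner gradings in the Morse--Bott model and then using the index theorem for rigid polygons, with the fractional phase contributions telescoping around the boundary loop to give the integral Fredholm index --- is exactly that standard Seidel-style argument spelled out. Your alternative route of lifting polygons to $\widetilde{X}$ and tracking the characters $\alpha_g$ is likewise consistent with how the paper later compares $\deg$ and $\deg^{1/m}$ (Lemma \ref{lem:deg} and Proposition \ref{prop:F-graded}), so there is no gap.
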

This follows from the  index formula (see \cite{Seidel-book}, \cite{Se}).
Namely,  one can measure the Maslov index using the quadratic volume form $\Omega^{\otimes 2}$
by combining the effects of grading along the Lagrangians $\Omega^{\otimes 2}(T_p \tilde{L_i}) = \conste^{2\pi \consti  \theta_{L_i}(p)/m}$ and the angles of the intersections $\angle(\widearc{L_{i}L_{i+1}})$ for each $i$.

Let us go back to the immersed Lagrangians $\bL = \bigcup_i L_i \subset X^0$ that we begin with.  Since it is assumed that the lifting $\bL^1 \subset \widetilde{X}$ is equipped with a grading $\theta$ with respect to $\Omega$, $\bL$ can also be equipped with a grading $m\theta$ with respect to $\Omega^{\otimes m}$.  Thus $\bL$ is $\frac{1}{m}\Z$-graded.

Recall that we consider formal deformations $b = \sum_{i} x_i X_i$ of $\bL$ where $X_i$ are odd-degree generators (under the canonical $\Z_2$ grading) and $x_i \in \Lambda Q$.  Using the $\frac{1}{m}\Z$-grading, each $X_i$ is associated with a fraction $\deg^{1/m} X_i$.

\begin{definition} \label{def:grading-A}
Define a grading on the path algebra $\Lambda Q$ generated by $x_i$ by
$\deg x_i := 1 - \deg^{1/m} X_i.$
\end{definition}

\begin{example}\label{ex:mainref2}
Let us consider the immersed Lagrangian $\mathbb{L}$ discussed in Chapter \ref{sec:333}, regarded as an immersion in the pair of pants (by removing three singular point of $\mathbb{P}^1_{3,3,3}$). It is easy to see that the resulting potential is given by $W=xyz$. The $1/3$-grading of the self-intersection point $X$ (see Figure \ref{fig:1o3gr}) can be computed as follows:
$$ \deg^{1/3} (X) =  \frac{1}{3} \left(\angle^3 (\widearc{L_2L_1})|_X \right) = \frac{1}{3}.$$
where $L_1$ and $L_2$ are local branches of $\mathbb{L}$ which intersect at $X$.
(Here, $\theta_L$ is a constant on $\mathbb{L}$, and hence the first two terms in \eqref{eq:deg^{1/m}} cancel out.  See also Proposition \ref{prop:deg^1/m}.)
\begin{figure}[h]
\begin{center}
\includegraphics[scale=0.4]{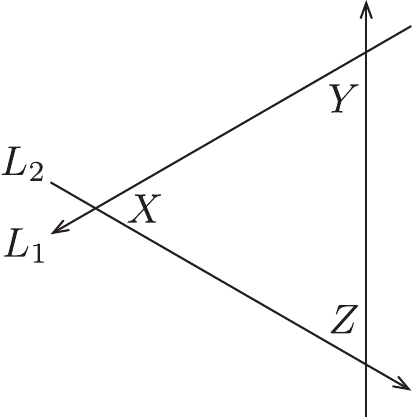}
\caption{Two branches meeting at the immersed generator $X$ in $CF(\mathbb{L},\mathbb{L})$ drawn in the $3:1$ cover.}\label{fig:1o3gr}
\end{center}
\end{figure}
Therefore the corresponding variable $x$ has $\deg x = 1- \deg^{1/3} (X) = \frac{2}{3}$. Likewise, we have $\deg y = \deg z =\frac{2}{3}$. Observe that $\deg (W) = \deg (xyz) =2$.
\end{example}

By definition $b = \sum_{i} x_i X_i$ always has degree one.  Note that $\deg x_i > 0$ if $\deg^{1/m} X_i < 1$.  In this case a homogeneous series in $x_i$ must have finitely many terms.

\begin{lemma} \label{lem:2k/m}
$\deg x_i = 2k_i / m$ for some $k_i \in \Z$.
\end{lemma}
\begin{proof}
By Lemma \ref{lem:parity}, $m \cdot \deg^{1/m}(X_i)$ is always even if $m$ is even.  When $m$ is odd it equals to the parity of $X_i$ (which is odd).  Thus $m \deg x_i = m - m \cdot \deg^{1/m}(X_i)$ is again even.
\end{proof}

Note that $m=1$ is the case that the Fukaya category is $\Z$-graded.  Then $X_i$ are taken to have odd degree, and so $x_i$ must have even degree such that $x_i X_i$ has degree $1$.

\begin{prop}
Under the above grading on $x_i \in \frac{1}{m} \Z$, the weakly unobstructed relations in $R$ are homogeneous.  Hence the grading descends from $\Lambda Q$ to $\cA = \Lambda Q / R$.  Moreover $W$ has degree $2$.
\end{prop}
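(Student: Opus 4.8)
The plan is to prove this by a direct degree count on $m_0^b$, using two inputs already available: first, the preceding proposition that the $A_\infty$-operations $m_k$ have $\frac{1}{m}\Z$-degree $2-k$; and second, the fact that, by Definition \ref{def:grading-A}, the element $b = \sum_i x_i X_i$ is a sum of elements each of $\frac{1}{m}\Z$-degree exactly $1$ (since $\deg x_i + \deg^{1/m} X_i = (1 - \deg^{1/m} X_i) + \deg^{1/m} X_i = 1$). Everything then follows from reading off the homogeneous pieces of $m_0^b = \sum_k m_k(b,\ldots,b)$.

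The key steps, in order, are as follows. First I would check that each monomial contribution to $m_k(b,\ldots,b)$ is homogeneous of degree $2$: such a contribution comes from a term $m_k(x_{e_1}X_{e_1},\ldots,x_{e_k}X_{e_k})$, which by the pull-out rule \eqref{eqn:pulloutrule} equals $x_{e_k}\cdots x_{e_1}\, m_k(X_{e_1},\ldots,X_{e_k})$; its $\frac{1}{m}\Z$-degree is $\sum_j \deg x_{e_j} + \big((2-k) + \sum_j \deg^{1/m} X_{e_j}\big)$, and substituting $\deg x_{e_j} = 1 - \deg^{1/m} X_{e_j}$ makes the sum telescope to $2$. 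Hence $m_0^b = \sum_k m_k(b,\ldots,b)$ is homogeneous of degree $2$ in the completed algebra (the infinite sum does not affect homogeneity). Second, I would decompose $m_0^b = \sum_i W_i \one_{L_i} + \sum_f P_f X_f$: since $\one_{L_i}$ has degree $0$, the coefficient $W_i$ — and hence $W = \sum_i W_i$ — has degree $2$; and since each $X_f$ has a well-defined $\frac{1}{m}\Z$-degree $\deg^{1/m} X_f$, matching homogeneous components forces $P_f$ to be homogeneous of degree $2 - \deg^{1/m} X_f$. Third, homogeneity of all the $P_f$ implies that the two-sided ideal $R = \langle P_f : f \in F\rangle$ is a graded ideal, so the $\frac{1}{m}\Z$-grading descends to $\cA = \Lambda Q/R$, with $W$ still of degree $2$.

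I do not expect a genuine obstacle here; the only place needing care is aligning the two grading conventions, namely the internal (unshifted) $\frac{1}{m}\Z$-grading and the shifted grading in which each $m_k$ raises degree by $1$, so that the statement ``$m_k$ has degree $2-k$'' from \cite{Seidel-book} and \cite{Se} is applied consistently, and confirming that the pull-out rule \eqref{eqn:pulloutrule} preserves total internal degree (which it does, since the formal variables $x_e$ carry their own degrees and are merely permuted). With that bookkeeping in place the argument is routine, so I would keep the write-up short and lean on the previously established degree of the $A_\infty$-operations rather than re-deriving it.
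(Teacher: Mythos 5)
Your proof is correct and follows essentially the same route as the paper: since $b$ has degree one and $m_k$ has degree $2-k$, the element $m_0^b$ is homogeneous of degree $2$, so matching components against the degrees of $\one_{L_i}$ and $X_f$ gives homogeneity of the relations and $\deg W = 2$. Your write-up simply spells out the telescoping degree count and the compatibility with the pull-out rule, which the paper leaves implicit.
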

\begin{proof}
Since $b = \sum_{i} x_i X_i$ has degree one, $m_0^b$ has degree 2.  A weakly unobstructed relation is a coefficient in $m_0^b$, and hence is homogeneous.  Moreover $\one_{L_i}$ has degree zero, and hence its coefficient $W_i$ in $m_0^b$ has degree 2, and so $W = \sum_i W_i$ has degree 2.
\end{proof}

\begin{cor}
If $\deg^{1/m} X_i < 1$ for all $i$, the mirror superpotential $W$ has only finitely many terms.
\end{cor}

Now we consider the mirror B-side.  

\begin{definition} \label{def:fracMF}
Given a $\frac{1}{m} \Z$-graded algebra $\cA$ with all homogeneous elements having degree $2k/m$ for some $k\in\Z$, and a central element $W \in \cA$ of degree $2$, the category $\MF^{\frac{1}{m}\Z} (\cA,W)$ consists of $(M,\delta)$, where $M$ is a $\frac{1}{m} \Z$-graded and $\Z_2$-graded projective $\cA$-module, and $\delta$ is a degree-one endomorphism (with respect to both the $\frac{1}{m} \Z$ and $\Z_2$ gradings) of $M$ satisfying $\delta^2 = W \cdot \mathrm{Id}$.  When $m$ is even, it is required that every homogeneous element of $M$ has degree $2k/m$ for some $k$.  When $m$ is odd, it is required that every homogeneous element of $M$ has degree $k/m$ where $k \mod 2$ is the parity of the element under the $\Z_2$ grading.  The morphism spaces are just the usual ones between $\cA$-modules.  (See Definition \ref{def:MF}.)
\end{definition}

\begin{prop} \label{prop:F^1/m}
$\cF$ defines a functor $\Fuk^{\frac{1}{m}\Z} (X) \to \MF^{\frac{1}{m}\Z} (\cA,W)$.
\end{prop}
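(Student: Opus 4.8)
The plan is to upgrade the $\Z_2$-graded $A_\infty$-functor $\cF$ of Theorem \ref{thm:F} to a $\frac{1}{m}\Z$-graded one. Since the $A_\infty$-functor equations are identities between linear maps, they continue to hold verbatim once objects and morphisms are reinterpreted; the only thing to verify is that, when restricted to $\frac{1}{m}\Z$-graded objects, $\cF$ produces $\frac{1}{m}\Z$-graded matrix factorizations and each component $\cF_l$ is homogeneous of the correct degree $1-l$. This is pure degree bookkeeping, built on three inputs already in place: $b$ has been arranged to have degree $1$ (Definition \ref{def:grading-A}), the operations $m_k$ have $\frac{1}{m}\Z$-degree $2-k$ (the preceding proposition, proved as in \cite{Seidel-book,Se}), and $\cA$-coefficients pull out of $m_k$ via \eqref{eqn:pulloutrule}.

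First I would grade the mirror objects. For a $\frac{1}{m}\Z$-graded Lagrangian $U$, every generator of $\CF^\bullet((\bL,b),U)$ lies in a clean intersection $S$ of a branch $L_i$ of $\bL$ with $U$, and I assign it the degree $\deg^{1/m}(\one_S)$ of \eqref{eq:deg^{1/m}}, using the grading $m\theta$ on $\bL$ fixed earlier and the grading of $U$. Tensoring with the graded algebra $\cA$ makes $\cA\otimes_{\Lambda^\oplus}\CF^\bullet((\bL,b),U)$ a graded free---hence graded projective, as $\CF^\bullet$ is finite-dimensional---$\cA$-module, with $\deg(g\otimes p):=\deg g+\deg^{1/m}p$. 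Then I would check that $\delta=(-1)^{\deg(\cdot)}m_1^{b,0}(\cdot)$ has degree $1$: by \eqref{eqn:pulloutrule},
\[
m_1^{b,0}(p)=\sum_{k\ge 0}\;\sum_{e_1,\dots,e_k}(x_{e_k}\cdots x_{e_1})\otimes m_{k+1}(X_{e_1},\dots,X_{e_k},p),
\]
and since $\deg x_{e_i}+\deg^{1/m}X_{e_i}=1$ and $m_{k+1}$ has $\frac{1}{m}\Z$-degree $1-k$, each term has $\cA\otimes\CF$-degree $k+\deg^{1/m}p+(1-k)=\deg^{1/m}p+1$. Combined with $\delta^2=W\cdot\mathrm{Id}$ (the matrix-factorization property already established) and $\deg W=2$, this exhibits $\cF(U)$ as an object of $\MF^{\frac{1}{m}\Z}(\cA,W)$.

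Finally I would run the identical computation on the higher components: for morphisms $q_1,\dots,q_l$ between $\frac{1}{m}\Z$-graded Lagrangians, $\cF_l(q_1,\dots,q_l)$ sends $p$ to $\pm\,m_{l+1}^{b,0,\dots,0}(p,q_1,\dots,q_l)=\pm\sum_k m_{k+l+1}(b,\dots,b,p,q_1,\dots,q_l)$, and the same cancellation of $\sum_i(\deg x_{e_i}+\deg^{1/m}X_{e_i})=k$ against the $-k$ in $\deg m_{k+l+1}=1-(k+l)$ shows $\cF_l(q_1,\dots,q_l)$ shifts $\cA\otimes\CF$-degree by $\sum_j\deg^{1/m}q_j+(1-l)$; hence $\cF_l$ has degree $1-l$, the correct degree for the $l$-th component of an $A_\infty$-functor. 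Together with the $A_\infty$-functor relations of Theorem \ref{thm:F}, this gives that $\cF$ restricts to a $\frac{1}{m}\Z$-graded $A_\infty$-functor. The only delicate point---and where a careful writeup should spend effort---is ensuring the grading is consistent on the \emph{mixed} complexes $\CF^\bullet((\bL,b),U)$ and that $b$ genuinely behaves as a degree-$1$ element under the convention $\deg x_e=1-\deg^{1/m}X_e$, so that all the $m_k$ appearing in the mixed operations remain homogeneous of degree $2-k$; this rests on the Morse--Bott graded Fukaya setup of \cite{Seidel-book,Se}. Everything else is the bookkeeping above.
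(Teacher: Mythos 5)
Your proposal is correct and follows essentially the same route as the paper's proof: the whole content is the degree bookkeeping that $b$ has degree $1$, hence the deformed operations $m_k^{(\bL,b),U_1,\ldots,U_k}$ retain degree $2-k$, so $\delta$ has degree $1$, $\cF_1$ has degree $0$, and the higher components have the expected degrees; the paper states exactly this (more tersely), and your explicit cancellation $\deg x_e+\deg^{1/m}X_e=1$ is just the unpacking of that statement.
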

\begin{proof}
We already have a $\Z_2$-graded functor from Section \ref{sec:FuktoMF}.  We just need to take care of the fractional grading.  Since $b$ has degree $1$, $m_k^{(\bL,b),L_1,\ldots,L_k}$ has degree $2-k$ for every $k$.  In particular the matrix factorization $\delta = m_1^{(\bL,b),L}$ corresponding to $L$ has degree $1$, and the functor on the morphism level given by $m_2^{(\bL,b),L_1,L_2}$ has degree zero and hence preserves grading.  Similarly the higher parts of the functor also have the correct degrees.  The last requirement follows from Lemma \ref{lem:parity}.
\end{proof}

\begin{example}\label{ex:mainref3}
We continue Example \ref{ex:mainref2} for the pair-of-pants.  Let's take the Lagrangian $L$ depicted by the red vertical line in Figure \ref{fig:1o3grmod}.  $L$ goes from one puncture to another and is a noncompact Lagrangian in the pair-of-pants.  

Let's fix the $\frac{1}{3}\Z$-grading on $\bL$ to be the constant phase function $3/2$ (and so $\Omega^{\otimes 3}(T_p\tilde{\bL})=\conste^{3\pi \consti /2}$ for all $p$).  Fix the $\frac{1}{3}\Z$-grading on $L$ to be the constant phase function $1/2$ (and so $\Omega^{\otimes 3}(T_pL)=\conste^{\pi \consti /2}$).

$L$ intersects $\mathbb{L}$ in two points $p$ and $q$ shown in the picture.  Then
$$\deg^{1/3} (p) = \frac{1}{3}(1/2-3/2) + \angle_p(\widearc{L\bL}) = 0, \quad \deg^{1/3} (q) = \frac{1}{3}(1/2-3/2) + \angle_q(\widearc{L\bL}) =\frac{1}{3}.$$
\begin{figure}[h]
\begin{center}
\includegraphics[scale=0.4]{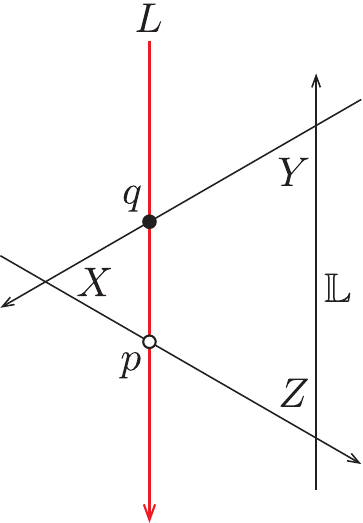}
\caption{The noncompact Lagrangian $L$ in the pair-of-pants drawn in the $3:1$ cover.}\label{fig:1o3grmod}
\end{center}
\end{figure}

Therefore the resulting matrix factorization is
$$x: M_0 \to M_1 , \qquad yz:  M_1  \to M_0 \qquad (M_0 =\C[x,y,z], \quad M_1 = \C[x,y,z][-1/3])$$
%$$x: M_0 \to M_1[1/3], \quad yz:  M_1[1/3] \to M_0 \qquad (M_0 = M_1 = \C[x,y,z])$$
where $M_0$ is spanned by $p$ and $M_1$ is by $q$. Notice that this is precisely the one introduced in Example \ref{ex:mainfgs}.

We can also shift the phase function of $L$ by $2k$.  Then the corresponding matrix factorization has grading shifted by $2k/3$.  They correspond to the $\Z$-graded Lagrangians in the punctured elliptic curve which have the same image as that of $L$ in the pair-of-pants.  (Those shifted by $2k/3\in\Z$ correspond to $L$ in the punctured elliptic curve with different $\Z$-gradings.)
\end{example}

\begin{remark} \label{rem:grMF}
Suppose the algebra $\cA$ is connected (that is, there are no non-trivial idempotent elements).  Then the above definition of $\MF^{\frac{1}{m}\Z} (\cA,W)$ is an equivalent formulation of Orlov's graded matrix factorizations 
$$(P_0,P_1,p_0: P_0 \to P_1,p_1: P_1 \to P_0) $$
for $(\hat{\cA},\hat{W})$.  Namely, we multiply every fractional degree by $m/2$ and get an integer grading on $\cA$.  $W$ then has degree $m$.  

For a $\frac{1}{m} \Z$-graded and $\Z_2$ graded $\cA$-module $M$ as in Definition \ref{def:fracMF}, we set $P_0$ and $P_1$ to be the even and odd parts of $M$ with respect to the $\Z_2$ grading respectively; $\delta$ is sending $P_0$ to $P_1$ and $P_1$ back to $P_0$.  $P_0$ and $P_1$ are $\frac{1}{m} \Z$-graded, and we multiply all the degrees by $m/2$, so that they become $\frac{1}{2}$-graded, and $\delta$ becomes degree $m/2$.  We shift the grading on $P_1$ by $m/2$.  Then $p_0:=\delta: P_0 \to P_1$ has degree $m$, and $p_1:=\delta: P_1 \to P_0$ has degree $0$.  If $m$ is even, both $P_0$ and $P_1$ are $\Z$-graded (since elements in $M$ are required to have degree $2k/m$ for some $k \in \Z$ under the fractional grading).  If $m$ is odd, recall that elements in $M$ are required to have fractional degree $k/m$ where $k \mod 2$ equals to the parity in $\Z_2$ grading.  After multiplying by $m/2$, $P_0$ is $\Z$-graded; further shifting by $m/2$ (where $m$ is odd) makes $P_1$ also $\Z$-graded.  

Since $\delta^2 = W \cdot \mathrm{Id}$, we have a corresponding quasi-periodic sequence.  The above process is reversible, and hence it is equivalent to Orlov's graded matrix factorization.  (Here the setting is a bit more general: the quiver algebra $\cA$ is not connected when the quiver has more than one vertices.)
\end{remark}

\subsection{Integer grading on equivariant matrix factorizations} \label{sec:MFupstairs}
In this subsection we purely work on grading on the B-side.  Let $W$ be a central element of a quiver algebra with relations $\cA = \Lambda Q/R$, and suppose we have a formal dual group action of $\hat{G}$ on $\cA$ (denoted by $x \mapsto g_x$) such that $W$ is invariant (see Definition \ref{def:dual-group}).  Then we have the formal dual group quotient $(\hat{\cA},\hat{W})$ from Definition \ref{def:formal-quot}.

We fix the following additional grading information and consider $\Z$-graded matrix factorizations of $(\hat{\cA},\hat{W})$, which are $\frac{1}{m} \Z$-graded matrix factorizations (Definition \ref{def:fracMF}) for $m=1$.  Under our mirror construction, this information is provided by Definition \ref{def:grading-A} from the A-side.

\begin{definition} \label{def:grading-B}
A grading on the formal dual $G$-quotient $(\hat{\cA},\hat{W})$ is a choice of a character $G \to U(1)$ together with a factional grading on $\cA$, namely an assignment $\deg x \in \Q$ for each generator $x$ of $\cA$, such that the character maps $g_x$ to $\conste^{\pi \consti \deg x}$.  Moreover $W$ is required to be homogeneous of degree 2 under the grading.
\end{definition}

The image of a character $G \to U(1)$ equals to $\Z_m$ for some $m \geq 1$.  Thus we have the exact sequence 
\begin{equation} \label{eq:K}
0 \to K \to G \to \Z_m \to 0
\end{equation}
where $K$ is the kernel of $G \to U(1)$.  Then automatically $\deg x \in \frac{2}{m}\Z$ in the above definition.  (This matches with Definition \ref{def:fracMF}.)  Moreover the character maps each $g$ to $\conste^{\pi \consti \alpha_g}$ for a unique $\alpha_g \in [0,2)$.  In particular we have $\alpha_{g_x} \equiv \deg x \mod 2\Z$ for every $x$.  

We use the above information to give a $\Z$-grading on the category $\cA\#\hat{G}$ given in Definition \ref{def:A*G}.

\begin{prop}
An element in $\Hom_{\cA\#\hat{G}}(A_{(v,g)},A_{(u,h)})$ (as a series) has every term with $\deg \equiv \alpha_h - \alpha_g \mod 2\Z$.
\end{prop}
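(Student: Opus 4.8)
The plan is to unwind the definition of the morphism space and push the defining group-theoretic constraint through the grading character. First I would recall from Definition \ref{def:A*G}, specifically the condition \eqref{eq:A*G}, that a term of $\Hom_{\cA\#\hat{G}}(A_{(v,g)},A_{(u,h)})$ is (the class of) a path $\omega = x_p \cdots x_1$ in $Q$ running from $u$ to $v$ and subject to the relation $h\,\mathfrak{f}(x_1)\cdots\mathfrak{f}(x_p)\,g^{-1} = 1$ in $G$.

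Next I would apply the grading character $\chi : G \to U(1)$ of Definition \ref{def:grading-B}, which by construction sends $\mathfrak{f}(x) \mapsto \conste^{\pi\consti\deg x}$ for every generator $x$ of $\Lambda Q$ and sends a general $g \in G$ to $\conste^{\pi\consti\alpha_g}$. Since $\chi$ is multiplicative, evaluating it on the relation $h\,\mathfrak{f}(x_1)\cdots\mathfrak{f}(x_p)\,g^{-1} = 1$ and rearranging gives
\[
\conste^{\pi\consti(\deg x_1 + \cdots + \deg x_p)} = \conste^{\pi\consti(\alpha_g - \alpha_h)} .
\]
Comparing the arguments modulo $2\Z$, and using that the grading on $\Lambda Q$ is additive under concatenation so that $\deg \omega = \deg x_1 + \cdots + \deg x_p$, one obtains $\deg\omega \equiv \alpha_g - \alpha_h = -(\alpha_h - \alpha_g) \pmod{2\Z}$. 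Since this holds for every term $\omega$ occurring in a given element, the proposition follows.

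There is essentially no real obstacle here beyond careful bookkeeping: the two points that need attention are (i) that the defining condition \eqref{eq:A*G} is the correct source of the group relation, with $g$ and $h$ sitting on the correct sides and carrying the correct inverses, and (ii) that only the multiplicativity of $\chi$ is used, so the argument is insensitive to the order of the $\mathfrak{f}(x_i)$ (and in particular does not require $G$ to be abelian). I would also add a remark that this proposition is exactly the statement that each morphism space $\Hom_{\cA\#\hat{G}}(A_{(v,g)},A_{(u,h)})$ is concentrated in a single residue class mod $2$; this compatibility is what makes the $\Z$-grading on $\MF^\Z(\hat{\cA},\hat{W})$ of Definition \ref{def:MF_graded} well defined and is precisely the input needed for the functor upstairs to respect the $\Z$-grading in Proposition \ref{prop:F-graded}.
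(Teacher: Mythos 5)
Your proof is correct and follows essentially the same route as the paper: both apply the grading character to the defining relation \eqref{eq:A*G} (the paper phrases this via $\conste^{\pi\consti\alpha_g}$ and the congruence $\alpha_{g_x}\equiv\deg x \bmod 2\Z$, you via $\chi(\mathfrak{f}(x))=\conste^{\pi\consti\deg x}$ directly) and then compare exponents modulo $2\Z$. No gaps; the bookkeeping of $g$, $h$ and the inverses matches Definition \ref{def:A*G}.
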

\begin{proof}
Recall from Definition \ref{def:A*G} that an element in $\Hom_{\cA\#\hat{G}} (A_{(v,g)},A_{(u,h)})$ has every term $x_p \ldots x_1$ with $g_{x_p} \ldots g_{x_1} = h g^{-1}$.  
Hence $e^{\pi \consti \left(\alpha_g - \alpha_h + \sum_{i=1}^p \alpha_{g_i}\right)} = 1$.  Thus
$$ \alpha_g - \alpha_h + \sum_{i=1}^p \alpha_{g_i} \equiv \alpha_g - \alpha_h + \deg (x_1 \ldots x_p) \equiv 0 \mod 2\Z.$$
\end{proof}

\begin{definition}
For a homogenous polynomial $f \in \mathrm{Hom}_{\cA\#\hat{G}}(A_{(v,g)},A_{(u,h)})$, its twisted degree is defined as
$$ \widetilde{\deg} f := \deg f + (\alpha_g - \alpha_h) \in 2\Z. $$
\end{definition}

From Proposition \ref{prop:MF}, $\MF(\hat{\cA},\hat{W})$ is identified with $\Tw(\cA\#\hat{G})$.  Now we define the $\Z$-graded enhancement $\Tw^\Z(\cA\#\hat{G})$ and denote it by $\MF_K^\Z(\cA,W)$, where $K$ is given in the exact sequence \eqref{eq:K}.  (The notation $\MF_K^\Z(\cA,W)$ emphasizes that the choice of $\Z$-grading depends on $K$.)  Similar to Proposition \ref{prop:MF}, $\Tw^\Z(\cA\#\hat{G})$ is equivalent to $\mathrm{MF}^\Z(\hat{A},\hat{W})$, where $\MF^\Z$ is given in Definition \ref{def:fracMF} with $m=1$.  
%\footnote{More precisely $\MF^\Z(A,W)$ should be denoted as $\MF^\Z(\hat{\cA},\hat{W})$, since it is the graded category of the formal dual group quotient.  To match with the notations in existing literature and for simplicity, we use the notation $\MF^\Z(A,W)$ instead.}  
%The objects and morphism spaces are equipped with $\Z$-grading rather than $\Z_2$-grading, and the differential is required to have degree one rather than odd degree.

\begin{definition} \label{def:MF_graded}
The category $\Tw^\Z(\cA\#\hat{G})$ consists of objects of the form $\left(\bigoplus_{i \in I} A_{(v(i),g(i))} [\sigma_i],\delta\right)$ where $\sigma_i \in \Z$, $\delta$ is an endomorphism of $\bigoplus_{i \in I} A_{(v(i),g(i))} [\sigma_i]$ satisfying $\delta^2 = W \cdot \mathrm{Id}$.  The graded morphism space is
$$ \mathrm{Hom} \left( \bigoplus_{i \in I} A_{(v(i),g(i))} [\sigma_i], \bigoplus_{k \in K} A_{(u(k),h(k))} [\tau_k] \right) = \bigoplus_{i,k} \mathrm{Hom}_{\cA\# \hat{G}}(A_{(v(i),g(i))},A_{(u(k),h(k))}) [\tau_k - \sigma_i]$$
where on the right hand side, $\mathrm{Hom}_{\cA\# \hat{G}}(A_{(v(i),g(i))},A_{(u(k),h(k))})$ is a graded vector space by using the above twisted degree, and $[\tau_k - \sigma_i]$ means to shift this grading by $\tau_k - \sigma_i$.  The differential $\delta$ above is required to have degree one with respect to this grading.  
\end{definition}
%Note that $W \cdot \mathrm{Id}$ has degree two and $\delta$ has degree one, and hence the equality $\delta^2 = W \cdot \mathrm{Id}$ matches with the grading.

\begin{remark}
Given a fractional grading on $\cA$, we have Orlov's graded matrix factorizations.  (See Definition \ref{def:fracMF} and Remark \ref{rem:grMF}.)  However $\MF_K^\Z(\cA,W)$ is more than this, namely we need the group $G$, and the $G$-grading on $\cA$: $x \mapsto g_x$.

Suppose $K=\{1\}$ (and in particular $G \cong \Z_m$ is cyclic).  Then $\MF_K^\Z(\cA,W)$ is an equivalent formulation of $\MF^{\frac{1}{m}\Z}(\cA,W)$ (which is an equivalent formulation of Orlov's graded matrix factorizations).  The integer degree of $x^{[h]} \in \hat{\cA}$ (where $[h] \in \Z_m$) is written in terms of the fractional degree of $x \in \cA$ as $\widetilde{\deg}\, x^{[h]} = \deg x + (\alpha_{2h/m} - \alpha_{2h/m + \deg x})$. Here, the subscripts on the right hand side are the elements of $\Z_m$ identified with $e^{\pi i (2h/m)}$ and $e^{\pi i (2h/m + \deg x)}$ in $\Z_m \subset U(1)$. The fractionally-graded projective $\cA$-module $P_v[-k/m]$ (for $k\in\Z$ is identified with the projective $\hat{\cA}$-module $\hat{P}_{v^{[k]}}[-\lfloor k/m \rfloor]$ where $[k] \in \Z_m$.  (Recall that $P_v$ and $\hat{P}_{v^{[k]}}$ denote $v \cdot \cA$ and $v^{[k]} \cdot \hat{A}$ respectively.)
\end{remark}

\begin{example}\label{ex:intdegups}
Let us consider the pair-of-pants in Example \ref{ex:mainref3} and its 3-fold cover. The variable $x$  downstairs gives rise to three different variables $x^{[0]},x^{[1]},x^{[2]}$ for $Q \# \Z_3$ ($[i] \in \Z_3$). More precisely, $x^{i}$ is taken to be the dual variable corresponding to the lift of $X$ in $\tilde{\mathbb{L}}^i \cap \tilde{\mathbb{L}}^{i+1}$ (see Figure \ref{fig:1o3lift}). From the above discussion, their integer degrees are given by
\begin{equation*}
\begin{array}{l}
\WT{\deg} x^{[0]} = \deg^{1/3} x + \left( \frac{2}{3} - \frac{4}{3} \right) =0\\
\WT{\deg} x^{[1]} = \deg^{1/3} x +  \left( 0 - \frac{2}{3} \right)= 0\\
\WT{\deg} x^{[2]} = \deg^{1/3} x +  \left( \frac{4}{3} - 0 \right)   = 2.
\end{array}
\end{equation*}
(One can check this directly using the standard volume $dz$ form upstairs.)
The integer gradings of $y^{[i]}$ and $z^{[i]}$ are given similarly.
\end{example}

\subsection{Graded functor upstairs}
Now we show that the functor upstairs respects the gradings on $\Fuk(\widetilde{X})$ and $\MF_K^\Z(\cA,W)$.  

Recall that for each $g \in G$, we have $g^* \Omega = \conste^{\pi \consti \alpha_g} \Omega$ for some $\alpha_g \in [0,2)$.
%\begin{assumption}
%Assume that each component of $\bL$ is a special Lagrangian with respect to $\Omega^{\otimes m}$.
%\end{assumption}
%By Proposition \ref{prop:deg^1/m}, $\deg^{1/m} (\one_S)$ for $\one_S$ as a morphism from one branch $L_1$ to another branch $L_2$ is given by $\angle(\widearc{L_2L_1})$, where $\pi \angle(\widearc{L_2L_1})$ is the phase angle of the positive-definite path $\widearc{L_2L_1}$ from $L_2$ to $L_1$ measured by $\Omega$ at any $p \in S$.
We have equipped the lifting $\bL^{g=1}$ with the grading $\theta$ with respect to $\Omega$, and have equipped $\bL$ with the grading $m\theta$ with respect to $\Omega^{\otimes m}$.
Since $g^* \Omega = \conste^{\pi\consti \alpha_g} \Omega$, another lifting $\bL^g$ can be equipped with the grading $\theta + \alpha_g$.  We stick with this setting throughout this section.

The following two lemmas relate the gradings upstairs and downstairs.

\begin{lemma}\label{lem:Lliftup1}
Suppose an $\Omega$-graded Lagrangian of $L \subset \widetilde{X}$ intersects $\bL^g$ at a point $S$ transversely.  Then
$$ \deg S = \deg^{1/m} S - \alpha_g $$
where $S$ is identified as a morphism from $\bL^g$ to $L$ in $\Fuk(\widetilde{X})$ on the left hand side, and as a morphism from $\bL$ to $L$ in $\Fuk(X)$ on the right hand side.
\end{lemma}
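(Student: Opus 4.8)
The plan is to prove the identity by writing out both sides using the defining formula \eqref{eq:deg^{1/m}} together with its $m=1$ specialization (which computes the ordinary $\Omega$-Maslov grading used in $\Fuk(\widetilde{X})$), and then comparing the three terms of that formula one at a time. Fix a point $p\in S$. Since $S$ lies in $X^0$, where $G$ acts freely, the quotient map $\pi:\widetilde{X}\to X$ restricts to a diffeomorphism of a neighbourhood of $p$ in $\widetilde{X}$ onto a neighbourhood of $\bar{p}:=\pi(p)$ in $X^0$; under it $\bL^g$ corresponds to $\bL$, $L$ upstairs corresponds to $L$ downstairs, and $\Omega^{\otimes m}$ on $\widetilde{X}$ is the pullback of the descended form $\Omega^{\otimes m}$ on $X^0$. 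Hence every local quantity at $\bar{p}$ computed with the descended $\Omega^{\otimes m}$ equals the corresponding quantity at $p$ computed with $\Omega^{\otimes m}$ on $\widetilde{X}$.

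Next I would record how the gradings transform. By hypothesis $\bL^1$ carries the $\Omega$-phase function $\theta$, and since $g^{*}\Omega=\conste^{\pi\consti\alpha_g}\Omega$, the lift $\bL^g$ carries the $\Omega$-phase function $\theta+\alpha_g$; downstairs $\bL$ carries the $\Omega^{\otimes m}$-phase function $m\theta$. Similarly $L$ carries $\theta_L$ (w.r.t. $\Omega$) upstairs and $m\theta_L$ (w.r.t. $\Omega^{\otimes m}$) downstairs, because for any Lagrangian plane $V$ one has $\Omega^{\otimes m}(V)=\Omega(V)^m$, so $\Omega^{\otimes m}$-phases are exactly $m$ times $\Omega$-phases. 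The same scaling applies to the angular term: the positive-definite path $\widearc{L\bL^g}$ from $T_pL$ to $T_p\bL^g$ is intrinsic to the one-parameter family of Lagrangian planes — positive-definiteness does not involve the volume form — and along it $\arg\Omega^{\otimes m}$ turns $m$ times as fast as $\arg\Omega$, so $\angle^{m}(\widearc{L\bL^g})|_p=m\,\angle(\widearc{L\bL^g})|_p$.

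With these ingredients the computation is immediate. Viewing $S$ as a morphism $\bL^g\to L$ in $\Fuk(\widetilde{X})$, the $m=1$ case of \eqref{eq:deg^{1/m}} gives
$$\deg S \;=\; \theta_L(p)-\bigl(\theta(p)+\alpha_g\bigr)+\angle(\widearc{L\bL^g})|_p .$$
Viewing $S$ as a morphism $\bL\to L$ in $\Fuk(X)$ and evaluating \eqref{eq:deg^{1/m}} at $\bar{p}$,
$$\deg^{1/m} S \;=\; \frac{1}{m}\Bigl(m\theta_L(\bar{p})-m\theta(\bar{p})+\angle^{m}(\widearc{L\bL})|_{\bar{p}}\Bigr)\;=\;\theta_L(p)-\theta(p)+\angle(\widearc{L\bL^g})|_p ,$$
where the last equality uses the identifications under $\pi$ described above. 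Subtracting the two displays yields $\deg S=\deg^{1/m}S-\alpha_g$, which is the claim. (Independence of the chosen point $p\in S$ is automatic, exactly as in the remark following the definition of $\deg^{1/m}$.)

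The only genuinely delicate point, which I would spell out carefully, is the scaling statement $\angle^{m}=m\,\angle$ for the positive-definite path — i.e. that passing from $\Omega$ to $\Omega^{\otimes m}$ multiplies the accumulated phase of a path of Lagrangian planes by $m$ while leaving the notion of positive-definiteness untouched — together with the compatibility of this phase computation across the covering map $\pi$ near $S$ (so that the descended $\Omega^{\otimes m}$ on $X^0$ genuinely reproduces the phases measured by $\Omega^{\otimes m}$ on $\widetilde{X}$). Everything else is a term-by-term bookkeeping of the three summands in \eqref{eq:deg^{1/m}}.
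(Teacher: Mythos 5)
Your proof is correct and follows essentially the same route as the paper: both compute $\deg^{1/m}S=\theta_L(p)-\theta(p)+\angle(\widearc{L\bL^g})|_p$ downstairs (using the grading $m\theta$ and $\angle^m=m\angle$) and $\deg S=\theta_L(p)-(\theta(p)+\alpha_g)+\angle(\widearc{L\bL^g})|_p$ upstairs (using the grading $\theta+\alpha_g$ on $\bL^g$), and subtract. The extra care you take with the phase-scaling under $\Omega\mapsto\Omega^{\otimes m}$ and the local identification via the free quotient is exactly what the paper's shorter proof uses implicitly.
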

\begin{proof}
It follows from the equalities
$$ \deg^{1/m} S = \frac{1}{m} \left(m \theta_L(S) - m \theta(S) + m \angle(\widearc{LL^g}) \right) = \theta_L(S) - \theta(S) + \angle(\widearc{LL^g}) $$
and 
$$\deg S = \theta_L(S) - (\theta(S) + \alpha_g) + \angle(\widearc{LL^g})$$
where $\theta_L$ denotes the $\Omega$-grading of $L$, $\theta$ is the $\Omega$-grading of the lifting $\bL^{g=1}$, and $\angle(\widearc{LL^g})$ denotes the phase angle of the positive-definite path $\widearc{LL^g}$ measured by $\Omega$ at $S$.
\end{proof}

\begin{lemma} \label{lem:deg}
Suppose $S \subset \widetilde{X}$ is a clean intersection between $\bL^g$ and $\bL^h$ for $g \not= h \in G$.  Then 
$$ \deg (\one_S) = \deg^{1/m} (\one_S) + (\alpha_h-\alpha_g) $$
where $\one_S$ is regarded as a morphism in $\Hom(\bL^g,\bL^h)$ of $\,\Fuk(\widetilde{X})$ and as an endomorphism in $\Hom(\bL,\bL)$ of $\,\Fuk(X)$ on the left and right hand sides respectively.
\end{lemma}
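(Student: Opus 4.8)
The plan is to reduce the identity to a side-by-side comparison of the two clean-intersection degree formulas, exactly parallel to the proof of the preceding lemma. First I would set up the gradings. Let $\vartheta_{g}$ and $\vartheta_{h}$ denote the chosen $\Omega$-gradings of the lifts $\bL^{g}$ and $\bL^{h}$; recall these were obtained from the reference grading $\theta$ of $\bL^{1}$ by $\vartheta_{g}=\theta+\alpha_{g}$ and $\vartheta_{h}=\theta+\alpha_{h}$ (transported appropriately), where $g^{*}\Omega=\conste^{\pi\consti\alpha_{g}}\Omega$. Downstairs, the $\tfrac1m\Z$-grading assigned to $\bL$ is "the same grading $m\theta$" on every branch, so the branch of $\bL$ covered by $\bL^{g}$ carries the $\Omega^{\otimes m}$-phase function $m(\vartheta_{g}-\alpha_{g})$ --- this is $m\vartheta_{g}$ shifted by $-m\alpha_{g}\in 2\Z$, the shift being legitimate since $c_{g}^{m}=1$ --- and likewise for $\bL^{h}$. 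I would also record the elementary fact that along a positive-definite path the $\Omega^{\otimes m}$-phase angle is $m$ times the $\Omega$-phase angle, so $\angle^{m}(\widearc{\bL^{h}\bL^{g}})=m\,\angle(\widearc{\bL^{h}\bL^{g}})$ at $S$.

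Next I would simply write out both degrees of $\one_{S}$. Viewed as a morphism from $\bL^{g}$ to $\bL^{h}$ in $\Fuk(\widetilde X)$, the clean-intersection grading formula (the $m=1$ case of \eqref{eq:deg^{1/m}}) gives
$$\deg(\one_{S})=\vartheta_{h}(S)-\vartheta_{g}(S)+\angle(\widearc{\bL^{h}\bL^{g}})\big|_{S}.$$
Viewed instead as the endomorphism of $\bL$ from the branch covered by $\bL^{g}$ to the branch covered by $\bL^{h}$, Equation \eqref{eq:deg^{1/m}} applied to the $\Omega^{\otimes m}$-gradings recorded above gives
$$\deg^{1/m}(\one_{S})=\frac1m\Bigl(m(\vartheta_{h}(S)-\alpha_{h})-m(\vartheta_{g}(S)-\alpha_{g})+m\,\angle(\widearc{\bL^{h}\bL^{g}})\big|_{S}\Bigr)=\bigl(\vartheta_{h}(S)-\alpha_{h}\bigr)-\bigl(\vartheta_{g}(S)-\alpha_{g}\bigr)+\angle(\widearc{\bL^{h}\bL^{g}})\big|_{S}.$$
Subtracting the two, the $\vartheta_{g}$, $\vartheta_{h}$ terms and the phase-angle term cancel identically, leaving $\deg(\one_{S})-\deg^{1/m}(\one_{S})=\alpha_{h}-\alpha_{g}$, which is the assertion. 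Independence of the chosen point of $S$ is the same continuity argument used for $\deg^{1/m}$ in its definition.

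The only place that needs care --- a convention check rather than a genuine difficulty --- is the first step: one must be sure that the $\tfrac1m\Z$-grading of $\bL$ downstairs is the one transported from the reference grading of $\bL^{1}$, so that on the branch covered by $\bL^{g}$ it differs from the honest $\Omega^{\otimes m}$-grading $m\vartheta_{g}$ of the lift by exactly $m\alpha_{g}$ (equivalently, from the $\Omega$-grading $\vartheta_{g}$ of the lift by $\alpha_{g}$ after dividing by $m$), and one must use literally the same positive-definite path $\widearc{\bL^{h}\bL^{g}}$ to measure the $\Omega$- and $\Omega^{\otimes m}$-phase angles in the two computations. With those conventions pinned down --- the same ones used in \eqref{eq:deg^{1/m}} and in the preceding lemma --- the identity is a one-line subtraction. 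I do not foresee a real obstacle: this lemma is the $\bL^{g}$-versus-$\bL^{h}$ analogue of the preceding $\bL^{g}$-versus-$L$ lemma, and the argument has exactly the same shape.
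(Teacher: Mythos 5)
Your proposal is correct and is essentially the paper's own argument: both write out $\deg$ and $\deg^{1/m}$ of $\one_S$ via the clean-intersection formula, using that the lift $\bL^g$ carries the $\Omega$-grading $\theta+\alpha_g$ while downstairs every branch carries the common $\Omega^{\otimes m}$-grading $m\theta$, and that the $\Omega^{\otimes m}$-phase angle is $m$ times the $\Omega$-phase angle, so the difference is $\alpha_h-\alpha_g$ by direct subtraction. Your extra remark that the shift $-m\alpha_g\in 2\Z$ is what makes the downstairs grading well defined is a harmless elaboration of the same convention the paper uses.
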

\begin{proof}
Pick a point in $S$, which corresponds to $p \in \bL^g$ and $q \in \bL^h$.  Then
$$ \deg (\one_S) = (\theta(q) + \alpha_h) - (\theta(p) + \alpha_g) + \angle(\widearc{\bL^h\bL^g})|_{(p,q)} $$
and
$$ \deg^{1/m} (\one_S) = \frac{1}{m} (m \theta(q) - m \theta(p) + m \angle(\widearc{\bL^h\bL^g)})|_{(p,q)} =  \theta(q) - \theta(p) + \angle(\widearc{\bL^h\bL^g}|)_{(p,q)}.$$
Thus the equality follows.
\end{proof}

For the category of matrix factorization, recall from Definition \ref{def:grading-B} that the grading information consists of a character of $G$ and a degree $\deg x \in \rat$ for each generator $x$ of $\cA$.  
We take the character of $G$ to be the one defined by $g^* \Omega = \conste^{\pi \consti \alpha_g} \Omega$ for $\alpha_g \in [0,2)$, and we take the degree $\deg x$ to be the one defined by Definition \ref{def:grading-A}.  The following lemma verifies the compatibility between the character and the degree.

\begin{lemma}
$\alpha_{g_x} \equiv \deg x \mod 2\Z$ for each generator $x$.
\end{lemma}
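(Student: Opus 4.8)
The plan is to read off this compatibility from the two grading-comparison lemmas just proven --- Lemma~\ref{lem:deg} together with the transverse-intersection lemma stated immediately before it --- combined with two elementary facts: that the immersed generators $X$ used to build $\Lambda Q$ are odd with respect to the canonical $\Z_2$-grading, and that the $\Z$-grading on $\Fuk(\widetilde{X})$ reduces modulo $2$ to that canonical $\Z_2$-grading. No further geometric input is needed; the argument is a substitution of the definitions of $\alpha_g$ (via $g^{*}\Omega = \conste^{\pi\consti\alpha_g}\Omega$ with $\alpha_g\in[0,2)$), of $\deg^{1/m}$, and of $\deg x$ from Definition~\ref{def:grading-A}.

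Concretely, I would proceed as follows. Fix a generator $x$ of $\Lambda Q$, corresponding to an odd-degree immersed generator $X$ of $\bL$, and put $g_x = \mathfrak{f}(X)$ as in Lemma~\ref{lem:g_x}. Using the fixed smooth lift $\bL^{1}\subset\widetilde{X}$, the generator $X$ is represented by a clean intersection $S$ between $\bL^{1}$ and $\bL^{g_x}$, and $\one_S$ is precisely the morphism whose fractional degree $\deg^{1/m}\one_S$ equals $\deg^{1/m}X$ (as an endomorphism of $\bL$ in $\Fuk(X)$) and whose integral degree $\deg\one_S$ is its degree as a morphism from $\bL^{1}$ to $\bL^{g_x}$ in $\Fuk(\widetilde{X})$. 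If $g_x\neq 1$, Lemma~\ref{lem:deg} with $(g,h)=(1,g_x)$ gives
$$\deg\one_S \;=\; \deg^{1/m}\one_S + (\alpha_{g_x}-\alpha_1) \;=\; \deg^{1/m}\one_S + \alpha_{g_x},$$
since $c_1=1$ forces $\alpha_1=0$; if $g_x=1$ (i.e.\ $X$ comes from an intersection already present inside $\bL^{1}$), the transverse-intersection lemma preceding Lemma~\ref{lem:deg}, applied with $g=1$, yields the same identity $\deg\one_S=\deg^{1/m}\one_S+\alpha_{g_x}$, again because $\alpha_1=0$. Since $X$ is odd for the canonical $\Z_2$-grading, which is the mod-$2$ reduction of the $\Z$-grading on $\Fuk(\widetilde{X})$, we have $\deg\one_S\equiv 1\pmod 2$; therefore
$$\alpha_{g_x} \;=\; \deg\one_S - \deg^{1/m}X \;\equiv\; 1-\deg^{1/m}X \;=\; \deg x \pmod{2\Z},$$
the last equality being Definition~\ref{def:grading-A}.

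The step I expect to require the most care --- though it is a minor one --- is the uniformity of the identity $\deg\one_S = \deg^{1/m}\one_S + \alpha_{g_x}$ across both the case $g_x\neq 1$ (where Lemma~\ref{lem:deg} applies verbatim) and the case $g_x=1$ (which is excluded from the statement of Lemma~\ref{lem:deg} only for trivial reasons and must be pulled back to the preceding transverse-intersection lemma, taking $L$ to be the relevant branch of $\bL^{1}$), together with the verification that the $\Z$-grading on $\Fuk(\widetilde{X})$ really does reduce mod $2$ to the $\Z_2$-grading used in the phrase ``odd-degree immersed generator''. Once these are in place, the remainder is the purely formal chain of equalities above.
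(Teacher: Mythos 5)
Your proof is correct and follows essentially the same route as the paper: lift $X$ to a morphism from $\bL^1$ to $\bL^{g_x}$, apply Lemma~\ref{lem:deg} (with $\alpha_1=0$) to get $\deg X=\deg^{1/m}X+\alpha_{g_x}$, and use that $\deg X$ is odd together with $\deg x=1-\deg^{1/m}X$. Your extra case analysis for $g_x=1$ and the remark on the mod-$2$ reduction of the grading are just details the paper leaves implicit.
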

\begin{proof}
$x$ corresponds to an immersed generator $X$ of $\bL$, and by definition $\deg x = 1 - \deg^{1/m} X$.  Lifting to upstairs, $X$ is lifted to a morphism from $\bL^1$ to $\bL^{g_x}$.  By Lemma \ref{lem:deg}, $\deg X = \deg^{1/m} X + \alpha_{g_x}$.  Moreover $\deg X$ is odd since we only take odd-degree deformations in our construction.  It follows that  $\deg x - \alpha_{g_x} = 1 - \deg X$ is even.
\end{proof}

Finally we show that the mirror functor upstairs is $\Z$-graded.

\begin{prop} \label{prop:F-graded}
$\cF$ defines a graded functor $\Fuk^{\Z} (\widetilde{X}) \to \MF_K^\Z(\cA,W)$ where $\Fuk^{\Z} (\widetilde{X})$ is the $\Z$-graded Fukaya category of $\widetilde{X}$.
\end{prop}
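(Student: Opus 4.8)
The plan is to promote the already-constructed $\Z_2$-graded functor $\cF$ to a $\Z$-graded one by re-decorating the objects and morphisms of $\MF^\Z(\cA,W)=\Tw^\Z(\cA\#\hat{G})$ with integer shifts and twisted degrees, and then checking only that the differential on each $\cF(U)$ has degree one and that $\cF_l$ has degree $1-l$. The matrix-factorization equation $\delta^2=W\cdot\mathrm{Id}$, the $A_\infty$-functor relations, and $\deg W=2$ have all been established in the $\Z_2$-graded setting, and upgrading the grading changes none of these identities; only the degree bookkeeping is new. The inputs are: (i) the $\frac{1}{m}\Z$-graded functor $\cF:\Fuk^{\frac{1}{m}\Z}(X)\to\MF^{\frac{1}{m}\Z}(\cA,W)$ of Proposition~\ref{prop:F^1/m}, in particular that every $m_l$ has degree $2-l$ for the $\frac{1}{m}\Z$-grading; (ii) the grading dictionary $\deg p=\deg^{1/m} p-\alpha_g$ for a transverse intersection $p\in\bL^g\cap U$ (the lemma preceding Lemma~\ref{lem:deg}) and $\deg \one_S=\deg^{1/m}\one_S+(\alpha_h-\alpha_g)$ for a self-intersection $S$ between $\bL^g$ and $\bL^h$ (Lemma~\ref{lem:deg}); and (iii) the definition of twisted degree in Definition~\ref{def:MF_graded}.

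\emph{Object level.} Let $U\in\Fuk^\Z(\widetilde X)$ be $\Omega$-graded. Each point $p\in\bL^g\cap U$ carries a $\Z$-degree $\deg p$ as a morphism $\bL^g\to U$, and I would place the corresponding summand $A_{(v(p),g)}$ of $\cF(U)$ in shift $\sigma_p:=-\deg p$. By the grading dictionary this is precisely the shift read off from the $\frac{1}{m}\Z$-graded $\cA$-module $\cA\otimes_{\Lambda^\oplus}\CF((\bL,b),U)$ of $\MF^{\frac{1}{m}\Z}(\cA,W)$ once the twist $\alpha_g$ attached to the vertex $v(p)^g$ is taken into account, so the underlying object upstairs agrees with the one obtained downstairs under the identification $\MF\cong\Tw(\cA\#G)$ of Proposition~\ref{prop:MF}.

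\emph{Degree count.} The component of $\delta$ from $p\in\bL^g\cap U$ to $q\in\bL^h\cap U$ is, up to sign, the $q$-coefficient of $\sum_j m_{j+1}(b,\dots,b,p)$; each monomial in it is a path $x_{e_1}\cdots x_{e_j}$ produced by a rigid strip with boundary insertions $X_{e_1},\dots,X_{e_j}$ along $\bL$, and defines an element of $\Hom_{\cA\#\hat{G}}(A_{(v(p),g)},A_{(v(q),h)})$. Since $m_{j+1}$ has degree $2-(j+1)$ for the $\frac{1}{m}\Z$-grading and $\deg x_{e_i}=1-\deg^{1/m} X_{e_i}$, solving for $\sum_i \deg^{1/m}X_{e_i}$ in $\deg^{1/m}q=(1-j)+\sum_i\deg^{1/m}X_{e_i}+\deg^{1/m}p$ gives $\deg(x_{e_1}\cdots x_{e_j})=\deg^{1/m}p-\deg^{1/m}q+1$; using $\deg p=\deg^{1/m}p-\alpha_g$ and $\deg q=\deg^{1/m}q-\alpha_h$,
\[
\widetilde{\deg}(x_{e_1}\cdots x_{e_j})=\deg(x_{e_1}\cdots x_{e_j})+(\alpha_h-\alpha_g)=\deg p-\deg q+1 .
\]
As this monomial lives in $\Hom_{\cA\#\hat{G}}(A_{(v(p),g)},A_{(v(q),h)})[\sigma_q-\sigma_p]$ with $\sigma_q-\sigma_p=\deg p-\deg q$, its degree there is $1$, so $\delta$ has degree one and $\cF(U)\in\MF^\Z(\cA,W)$. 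For the higher parts, the component of $\cF_l(q_1,\dots,q_l)$ sending $p\in\bL^g\cap U_1$ to the $q'$-coefficient (with $q'\in\bL^h\cap U_{l+1}$) of $\pm\sum_j m_{l+1+j}(b,\dots,b,p,q_1,\dots,q_l)$ is treated the same way; here one additionally invokes Lemma~\ref{lem:deg} to convert the upstairs $\Z$-degrees of the insertions $X_{e_i}$ into fractional degrees, and the resulting chain of $\alpha$'s telescopes — using the vertex condition of Definition~\ref{def:A*G} — to $\alpha_g-\alpha_h$. One obtains that $\cF_l(q_1,\dots,q_l)$ has degree $1-l+\sum_i\deg q_i$, i.e. $\cF_l$ has degree $1-l$, which is exactly what is needed for a $\Z$-graded $A_\infty$-functor.

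The step requiring the most care is the bookkeeping of conventions: pinning down the sign in $\sigma_p=-\deg p$, the direction of $\Hom_{\cA\#\hat{G}}$ and of the shift $[\tau-\sigma]$ in Definition~\ref{def:MF_graded}, and the orientation of the coefficient path $x_{e_1}\cdots x_{e_j}$ relative to the Floer strip, so that the two comparison lemmas are applied in their correct roles (transverse intersections with $U$ versus self-intersections of $\tilde\bL$) and the final count yields $1$ rather than some other integer. Once these are fixed compatibly with Proposition~\ref{prop:F^1/m}, the degree identities above are forced, and the proposition follows.
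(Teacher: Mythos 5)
Your proposal is correct and follows essentially the same route as the paper's proof: it reduces the claim to the degree $2-k$ statement of Proposition \ref{prop:F^1/m} downstairs, converts between $\deg$ and $\deg^{1/m}$ via the two comparison lemmas (the one before Lemma \ref{lem:deg} for points of $\bL^g \cap U$ and Lemma \ref{lem:deg} for self-intersections), and checks that the coefficient paths have the right twisted degree in $\Hom_{\cA\#\hat{G}}$ so that $\delta$ has degree one and $\cF_l$ has degree $1-l$. The only divergence is the shift convention $\sigma_p = -\deg p$ versus the paper's $A_g[\deg p]$, which is a bookkeeping choice (you flag it yourself) compatible with the ambiguity in Definition \ref{def:MF_graded} and does not affect the argument.
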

\begin{proof}
Let $U_1,\ldots,U_k \subset \widetilde{X}$ be $\Omega$-graded Lagrangians intersecting each other transversely and also intersecting $\tilde{\bL}$ transversely.  Let $\theta_i$ be the $\Omega$-grading on $U_i$.  The Lagrangians downstairs have corresponding fractional grading $m\theta_i$ with respect to $\Omega^{\otimes m}$.

Since downstairs $m_k^{(\bL,b),U_1,\ldots,U_k}$ has degree $2-k$, we have
$$ \deg^{1/m} m_k^{(\bL,b),U_1,\ldots,U_k} (\iota,a_1,\ldots,a_{k-1}) = 2-k + \deg^{1/m}(\iota) + \sum_{i=1}^{k-1} \deg^{1/m}(a_i). $$
Now
\begin{align*}
m_k^{(\bL,b),U_1,\ldots,U_k} (\iota,a_1,\ldots,a_{k-1}) &= \sum_{p=0}^\infty m_{p+k}^{\bL,\ldots,\bL,U_1,\ldots,U_k} (b,\ldots,b,\iota,a_1,\ldots,a_{k-1}) \\
&= \sum_{p=0}^\infty \sum_{|I|=p} x_{i_p}\ldots x_{i_1} m_{p+k}^{\bL,\ldots,\bL,U_1,\ldots,L_k} (X_{i_1},\ldots,X_{i_p},\iota,a_1,\ldots,a_{k-1}).
\end{align*}
This implies
$$\deg(x_{i_p}\ldots x_{i_1}) + 2-k-p + \sum_{l=1}^p \deg^{1/m} X_{i_l} = 2- k$$
for consecutive morphisms $X_{i_l} \in \Hom(L^{g_{i_{l-1}}},L^{g_{i_l}})$, $l=1,\ldots,p$, contributing to $m_k^{(\bL,b),U_1,\ldots,U_k}$.  

Since $\deg^{1/m} X_{i_l} = \deg X_{i_l} - (\alpha^{g_{i_l}} - \alpha^{g_{i_{l-1}}})$, we have
$$\widetilde{\deg}(x_{i_p} \ldots x_{i_1}) := \deg (x_{i_p} \ldots x_{i_1}) + (\alpha^{g_{i_0}} - \alpha^{g_{i_p}}) = \sum_{l=1}^p (1-\deg X_{i_l}) \in 2\Z$$
where the last expression is even because each $\deg X_{i_l}$ is odd (we always choose odd-degree deformations).
Then for the degree upstairs,
\begin{align*}
&\deg(x_{i_p}\ldots x_{i_1}) m_{p+k}^{\bL,\ldots,\bL,U_1,\ldots,U_k}(X_{i_1},\ldots,X_{i_p},\iota,a_1,\ldots,a_{k-1}) \\
=& \widetilde{\deg}(x_{i_p}\ldots x_{i_1}) + 2-(p+k) + \sum_{l=1}^p \deg X_{i_l} + \deg \iota + \sum_{j=1}^{k-1} \deg a_j \\
=& \deg f + 2-p-k + \sum_{l=1}^p \deg^{1/m} X_{i_l} + \deg \iota + \sum_{j=1}^{k-1} \deg a_j \\
=& 2-k + \deg \iota + \sum_{j=1}^{k-1} \deg a_j.
\end{align*}
Thus $m_k^{(\tilde{\bL},b),U_1,\ldots,U_k}$ also has degree $2-k$ upstairs since we use the twisted degree $\widetilde{\deg}$.

For an $\Omega$-graded Lagrangian $U \subset \widetilde{X}$ which intersects $\tilde{\bL}$ transversely, the functor $\cF$ maps it to
$\cF(L) = \bigoplus_{g \in G} \bigoplus_{p \in \bL^g \cap U} A_g [-\deg p]$ together with $m_1^{(\tilde{\bL},b),U}$,
where $p \in \bL^g \cap U$ is identified as a morphism from $\bL^g$ to $U$.  We need to show that $m_1^{(\tilde{\bL},b),U}$ belongs to $\Hom(\cF(U),\cF(U))$ in the sense defined in \ref{sec:MFupstairs}, and it has degree one.  From the above calculation, the component $f$ of $m_1^{(\tilde{\bL},b),U}$ which maps $\bigoplus_{p \in \bL^g \cap U} A_g [-\deg p]$ to $\bigoplus_{q \in \bL^h \cap U} A_h [-\deg q]$ has
$ \deg f + (\alpha^{h} - \alpha^g) \in 2\Z $
and hence $m_1^{(\tilde{\bL},b),U}$ belongs to $\Hom(\cF(U),\cF(U))$.  Also the above shows that $m_1^{(\tilde{\bL},b),U}$ has degree one.
  
Similarly, the functor on morphism level $\Hom(U_1,U_2) \to \Hom(\cF(U_1),\cF(U_2))$ is defined by $m_2^{(\tilde{\bL},b),U_1,U_2}$ and hence has degree $0$, and the higher part $\cF_k: \Hom(U_1,U_2) \otimes \ldots \otimes \Hom(U_{k-1},U_k) \to \Hom(\cF(U_1),\cF(U_k))$ is defined by $m_{k}^{(\tilde{\bL},b),U_1,\ldots,U_k}$ and hence has degree $2-k$.
\end{proof}

\begin{example}\label{ex:intdegups1}
We take the lift $\tilde{L}$ of $L$ in Example \ref{ex:mainref3} as shown in Figure \ref{fig:1o3lift}. By Lemma \ref{lem:Lliftup1}, the liftings $\tilde{p},\tilde{q}$ of $p,q \in \mathbb{L} \cap L$ have the following integer gradings
$$\deg \tilde{p} = \deg^{1/3} (p) + 0 = 0  ,\quad \deg \tilde{q} = \deg^{1/3} (q) + \frac{2}{3} =  1.$$
Here, we choose a grading of $\tilde{L}$ to be $1/6$ so that it is compatible with  $\frac{1}{3} \Z$-grading downstairs (which was $3 \times 1/6 = 1/2$).
\begin{figure}[h]
\begin{center}
\includegraphics[scale=0.45]{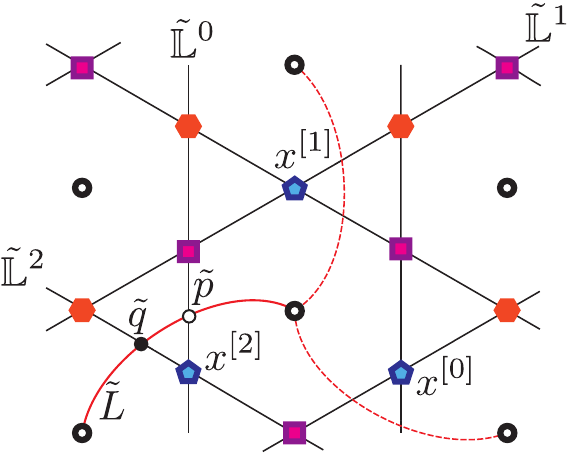}
\caption{3-fold cover of the pair-of-pants}\label{fig:1o3lift}
\end{center}
\end{figure}

Therefore the resulting matrix factorization is 
$$ y^{[0]} z^{[1]} \cdot x^{[2]}$$
for $[i] \in \Z_3$. Note that this indeed acts on $\tilde{p}$ as $\hat{W} \cdot \mathrm{Id}$ (where $W= \sum_i x^{[i]} y^{[i+1]} z^{[i+2]}$), since $\tilde{p} \in CF(\tilde{\mathbb{L}}, \tilde{L})$ is supported at $v^{[2]}$ of the quiver. Likewise, the $\Z_3$-family $\cup_{[i] \in \Z_3} [i] \cdot \tilde{L}$ transforms into the matrix factorization given at the end of Example \ref{ex:mainfgs}
\end{example}

In summary, given a K\"ahler manifold $\tilde{X}$ with a finite group symmetry $G$, we can first construct the mirror of the quotient $X = \tilde{X}/G$ by consider an immersed Lagrangian $\bL$ in $X$.  The geometrically constructed mirror $(\cA,W)$ automatically admits a (formal) dual-$G$ action, and the mirror of $\tilde{X}$ is given as the formal dual-$G$ quotient $(\hat{\cA},\hat{W})$.  If $\tilde{X}$ has a holomorphic volume form and $G$ preserves the holomorphic volume form up to rescaling by constants, then we have a canonical way to enhance the mirror $(\hat{\cA},\hat{W})$ and the mirror functor $\cF$ to be $\Z$-graded.

%-----------------------------------------------------------------------
% Beginning of chap1.tex
%-----------------------------------------------------------------------
%
%  AMS-LaTeX sample file for a chapter of a monograph, to be used with
%  an AMS monograph document class.  This is a data file input by
%  chapter.tex.
%
%  Use this file as a model for a chapter; DO NOT START BY removing its
%  contents and filling in your own text.
% 
%%%%%%%%%%%%%%%%%%%%%%%%%%%%%%%%%%%%%%%%%%%%%%%%%%%%%%%%%%%%%%%%%%%%%%%%

%\part{This is a Part Title Sample}

\chapter{4-punctured spheres and the pillowcase}\label{sec:4puncture2222}
In this chapter we apply our theory to construct the mirror of the elliptic orbifold $E / \Z_2$, where $\Z_2$ acts on $\C$ by $z \mapsto -z$ and the action descends to an elliptic curve $E$.  Topologically the quotient is a sphere, and there are four orbifold points which are locally isomorphic to $\C / \Z_2$.  Thus the orbifold is denoted as $\bP^1_{2,2,2,2}$, which is also known as the pillowcase.

We begin with a set of Lagrangian immersions $\bL_0 := \{ L_{1}, L_{2}\}$ in $\bP^1_{2,2,2,2}$ as shown in Figure \ref{fig:2222basic} and apply the construction in Chapter \ref{sec:MFseverallag}.  The following shows the main result.

\begin{theorem} \label{thm:P2222}
The generalized mirror of $E/\Z_2 = \bP^1_{2,2,2,2}$ corresponding to $\bL_0$ is given by $\left(\cA_0, W_0 \right)$ where
\begin{enumerate}
\item $Q$ is the directed graph with two vertices $v_1, v_2$, two arrows $\{y,w\}$ from $v_1$ to $v_2$ and two arrows $\{x,z\}$ from $v_2$ to $v_1$.  
\item $\cA_0 = \cA(Q, \Phi_0)$ is the noncommutative resolution of the conifold $\frac{ \Lambda Q }{ \left( \partial \Phi_0 \right)}$, 
where $\Phi_0: = xyzw -wzyx$.
\item $$W_0=\phi(q_\orb^{\frac{1}{4}}) ((xy)^2 + (xw)^2 + (zy)^2 + (zw)^2 + (yx)^2 + (wx)^2 + (yz)^2 + (wz)^2) + \psi(q_\orb^{\frac{1}{4}}) (xyzw + wzyx)$$
lies in the center of $\cA_0$, where $\phi$ and $\psi$ are given in Theorem \ref{thm:mir-2222} and $q_\orb = \exp \left(- \int_{\bP^1_{2,2,2,2}} \omega \right)$ is the K\"ahler parameter.
\item $\psi(q_\orb^{\frac{1}{4}})\left(\phi(q_\orb^{\frac{1}{4}})\right)^{-1}$ equals to the mirror map of $\bP^1_{2,2,2,2}$.
\item
We have  $\Z_2$-graded, and $\Z$-graded $\AI$-functors
$$ \mathcal{F}^{\mathbb{L}_0}: \Fuk(\bP^1_{2,2,2,2}) \to \MF(\mathcal{A}_0, W_0), \;\;\;  \mathcal{F}^{\mathbb{L}_0}: \Fuk^\Z(E) \to \MF^{\Z}(\cA_0, W_0).$$
%\item $(a(t), b(t), c(t))$ lies on the mirror curve $\check{E}$ defined by  $W_0=0$ in $\mathbb{P}^2$. i.e. 
%$$W_0( a(t), b(t), c(t))=0 \;\; \textrm{for all}\;\; t.$$
%\item A graded noncommutative algebra $\mathcal{A}_t$ is a twisted homogeneous coordinate ring of the mirror elliptic curve $\check{E}$.
%\item The quotient $\mathcal{A}_t/(W_t)$ by a central element is a deformation quantization
%of an affine Del Pezzo surface.
\end{enumerate}
\end{theorem}
%\begin{remark}
%The noncommutative algebra $\mathcal{A}_t$ is called Sklyanin algebra.
%To note the dependence of $a,b,c$ on the Kahler parameter $q$, we may sometimes denote
%$a,b,c$ as $a(t,q), b(t,q), c(t,q)$.
%\end{remark}

In the later part of this chapter, we deform $\bL_0$ to $\bL_t$, and the generalized mirror is then deformed to $(\cA_t,W_t)$.  In this way we obtain an interesting class of noncommutative algebras with central elements.

\begin{theorem}\label{thm:deform2222coni}
There is a $T^2$-family $(\mathbb{L}_t,\lambda)$ of Lagrangians decorated by flat $U(1)$-connections for $(t-1) \in \R/2\Z$ and $\lambda \in U(1)$ such that
\begin{enumerate}
\item the corresponding mirror noncommutative algebras $\cA_{(\lambda,t)}$ takes the form
$$\cA_{(\lambda,t)} :=  \frac{ \Lambda Q }{ \left( \partial \Phi_{(\lambda,t)} \right)}, 
%where
%\begin{equation*}
 \Phi = a(\lambda,t) \, xyzw  +  b(\lambda,t)\, wzyx 
+ \frac{1}{2} c(\lambda,t)\, ((wx)^2 + (yz)^2) + \frac{1}{2} d(\lambda,t)\, ( (xy)^2   + (zw)^2)$$
% \end{equation*}
($\cA_{(1,0)}$ is the same as $\mathcal{A}_0$ in Theorem \ref{thm:P2222} (2))
\item Coefficients $(a:b:c:d)$ defines an embedding $T^2 \to \mathbb{P}^3$ onto the complete intersection of two quadrics given as follows. For $[x_1,x_2,x_3,x_4] \in \bP^3$ and $\sigma=\frac{\psi}{\phi}$,
\begin{eqnarray*}
x_1 x_3 = x_2 x_4 \\ 
 x_1^2 + x_2^2 +x_3^2 +x_4^2 + \sigma x_1 x_3 = 0.
\end{eqnarray*}
This is isomorphic to the mirror elliptic curve $\check{E}$ given by Hesse cubic in Theorem \ref{thm:mainthm333}.
\item The family of noncommutative algebra $\mathcal{A}_{\lambda, t}/(W_{\lambda,t})$ near $t=0, \lambda=1$ gives a quantization of the complete intersection given by above two quadratic equations in $\C^4$ in the sense of \cite{EG}.
\end{enumerate}
\end{theorem}

%In the end of the section, we will provide a relation between our noncommutative algebra $\cA_{(\lambda,t)}$ and a deformation quantization of a complete intersection in $\bP^3$.
%(In actual computation in \ref{subsec:2222pointmod}, we need to make some coordinate changes for the variables $y,z$.)

\section{Mirror of the four-punctured sphere}
More general case of punctured Riemann surfaces will be discussed in Chapter \ref{sec:Rs}.  We make it into an independent chapter to give a nice illustration of our construction and to compactify it as the orbifold $\bP^1_{2,2,2,2}$.

Let $p_1, p_2, p_3, p_4$ be four distinct points in $\bP^1$.  We take $p_1, p_2,p_3 , p_4$ to lie in the real locus $\R\bP^1 \subset \bP^1$.  This fixes the complex structure of $X = \bP^1-\{p_1,p_2,p_3,p_4\}$.  The K\"ahler structure on $X$ is given by the restriction of the Fubini-Study metric $\omega$ on $\bP^1$ (which can be rescaled).  We have the anti-symplectic involution $\iota$ defined by $[z_1:z_2] \mapsto [\overline{z_1}:\overline{z_2}]$ on $X$.

To carry out our construction, we fix the Lagrangian immersion $\bL_0$ as depicted in Figure \ref{fig:2222basic}, which consists of two circles $L_1$ and $L_2$.  Note that $L_1 \cup L_2$ is invariant under the anti-symplectic involution $\iota$.  There is a quadrilateral contained in the upper hemisphere $\{[z_1:z_2] \in \bP^1: \mathrm{Im}(z_1/z_2) > 0\} \subset X$ bounded by $L_1 \cup L_2$, and let $q_d = \exp (-A)$ where $A$ denotes its area.  There is also a quadrilateral contained in the lower hemisphere $\{[z_1:z_2] \in \bP^1: \mathrm{Im}(z_1/z_2) < 0\} \subset X$ bounded by $L_1 \cup L_2$ which has the same area due to the anti-symplectic involution.

%\begin{figure}
%\begin{center}
%\includegraphics[scale=0.7]{2222nlift}
%\caption{A Lagrangian immersion in $\bP^1-\{p_1,p_2,p_3,p_4\}$.  The dots denote the punctures $p_i$.  There are four Lagrangian arcs sitting inside the equator, which split generate the wrapped Fukaya category.}\label{fig:P2222}
%\end{center}
%\end{figure}

Equip $L_1$ and $L_2$ with the orientations indicated in Figure \ref{fig:2222basic} and the trivial spin structure.  There are four immersed points which are denoted as $X,Y,Z,W$ using counterclockwise convention.  We denote the corresponding odd-degree immersed generators also by $X,Y,Z,W$, and the corresponding even-degree immersed generators by $\bar{X},\bar{Y},\bar{Z},\bar{W}$.  $X$ and $Z$ are morphisms from $L_2$ to $L_1$, while $Y$ and $W$ are morphisms from $L_1$ to $L_2$.  Thus
%$$ Q: \begin{tikzcd}
%1 \arrow[r,bend left,shift left=1.5ex,"x"] \arrow[r,bend left,shift right=0.5ex,"z"] & 2 \arrow[l,bend left,shift right=0.5ex,"w"] \arrow[l,bend left,shift left=1.5ex,"y"] 
%\end{tikzcd}$$
\begin{lemma}[(1) of Theorem \ref{thm:P2222}]
The corresponding endomorphism quiver $Q=Q^\bL$ in Definition \ref{def:mirrorquiverseveral} is given as in 
(a) of Figure \ref{fig:4puncturemq}.
%\begin{equation}\label{eqn:2222quiver}
%\xymatrix{ Q :& \!\!\!\!\!\!\!\!\!\! 1 \bullet \!\!\!\!\!\!\!\! \ar@/^2pc/[rr]^{y} \ar@/^1pc/[rr]^{w} &&\bullet 2 \ar@/^1pc/[ll]^{x} \ar@/^2pc/[ll]^{z} }
%\end{equation}
\end{lemma}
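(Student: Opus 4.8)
The plan is to read the quiver straight off Definition \ref{def:mirrorquiverseveral}, since once the configuration of Figure \ref{fig:2222basic} is understood the statement is almost entirely bookkeeping. First I would record that each of $L_1$ and $L_2$ is an \emph{embedded} circle in $X = \bP^1 \setminus \{p_1,\dots,p_4\}$, so neither Lagrangian has transverse self-intersection points; consequently the only immersed generators of $\bL_0 = L_1 \cup L_2$ arise from the transverse intersections $L_1 \cap L_2$. A glance at Figure \ref{fig:2222basic} shows that $L_1 \cap L_2$ consists of exactly four points, which we have already named $X, Y, Z, W$ in counterclockwise order. In particular there will be no loops in the quiver, and exactly two vertices $v_1 \leftrightarrow L_1$, $v_2 \leftrightarrow L_2$.

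Next I would pin down the $\Z_2$-degrees of the associated Floer generators. At each intersection point $p \in L_1 \cap L_2$ one has the two complementary generators, one regarded as a morphism $L_1 \to L_2$ and the other as a morphism $L_2 \to L_1$; because $L_i$ is one-dimensional, the two degrees sum to $\dim_\R L_i = 1$, so exactly one of the two is odd and the other even. Using the orientations and trivial spin structures fixed in Figure \ref{fig:2222basic}, I would check that the odd-degree representatives at the four points are precisely $X, Z \in \CF^1(L_2, L_1)$ and $Y, W \in \CF^1(L_1, L_2)$, the even-degree complements being $\bar X, \bar Y, \bar Z, \bar W$; none of the odd generators is a self-morphism.

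Finally I would apply Definition \ref{def:mirrorquiverseveral}: the quiver $Q^{\bL_0}$ has one vertex $v_i$ per Lagrangian $L_i$ and one arrow per odd-degree immersed generator, directed according to the corresponding morphism. This yields the vertex set $\{v_1, v_2\}$, arrows $y, w \colon v_1 \to v_2$ and $x, z \colon v_2 \to v_1$, and no loops --- which is exactly the quiver drawn in (a) of Figure \ref{fig:4puncturemq}, and agrees with part (1) of Theorem \ref{thm:P2222}.

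The only step that is not pure bookkeeping is the degree computation: one must verify, in the chosen Morse--Bott model of Lagrangian Floer theory and with the specified orientation and spin data, which of the two complementary generators at each of the four intersection points is the odd one, and confirm that this assignment is compatible with the directions $X, Z \colon L_2 \to L_1$ and $Y, W \colon L_1 \to L_2$ asserted above. I expect this to be the main (and essentially the only) subtlety; it follows the same grading conventions already used for the three-punctured sphere in Section \ref{sec:333} and is standard (cf. \cite{Se}, \cite{Seidel-book}, \cite{Sheridan11}).
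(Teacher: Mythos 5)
Your proposal is correct and follows essentially the same route as the paper: the paper likewise simply reads the quiver off Definition \ref{def:mirrorquiverseveral} after recording from Figure \ref{fig:2222basic} that $L_1,L_2$ are embedded circles meeting transversely at the four points $X,Y,Z,W$, with the odd-degree generators being $X,Z\colon L_2\to L_1$ and $Y,W\colon L_1\to L_2$ under the chosen orientations, so the quiver has two vertices, arrows $x,z\colon v_2\to v_1$, $y,w\colon v_1\to v_2$, and no loops. The degree bookkeeping you flag (complementary generators at a point having opposite parity since $\dim_\C X=1$) is exactly the convention the paper uses implicitly, so there is no gap.
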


\begin{figure}[h]
\begin{center}
\includegraphics[scale=0.6]{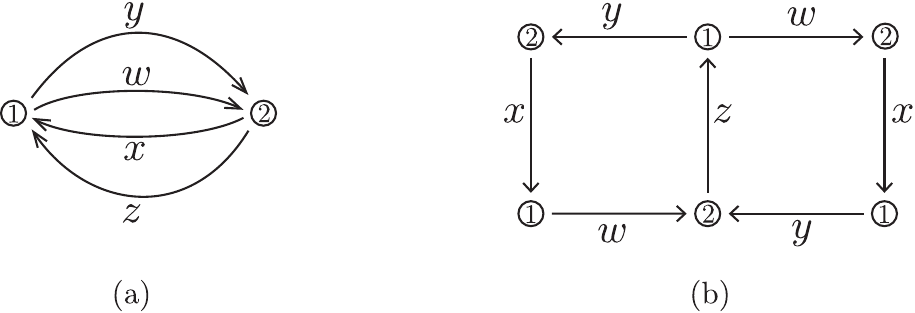}
\caption{Dimer structure on the endomorphis quiver $Q$}\label{fig:4puncturemq}
\end{center}
\end{figure}

Let $\widehat{\Lambda Q}$ be the completed path algebra of $Q$ which is generated by $x,y,z,w$, and consider the infinitesimal deformation $b = xX + yY + zZ + wW$.  The $A_\infty$-algebra $(\CF^*(L_1 \cup L_2),(m_k)_{k=0}^\infty)$ is deformed by $b$ to $(\CF^*(L_1 \cup L_2),(m^b_k)_{k=0}^\infty)$.  A basis of $\CF^*(L_1 \cup L_2)$ is given by
$$\{X,Y,Z,W,\bar{X},\bar{Y},\bar{Z},\bar{W},\One_{L_1},\One_{L_2},T_{L_1},T_{L_2}\},$$
where $\One_{L_i}$ and $T_{L_i}$ are the fundamental class and point class of $L_i$ respectively (for $i=1,2$).  They can be represented by the maximum and minimum points of a Morse function, and here we take the Morse function to be the standard height function on the circle $L_i$.  $\One_{L_i}$ for $i=1,2$ serve as units of the Floer complex of $L_i$.  $T_{L_i}$ for $i=1,2$ are marked as crosses in Figure \ref{fig:2222basic}.

The only holomorphic polygons contributing to $m_0^b$ are the two quadrilaterals contained in the upper and lower hemispheres mentioned above.  Hence we have the following.
\begin{prop}
For $X = \bP^1-\{p_1,p_2,p_3,p_4\}$ and an odd-degree immersed deformation $b = xX + yY + zZ + wW$ of the immersed Lagrangian $L_1 \cup L_2$, we have
\begin{equation}\label{eqn:m0b2222sym}
m_0^b = q_d \left((xyz - zyx) \bar{W} + (yzw - wzy) \bar{X} + (zwx - xwz) \bar{Y} + (wxy - yxw) \bar{Z} +  (xyzw \One_{L_1} + yxwz \One_{L_2})\right)
\end{equation}
where $\One_{L_i}$ denotes the unit corresponding to the Lagrangian $L_i$ for $i=1,2$ respectively.
\end{prop}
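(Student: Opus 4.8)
The plan is to compute $m_0^b = m(e^b) = \sum_{k\geq 0} m_k(b,\ldots,b)$ directly by enumerating the holomorphic polygons bounded by $L_1\cup L_2$ in $X = \bP^1 - \{p_1,p_2,p_3,p_4\}$ and reading off the contributions to each generator of $\CF^\bullet(L_1\cup L_2)$. First I would fix conventions: the basis $\{X,Y,Z,W,\bar X,\bar Y,\bar Z,\bar W,\One_{L_1},\One_{L_2},T_{L_1},T_{L_2}\}$ with the orientations and spin structure from Figure \ref{fig:2222basic}, the degrees (the odd-degree immersed generators $X,Y,Z,W$ carry formal dual variables $x,y,z,w$ of degree $0$ in the $\Z_2$-grading, so $b = xX+yY+zZ+wW$ has degree $1$), and the sign/area weighting of a holomorphic polygon by $q_d^{(\text{area})}$ together with the parallel transport of the (trivial here) flat connection. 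Since $m_0^b$ has degree $2$, only even-degree generators can appear, i.e. $\bar X,\bar Y,\bar Z,\bar W,\One_{L_1},\One_{L_2}$ (the minima $T_{L_i}$ have the wrong degree, exactly as in the argument around Proposition \ref{prop:mccd}).

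The key geometric input is that $X = \bP^1 - \{4 \text{ points}\}$ has a very restricted supply of holomorphic polygons bounded by $L_1\cup L_2$: because the complement of the four punctures is hyperbolic (or more concretely, because $L_1\cup L_2$ cuts $\bP^1$ into the two hemispheres each of which is a single quadrilateral, and all other candidate regions contain a puncture so cannot support a holomorphic disc of finite energy), the only contributing polygons are (i) the two quadrilaterals in the upper and lower hemispheres, each with all four corners at immersed points, contributing to $\One_{L_1}$ and $\One_{L_2}$ after passing through the point-class marked crosses $T_{L_i}$, and (ii) the triangles obtained by "capping off" one corner — these give the $\bar X,\bar Y,\bar Z,\bar W$ terms with a pair of immersed inputs. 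Concretely, the triangle with corners $Y,Z,W$ (in the appropriate cyclic order) outputs $\bar X$, and its orientation-reversed partner outputs $\bar X$ with the opposite sign, combining into $(yzw - wzy)\bar X$; I would check the cyclic labelling of corners against Figure \ref{fig:2222basic} to pin down which monomial (e.g. $yzw$ versus $wzy$) goes with which $\bar{\cdot}$, and verify there are no higher polygons (pentagons, hexagons, $\ldots$) contributing, which follows from the hemisphere decomposition. The noncommutative bookkeeping of the variables follows the "pull-out" rule \eqref{eqn:pulloutrule}: $m_k(f_1 X_{i_1},\ldots,f_k X_{i_k}) = f_k\cdots f_1\, m_k(X_{i_1},\ldots,X_{i_k})$, which is why the monomials appear in the stated (reversed) order such as $xyz$ and $zyx$, and $xyzw$ and $yxwz$.

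The main obstacle is the sign computation: one must verify that the two quadrilaterals and the two triangles in each "capping" family come with the relative signs $+/-$ claimed in \eqref{eqn:m0b2222sym}, and that the coefficient of $\One_{L_2}$ is $yxwz$ rather than, say, $wzyx$ — this depends delicately on the chosen orientations of $L_1,L_2$, the trivial spin structures, the Morse functions realizing $\One_{L_i},T_{L_i}$, and the sign conventions in the Morse-Bott $A_\infty$-structure of \cite{AJ,Se,Seidel-book}. I would handle this by comparing with the $t=0$, $\lambda = -1$ specialization: the anti-symplectic involution $\iota$ preserves $L_1\cup L_2$, so the two members of each reflected pair of polygons have equal area $q_d$, and the involution argument (as used for $\bP^1_{3,3,3}$ in \cite{CHL13}) forces the relative signs to be opposite for the $\bar{\cdot}$-terms and equal for the $\One$-terms. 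Once the finitely many polygons are identified and their signs fixed, assembling them into the displayed formula for $m_0^b$ is immediate, and the fact that it has the form $\sum_{\text{even }X_f} P_f X_f + W_1\One_{L_1}+W_2\One_{L_2}$ is exactly what is needed to extract the Maurer--Cartan relations and the superpotential $W_0$ in the subsequent Theorem \ref{thm:P2222}.
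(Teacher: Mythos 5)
Your overall strategy --- computing $m_0^b$ by enumerating the finitely many holomorphic polygons bounded by $L_1\cup L_2$ and observing that every other region of $\bP^1\setminus(L_1\cup L_2)$ contains a puncture, so only the two hemispheric quadrilaterals can contribute --- is exactly the paper's argument, and your handling of degrees, the pull-out rule \eqref{eqn:pulloutrule}, and the involution-based sign check is in line with it. However, there is a genuine error in your step (ii): you attribute the $\bar X,\bar Y,\bar Z,\bar W$ terms to ``triangles obtained by capping off one corner \ldots with a pair of immersed inputs.'' No such triangles exist: the complement of $L_1\cup L_2$ in $\bP^1$ consists of the two puncture-free quadrilaterals together with four regions each containing one puncture, so the two quadrilaterals are the \emph{only} holomorphic polygons, for every output. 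Moreover, a polygon with only two immersed inputs would produce quadratic monomials, whereas the coefficients of $\bar X,\dots,\bar W$ in \eqref{eqn:m0b2222sym} are cubic ($yzw-wzy$, etc.), so the mechanism you describe cannot produce the formula you are proving; and your own example (``the triangle with corners $Y,Z,W$ outputs $\bar X$'') is internally inconsistent, since the output $\bar X$ must sit at a corner of the polygon lying over the immersed point underlying $X$, which leaves no triangle whose corners are only $Y,Z,W$.

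The correct mechanism, which is what the paper uses, is that the same two quadrilaterals account for \emph{all} the terms: each has its four corners at the immersed points, and taking one corner as the output (read as the complementary even-degree generator) with the remaining three corners as $b$-insertions gives the $m_3$-contributions $yzw\,\bar X$, $zwx\,\bar Y$, $wxy\,\bar Z$, $xyz\,\bar W$ from the upper quadrilateral and the order-reversed, opposite-sign contributions $-wzy\,\bar X$, etc., from the lower one; taking all four corners as inputs and requiring the boundary to pass through $T_{L_1}$ (resp.\ $T_{L_2}$) gives the $m_4$-contributions $xyzw\,\One_{L_1}$ (resp.\ $yxwz\,\One_{L_2}$), each weighted by $q_d$ since both quadrilaterals have the same area. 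With this correction your proof goes through and coincides with the paper's.
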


According to our construction scheme given in Chapter \ref{sec:MFseverallag}, we quotient the path algebra $\widehat{\Lambda Q}$ by the weakly unobstructed relations and define $W$ to be the sum of the coefficients of $\One_{L_i}$ in $m_0^b$.  The weakly unobstructed relations are given by the coefficients of even-degree generators other than $\One_{L_i}$ in $m_0^b$.  Note that the weakly unobstructed relations can be obtained as cyclic derivatives of the spacetime superpotential $\Phi = xyzw - wzyx$.  
Thus we obtain the mirror given below.

\begin{theorem} \label{thm:mir-S-4}
The generalized mirror of $\bP^1-\{p_1,p_2,p_3,p_4\}$ is the noncommutative Landau-Ginzburg model $(\cA_0,W^{\textrm{punc}})$, where $\cA_0$ is the algebra stated in Theorem \ref{thm:P2222}, and $W^{\textrm{punc}}$ is the central element of $\cA_0$ defined by
$$ W^{\textrm{punc}} = xyzw + wzyx. $$
\end{theorem}

From the relations we have $xyzw = yzwx$, $wzyx = xwzy$ and so on.
The quiver $Q$ in this case can be embedded into a torus in the way depicted by (b) of  Figure \ref{fig:4puncturemq}.
%which is identical to \eqref{eqn:2222quiver} as a quiver.  
It gives a dimer which will be discussed in Chapter \ref{sec:Rs} in detail.

The algebra $\cA_0$ is a noncommutative crepant resolution of the conifold $Y$ \cite[Section 5.1]{Aspinwall-Katz}.  Namely, the derived category of left $\cA_0$-modules is equivalent to the derived category of coherent sheaves over the commutative crepant resolution of the conifold.  Below we provide a brief review and establish explicit relations between the coordinates of $\cA_0$ and $Y$.

The conifold is the affine variety
$$Y = \{(Z_1,Z_2,Z_3,Z_4) \in \C^4: Z_1 Z_3 = Z_2 Z_4\} $$
which has an isolated singularity at the origin.  It admits a crepant resolution by the total space of the vector bundle $\hat{Y} = \CO_{\bP^1}(-1) \oplus \CO_{\bP^1} (-1)$, which can be written as the GIT quotient $(\C^4-V) / \C^\times$, where $(x,y,z,w)$ are the standard coordinates of $\C^4$, $V = \{x = z = 0\}$, and $\C^\times$ acts by $\lambda \cdot (x,y,z,w) = (\lambda x, \lambda^{-1} y, \lambda z, \lambda^{-1} w)$. \footnote{Here we use the same notations for the coordinates of $\C^4$ and the generators of the quiver $Q$, since there is a natural correspondence between them, by identifying them as sections of $\CO(1)$ and $\CO(-1)$ over $\tilde{Y}$.}  The resolution map $\hat{Y} \to Y$ is given by $Z_1=xy,Z_2=yz,Z_3=zw,Z_4=wx$.

The variables $x,z$ have degree $1$ and can be regarded as generating sections of the line bundle $\CO(1)$ over $\hat{Y}$.  Similarly $y,w$ have degree $-1$ and can be regarded as generating sections of the line bundle $\CO(-1)$ over $\hat{Y}$.  The trivial line bundle $\CO$ has generating sections $Z_1=xy,Z_2=yz,Z_3=zw,Z_4=wx$.

$\CT = \{\CO, \CO(1)\}$ forms a full strongly exceptional collection of the derived category of coherent sheaves over $\hat{Y}$.  The morphism spaces of $\CT$ are represented by the quiver $Q$.  Moreover, since multiplications of the sections $x,y,z,w$ are commutative, it follows that the relations between morphisms are exactly given by the cyclic derivatives of $\Phi = xyzw - wzyx$.

By the result of Bondal \cite{Bondal}, the derived category of left $\cA_0$-modules is equivalent to the derived category $D^b(\hat{Y})$.  Given an $\cA_0$-module $M$, we obtain the complex $\CT \otimes_{\cA_0} M$ which is the corresponding object in $D^b(\hat{Y})$.  The reverse functor is given by $Hom(\CT,-)$.

The superpotential $W \in \cA_0$ in Theorem \ref{thm:mir-S-4} can be lifted to a function over $\hat{Y}$, which is given as $W = 2 Z_1Z_3 = 2 Z_2Z_4$.  

In conclusion, the constructed mirror of the punctured sphere $X$ is equivalent to $(\hat{Y}, Z_1Z_3)$, which agrees with the one used in \cite{AAEKO}.

\section{Mirror of the pillowcase}\label{subsec:centralfiber2222}
Now we construct the mirror of the elliptic orbifold $\bP^1_{2,2,2,2}$, which is obtained by compactifying the punctured sphere $\bP^1 - \{p_1,p_2,p_3,p_4\}$ by four $\Z_2$-orbifold points.  We also denote these four orbifold points by $p_1,p_2,p_3,p_4$.  Again we have the anti-symplectic involution $\iota$ on $\bP^1_{2,2,2,2}$.

The elliptic orbifold $\bP^1_{2,2,2,2}$ is the quotient of an elliptic curve $E = \C / \Lambda$ by $\Z_2$, where $\Lambda \cong \Z^2$ is a lattice in $\C$ and $[1] \in \Z_2$ acts by $[z] \mapsto -[z]$ on $E$.  Since we focus on the K\"ahler structures, let's fix the complex structure of $E$, say taking $\Lambda = \Z\langle 1, i \rangle \subset \C$.  The universal cover of $E$ and the lifts of the Lagrangians $L_1,L_2$ (given in the previous section) are depicted by Figure \ref{fig:2222basic}.

\begin{figure}
\begin{center}
\includegraphics[height=2.7in]{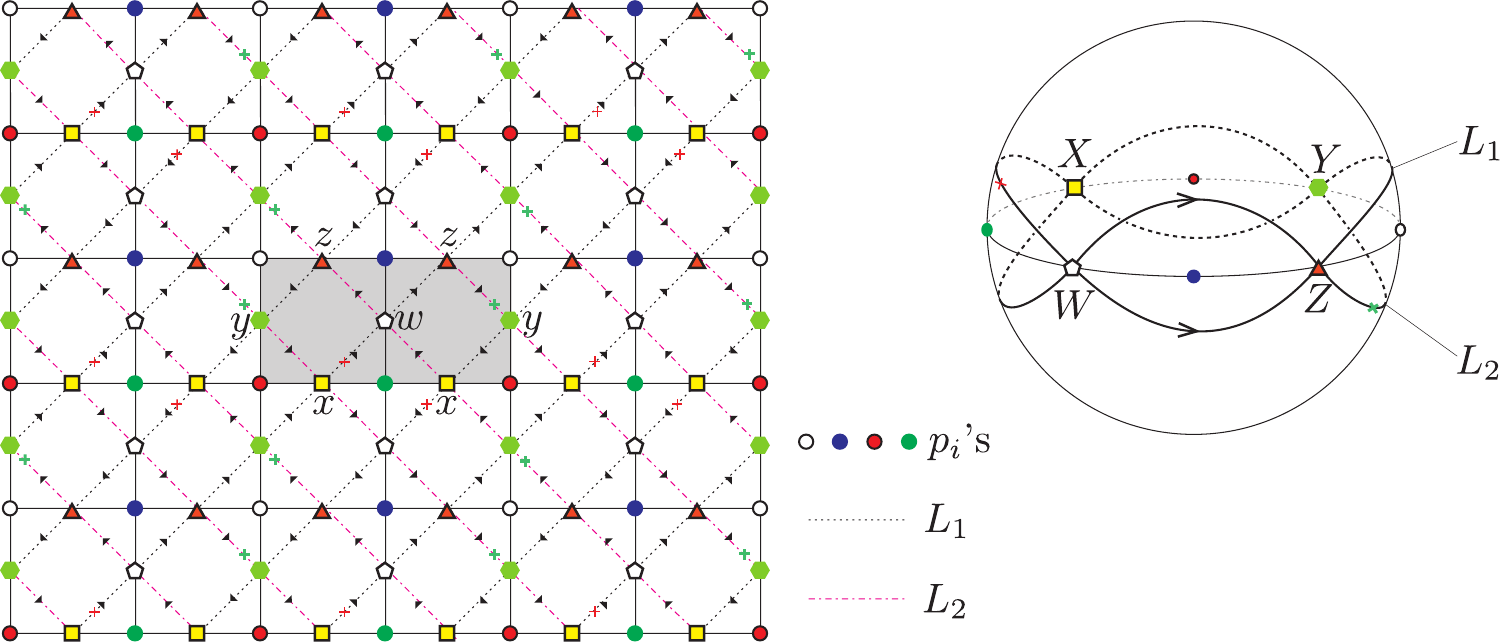}
\caption{The universal cover of $\bP^1_{2,2,2,2}$.  The bold lines show the lifts of the equator of $\bP^1_{2,2,2,2}$, and the dotted lines show the lifts of the Lagrangians $L_1$ and $L_2$.}\label{fig:2222basic}
\end{center}
\end{figure}

We use the formal deformation parameter $b = xX+yY+zZ+wW$ for $\bL_0 = L_1 \cup L_2$ as in the previous section.  In particular we have the same quiver $Q$ and its path algebra $\widehat{\Lambda Q}$.  On the other hand, the obstruction term $m_0^b$ is different since there are more holomorphic polygons in $\bP^1_{2,2,2,2}$.  In general the weakly unobstructed relations could become much more complicated.  It turns out that in this particular case, the weakly unobstructed relations remain to be the same.  The main reason is that the immersed Lagrangian $\bL_0$ is taken to be invariant under the anti-symplectic involution $\iota$. (In \ref{subsec:2222pointmod}, we will study the case where the symmetry breaks.)

\begin{lemma}
The weakly unobstructed relations for $\bL_0 \subset \bP^1_{2,2,2,2}$ are the cyclic derivatives of $\Phi = xyzw - wzyx$.
\end{lemma}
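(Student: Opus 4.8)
The plan is to compare $m_0^b$ for $\bL_0 \subset \bP^1_{2,2,2,2}$ with the obstruction term already computed for $\bL_0 \subset \bP^1 \setminus \{p_1,p_2,p_3,p_4\}$ in \eqref{eqn:m0b2222sym}, and to show that the compactification changes only the coefficients of the units $\One_{L_1},\One_{L_2}$ (i.e.\ the worldsheet superpotential) and not the coefficients of the even-degree immersed generators $\bar X,\bar Y,\bar Z,\bar W$ (i.e.\ the weakly unobstructed relations). By Definition \ref{def:mir-several}, those latter coefficients are signed, area-weighted counts of holomorphic polygons bounded by $L_1 \cup L_2$ all of whose corners lie at the four immersed points $X,Y,Z,W$ and which carry a distinguished output corner of the appropriate (even-degree) type. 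Every such polygon whose image avoids the four orbifold points $p_1,\dots,p_4$ already occurs in the punctured sphere and contributes identically there; in particular the two lens quadrilaterals in the upper and lower hemispheres reproduce exactly the terms $q_d(yzw-wzy)\bar X$ and its cyclic rotations, which are $q_d$ times the cyclic derivatives of $\Phi = xyzw - wzyx$. Hence it suffices to rule out any new contribution to the coefficients of $\bar X,\bar Y,\bar Z,\bar W$ coming from the \emph{extra} polygons available in $\bP^1_{2,2,2,2}$, namely those whose image meets (and wraps around) one of the orbifold points.

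The first step is a combinatorial classification: decompose $\bP^1_{2,2,2,2}\setminus(L_1\cup L_2)$ into its connected components and note that only the two hemisphere lenses are free of orbifold points, while each of the remaining components contains exactly one $p_i$. Since a holomorphic polygon is carried by an immersed polygonal region, a polygon covering some $p_i$ must cover one of these ``orbifold components,'' and I would check that the corner type forced at an immersed point by such a covering is of ``unit'' type rather than of the type producing $\bar X$ --- equivalently, that wrapping polygons feed only into $\One_{L_1},\One_{L_2}$. The anti-symplectic involution $\iota$ is the key ingredient here: since $\bL_0$ is chosen $\iota$-invariant and $\iota$ fixes both the immersed points and the orbifold points, $\iota$ acts on the moduli of holomorphic polygons, preserving symplectic areas and the output corner while reversing the cyclic order of the inputs. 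For any extra polygon $u$ that contributed to the coefficient of one of $\bar X,\bar Y,\bar Z,\bar W$, the reflected polygon $\iota\circ u$ would contribute, with opposite orientation sign, a term that together with $u$'s term cancels in the path algebra; this is the same mechanism, using the same sign computation, as in the $\bP^1_{3,3,3}$ case of Section \ref{sec:333} and \cite{CHL13}. Combining the classification with the $\iota$-cancellation gives that the coefficients of $\bar X,\bar Y,\bar Z,\bar W$ in $m_0^b$ are exactly those in \eqref{eqn:m0b2222sym}, so the weakly unobstructed relations are the cyclic derivatives of $\Phi$, and Theorem \ref{thm:mir-S-4} applies verbatim over $\bP^1_{2,2,2,2}$.

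The main obstacle I anticipate is the orientation and sign bookkeeping in the $\iota$-argument: one must fix the spin structures and the orientations of the relevant moduli spaces of holomorphic polygons and verify precisely that complex conjugation of the domain sends a contribution to the coefficient of $\bar X$ (respectively $\bar Y,\bar Z,\bar W$) to a contribution to the coefficient of the same generator with reversed input word and opposite sign --- rather than leaking into the superpotential or into a different generator. This is exactly the kind of sign analysis performed for the symmetric reference Lagrangian in \cite{CHL13}; once it is in place, the remaining verification (that no wrapping polygon has a corner of $\bar X$-output type, and that the two hemisphere quadrilaterals are the only minimal contributors) is a finite inspection of the configuration in Figure \ref{fig:2222basic}.
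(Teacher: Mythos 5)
There is a genuine gap: your central claim --- that compactifying to $\bP^1_{2,2,2,2}$ adds no new contributions to the coefficients of $\bar{X},\bar{Y},\bar{Z},\bar{W}$, so that these coefficients are ``exactly those in \eqref{eqn:m0b2222sym}'' --- is false. Larger rectangles in the universal cover, which necessarily wrap orbifold points, do have all four corners at immersed generators and do contribute to these coefficients; this is visible in the paper's own later computation \eqref{eqn:idealLt2222}--\eqref{eqn:ABCD2222bef}, whose specialization to $(\lambda,t)=(1,0)$ gives the coefficient of $yzw$ in $h_{\bar{X}}$ as the full series $\sum_{k,l} q_d^{(4k+1)(4l+1)}-\sum_{k,l}q_d^{(4k+3)(4l+3)}$, not just $q_d$. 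So both of your sub-claims fail: (i) wrapping polygons do \emph{not} feed only into $\One_{L_1},\One_{L_2}$ (the $3\times 3$ and $5\times 1$ rectangles, for instance, output $\bar{X}$-type generators), and (ii) an involution-related pair does \emph{not} cancel in the path algebra when the input word is not a palindrome: $\iota$ reverses the word, so the pair contributes $\pm q_d^{A}(xyz-zyx)$, which is nonzero in the free noncommutative algebra $\widehat{\Lambda Q}$ and vanishes only modulo the very relation you are trying to establish. (The pairs with palindromic words such as $wxw$, $yxy$ do cancel outright, which is why no new-shaped relations appear, but that is a different and smaller point.)

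The paper's proof uses the involution for a weaker but correct conclusion: every polygon with output $\bar{W}$ contributes a length-three word $xyz$ or $zyx$ (or a palindrome $xwx$, $zwz$), the $\iota$-pairing matches areas and flips both the sign and the order of the word, and hence the total coefficient of $\bar{W}$ has the form $f(q_d)\,(xyz - zyx)$ with $f(q_d)=q_d+\cdots$ a nonzero element of $\Lambda$. Since $f$ is an invertible scalar, the two-sided ideal in Definition \ref{def:mir-several} generated by this coefficient coincides with the one generated by $xyz-zyx=\partial_w\Phi$, and similarly for the other generators. In other words, the coefficients of $m_0^b$ genuinely change under compactification; only the ideal they generate --- which is all the lemma asserts --- stays the same. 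Your argument can be repaired by replacing the ``no new contributions / exact cancellation'' step with this proportionality statement, at which point the $\iota$-sign analysis you flagged as the main obstacle is exactly the input needed.
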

\begin{proof}
The weakly unobstructed relations are given by the coefficients of $\bar{W}, \bar{X}, \bar{Y}, \bar{Z}$ in $m_0^b$.  We just consider $\bar{W}$ and the others are similar.

The polygons contributing to $m_0^b$ appear in pairs due to the anti-symplectic involution $\iota$, and the polygons in each pair have the same area and opposite signs.  Moreover the corresponding monomials for each pair are related to each other by reversing the order (since the anti-symplectic involution flips the orientation of a polygon).
As a result, the coefficient of $\bar{W}$ takes the form $f(q_d) (xyz - zyx)$ where $f(q_d) = q_d + \ldots$ is a non-zero series in $q_d$.  Thus it gives the weakly unobstructed relation $xyz \sim zyx$.

Hence we conclude that the weakly unobstructed relations remain to be the same as those for the four-punctured sphere, namely $xyz \sim zyx, yzw \sim wzy, zwx \sim xwz, wxy \sim yxw$.
\end{proof}

Then we directly compute the worldsheet superpotential $W=W_1+W_2$, where $W_i$ are the coefficients of $\one_{L_i}$ for $i=1,2$  They are computed by counting holomorphic polygons bounded by $\bL$ with corners at the immersed generators $X,Y,Z,W$, which are quadrilaterals in this case, see Figure \ref{fig:2222basic}.  More precisely, we count the number of $T_i=PD(\one_{L_i})$ on the boundary of such polygons. We omit the detailed computations here and only state the result as below.

\begin{theorem}[(2) and (3) of Theorem \ref{thm:P2222}] \label{thm:mir-2222}
The generalized mirror of $\bP^1_{2,2,2,2}$ is $(\cA_0, W_0)$, where
\begin{equation} \label{eq:W2222}
W_0 =\phi(q_d) ((xy)^2 + (xw)^2 + (zy)^2 + (zw)^2 + (yx)^2 + (wx)^2 + (yz)^2 + (wz)^2) + \psi(q_d) (xyzw + wzyx)
\end{equation}
which is a central element of $\cA_0$, and
\begin{align*}
\phi(q_{d})&=
\sum_{k,l\geq 0}^{\infty}(k+l+1)q_{d}^{(4k+1)(4l+3)},\\
\psi(q_{d})&=\sum_{k,l\geq 0}^{\infty}(4k+1)q_{d}^{(4k+1)(4l+1)}+\sum_{k,l\geq 0}^{\infty}(4k+3)q_{d}^{(4k+3)(4l+3)}.
\end{align*}
%old: psi and phi switched
%\begin{align*}
%\phi(q_{d})&=\sum_{k,l\geq 0}^{\infty}(4k+1)q_{d}^{(4k+1)(4l+1)}+\sum_{k,l\geq 0}^{\infty}(4k+3)q_{d}^{(4k+3)(4l+3)},\\
%\psi(q_{d})&=
%\sum_{k,l\geq 0}^{\infty}(k+l+1)q_{d}^{(4k+1)(4l+3)}.
%\end{align*}
\end{theorem}

As explained in the previous section, $\cA_0$ is a noncommutative resolution of the conifold.  $W_0$ corresponds to a unique element in the conifold algebra
$$2\left(\phi(q_d) (Z_1^2 + Z_2^2 + Z_3^2 + Z_4^2) + \psi(q_d) Z_1Z_3\right) $$
which lifts to a holomorphic function on the commutative resolution $\CO_{\bP^1}(-1) \oplus \CO_{\bP^1}(-1)$.
By a simple change of coordinates on $(Z_1,Z_2,Z_3,Z_4)$, we can change the expression to
$$ W^{\open} = Z_1^2 + Z_2^2 + Z_3^2 + Z_4^2 + \frac{\psi(q_d)}{\phi(q_d)} Z_1Z_3. $$
In this way the generalized mirror should be equivalent to $(\CO_{\bP^1}(-1) \oplus \CO_{\bP^1}(-1), W^\open)$.  We will leave HMS for future work.

The quotient construction by $\Z_2$ given in Chapter \ref{sec:fingpsymm} can be applied to obtain the generalized mirror $(\hat{A}_0,\hat{W}_0)$ of the covering elliptic curve $E$.  Recall from Definition \ref{def:formal-quot} that $\hat{\cA}_0 = \Lambda (Q \# \Z_2) \big/ \langle P^g_i: g \in G, i = 1,\ldots,K \rangle$, where $P^g_i$ are the lifts of the weakly unobstructed relations $P_i$.  $Q \# \Z_2$ is the quiver with vertices $(i,[j]) \in \{0,1\} \times \Z_2$, and with arrows $x^{[i]}, y^{[i]},z^{[i]},w^{[i]}$  (where the upper-index $[i] \in \Z_2$ indicates the second component of the head vertex).  $\hat{W}_0$ is the lift of $W_0$ downstairs according to Definition \ref{def:lift-path}.)  We conclude by the following.
\begin{prop}
The generalized mirror of $E$ corresponding to $\tilde{\bL}_0$ is $(\hat{\cA}_0,\hat{W}_0)$.
\end{prop}

\section{Open mirror theorem}\label{subsec:openmirthm2222}
We compare the above worldsheet superpotential $W_0$ with the family
$$ W^{\mir} := Z_1^2 + Z_2^2 + Z_3^2 + Z_4^2 + \sigma Z_1Z_3$$
parametrized by $\sigma$.  By direct computation, the critical locus of $W^{\mir}$ is the zero section $\bP^1 \subset \CO_{\bP^1}(-1) \oplus \CO_{\bP^1}(-1)$ instead of a point.  

We wants to compare our generalized mirror with the mirror map determined by Frobenius structure.  Unfortunately the Frobenius structure on the universal deformation space of $W^{\mir}$ is not known since Saito's theory is not yet known for non-isolated singularities.  

On the other hand, $W^\mir$ descends to the quotient of $\CO_{\bP^1}(-1) \oplus \CO_{\bP^1}(-1)$ by $\Z_2$.  After blowing up along $\bP^1$, it is the total space of canonical line bundle $K_{\bP^1 \times \bP^1}$.  The critical locus of $W^\mir$ in $K_{\bP^1 \times \bP^1}$ is the elliptic curve $\{W^\mir=0\} \subset \bP^1 \times \bP^1$, where $(x:z,y:w)$ are the standard homogeneous coordinates of $\bP^1 \times \bP^1$.  It can also be embedded into $\bP^3$ via Segre embedding
\begin{equation}\label{eqn:segrecoord}
x_1 = xy, x_2 = xw, x_3 = zw, x_4 = zy.
\end{equation}
Then we obtain the following family of elliptic curves, which are complete intersections
$$\{x_1 x_3 = x_2 x_4\} \cap \{x_1^2 + x_2^2 +x_3^2 +x_4^2 + \sigma x_1 x_3 = 0\} \subset \bP^3.$$

The flat coordinate $q$ of moduli of elliptic curves is classically well-known, and can be found explicitly by embedding the above family into $\bP^2$.  This gives a way to obtain the flat coordinate $q_\orb$ of $W^{\mir}$, which is related to $q$ by $q = q_\orb^2$.

Explicitly the change of coordinates between $\sigma$ and $q$ is given by
$$ \sigma(q) = \frac{\eta^{12}(q)}{\eta^{8}(q^{2})\eta^{4}(q^{\frac{1}{2}})}$$
where $\eta(q) = q^{1/24}\prod_{n=1}^\infty (1-q^n)$ is the Dedekind $\eta$ function.

Now we can compare $W^\open$ with $W^\mir$.  The following can be verified by direct calculations.
\begin{theorem}[(4) of Theorem \ref{thm:P2222}] \label{thm:open=mir}
$$W^\open (q_d) = W^\mir(\sigma(q))$$
where $q_\orb = q_d^4$, and $q = q_\orb^2 = q_d^8$.
\end{theorem}

As a consequence, the elliptic curve family defined by the complete intersection 
$$\left\{W^{\open} = Z_1^2 + Z_2^2 + Z_3^2 + Z_4^2 + \frac{\psi(q^{\frac{1}{8}})}{\phi(q^{\frac{1}{8}})} Z_1Z_3 = 0 \right\} \cap \{Z_1 Z_3 = Z_2 Z_4\} \subset \bP^3 $$
equals to the Hesse elliptic curve family
$$ \{ x^3 + y^3 + z^3 + \left(-3 - \left( \frac{\eta(q^{\frac{1}{3}})}{\eta(q^3)} \right)^3\right) xyz \} \subset \bP^2 $$
given in Theorem \ref{thm33313}.  This is a natural expectation from mirror symmetry, since both are the mirror elliptic curve family.

More explicitly, we can state the above in terms of the $j$-invariant.  The $j$-invariant of the elliptic curve family
$$\{((xy)^2 + (xw)^2 + (zy)^2 + (zw)^2) + \sigma xyzw=0\} \subset \bP^1 \times \bP^1$$
is given by \cite{LZ}
\begin{equation}
j(\sigma)=\frac{\left(\sigma^4-16 \sigma^2+256\right)^3}{\sigma^4 \left(\sigma^2-16\right)^2}
\end{equation}
which can be obtained by constructing an explicit isomorphism between the Hesse elliptic curve $\{x^3 + y^3 + z^3 + s xyz = 0\}$ and the above family.  Then we have
$$ j\left(\frac{\psi(q_d)}{\phi(q_d)}\right) =  \frac{1}{q} + 744 + 196884 q + 21493760 q^{2} + 864299970 q^{3} + \ldots $$
(recall that $q = q_d^8$) equals to the $j$-function in flat coordinate.

\section{Deformation of the reference Lagrangian $\bL$}\label{subsec:2222pointmod}
In the rest of this chapter, we consider a geomtric deformation $\mathbb{L}_t$ of $\mathbb{L}_0$, and the mirror construction will produce a family of noncommutative algebras.  The weak Maurer-Cartan relations become much more complicated.  Miraculously we will show that coefficients in the relations determines a point in the mirror elliptic curve.  The construction here is parallel to that in Chapter \ref{sec:333} for $\mathbb{P}^1_{3,3,3}$.
%The resulting algebra can be thought of as a $\mathbb{P}^1_{2,2,2,2}$-analogue of the Sklyanin algebra which we have obtained from translated copies of the Seidel Lagrangian in $\mathbb{P}^1_{3,3,3}$ (Section \ref{}).

\begin{figure}
\begin{center}
\includegraphics[height=1.8in]{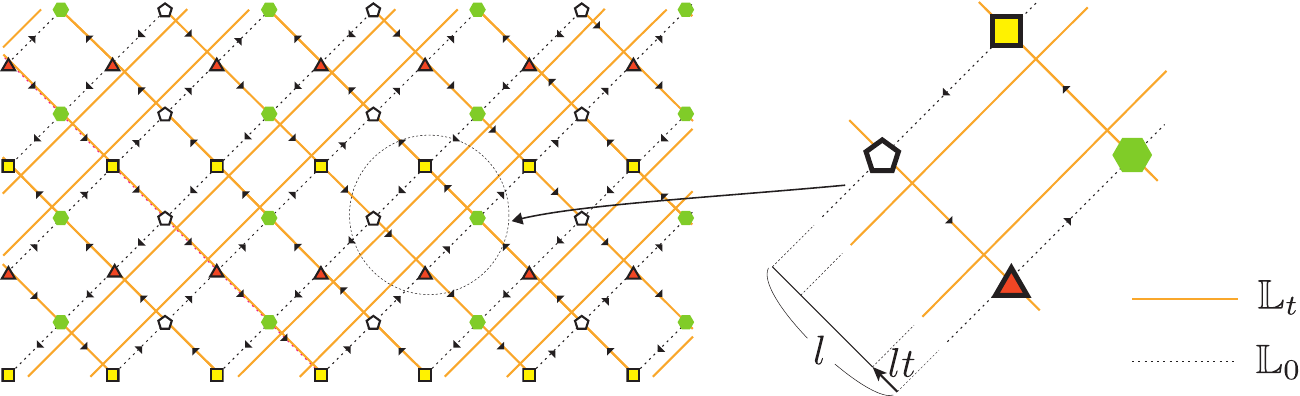}
\caption{Translation to obtain $\bL_t$}\label{fig:2222deform}
\end{center}
\end{figure}

We define $\mathbb{L}_t$ as follows. Let $l$ be the length of minimal $xyzw$-rectangle for the original reference Lagrangian $\mathbb{L}=\mathbb{L}_0$. We translate $L_1$ by $lt$ to get $\left(L_1 \right)_t$ while $L_2$ being fixed. (See Figure \ref{fig:2222deform}.)  As in \ref{subseck:weakMC333hol}, we also deform the flat line bundle on $\left(L_1\right)_t$ to have holonomy $\lambda$, and we represent this flat bundle by the trivial line bundle on $\left(L_1\right)_t$ together with a flat connection whose parallel transport is proportional to the length of a curve.
%
%
%We represent this bundle by choosing one special point {\color{red}$\times$} on $\left(L_1\right)_t$ in such a way that the parallel transport (with respect to the flat connection) multiplies a vector by $\lambda^{-1}$ when we pass through this point along with the orientation of the Lagrangian. 
The line bundle on $L_2$ remains unchanged with trivial holonomy. 

We now consider the (noncommutative) mirror induced by formal deformation of $(\mathbb{L}_t := \left(L_1 \right)_t \cup L_2, \lambda)$ with $b= xX +yY + zZ +w W$. 
For $(\bL_t, \lambda)$, the coefficients of $\bar{X}, \tilde{Y},\bar{Z},\bar{W}$ in $m_0^b$ are given as
%
%\begin{equation}\label{eqn:idealLt2222}
%\begin{array}{lcl}
%h_{\bar{X}} &= & A\, \tilde{y}\tilde{z}w + B\, w\tilde{z}\tilde{y} + C\, wxw  + D\, \tilde{y}x\tilde{y}\\
%h_{\tilde{Y}} &=& A\, \tilde{z}wx +  B\, xw\tilde{z} + C\, \tilde{z}\tilde{y}\tilde{z} +  D\, x\tilde{y}x \\
%h_{\tilde{Z}} &=&  A\, wx\tilde{y} + B\, \tilde{y}xw + C\, \tilde{y}\tilde{z}\tilde{y} +  D\, w\tilde{z}w\\
%h_{\bar{W}}  &=& A\, x\tilde{y}\tilde{z} + B\, \tilde{z}\tilde{y}x + C\, xwx + D\, \tilde{z}w\tilde{z}.
%\end{array}
%\end{equation}
\begin{equation}\label{eqn:idealLt2222}
\begin{array}{lcl}
h_{\bar{X}} &= & A\, yzw + B\, wzy + C\, wxw  + D\, yxy\\
h_{\bar{Y}} &=& A\, zwx +  B\, xwz + C\, zyz +  D\, xyx \\
h_{\bar{Z}} &=&  A\, wxy + B\, yxw + C\, yzy +  D\, wzw\\
h_{\bar{W}}  &=& A\, xyz + B\, zyx + C\, xwx + D\, zwz.
\end{array}
\end{equation}
These relations generate the ideal $R_{(\lambda,t)}$ of the weak Maurer-Cartan relations in $\widehat{\Lambda Q}$ (with $Q$ given as in Figure \ref{fig:4puncturemq}). Corresponding spacetime potential $\Phi$ is given by
\begin{equation*}
 \Phi = A \, (xyzw) + B\, (wzyx)  + \frac{1}{2} C\, ((wx)^2  +  (yz)^2) + \frac{1}{2} D\, ( (xy)^2 +(zw)^2).
 \end{equation*}
Thus the mirror algebra $\mathcal{A}_{(\lambda,t)}$ arising from $(\bL_t,\lambda)$ can is the Jacobi algebra of $\Phi$, $\cA_{(\lambda,t)} =  \frac{ \Lambda Q }{ \left( \partial \Phi_{(\lambda,t)} \right)}$. This proves (1) of Theorem \ref{thm:deform2222coni}.

One can express $A$, $B$, $C$ and $D$ in terms of theta series with deformation parameters $(\lambda,t)$. By direct counting from the picture, we see that
\begin{equation}\label{eqn:ABCD2222bef}
\begin{array}{ll}
A(\lambda,t) &= \displaystyle\sum_{k,l \geq 0}^{\infty} \lambda^{-2l-\frac{1}{2}}   q_d^{(4k+1 -2t_1)(4l+1 -2 t_2)}- \sum_{k,l \geq 0}^{\infty} \lambda^{2l+\frac{3}{2}} q_d^{(4k+3 +2t_1)(4l+3 +2t_2)}   \\
B(\lambda,t) &= \displaystyle\sum_{k,l \geq 0}^{\infty} \lambda^{-2l-\frac{3}{2}}   q_d^{(4k+3 -2t_1)(4l+3 -2t_2)}- \sum_{k,l \geq 0}^{\infty} \lambda^{2l+\frac{1}{2}}  q_d^{(4k+1 +2t_1)(4l+1 +2t_2)}\\
C(\lambda,t) &= \displaystyle\sum_{k,l \geq 0}^{\infty} \lambda^{-2l-\frac{1}{2}}   q_d^{(4k+3 -2t_1)(4l+1-2t_2)}- \sum_{k,l \geq 0}^{\infty} \lambda^{2l+\frac{3}{2}}   q_d^{(4k+1 +2t_1)(4l+3 +2t_2)}\\
D(\lambda,t) &= \displaystyle\sum_{k,l \geq 0}^{\infty}\lambda^{-2l-\frac{3}{2}}   q_d^{(4k+1 -2t_1)(4l+3-2t_2)}- \sum_{k,l \geq 0}^{\infty} \lambda^{2l+\frac{1}{2}}   q_d^{(4k+3 +2t_1)(4l+1+2t_2)}.
%
%
%A(\lambda,t) &= \displaystyle\sum_{k,l \geq 0}^{\infty} \lambda^{-2l-2} q_d^{(4k+1 -2t)(4l+1)}- \sum_{k,l \geq 0}^{\infty} \lambda^{2l } q_d^{(4k+3 +2t)(4l+3)}   \\
%B(\lambda,t) &= \displaystyle\sum_{k,l \geq 0}^{\infty} \lambda^{-2l-3} q_d^{(4k+3 -2t)(4l+3)}- \sum_{k,l \geq 0}^{\infty} \lambda^{2l-1} q_d^{(4k+1 +2t)(4l+1)}\\
%C(\lambda,t) &= \displaystyle\sum_{k,l \geq 0}^{\infty} \lambda^{-2l-2} q_d^{(4k+3 -2t)(4l+1)}- \sum_{k,l \geq 0}^{\infty} \lambda^{2l} q_d^{(4k+1 +2t)(4l+3)}\\
%D(\lambda,t) &= \displaystyle\sum_{k,l \geq 0}^{\infty} \lambda^{-2l-3} q_d^{(4k+1 -2t)(4l+3)}- \sum_{k,l \geq 0}^{\infty} \lambda^{2l-1} q_d^{(4k+3 +2t)(4l+1)}.
\end{array}
\end{equation}
For instance, holomorphic polygons with small areas for the coefficients $A$, $B$, $C$ and $D$ are shown in Figure \ref{fig:2222mfell}.

Recall $q_{orb} = q_d^4$, and set $q_{orb}^2 = e^{2\pi i \tau}$, $\lambda=e^{2\pi is}$.
Up to a common factor, $A,B,C,D$ are holomorphic functions in a variable $u := -s - \tau \frac{t}{2} - \frac{1}{4}$ which can be thought of as a coordinate on $E_\tau:= \C / (\Z \oplus \Z \langle \tau \rangle)$.
 To see this, we multiply $K = \theta_0 (K')^{-1}$ to $A,B,C,D$ to get
\begin{equation}\label{eqn:KAKB}
 K A =   \dfrac{ \theta [0,0] ( 2u , 2\tau) }{ \theta \left[\frac{3}{4}, 0 \right] (2u , 2\tau)   }, \quad
 K B=  -i  \dfrac{ \theta [0,0] ( 2u , 2\tau) }{ \theta \left[\frac{1}{4}, 0 \right] (2u , 2\tau)   }, \quad
K C=   i \dfrac{ \theta \left[\frac{2}{4}, 0 \right]  ( 2u , 2\tau) }{ \theta \left[\frac{1}{4}, 0 \right] (2u , 2\tau)   }, \quad
K D=    \dfrac{ \theta \left[\frac{2}{4}, 0 \right]  ( 2u , 2\tau) }{ \theta \left[\frac{3}{4}, 0 \right] (2u , 2\tau)   }   
\end{equation}
where $K':=\frac{1}{2 \pi} e^{\frac{\pi i \tau}{2}} \theta' ( (2\tau+1)/2, 2 \tau)$ and $\theta_0:= \theta \left[\frac{3}{4}, 0 \right] (-1/2,2\tau)$. The proof of these identities will be given in Appendix \ref{appsubsec:theta2222}.
%we multiply $-\lambda^{3/2}$ to $A,B,C,D$ to get $a,b,c,d$, and set 
%$q_{orb} = q_d^4$, $q_{orb}^2 = e^{2\pi i \tau}$, $\lambda=e^{2\pi is}$, and let
%$w= s+ \frac{\tau t}{2} + \frac{\tau}{4}$ be holomorphic coordinate on the elliptic curve $E_{\tau}:=\C^2 / \Z \oplus \Z \langle \tau \rangle$. Then
%\begin{equation*}
%\begin{array}{ll}
%a(w) &=   \displaystyle\sum_{k\in \Z}^{}  \frac{ e^{3 \pi i (w+ \tau(k+\frac{1}{2}))}}{1- e^{4 \pi i ( w+ \tau(k + \frac{1}{2}))}} \\
%b(w) &= \displaystyle\sum_{k\in \Z}^{}  \frac{ e^{ \pi i ( w+ \tau k)}}{1- e^{4 \pi i  (w+ \tau k)}} \\
%c(w) &=  \displaystyle\sum_{k\in \Z}^{}  \frac{  e^{ 3 \pi i (w+ \tau k)}}{1- e^{4 \pi i (w+ \tau k)}} \\
%d(w) &=  \displaystyle\sum_{k\in \Z}^{}  \frac{  e^{  \pi i( w+ \tau(k + \frac{1}{2}))}}{1-e^{4 \pi i ( w+ \tau(k + \frac{1}{2}))}},
%\end{array}
%\end{equation*}
Denote the above four functions $K A, K B, K C, K D$ by $a,b,c,d$. By our choice of the holomorphic coordinate $u$, $u_0:=-\frac{1}{4}$ gives back the algebra $\mathcal{A}_0$ studied in Section \ref{subsec:centralfiber2222}. Namely, $a(u_0) = -b(u_0)$ and $c(u_0)=d(u_0)=0$.

It is easy to see that $a,b,c,d$ define a map
$(a:b:c:d) : E_\tau \to \mathbb{P}^3$. Indeed, it gives an embedding of $E_\tau$ into $\mathbb{P}^3$ whose proof will be given in Appendix \ref{appsubsec:theta2222e}. The image of this map in $\mathbb{P}^3$ is a complete intersection of two quadrics, one of which is $\{ac+bd=0\}$. One can check this relation directly from the expression \eqref{eqn:KAKB}. Defining equation of the other quadric can be also found explicitly as follows.
%
%
%Observe that $ac + bd =0$. Hence the image of the map lies in the quadric $\{ac + bd = 0 \} \subset \mathbb{P}^3$ which is isomorphic to $\mathbb{P}^1 \times \mathbb{P}^1$. In fact, one can check that $(a:b:c:d)$ gives an embedding of $E_\tau$ into $\mathbb{P}^3$ (see Appendix \ref{appsubsec:theta2222e}), whose image is a complete intersection of two quadrics. We already found one of them, and the defining equation of the other can be also found explicitly as follows.

%\begin{figure}
%\begin{center}
%\includegraphics[height=3.7in]{typeabcd}
%\caption{Holomorphic polygons contributing to $a$, $b$, $c$, $d$}\label{fig:typeabcd}
%\end{center}
%\end{figure}
We apply the same trick as in Chapter \ref{sec:333} to obtain a matrix factorization of $W (=W_{\mathbb{L}_0})$ one of whose matrices has $a$, $b$, $c$, $d$ as coefficients. Namely, we translate $L_1$ further by $2t$ and compute the mirror matrix factorization of $L:=\left(L_1 \right)_{2t}$ which is by definition $\CF(\mathbb{L}_0, L )$ with $m_1^{b,0}$. Here, we set the flat connection on $L$  to have trivial holonomy ($\lambda=1$) and only $t$ varies. So, both $\mathbb{L}_0$ and $L$ have trivial holonomies in what follows. 

Note that $L$ intersects only one branch, $L_2$, of $\mathbb{L}_0$. Thus, we restrict ourselves to the subalgebra $\pi_2 \cdot \left(\widehat{\Lambda Q} / R_0 \right) \cdot \pi_2$ of $\widehat{\Lambda Q} / R_0$ which are generated by four loops based at $\pi_2$. We write these four loops as
$$ \alpha =yz,\quad  \beta =wz, \quad \gamma =wx,\quad  \delta =yx.$$
It is easy to see that these variables commute to each other. For example, 
$$ \alpha \beta = yzwz  \stackrel{\eqref{eqn:m0b2222sym}}{=} wzyz = \beta \alpha. $$
One can deduce $\alpha \gamma = \beta \delta$ similarly so that
\begin{equation*}\label{eqn:2ndvertsub}
\pi_2 \cdot \left(\widehat{\Lambda Q} / R_0 \right) \cdot \pi_2 \cong \C[\alpha, \beta, \gamma,\delta ] / \alpha \gamma - \beta \delta
\end{equation*}
which defines a conifold in $\C^4$.

Also, the potential $W$ reduces to
\begin{equation}\label{eqn:redpot2}
 \UL{W}(\alpha, \beta, \gamma,\delta) = \phi  (\alpha^2 + \beta^2 + \gamma^2 + \delta^2) + \psi \alpha \gamma
\end{equation}
Finally, $L$ intersects $L_2$ at four points denoted by $p_1$, $p_2$ with odd degree and $q_1$, $q_2$ with even degree (see Figure \ref{fig:2222mfell}), and our functor produces a 2 by 2 matrix factorization of $ \UL{W}(\alpha, \beta, \gamma,\delta)$ over the commutative algebra $\C[\alpha, \beta, \gamma,\delta ] / \alpha \gamma - \beta \delta$ (after taking $\pi_2 \cdot (-) \cdot \pi_2$)
\begin{equation}\label{eqn:2222tranMF}
P_{odd \to even} \cdot P_{even \to odd} = \UL{W} \cdot Id_2.
\end{equation}

\begin{figure}
\begin{center}
\includegraphics[height=4.5in]{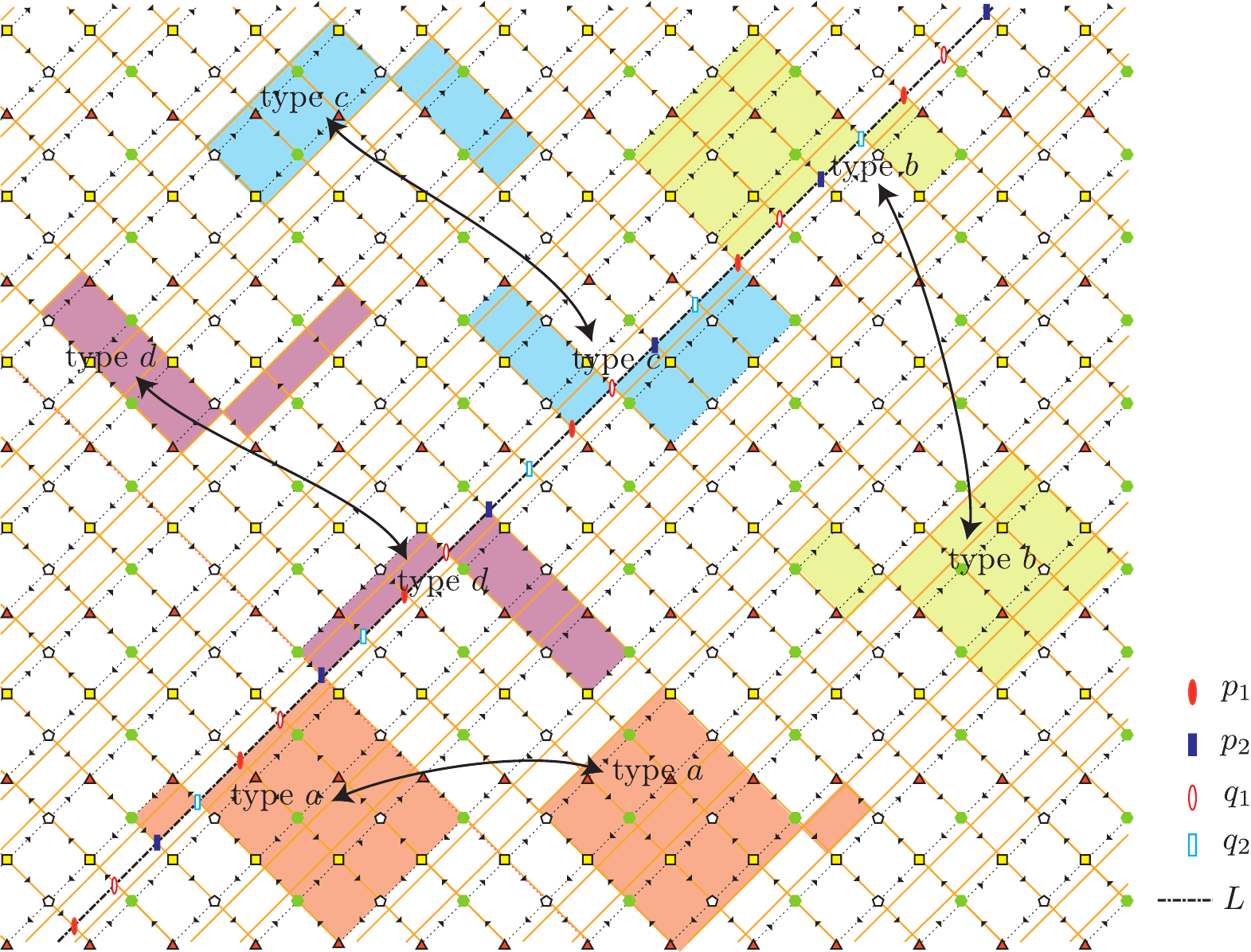}
\caption{Correspondence between $\{$polygons contributing to $a$, $b$, $c$, $d$$\}$ and $\{$holomorphic strips contributing to $P_{odd \to even}$$\}$}\label{fig:2222mfell}
\end{center}
\end{figure}

One can see from Figure \ref{fig:2222mfell} that there is an obvious one-to-one correspondence between ``polygons for $a$, $b$, $c$, $d$ (or $A,B,C,D$)" and ``holomorphic strips contributing to $P_{odd \to even}$", which is area preserving. In the figure, holomorphic strips for $\CF(\mathbb{L}_0, L)$ are arranged along the diagonal dashed line representing $L$. By direct computation we get 
\begin{equation*}
P_{odd \to even}=
\bordermatrix{ & p_1 & p_2 \cr
q_1 & a\, wx + c\, yz & b\, yx + d\, wz \cr
q_2 & b\, wz + d\, yx & a\, yz + c\, wx \cr              } =
\bordermatrix{ &  &  \cr
 &  a\, \gamma + c\, \alpha & b\, \delta + d\, \beta  \cr
 &  b\, \beta + d\, \delta  &  a\, \alpha+  c\, \gamma \cr              }
\end{equation*}
(up to scaling) where $a,b,c,d$ above are evaluated at $(\lambda,t)=(1,t)$.
%\begin{equation*}
%P_{odd \to even}=
%\left(\begin{array}{ccc} a\, xw + c\, zy   & b\, xy + d\, zw  \\  b\, zw + d\, xy &  a\, zy + c\, xw  \end{array}\right) =
%\left(\begin{array}{ccc} a\, \gamma + c\, \alpha & b\, \delta + d\, \beta  \\  b\, \beta + d\, \delta & a\, \alpha+  c\, \gamma  \end{array}\right)
%\end{equation*}
Since we are now working over the commutative algebra, it makes sense to take the determinant of the both sides of the equation \ref{eqn:2222tranMF},
$$\det P_{odd \to even} = ac (\alpha^2 + \gamma^2 ) - bd (\beta^2 + \delta^2) + (a^2 +c^2  - b^2 - d^2) \alpha \gamma$$
which should be a constant multiple of $\UL{W}$ by the (generic) irreducibility of $\UL{W}$ as in the proof of Theorem \ref{abceq}. 
%(Here, we use the fact that {\color{red}$\UL{W}$ is generically(?) irreducible}.) 
Comparing with \eqref{eqn:redpot2}, we see that
$$ ac +bd =0,\quad a^2 + c^2 - b^2 - d^2  - \frac{\psi}{\phi} ac =0$$
hold for $(\lambda,t) = (1,t)$. As $a,b,c,d$ are holomorphic functions in $s + \frac{\tau t}{2}$, this relations extend to hold over whole $E_\tau$. 

We conclude that for all $(\lambda,t) \in E_\tau$, $( \tilde{a} :b : \tilde{c} : \tilde{d}) := (-ia : b : ic : -d) \in \mathbb{P}^3$ lies on the complete intersection 
$$ \{  \tilde{a} \tilde{c} -b\tilde{d} =0\} \cap \{\tilde{a}^2 + b^2 + \tilde{c}^2 + \tilde{d}^2 +(\psi / \phi) \tilde{a}\tilde{c}=0 \} \subset \mathbb{P}^3$$
which is precisely the mirror elliptic curve in Section \ref{subsec:openmirthm2222}.

For later use, we provide the formula of the worldsheet potential $W_{(\lambda,t)}$ (only whose first component will be given). Since we have two branches $L_1$ and $L_2$,
$$ W_{(\lambda,t)} = (W_{(\lambda,t)})_1 {\bf 1}_{\mathbb{L}_1} + (W_{(\lambda,t)})_2 {\bf 1}_{\mathbb{L}_2}.$$
By the similar computation as in Section \ref{subsec:wspot333}, one can check that the coefficients of $W_1$ are $u$-derivatives of $A,B,C,D$:
\begin{equation}\label{eqn:w12222def}
\begin{array}{lcl}
(W_{(\lambda,t)})_1 &=& \left(\frac{K}{4 \pi i} a'(u) + \frac{1}{8} A(u) \right) (xyzw + zwxy )
+ \left(\frac{K}{4 \pi i}  b'(u) + \frac{1}{8} B(u) \right) (xwzy + zyxw ) \\
&& + \left(\frac{i K}{4 \pi i}  c'(u)  + \frac{1}{8} C(u) \right) (xwxw + zyzy )
+ \left(\frac{i K}{4 \pi i}  d'(u)  + \frac{1}{8} D(u) \right) (xyxy + zwzw)\\
&=& \frac{K}{4 \pi i} \left(  a'(u)  (xyzw + zwxy) +  b'(u)  (xwzy + zyxw) +  c'(u)  ( (xw)^2+ (zy)^2) + d'(u) ( (xy)^2 + (zw)^2) \right) 
\end{array}
\end{equation}
where we used the weak Maurer-Cartan relations to get the simpler expression in the second line.

In order to obtain the similar result for $W_2$, one needs to deform the second Lagrangian $L_2$ as well so that $a,b,c,d$ now become two variable functions. Then the coefficients of $W_2$ will be partial derivatives of $a,b,c,d$ with respect to the second variable. We omit this computation here as it will not be used in this paper.

\section{Relation to the quantization of an intersection of two quadrics in $\C^4$}

In parallel with Section \ref{subsec:defquant333}, we relate the algebra $\mathcal{A}_{(\lambda,t)}$ obtained in the previous section with a deformation quantization of a hypersurface in the conifold. 

We first restrict $\mathcal{A}_{(\lambda,t)}$ to the subalgebra generated by the loops based at the first vertex of $Q$ ((a) of Figure \ref{fig:4puncturemq}). Let us denote this algebra by $\underline{\mathcal{A}}$.
We take $ x_1:=xy, x_2:=xw, x_3:=zw, x_4:=zy$ to be the generators of $\underline{\mathcal{A}}$ similarly to \eqref{eqn:segrecoord}.
Multiplying a suitable variable to front or back of the original relations \eqref{eqn:idealLt2222}, we obtain relations among the generating loops $x_1,x_2,x_3,x_4$:
%\begin{equation*}
%\begin{array}{lcl}
%x h_{\bar{X}} &= & A\, (xy)(zw) + B\, (xw)(zy) + C\, (xw)^2  + D\, (xy)^2\\
%h_{\bar{Y}}y &=& A\, (zw)(xy) +  B\, (xw)(zy) + C\, (zy)^2 +  D\, (xy)^2 \\
%z h_{\bar{Z}} &=&  A\, (zw)(xy) + B\, (zy)(xw) + C\, (zy)^2 +  D\, (zw)^2\\
%h_{\bar{W}} w &=& A\, (xy)(zw) + B\, (zy)(xw) + C\, (xw)^2 + D\, (zw)^2.
%\end{array}
%\end{equation*}
%\begin{equation*}
%\begin{array}{lcl}
%z h_{\bar{X}} &= & A\, (zy)(zw) + B\, (zw)(zy) + C\, (zw)(xw)  + D\, (zy)(xy)\\
%h_{\bar{Y}} w &=& A\, (zw)(xw) +  B\, (xw)(zw) + C\, (zy)(zw) +  D\, (xy)(xw) \\
%x h_{\bar{Z}} &=&  A\, (xw)(xy) + B\, (xy)(xw) + C\, (xy)(zy) +  D\, (xw)(zw)\\
%h_{\bar{W}} y &=& A\, (xy)(zy) + B\, (zy)(xy) + C\, (xw)(xy) + D\, (zw)(zy).
%\end{array}
%\end{equation*}
%
%where we set $ x_1:=xy, \quad x_2:=xw,\quad x_3:=zw,\quad x_4:=zy$ as before so that
%the above relations can expressed as follows:
\begin{equation}\label{eqn:rel1}
\begin{array}{lcl}
K  \cdot x h_{\bar{X}} &= & a\, x_1  x_3 + b\, x_2  x_4 + c\, x_2^2  + d\, x_1^2\\
K  \cdot h_{\bar{Y}}y &=& a\, x_3  x_1 +  b\, x_2  x_4 + c\, x_4^2 +  d\, x_1^2 \\
K   \cdot z h_{\bar{Z}} &=&  a\, x_3  x_1 + b\, x_4 x_2 + c\, x_4^2 +  d\, x_3^2\\
K  \cdot h_{\bar{W}} w &=& a\, x_1  x_3 + b\, x_4  x_2 + c\, x_2^2 + d\, x_3^2.
\end{array}
\end{equation}
(There is one redundant relation in \eqref{eqn:rel1}.)
% so that
%the above relations can expressed in terms of these loops.
\begin{equation}\label{eqn:rel2}
\begin{array}{lcl}
K  \cdot z h_{\bar{X}} &= & a\, x_4  x_3 + b\, x_3  x_4 + c\, x_3  x_2  + d\, x_4  x_1\\
K  \cdot h_{\bar{Y}} w &=& a\, x_3  x_2 +  b\, x_2  x_3 + c\, x_4  x_3 +  d\, x_1  x_2 \\
K  \cdot x h_{\bar{Z}} &=&  a\, x_2  x_1 + b\, x_1  x_2 + c\, x_1  x_4 +  d\, x_2  x_3\\
K  \cdot h_{\bar{W}} y  &=& a\, x_1  x_4 + b\, x_4  x_1 + c\, x_2  x_1 + d\, x_3  x_4.
\end{array}
\end{equation}
The subalgebra $\underline{\mathcal{A}}$ of $\mathcal{A}$ is obviously the quotient of $\C \{x_4, x_3, x_2, x_1\}$ by the ideal generated by these eight relations. Recall that $a,b,c,d$ are functions in 
$$u=-s-\tau \frac{ t}{2} - \frac{1}{4}$$
(and $\lambda=e^{2\pi \consti s}$).
To emphasize the dependence of $\underline{\mathcal{A}}$ in $u$, we write $\underline{\mathcal{A}}_u$ from now on. 

Note that $\underline{\mathcal{A}}_{u_0}$ represents the commutative conifold since $a(u_0) = - b(u_0)$ and $c(u_0)=d(u_0)=0$:
\begin{equation}\label{eqn:conifoldcomm}
\underline{\mathcal{A}}_{u_0} \cong \C[x_1,x_2,x_3,x_4] / x_1x_3 - x_2 x_4.
\end{equation}
Thus one may view $\underline{\mathcal{A}}_{u}$ as a noncommutative conifold.
Let 
$$f=x_1 x_3 - x_2 x_4$$ 
be the defining equation of the conifold.

If we are given another function $g$ on $\C^4$, we can define a Poisson structure in the following way.
\begin{equation}\label{eqn:nambu2222}
\{x_i, x_j\} = \zeta \frac{dx_i \wedge dx_j \wedge df \wedge dg}{dx_1 \wedge dx_2 \wedge dx_3 \wedge dx_4} = \zeta \left( \frac{\partial f}{\partial x_k} \frac{\partial g}{\partial x_l} - \frac{\partial g}{\partial x_k} \frac{\partial f}{\partial x_l}\right).
\end{equation}
where $(i,j,k,l)$ is equivalent to $(1,2,3,4)$ up to an even permutation.
$\zeta$ in \eqref{eqn:nambu2222} will be some constant in our case though it could be a more complicated function in general.
Such a structure is in fact a special case of certain higher brackets among functions called the \emph{Nambu bracket}. (See for e.g. \cite{Ode}.) Choice of $g$ will be fixed shortly.

One can check that \eqref{eqn:nambu2222} satisfies a Jacobi relation, and $f$ and $g$ lie in the Poisson center. Therefore it descends to the quotient algebra $\mathcal{B}_{f,g}=\C [x_1, x_2, x_3, x_4] / \langle f,g \rangle$ which is a coordinate ring of  a hypersurface in the conifold defined by $g=0$. Now we provide some relation between our $\underline{\mathcal{A}}_u / W_{u}$ and the deformation quantization of $\mathcal{B}_{f,g}$ associated with the Poisson structure \eqref{eqn:nambu2222}.

First, we make the following specific choice of the second function $g$ in \eqref{eqn:nambu2222}. Restricted to the loops based at the first vertex of $Q$, world sheet potential $(W_{(\lambda,t)} )_1$ \eqref{eqn:w12222def} can be written as
\begin{equation}\label{eqn:WuquadC4}
W_u:=  \frac{K}{4 \pi i} \left(  a'(u)  (x_1 x_3 + x_3 x_1) +  b'(u)  (x_2 x_4 + x_4 x_2) +  c'(u)  ( x_2^2+ x_4^2) + d'(u) ( x_1^2 + x_3^2) \right) 
\end{equation}
Compared with Theorem \ref{thm:mir-2222}, this should reduce to 
$$g:= \phi (q_d) (x_1^2 + x_2^2 + x_3^2 + x_4^2) + \psi (q_d) x_1 x_3$$
at $u=u_0$, which we denote by $g$. Thus, if we set $v= u- u_0$ (so that $v=0$ gives the commutative conifold \eqref{eqn:conifoldcomm}) and expand $a,b,c,d$ in terms of $v$, we have
$$ a(v) = a(0) + a'(0) v + O(v), \quad b(v) = b(0) + b'(0) v + O(v), \quad c(v) =  2 \zeta \phi v + O(v), \quad d(v) =  2 \zeta \phi v + O(v)$$
with $a(0)=-b(0)$ and $a'(0) + b'(0) = \zeta \psi $ for some constant $\zeta$. Here, $a(0)$ and $a'(0)$ are evaluated at $v=0$ (i.e. $u=u_0$) and similar for $b,c,d$.

\begin{theorem} \label{dq2222}
The family of noncommutative algebra $\underline{\mathcal{A}}_{v}/(W_{v})$ near $v=0$ gives a quantization of the complete intersection given by two quadratic equations $f=0$ and $g=0$ in $\C^4$ in the sense of \cite{EG}.
\end{theorem}
We remark that $\{f=0\} \cap \{g=0\}$ defines the mirror elliptic curve in $\mathbb{P}^3$ after projectivization.  As in Theorem \ref{thm:tw-ring333} \cite{ATV, St1}, we expect that $\underline{\mathcal{A}}_{v}/(W_{v})$ is given as a twisted homogeneous coordinate ring of the mirror elliptic curve embedded in the quadric $\{x_1x_3=x_2x_4\}\subset \bP^3$ (which can be identified as $\bP^1 \times \bP^1$) by the line bundle $\mathcal{L} = \cO_{\bP^1}(1) \boxtimes \cO_{\bP^1}(1)$.

\begin{proof}
As $v \to 0$, we have $$\underline{\mathcal{A}}_{v}/(W_{v}) \to \C[x_1,x_2,x_3,x_4]/\langle f,g \rangle.$$
So it remains to check that the first order terms of the commutators for $A_{v}$ are equivalent to the Poisson bracket induced by $f$ and $g$ with the formula \eqref{eqn:nambu2222}. The first equation in \eqref{eqn:rel2} gives rise to
\begin{equation*}
r (x_3  x_4 - x_4  x_3) =v \left( a'(0)x_4  x_3 + b'(0) x_3  x_4 + c'(0) x_3  x_2 + d'(0) x_4  x_1   \right) + o(v), 
\end{equation*}
where $r:=a(0) =-b(0) \in \C$. Thus we obtain
\begin{equation}\label{eqn:2222x3x4}
\lim_{v \to 0} \frac{ x_3  x_4 - x_4  x_3}{v} = \frac{1}{r} \left( (a'(0) +b'(0) )x_3  x_4 + c'(0) x_3  x_2 + d'(0) x_4  x_1   \right)
%\frac{1}{r} \left( a'(0)x_4  x_3 + b'(0) x_3  x_4 + c'(0) x_3  x_2 + d'(0) x_4  x_1   \right) 
\end{equation}
since $x_3$ and $x_4$ commute at $v=0$ (and so do other variables).

On the other hand,  the Poisson bracket between $x_3$ and $x_4$ from  \eqref{eqn:nambu2222} is given by
$$\{x_3,x_4\}= \zeta \left( \frac{\partial f}{\partial x_1} \frac{\partial g}{\partial x_2} - \frac{\partial g}{\partial x_1} \frac{\partial f}{\partial x_2} \right)=\zeta \left( \psi x_3 x_4+ 2 \phi (x_3 x_2 + x_4  x_1 )  \right),$$
which is proportional to the right hand side of \eqref{eqn:2222x3x4} since $a'(0) + b'(0) = \zeta \psi$ and $c'(0) = d'(0) = 2\zeta \phi$. Similarly, the first order commutator between $x_i$ and $x_{i+1}$ for other $i$'s agrees with the Poisson bracket of the corresponding variables.

%which leads to "conjectural" Poisson bracket between $x_3$ and $x_4$:
%$$\{x_3,x_4\}_{\underline{\mathcal{A}}_0}:=\lim_{u \to 0} \frac{ x_3 \ast x_4 - x_4 \ast x_3}{u} =  (a'(0)+b'(0) )x_4 \cdot x_3  + c'(0) x_3 \cdot x_2 + d'(0) x_4 \cdot x_1 $$
%Likewise, we get three more Poisson relations
%$$\{x_2,x_3\}_{\underline{\mathcal{A}}_0}:=\lim_{u \to 0} \frac{ x_2 \ast x_3 - x_3 \ast x_2}{u} = (a'(0)+b'(0) )x_3 \cdot x_2  + c'(0) x_4 \cdot x_3 + d'(0) x_1 \cdot x_2$$
%$$\{x_1,x_2\}_{\underline{\mathcal{A}}_0}:=\lim_{u \to 0} \frac{ x_1 \ast x_2 - x_2 \ast x_1}{u} = (a'(0)+b'(0) ) x_2 \cdot x_1  + c'(0) x_1 \cdot x_4 + d'(0) x_2 \cdot x_3$$
%$$\{x_4,x_1\}_{\underline{\mathcal{A}}_0}:=\lim_{u \to 0} \frac{ x_4 \ast x_1 - x_1 \ast x_4}{u} =(a'(0)+b'(0) ) x_1 \cdot x_4  + c'(0) x_2 \cdot x_1 + d'(0) x_3 \cdot x_4.$$

Lastly, the remaining two commutators can be computed from \eqref{eqn:rel1}. Considering the difference between two equations in \eqref{eqn:rel1}, we have
\begin{equation}\label{eqn:rel3}
\begin{array}{l}
a\, x_3  x_1 - a\, x_1  x_3 +c\, x_4^2 - c\, x_2^2 =0\\
b\, x_4  x_2 - b\, x_2  x_4 + d\, x_3^2 - d \, x_1^2 =0.
\end{array}
\end{equation}
By the same trick, the above implies
$$r(x_1 x_3 - x_3 x_1) =  v( c'(0) x_4^2 - c'(0) x_2^2 ) +va'(0)( x_3 x_1 - x_1 x_3  )+ o(v)$$
$$r(x_2 x_4 - x_4 x_2)= v(d'(0) x_1^2- d'(0) x_3^2 ) +vb'(0) (x_4 x_2 - x_2 x_4) + o(v),$$
%$$\{x_1,x_3\}_{\underline{\mathcal{A}}_0}:=\lim_{u \to 0} \frac{ x_1 \ast x_3 - x_3 \ast x_1}{u} = c'(0) x_4^2 - c'(0) x_2^2 $$
%$$\{x_2,x_4\}_{\underline{\mathcal{A}}_0}:=\lim_{u \to 0} \frac{ x_2 \ast x_4 - x_4 \ast x_2}{u} =  d'(0) x_3^2 - d'(0) x_1^2 $$
and hence,
$$\lim_{v\to0} \frac{x_1 x_3 - x_3 x_1}{v} = \frac{1}{r}( c'(0) x_4^2 - c'(0) x_2^2 ) = \frac{\zeta}{r} ( 2\phi x_4^2 - 2 \phi x_2^2)$$
$$\lim_{v\to0} \frac{x_2 x_4 - x_4 x_2}{v}= \frac{1}{r}(d'(0) x_1^2- d'(0) x_3^2 ) =\frac{\zeta}{r} ( 2 \phi x_1^2 - 2 \phi  x_3^2)$$
since variables commute at $v=0$. One can easily check that these coincide (up to scaling) with the Poisson brackets \eqref{eqn:nambu2222}, which in this case are $\zeta \left( \frac{\partial f}{\partial x_k} \frac{\partial g}{\partial x_l} - \frac{\partial g}{\partial x_k} \frac{\partial f}{\partial x_l}\right)$ for $(k,l) = (4,2)$ and $(k,l) = (3,1)$ respectively.
\end{proof}

From the relations \eqref{eqn:rel2} and \eqref{eqn:rel3}, we can capture some information on noncommutative deformation of $\C^4$ (or that of $\bP^3$ regarded as a graded algebra). Note that at the commutative point $v=0$, these six relations (\eqref{eqn:rel2} and \eqref{eqn:rel3}) boil down to the commutators among four variable $x_1,x_2,x_3,x_4$. Therefore, the algebra $A_v$ defined by 
$$A_v:= \frac{\C \{x_1, x_2, x_3, x_4\}}{ \left\langle 
\begin{array}{ll}
 ax_4  x_3 + bx_3  x_4 + cx_3  x_2  + dx_4  x_1,&  a x_3  x_2 +  b x_2  x_3 + cx_4  x_3 +  dx_1  x_2 \\
  a x_2  x_1 + b x_1  x_2 + c x_1  x_4 +  dx_2  x_3,&  a x_1  x_4 + b x_4  x_1 + cx_2  x_1 + dx_3  x_4\\
ax_3  x_1 - a x_1  x_3 +c x_4^2 - c x_2^2,& bx_4  x_2 - b x_2  x_4 + d x_3^2 - d x_1^2
\end{array}
\right\rangle}
$$
can be thought of as a noncommutative deformation of $\C^4$, where $a,b,c,d$
satisfy the two quadratic relations $ac+bd=0$ and $a^2 + c^2 - b^2
-d^2- \frac{\psi}{\phi} ac =0$.

$\underline{\cA}_v$ is obtained by quotienting $A_v$ by the conifold relation $C_1$ which can be taken as the first line in \eqref{eqn:rel1}.
Set $C_2$ to be the (pre)image of $W_v(\in \underline{\cA}_v)$ in $A_v$ defined by the same equation \eqref{eqn:WuquadC4}. From the proof of Theorem \ref{dq2222}, we see that the six relations in $A_v$ determine the Poisson bracket on $\C[x_1,x_2,x_3,x_4]$ \ref{eqn:nambu2222} with $f=C_1, g= C_2$.

If we equip $A_v$ with the grading by $\deg x_i =1$ ($i=1,2,3,4$), it may also be  regarded as a deformation of the homogeneous coordinate ring of $\bP^3$. We conjecture that $A_v$ is isomorphic to the 4-dimensional Sklyanin algebra, which was known as a noncommutative $\bP^3$ having a 2-dimensional center (see for instance \cite{SPS}).

\begin{conjecture}
$A_v$ is isomorphic to the 4-dimensional Sklyanin algebra. In addition, $C_1$ and $C_2$ are central in $A_v$, which generate the whole center of $A_v$.
\end{conjecture}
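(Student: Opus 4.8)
The plan is to prove the conjecture in two stages: (i) identify $A_v$ with the four-dimensional Sklyanin algebra $Q(E_\tau,\eta)$ for generic parameters, and (ii) deduce the statement about the center from the known structure of that algebra. For stage (i), recall that $Q(E_\tau,\eta)$ admits a presentation by six quadratic relations in four generators $e_0,e_1,e_2,e_3$ — three of ``commutator type'' $[e_0,e_i] = \alpha_i(e_je_k+e_ke_j)$ and three of ``Clifford type'' $[e_j,e_k] = e_0e_i+e_ie_0$ (or $e_j^2-e_k^2$, depending on the chosen normal form) — whose structure constants $\alpha_i$ are ratios of Jacobi theta constants attached to $(E_\tau,\eta)$, subject to the Sklyanin relation $\alpha_1+\alpha_2+\alpha_3+\alpha_1\alpha_2\alpha_3=0$. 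I would match this with the six relations of $A_v$: the four relations \eqref{eqn:rel2} are linear combinations of commutator- and Clifford-type relations, and the two relations \eqref{eqn:rel3} are of Clifford type. Concretely, one exhibits an explicit invertible linear change of variables $x_i\mapsto\sum_j M_{ij}x_j$ — I expect a rescaled discrete Fourier transform on $\Z/4$, exactly as in the three-vertex $\bP^1_{3,3,3}$ computation of Section~\ref{sec:333} — carrying the relations of $A_v$ into Sklyanin normal form, and then identifies the resulting structure constants with the theta constants of $Q(E_\tau,\eta)$. The essential geometric input is already in hand: by Subsection~\ref{subsec:2222pointmod} the coefficients $(a:b:c:d)$ are theta functions of the holomorphic coordinate $u$ on $E_\tau$ and define an embedding onto the mirror elliptic curve $\check E$; identifying the translation parameter $\eta$ with $u$ and $\tau$ with the inverse mirror map of $q_{\orb}$ should make the two parametrizations coincide, the commutative point $v=0$ corresponding to the degenerate member of the Sklyanin family.

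\textbf{Flatness.} Having matched presentations formally, one must verify that $A_v$ has the Koszul/PBW Hilbert series $(1-t)^{-4}$, so that it is genuinely the Sklyanin algebra and not a proper degeneration. At $v=0$ we have $A_0=\C[x_1,x_2,x_3,x_4]$, which has the desired series, so it suffices to show the deformation is flat. I would do this by the Diamond Lemma: there are finitely many overlap ambiguities among the six quadratic relations, and they resolve for all small $v$ as a consequence of flatness at $v=0$ together with upper semicontinuity of the Hilbert function; alternatively one can invoke the Artin–Schelter regularity criterion for six-relation quadratic algebras with generic elliptic data (as in \cite{SPS}). This is where the real work lies: one must confirm that for generic $v$ the six relations of $A_v$ are ``in general position'' in the required sense, i.e.\ that the theta constants $a,b,c,d$ satisfy the relevant nondegeneracy condition away from the finitely many bad values of $u$ on $E_\tau$. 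I expect this genericity verification, together with making the change of variables and the theta-constant identification completely explicit, to be the \emph{main obstacle}.

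\textbf{The center.} Once $A_v\cong Q(E_\tau,\eta)$ is established for generic $(\tau,u)$, the assertion about the center follows from the theorem (see \cite{SPS}) that the center of a generic four-dimensional Sklyanin algebra is a polynomial ring $\C[\Omega_1,\Omega_2]$ on two homogeneous central elements of degree two, with $Q(E_\tau,\eta)/(\Omega_1,\Omega_2)$ a twisted homogeneous coordinate ring of $E_\tau$. It then remains to identify $C_1$ and $C_2$ with a basis of the degree-two center. Centrality of $C_2=W_v$ is essentially Theorem~\ref{thm:central1}: it is central in the quotient $\underline{\cA}_v=A_v/(C_1)$, and one checks directly, using the relations \eqref{eqn:rel1}, \eqref{eqn:rel2}, \eqref{eqn:rel3}, that the commutators $[C_2,x_j]$ already vanish in $A_v$ (the relations \eqref{eqn:rel1} being consequences of \eqref{eqn:rel2} and \eqref{eqn:rel3} in $A_v$). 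Centrality of $C_1$ is a similar finite check — $C_1$ specializes at $v=0$ to the conifold quadric $f$, which is in the Poisson center by construction, and one verifies $[C_1,x_j]=0$ for all $v$ from \eqref{eqn:rel2}–\eqref{eqn:rel3}. Finally $C_1$ and $C_2$ are linearly independent as degree-two elements, since they specialize to the linearly independent quadrics $f$ and $g$ cutting out $\check E\subset\bP^3$; hence they span the two-dimensional degree-two center and generate the whole center $\C[C_1,C_2]$.
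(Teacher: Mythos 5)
The statement you are addressing is left as a \emph{conjecture} in the paper: the authors give no proof, so there is nothing of theirs to compare against, and the only question is whether your proposal actually closes it. It does not. What you have written is a plausible research plan whose load-bearing steps are all deferred: the explicit linear change of variables carrying the six relations of $A_v$ into Sklyanin normal form, the identification of the resulting structure constants with the theta constants of $Q(E_\tau,\eta)$ (including verifying the constraint $\alpha_1+\alpha_2+\alpha_3+\alpha_1\alpha_2\alpha_3=0$), and the flatness/Hilbert-series verification are each flagged as ``expected'' or ``the main obstacle'' rather than carried out, and semicontinuity of the Hilbert function by itself only bounds the Hilbert series in one direction, so the Diamond Lemma computation cannot be waved away. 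Moreover, even granting the isomorphism, your route to the center statement uses that the center of a four-dimensional Sklyanin algebra is $\C[\Omega_1,\Omega_2]$, which holds only when the translation parameter has infinite order; at torsion points of $E_\tau$ the center is strictly larger, so your argument could at best establish the second half of the conjecture for generic $v$, not as stated.

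There is also a concrete error in the centrality step. You claim the relations \eqref{eqn:rel1} are consequences of \eqref{eqn:rel2} and \eqref{eqn:rel3} in $A_v$. They are not: if they were, $C_1$ would vanish in $A_v$ and $A_v$ would coincide with $\underline{\cA}_v$, contradicting the paper's setup, in which $\underline{\cA}_v \cong A_v/(C_1)$ and the four relations in \eqref{eqn:rel1} merely become equivalent to one another modulo \eqref{eqn:rel2}--\eqref{eqn:rel3} (their pairwise differences are exactly \eqref{eqn:rel3}). Consequently Theorem \ref{thm:central1} only yields centrality of $C_2=W_v$ modulo the ideal generated by \emph{all} weak Maurer--Cartan relations, i.e.\ in $\underline{\cA}_v$; centrality of $C_1$ and $C_2$ in the larger algebra $A_v$ is precisely part of what is being conjectured, and the ``direct check'' you invoke --- showing that the degree-three elements $[C_i,x_j]$ lie in the span of $x_k\cdot r$ and $r\cdot x_k$ over the six defining relations $r$, using the quadric identities satisfied by $(a:b:c:d)$ on the mirror elliptic curve --- is exactly the computation that must be exhibited and is absent. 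In short: a reasonable strategy, but with genuine gaps at every decisive step, so the conjecture remains open under your proposal.
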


\begin{remark}
After the submission, Authors were informed by Chirvasitu and Smith that the conjecture is true only for a certain subfamily of the algebras for which $a,b,c,d$ satisfy some special condition. Unfortunately, we do not have a good geometric interpretation of this. See \cite[Proposition 5.2]{CS2017} for the precise statement.
\end{remark}

We give a brief summary of our deformation quantization construction in Chapter \ref{sec:333} and Chapter \ref{sec:4puncture2222} .
In the case of $\mathbb{P}^1_{3,3,3}$ (resp. $\mathbb{P}^1_{2,2,2,2}$), the commutative weak Maurer-Cartan space of the reference Lagrangian $\bL_0$ is given by $\C^3$ (resp. a conifold  $x_1x_3-x_2x_4=0$ in $\C^4$) with a potential function $W_0$. For $\mathbb{P}^1_{2,2,2,2}$, we first applied a quiver construction to obtain a non-commutative mirror LG model, and reduced it to a commutative LG model by restricting to loops based at one vertex of the mirror quiver.

We defined a Poisson structure on this weak Maurer-Cartan space using the function 
$W_0$ (resp.  $W_0$ and $f_0=x_1x_3-x_2x_4$ in the case of $\C^4$) so that
$W_0$ (resp. $f_0,W_0$) lies in the Poisson Kernel.
In particular, this Poisson structure descends to the quotient hypersurface 
$W_0=0$ in $\C^3$ (resp. the complete intersection $f_0=W_0=0$ in $\C^4$).
Geometrically the hypersurface $W_0=0$ gives the strict (or ``flat'') Maurer-Cartan space. Namely, it consists of unobstructed deformations whose corresponding potential values are $0$.

Then we obtain the deformation quantization of the affine spaces $\C^3$ (resp. conifold $f_0=0$ in $\C^4$) by considering non-commutative Maurer-Cartan equations for the family of deformed Lagrangians $\bL_t$. Moreover, the further quotient by the central element $W_t$ (in the deformation quantization algebra) gives the deformation quantization of the (singular) hypersurface $W_0=0$ in $\C^3$ (resp. the complete intersection $f_0=W_0=0$ in $\C^4$) in the sense of \cite{EG}. This should correspond to the deformation of the strict Maurer-Cartan space.

However, the Fukaya category obviously remains invariant under $t$ since we do not deform the symplectic structure, and hence we expect that the singularity category of the deformation quantization of the hypersurface which arises in this way remains invariant. (See for e.g., Corollary \ref{cor:etatwt}.) Presumably, this is related to the fact that the superpotential $W_t$ is central for any $t$.

%-----------------------------------------------------------------------
% Beginning of chap1.tex
%-----------------------------------------------------------------------
%
%  AMS-LaTeX sample file for a chapter of a monograph, to be used with
%  an AMS monograph document class.  This is a data file input by
%  chapter.tex.
%
%  Use this file as a model for a chapter; DO NOT START BY removing its
%  contents and filling in your own text.
% 
%%%%%%%%%%%%%%%%%%%%%%%%%%%%%%%%%%%%%%%%%%%%%%%%%%%%%%%%%%%%%%%%%%%%%%%%

%\part{This is a Part Title Sample}

\chapter{Extended mirror functor} \label{sec:ext-mir}
In this chapter we construct an extended noncommutative Landau-Ginzburg model $(A,d,W)$, which is a curved dg-algebra (see for instance \cite{Kapustin-Li,Segal}), from the Lagrangian Floer theory of $\bL$.  We also construct an $A_\infty$-functor from the Fukaya category of $X$ to the category of curved dg-modules of $(A,d,W)$.  In this sense $(A,d,W)$ is a generalized mirror of $X$ (and it is the mirror in the ordinary sense when the functor induces an equivalence between the corresponding derived categories).  

The construction uses Lagrangian deformations in all degrees (instead of odd-degree immersed deformations only).  More concretely, it uses $m_k^{(\bL,\tilde{b}),\ldots}$ where $\tilde{b}$ parametrizes all Lagrangian deformations.
In practice it is much more difficult to compute $(A,d,W)$ and the full general form of $(A,d,W)$ does not appear much in the study of mirror symmetry.  More frequently used is its simplified forms.  For instance when $d=0$, it reduces to a curved algebra which we use throughout this paper.  When $W=0$ it reduces to dg algebras.  For a Calabi-Yau threefold with certain additional grading assumptions on the reference Lagrangian $\bL$ (see Proposition \ref{prop:Ginzburg}), the construction produces Ginzburg algebra (with $W = 0$), which is explicit and was used in \cite{Smith} for studying stability conditions on Fukaya category of certain local Calabi-Yau threefolds.

\section{Extended Landau-Ginzburg mirror} 

Suppose $\bL=\bigcup_i L_i$ has transverse self-intersections.  In general the Lagrangian Floer theory is $\Z_2$-graded.  When we have a holomorphic volume form on the ambient space $X$ and $\bL$ is graded, then the Lagrangian Floer theory is $\Z$-graded.  We work with either case in this paper.  (In the $\Z_2$-graded case all degrees below take value in $\Z_2$.)

Let $\{X_e\}$ be the set of odd-degree immersed generators of $\bL$.  As explained in Chapter \ref{sec:MFseverallag}, we have a corresponding endomorphism quiver $Q=Q^\bL$ whose vertices are one-to-one corresponding to $L_i$ and whose arrows are one-to-one corresponding to the generators $X_e$.  The completed path algebra is denoted as $\widehat{\Lambda Q}$.

\begin{definition}[Extended quiver] \label{def:extQ}
Let $\{X_f\}$ be the set of even-degree immersed generators.  Moreover take a Morse function on each $L_i$ and let $\{T_g\}$ be the set of their critical points \emph{other than} the maximum points (which correspond to the fundamental classes $\one_{L_i}$).   The extended quiver $\tilde{Q}$ is defined to be a directed graph whose vertices are one-to-one corresponding to $\{L_i\}$, and whose arrows are one-to-one corresponding to $\{X_e\} \cup \{X_f\} \cup \{T_g\}$.
\end{definition}

Recall that $m_0^b$ takes the form 
$$m_0^b = \sum_i W_i \one_{L_i} + \sum_f P_f X_f + \sum_g P_g T_g.  $$
$W = \sum_i W_i \in \widehat{\Lambda Q}/R$ is a central element, where $R$ is the two-sided ideal generated by $P_f$ and $P_g$.  As explained in Chapter \ref{sec:MFseverallag}, we obtain a Landau-Ginzburg model $(\widehat{\Lambda Q}/R,W)$.

\begin{prop} \label{prop:W=0}
Suppose $X$ is Calabi-Yau and $\bL$ is $\Z$-graded such that $\deg X_e \geq 1$ for all odd-immersed generators $X_e$.  Then $W \equiv 0$.
\end{prop}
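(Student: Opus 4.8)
The plan is to do a degree bookkeeping argument on $m_0^b$. Recall that $b = \sum_e x_e X_e$ has degree one by construction (Definition \ref{def:deg}), since $\deg x_e = 1 - \deg X_e$; here the $X_e$ range over odd-degree immersed generators. Therefore $m_0^b = m(e^b) = \sum_{k\geq 0} m_k(b,\ldots,b)$ has degree two. Now $m_0^b$ decomposes along the basis as
$$ m_0^b = \sum_i W_i \one_{L_i} + \sum_f P_f X_f + \sum_g P_g T_g, $$
and in particular the coefficient $W_i$ of $\one_{L_i}$ has degree $2 - \deg \one_{L_i} = 2$. So $W = \sum_i W_i$ is a degree-two element of $\widehat{\Lambda Q}/R$.

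The key point is then to show that under the hypothesis $\deg X_e \geq 1$ for all odd immersed generators $X_e$, there is no nonzero degree-two element of $\widehat{\Lambda Q}$ (or its quotient) that can serve as $W$ except $0$, because the generators $x_e$ all have nonpositive degree while $W$ would have to be a series in them of total degree two. Concretely, $\deg x_e = 1 - \deg X_e \leq 0$ for every $e$. A nonempty product $x_{e_1}\cdots x_{e_k}$ (a path of length $k\geq 1$) has degree $\sum_j (1-\deg X_{e_j}) \leq 0$, and the length-zero paths $\pi_i$ have degree zero. Hence every path in $Q$ has degree $\leq 0$, so a homogeneous element of degree two in $\widehat{\Lambda Q}$ must be an empty combination, i.e. zero. (One should note that since $W$ is a coefficient extracted from a convergent expression $m_0^b$, and the grading is preserved, $W$ is automatically homogeneous of degree two; alternatively, decompose $W$ into homogeneous pieces and observe each must vanish.) Since the grading descends to $\cA = \widehat{\Lambda Q}/R$ and $W$ is the image of a degree-two element, we conclude $W \equiv 0$ in $\cA$.

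The only subtlety — the step I expect to be the main obstacle — is handling the completion: $\widehat{\Lambda Q}$ is a completed path algebra with respect to the energy filtration (powers of $\bT$), and one must be careful that "homogeneous of degree two implies a \emph{finite} combination of degree-two paths, hence zero" survives the completion. Since the grading by the $x_e$ is \emph{separate} from the energy filtration, a degree-two element of $\widehat{\Lambda Q}$ is, term by term in $\bT$, a (possibly infinite) $\Lambda_0$-combination of degree-two paths; but there are \emph{no} paths of degree two at all when all $\deg x_e \leq 0$, so the element is identically zero regardless of completion. One should state this cleanly: the subspace of $\widehat{\Lambda Q}$ of internal degree $d > 0$ is $\{0\}$ whenever every arrow has internal degree $\leq 0$. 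Applying this with $d = 2$ and quotienting by $R$ (which is a graded ideal, since the relations $P_f, P_g$ are themselves homogeneous — they are coefficients in $m_0^b$) yields $W = 0$ in $\cA$. This also matches the heuristic remark in the excerpt that "$\deg x_i > 0$ if $\deg X_i > 1$, in which case a homogeneous polynomial must have finitely many terms," run in the opposite direction: here all $\deg x_e \leq 0$, so there is no room for a positive-degree element at all.
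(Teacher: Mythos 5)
Your proposal is correct and follows the same route as the paper: the paper's proof simply notes that $\deg x_e = 1-\deg X_e \leq 0$ forces $\deg W_i \leq 0$, while $W_i$ must have degree $2$, so $W\equiv 0$. Your additional remarks about the completion and the grading descending to the quotient are fine but just flesh out the same degree-counting argument.
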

\begin{proof}
Since $\deg X_e \geq 1$, $\deg x_e \leq 0$ for all $e$, and hence $\deg W_i \leq 0$.  But $\deg W_i = 2$.  Thus $W \equiv 0$.
\end{proof}

Now we consider extended deformations in directions of $X_f$ and $T_g$.
\begin{definition}
Take
$$\tilde{b} = \sum_e x_e X_e + \sum_f x_f X_f + \sum_g t_g T_g.  $$
Define $\deg x_e = 1 - \deg X_e$, $\deg x_f = 1 - \deg X_f$ and $\deg t_g = 1 - \deg T_g$ such that $\tilde{b}$ has degree one.  
\end{definition}

The deformation parameter $\deg x_e$  is always even, but the other parameters may not be.
Hence we need to modify the sign rule of \eqref{eqn:pulloutrule} in the definition \ref{defn:nccoeff} to the following.
\begin{equation}\label{eqn:pulloutrule2}
m_k(f_1 e_{i_1},f_2 e_{i_2},\cdots, f_k e_{i_k} ) :=  (-1)^\epsilon f_k f_{k-1} \cdots f_2 f_1\cdot m_k(e_{i_1},\cdots, e_{i_k}).
\end{equation}
where $\epsilon$ is obtained from the usual Koszul sign convention.
For example, if $f_j$ is the dual variable of $e_{i_j}$ for all $j=1,\ldots k$, then we have
\begin{equation}\label{eqn:pulloutrule3}
m_k(f_1 e_{i_1},f_2 e_{i_2},\cdots, f_k e_{i_k} ) :=  (-1)^{\sum_{j=1}^k \deg f_j} f_k f_{k-1} \cdots f_2 f_1\cdot m_k(e_{i_1},\cdots, e_{i_k}),
\end{equation}
since one can pull out $f_k, f_{k-1},\ldots,f_1$ one at a time and by assumption $f_j e_{i_j}$ will have (shifted) degree 0.
One can check that this defines an $\AI$-structure. 

Then $m_0^{\tilde{b}}$ will additionally have odd-degree part, i.e., it is of the form
$$m_0^{\tilde{b}} = \sum_i W_i \one_{L_i} + \sum_e P_e X_e + \sum_f P_f X_f + \sum_g P_g T_g.  $$

The constructions in previous chapters generalize to this setting, to give the noncommutative ring $A$ modulo the relations $(P_e,P_f,P_g)$, and  an extended $\AI$ functor to matrix factorizations of an extended potential function $\widetilde{W}$.

Alternatively, we explain a construction which gives a curved dg-algebra $(A,d,\widetilde{W})$, and an $\AI$-functor to the category
of curved dg-modules over $(A,d,\widetilde{W})$.

We will construct a curved dg-algebra from the extended deformations.
\begin{definition}[\cite{Kapustin-Li}]
A curved dg-algebra is a triple $(A,d,F)$, where $A$ is a graded associative algebra, $d$ is a degree-one derivation of $A$, and $F \in A$ is a degree 2 element such that $dF = 0$ and $d^2 = [F,\cdot] = F (\cdot) - (\cdot) F$.  
%(Note that since $W$ has degree two, the sign involved in the definition of the bracket is always negative.)
\end{definition}

%Note that if $d=0$, $F$ is a central element so that we have a noncommutative Landau-Ginzburg model. When $F=0$, we have a dg algebra since $d^2 = 0$.
%Note that if $d^2=0$ (or equivlaently $F$ is central), $F$ descends to a central element $[F]$ in the $d$-cohomology so that we have a noncommutative Landau-Ginzburg model.
If $d^2=0$ (or equivalently $F$ is central), a curved dg-algebra structure
gives rise to a noncommutative Landau-Ginzburg model on its $d$-cohomology with the cohomology class $[F]$ as a (worldsheet) superpotential.  If the dg-algebra is formal, then $dg$-modules over $A$ (Definition \ref{def:cdg} below) boils down to matrix factorizations of $[F]$ over $H(A)$.  In this section we don't take this assumption and work with $dg$-modules.

Now take the path algebra $\Lambda \tilde{Q}$.  It carries a natural curved dg structure as follows.  

\begin{definition} 
Define the extended worldsheet superpotential to be $\tilde{W} = \sum_i \tilde{W}_i$, where $\tilde{W}_i$ are the coefficients of $\one_{L_i}$ in $m_0^{\tilde{b}}$.  
Define a derivation $d$ of degree one on $\Lambda \tilde{Q}$ by
$$d(a_e x_e + b_f x_f + c_g t_g) = a_e P_e + b_f P_f + c_g P_g$$
for $a_e, b_f, c_g \in \Lambda$, and extend it using Leibniz rule.
(Our sign convention is $d(xy) = x dy + (-1)^{\deg y} (dx) y$.)
\end{definition}
Then the $A_\infty$-relations for $\bL$ can be rewritten as follows.

\begin{prop}
$d \tilde{W} = 0$ and $d^2 = [\tilde{W}, \cdot]$.  In other words $(\Lambda \tilde{Q}, d, \tilde{W})$ is a curved dg algebra.
\end{prop}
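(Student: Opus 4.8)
The plan is to derive the three claimed identities $d\tilde W = 0$ and $d^2 = [\tilde W,\cdot]$ directly from the $A_\infty$-relations of the Fukaya algebra of $\bL$, applied to the extended Maurer-Cartan-type element $m_0^{\tilde b}$. The starting point is the coderivation identity $m\circ\hat m(e^{\tilde b}) = 0$ on the bar complex, exactly as in the proof of Theorem \ref{thm:central1} and Theorem \ref{thm:centralseveral}, except that now $m_0^{\tilde b}$ is not a pure multiple of the units: it has the general form
$$ m_0^{\tilde b} = \sum_i \tilde W_i \one_{L_i} + \sum_e P_e X_e + \sum_f P_f X_f + \sum_g P_g T_g. $$
First I would expand $0 = m\circ\hat m(e^{\tilde b}) = m(e^{\tilde b}, m_0^{\tilde b}, e^{\tilde b})$ and collect the output according to the basis $\{\one_{L_i}\} \cup \{X_e\} \cup \{X_f\} \cup \{T_g\}$ of $\CF(\bL,\bL)$. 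Using the unit axioms, the term $m(e^{\tilde b}, \sum_i\tilde W_i\one_{L_i}, e^{\tilde b})$ contributes only through $m_2$ and produces $\sum_{e}(\tilde W\, x_e - x_e\,\tilde W)X_e$ together with the analogous commutator terms in the $x_f$ and $t_g$ directions (with appropriate Koszul signs coming from \eqref{eqn:pulloutrule2}); while the terms $m(e^{\tilde b}, P_\bullet Y_\bullet, e^{\tilde b})$ where $Y_\bullet$ runs over the non-unit generators reassemble, after matching coefficients, into $d$ applied to the corresponding relation and into $d^2$ applied to each generator. This is the bookkeeping core of the argument.

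Concretely, the plan splits into three parts. (i) \emph{$d^2 = [\tilde W,\cdot]$ on generators:} reading off the coefficient of a generator $Y$ (one of $X_e, X_f, T_g$) in $0 = m(e^{\tilde b}, m_0^{\tilde b}, e^{\tilde b})$, the piece where the inner $m_0^{\tilde b}$ supplies a non-unit generator $Y'$ gives precisely the $Y'$-coefficient in $m_0^{\tilde b}$ differentiated once more, i.e. $d^2(y)$ by the definition of $d$; the piece where it supplies $\sum_i\tilde W_i\one_{L_i}$ gives $\tilde W y - (-1)^{\deg y}\,y\,\tilde W$-type commutator by the unit axioms. Since this holds for all generators and both $d^2$ and $[\tilde W,\cdot]$ are derivations (the latter because $\tilde W$ has even degree so $[\tilde W,\cdot]$ satisfies a graded Leibniz rule), they agree on all of $\Lambda\tilde Q$. (ii) \emph{$d\tilde W = 0$:} here I would read off the coefficient of $\one_{L_i}$ in the same expansion, which by the unit axioms forces the coefficient of each $\one_{L_i}$ coming from differentiating $\tilde W_i$ to vanish; equivalently, apply $d$ to the identity $m_0^{\tilde b} = \sum_i\tilde W_i\one_{L_i} + (\text{terms killed in }\cA)$ and use that $d$ of the relations $P_\bullet$ encode the $A_\infty$ structure, so that the $\one_{L_i}$-component of $d(m_0^{\tilde b})$ is $d\tilde W$ and must vanish. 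Alternatively, and perhaps cleaner, once (i) is known one observes $d\tilde W$ is a degree-3 central-type element whose vanishing follows from degree reasons together with the cyclic/unit structure. (iii) Finally, assemble (i) and (ii) into the statement that $(\Lambda\tilde Q, d, \tilde W)$ is a curved dg algebra per the definition of \cite{Kapustin-Li}.

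The main obstacle I expect is \textbf{sign management}: because the extended deformation $\tilde b$ includes generators $X_f$ and $T_g$ of non-even dual degree, the pull-out rule \eqref{eqn:pulloutrule2}–\eqref{eqn:pulloutrule3} introduces Koszul signs that must be tracked carefully through the bar-complex expansion, and these must be shown to be exactly the signs in the stated conventions $d(xy) = x\,dy + (-1)^{\deg y}(dx)y$ and $d^2 = [\tilde W,\cdot] = \tilde W(\cdot) - (\cdot)\tilde W$. In particular one must verify that $\tilde W$ genuinely has degree $2$ (so that $[\tilde W,\cdot]$ is an ordinary, not graded, commutator and defines a degree-zero operator, matching $d^2$), which follows since $m_0^{\tilde b}$ has degree $2$ and $\one_{L_i}$ has degree $0$. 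Everything else is a direct transcription of the $A_\infty$-equations, parallel to the centrality proofs already given, so I would present it compactly, highlighting only the coefficient-matching and deferring the routine sign verification to a remark.
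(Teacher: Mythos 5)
Your plan is essentially the paper's own proof: both rest on the deformed $\AI$-equation $m_1^{\tilde{b}}(m_0^{\tilde{b}})=0$ (equivalently $m(e^{\tilde{b}},\hat{m}(e^{\tilde{b}}),e^{\tilde{b}})=0$), split $m_0^{\tilde{b}}$ into its unit part $\tilde{W}\be$ and the generator part $\sum (dx)X$, use the unit axioms to turn the first into the commutator terms and the Koszul-sign reassembly to identify the second with $d(m_0^{\tilde{b}})=d(\tilde{W})\be+\sum(d^2x)X$, and then compare coefficients of $\be$ and of the generators (with the extension from generators to all of $\Lambda\tilde{Q}$ by the derivation property, which you make explicit and the paper leaves implicit). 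The only caveat is your suggested shortcut that $d\tilde{W}=0$ might follow "by degree reasons" once (i) is known — that would at best give centrality of $d\tilde{W}$, not its vanishing — but your primary route via the $\one_{L_i}$-coefficient is exactly the paper's argument and is correct.
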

\begin{remark}
It is well-known (see \cite{KS_Ainfty}) that $\AI$-algebra structure on $V$ is equivalent to a codifferential $\hat{d}$ on
the tensor coalgebra $TV$. And if $V$ is finite dimensional (as in our case), then one can take its dual algebra $(TV)^*$ 
with a differential $\tilde{d}$ (with $\tilde{d}^2=0$). 
We may regard $\{ x_e, x_f, x_g\}$ as dual generators of the dual algebra $(TV)^*$.  There is
one more generator $x_{\be}$ that we have suppressed in the previous formulation.  Turning it on simply results in adding a multiple of unit in $\tilde{W}$.  The desired identities can be obtained from $\tilde{d}^2=0$ by setting $x_{\be}=0$. But we show the direct proof below due to signs.  These two formulations are equivalent.
\end{remark}
\begin{proof}
Consider a deformed $\AI$-structure $\left\{ m_k^{\tilde{b}} \right\}_{k \geq 0}$ (see Definition \ref{def:bdeformAI}), and the first $\AI$-equation:
$$m_1^{\tilde{b}}(m_0^{\tilde{b}}) =0.$$
From the definition, we have $m_0^{\tilde{b}} =  \tilde{W} \be + \sum_X (dx) X$ for  $X$ are elements in $\{X_e\} \cup \{X_f\} \cup \{T_g\}$. Hence we have 
$$m_1^{\tilde{b}}( \tilde{W} \cdot \be) + m_1^{\tilde{b}}( \sum_X (dx) X)=0.$$
The first term becomes (writing $\tilde{b} = \sum_X xX$)
$$m_2( \tilde{W} \cdot \be, \tilde{b}) + m_2( \tilde{b}, \tilde{W} \cdot \be) = \sum_X (x \tilde{W} - \tilde{W} x) X = - \sum [\tilde{W},x]X.$$
The second term can be written as follows.

\begin{align}\label{cvmd1}
m_1^{\tilde{b}} \left( \sum_X (dx) X \right)  =& \sum_{k,i,X_j,X} m_{k+1}(x_1X_1,\cdots,(dx)X, x_iX_i, \cdots x_kX_k) \\
=& \sum_{k,i,X_j,X} (-1)^{|x_{i-1}|+\cdots+|x_1|} x_k \cdots x_i (dx) \cdots x_1 m_{k+1}(X_1,\cdots,X,\cdots X_k) \nonumber \\
=& \sum_{X_j,X} d(x_k,\ldots,x_1)   m_{k+1}(X_1,\cdots,X,\cdots X_k)  \nonumber \\
=& d(m_0^{\tilde{b}}) = d(  \tilde{W} \be + \sum_X (dx) X)  \nonumber \\
=& d(\tilde{W})\be + \sum_X (d^2x) X  \nonumber
\end{align}
Here, the degree of the dual variable $x_{j}$ is the negative of the shifted degree of the corresponding generator $X_j$.
Since the whole sum vanishes, we have $d \tilde{W} = 0$ and $d^2 x - [\tilde{W}, x] = 0$ for all $x$.
\end{proof}

The following proposition and its proof is an extended version of Proposition \ref{prop:W=0}.
\begin{prop} \label{prop:tildeW=0}
When $X$ is Calabi-Yau, and $\bL$ is graded such that $\deg X_e, \deg X_f, \deg T_g \geq 1$, we have $\tilde{W} = 0$.  Thus $(\Lambda \tilde{Q}, d)$ is a dg algebra.
\end{prop}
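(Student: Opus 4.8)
\textbf{Proof plan for Proposition \ref{prop:tildeW=0}.}

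The statement is a direct generalization of Proposition \ref{prop:W=0}, and I would prove it by the same degree-counting argument, now carried out on the extended deformation $\tilde{b}$ rather than the purely odd-immersed deformation $b$. The plan is as follows. Recall that by definition $\deg x_e = 1 - \deg X_e$, $\deg x_f = 1 - \deg X_f$ and $\deg t_g = 1 - \deg T_g$, so that $\tilde{b}$ has degree one. Under the hypothesis that $X$ is Calabi-Yau and $\bL$ is $\Z$-graded with $\deg X_e, \deg X_f, \deg T_g \geq 1$, every one of the dual variables $x_e, x_f, t_g$ therefore has degree $\leq 0$. Consequently any monomial appearing in the path algebra $\Lambda \tilde{Q}$ has degree $\leq 0$ as well, and in particular every coefficient $\tilde{W}_i$ of $\one_{L_i}$ in $m_0^{\tilde{b}}$, being a (completed) sum of such monomials, is supported in degrees $\leq 0$.

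On the other hand, since $\tilde{b}$ has degree one and every $A_\infty$-operation $m_k$ has degree one (in the shifted convention; i.e. degree $2-k$ before shift), the element $m_0^{\tilde{b}} = m(e^{\tilde{b}})$ has degree two. The unit $\one_{L_i}$ has degree zero, so its coefficient $\tilde{W}_i$ must have degree two. Combining the two constraints, $\tilde{W}_i$ must be simultaneously supported in degrees $\leq 0$ and be homogeneous of degree two, which forces $\tilde{W}_i = 0$ for every $i$. Hence $\tilde{W} = \sum_i \tilde{W}_i = 0$. Given the previous proposition, the curved dg algebra $(\Lambda \tilde{Q}, d, \tilde{W})$ then has $\tilde{W} = 0$, so $d^2 = [\tilde{W},\cdot] = 0$ and $(\Lambda \tilde{Q}, d)$ is an honest dg algebra, which is the second assertion.

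There is essentially no obstacle here — the only point that needs a word of care is the degenerate case in which $\deg X_e = \deg X_f = \deg T_g = 1$ so that the dual variables have degree exactly $0$; then monomials of arbitrary length all sit in degree $0$, and one still concludes $\tilde{W}_i = 0$ since $0 \neq 2$. The argument also implicitly uses that the $\Z$-grading is genuinely $\Z$-valued (not $\Z_2$), which is exactly the Calabi-Yau/graded hypothesis, so that ``degree $\leq 0$'' and ``degree $=2$'' are incompatible; I would state this explicitly. No convergence subtlety arises because the conclusion is just that each homogeneous piece of $\tilde{W}_i$ vanishes.
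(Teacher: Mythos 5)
Your argument is correct and is essentially identical to the paper's own proof: both deduce $\deg \tilde{W}_i \leq 0$ from $\deg X \geq 1$ for all extended generators and contrast this with $\deg \tilde{W}_i = 2$, concluding $\tilde{W} \equiv 0$ and hence $d^2 = [\tilde{W},\cdot] = 0$. Your extra remarks on the degree-zero boundary case and on the grading being genuinely $\Z$-valued are harmless clarifications, not a different method.
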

\begin{proof}
Since $\deg X \geq 1$ for $X \in \{X_e\} \cup \{X_f\} \cup \{T_g\}$, $\deg x \leq 0$, and hence $\deg \tilde{W}_i \leq 0$.  But $\deg \tilde{W}_i = 2$.  Thus $\tilde{W} \equiv 0$.
\end{proof}

In general it is difficult to write down $(\Lambda \tilde{Q}, d, \tilde{W})$ explicitly since it requires computing all $m_k$'s for $\bL$.  On the other hand, for Calabi-Yau threefolds with grading assumptions (see below), a lot of terms automatically vanish by dimension reason, and $(\Lambda \tilde{Q}, d, \tilde{W})$ can be recovered from the quiver $Q$ with potential $\Phi$ (and $\tilde{W} = 0$).  The resulting dg algebra $(\Lambda \tilde{Q}, d)$ is known as Ginzburg algebra.

\begin{definition}[\cite{Ginzburg}] \label{def:Ginzburg}
Let $(Q,\Phi)$ be a quiver with potential.  The Ginzburg algebra associated to $(Q,\Phi)$ is defined as follows.  Let $\tilde{Q}$ be the doubling of $Q$ by adding a dual arrow $\bar{e}$ (in reversed direction) for each arrow $e$ of $Q$ and a loop based at each vertex of $Q$.  Define the grading on the path algebra $\Lambda \tilde{Q}$ as
$$\deg x_e = 0, \deg x_{\bar{e}} = -1, \deg t_i = -2$$
where $t_i$ corresponds to the loop at a vertex.  Define the differential $d$ by
$$d t_i = \sum_e \pi_{v_i} \cdot [x_e, x_{\bar{e}}] \cdot \pi_{v_i}, \,\,\,\, d x_{\bar{e}} = \partial_{x_e} \Phi , \,\,\,\, d x_e = 0$$
where $\pi_{v_i}$ denotes the constant path at the vertex $v_i$.
Then the dg algebra $(\Lambda \tilde{Q},d)$ is called the \emph{Ginzburg algebra}.
\end{definition}

\begin{prop} \label{prop:Ginzburg}
Suppose $X$ is a Calabi-Yau threefold, $L_i$ are graded Lagrangian spheres (equipped with a Morse function with exactly one maximum point and one minimum point) such that $\deg X$ equals to either one or two for all immersed generators $X$.  Then $(\Lambda \tilde{Q}, d, \tilde{W}=0)$ produced from the above construction is the Ginzburg algebra associated to $(Q,\Phi)$.
\end{prop}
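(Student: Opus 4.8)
The plan is to match term by term the curved dg-algebra $(\Lambda\tilde Q,d,\tilde W)$ produced by the extended mirror construction against the Ginzburg algebra of Definition~\ref{def:Ginzburg}. The key input is that all degrees are severely constrained: by hypothesis each immersed generator $X$ has $\deg X\in\{1,2\}$, each $L_i$ is a graded Lagrangian $3$-sphere whose Morse complex has exactly one maximum ($\one_{L_i}$, degree $0$) and one minimum ($T_{L_i}$, degree $3$), and $X$ is Calabi--Yau of complex dimension $3$. First I would identify the arrow set of $\tilde Q$ with that of the Ginzburg double: the odd-degree immersed generators are precisely those with $\deg=1$ (giving the arrows $x_e$ of $Q$), their complementary generators $\bar X_e$ have degree $3-1=2$ (giving the reversed arrows $x_{\bar e}$), and the minima $T_{L_i}$ of degree $3$ give the loops $t_i$. (Any immersed generator of degree $2$ is complementary to one of degree $1$ by the pairing, so there are no extra arrows.) Then Definition~\ref{def:grading-A}/the degree convention gives $\deg x_e = 1-1 = 0$, $\deg x_{\bar e} = 1-2 = -1$, $\deg t_i = 1-3 = -2$, matching Definition~\ref{def:Ginzburg} exactly. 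By Proposition~\ref{prop:tildeW=0}, since $\deg X\ge 1$ for every generator in $\{X_e\}\cup\{X_f\}\cup\{T_g\}$, we get $\tilde W=0$, so $(\Lambda\tilde Q,d)$ is a genuine dg algebra.

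Next I would compute the differential $d$, which is determined by $m_0^{\tilde b}$: writing $m_0^{\tilde b}=\sum_i \tilde W_i\one_{L_i}+\sum_e P_e X_e+\sum_f P_f \bar X_e+\sum_g P_g T_g$, one has $dx_e = P_e$, $dx_{\bar e}=P_{\bar e}$, $dt_i = P_{g(i)}$ where $g(i)$ indexes the minimum of $L_i$. The three pieces are handled by degree/dimension bookkeeping. For $dx_e$: its target $X_e$ has degree $1$, and a term $x_{j_k}\cdots x_{j_1}m_k(X_{j_1},\dots,X_{j_k})$ contributing to the coefficient of $X_e$ would need the product of dual variables to have degree $1-k-\sum(\deg X_{j_l}-1)\cdot(\text{stuff})$; more cleanly, since $m_0^{\tilde b}$ has degree $2$, a term landing in $X_e$ forces the coefficient to have degree $2-1=1$, but every generator dual variable has degree $\le 0$, so the coefficient is degree $\le 0$ unless it is the empty product — hence $dx_e=0$ (this is the analogue of Proposition~\ref{prop:W=0}). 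For $dx_{\bar e}$: the coefficient $P_{\bar e}$ has degree $2-2=0$, so it is a sum of words in the degree-$0$ variables $x_e$ only, i.e. a word in $\Lambda Q$; by Proposition~\ref{prop:jac6} (or Proposition~\ref{prop:mccd}) applied in this odd-dimensional Lagrangian-sphere situation, $P_{\bar e}=\partial_{x_e}\Phi$, giving $dx_{\bar e}=\partial_{x_e}\Phi$. For $dt_i$: the coefficient has degree $2-3=-1$, so it is a sum of words of total dual-degree $-1$; the only way to build a word of degree $-1$ from generators of degrees $0,-1,-2$ with the $\pi_{v_i}$-idempotent constraints on both sides (the loop $t_i$ is based at $v_i$) is a single $x_{\bar e}$ flanked by $x_e$'s, i.e. $\pi_{v_i}\cdot x_e x_{\bar e}\cdot\pi_{v_i}$ and $\pi_{v_i}\cdot x_{\bar e} x_e\cdot\pi_{v_i}$; computing the signed count of the relevant holomorphic polygons (triangles with the minimum $T_{L_i}$ as output, coming in the two cyclic orders) gives $dt_i=\sum_e\pi_{v_i}[x_e,x_{\bar e}]\pi_{v_i}$, again matching Definition~\ref{def:Ginzburg}. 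I would cite cyclic symmetry to get the signs and coefficients right here.

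Finally I would note that $d^2=0$ and the Leibniz rule hold by the general proposition just proved ($(\Lambda\tilde Q,d,\tilde W)$ is a curved dg algebra with $\tilde W=0$), so no separate verification is needed; alternatively these follow from the $A_\infty$-relations for $\bL$. \textbf{The main obstacle} I expect is the careful sign verification in the formula for $dt_i$: one must track the modified Koszul sign rule \eqref{eqn:pulloutrule2}--\eqref{eqn:pulloutrule3} through the pull-out of the non-even dual variables $x_{\bar e}$ and $t_i$, and confirm that the two cyclic orders of the contributing triangles assemble into the commutator $[x_e,x_{\bar e}]$ with the correct overall sign (rather than an anticommutator). A secondary point requiring care is ruling out, purely by the dimension-three and Lagrangian-sphere hypotheses, any unexpected higher-length words in $P_{\bar e}$ or $P_g$ — i.e. confirming that the degree constraint plus the sphere Morse complex leaves exactly the Ginzburg terms and nothing else; this is where the hypothesis "$\deg X\in\{1,2\}$ for all immersed generators" is essential, since it forbids degree-$0$ immersed generators that would spoil the grading identification.
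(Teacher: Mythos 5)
Your overall route is the same as the paper's: identify the doubled quiver via the pairing of degree-$1$ and degree-$2$ immersed generators plus the minima $T_{L_i}$, read off $\deg x_e=0$, $\deg x_{\bar e}=-1$, $\deg t_i=-2$, kill $\tilde W$ by Proposition \ref{prop:tildeW=0}, get $dx_e=0$ because its coefficient would have positive degree, and get $dx_{\bar e}=\partial_{x_e}\Phi$ because a degree-$0$ coefficient is a series in the $x_e$ alone and therefore coincides with the corresponding coefficient of $m_0^{b}$. Up to that point the argument is correct and matches the paper.

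There is, however, a genuine gap in your computation of $dt_i$. The degree constraint only forces each term of $dt_i$ to contain \emph{exactly one} factor $x_{\bar e}$ and no $t_j$; it does \emph{not} force the word to have length two. A word such as $\pi_{v_i}\, x_{e_1} x_{\bar e}\, x_{e_2}\cdots x_{e_k}\,\pi_{v_i}$, with one $x_{\bar e}$ and arbitrarily many degree-$0$ generators $x_e$, also has degree $-1$ and can perfectly well be a loop based at $v_i$; such a term would arise from a non-constant polygon contributing to $m_{k+1}(X_{e_k},\dots,X_{\bar e},X_{e_1})$ with output $T_i$, and nothing in your "degree plus idempotent" argument excludes it. The paper closes exactly this hole with a geometric input you omit: only \emph{constant} polygons can contribute to the coefficient of $T_i$, because $T_i$ is the point class, Poincar\'e dual to the fundamental class, so the output marked point is unconstrained and a non-constant disc with an unconstrained boundary output cannot be rigid. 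Once only constant triangles at the self-intersection points survive, the contributions are precisely $m_2(X_e,X_{\bar e})$ and $m_2(X_{\bar e},X_e)$, and the cyclicity of $\langle m_2(X_e,X_{\bar e}),\one_{L_i}\rangle$ together with the unit property fixes the relative sign, yielding $dt_i=\sum_e \pi_{v_i}\,[x_e,x_{\bar e}]\,\pi_{v_i}$. You flag the sign issue as the main obstacle, and you do gesture at "ruling out unexpected higher-length words" as a secondary point, but you attribute that exclusion to the degree constraint, which is insufficient; the constant-disc/rigidity argument (or some substitute for it) is an essential missing step, not a bookkeeping detail. Note also that for $dx_{\bar e}$ no such exclusion is needed or wanted: $\partial_{x_e}\Phi$ genuinely contains words of arbitrary length, so the only place where the extra geometric input is required is $dt_i$.
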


\begin{proof}
Since each branch $L_i$ is a sphere and we take the standard height function to be the Morse function which has exactly one maximum point and one minimum point, $T_i$ are the minimal points of the Lagrangian spheres $L_i$ with $\deg T_i = 3$.  Moreover there is an one-to-one correspondence between $\{X_e\}$ and $\{X_f\}$, and each pair is denoted as $(X_e ,X_{\bar{e}})$.  As a result, the extended quiver $\tilde{Q}$ is the doubling of $Q$ in Definition \ref{def:Ginzburg}.  We have $\deg x_e = 0$, $\deg x_{\bar{e}} = -1$ and $\deg t_i = -2$.  Since all elements in $\Lambda \tilde{Q}$ has $\deg \leq 0$, $d x_e$ which has degree one must vanish.  Moreover $d x_{\bar{e}}$ has degree zero, and hence must be a series in $\{x_e\}$.  Thus the coefficient of $x_{\bar{e}}$ in $m_0^{\tilde{b}}$ is the same as that in $m_0^b$, which is $\partial_{x_e} \Phi$.  As a consequence $d x_{\bar{e}} = \partial_{x_e} \Phi$.  Finally only constant disc can contribute to the coefficient of $T_i$ in $m_0^{\tilde{b}}$.  Moreover since $d t_i$ has degree -1, each term has exactly one $x_{\bar{e}}$ factor.  Thus each term must take the form $x_e x_{\bar{e}}$ or $x_{\bar{e}} x_e$ which is a path from $v_i$ back to itself.  As a result $d t_i = \sum_e \pi_{v_i} \cdot [x_e, x_{\bar{e}}] \cdot \pi_{v_i}$.  (The sign difference between $x_e x_{\bar{e}}$ and $x_{\bar{e}} x_e$ follows from the cyclicity of $(m_2(x_e,x_{\bar{e}}),\one_{L_i})$ and the property of unit $\one_{L_i}$.)
\end{proof}

\section{Extended mirror functor}
In this section we define an $A_\infty$-functor from $\Fuk(X)$ to the dg category of curved dg modules of $(\Lambda \tilde{Q}, d, \tilde{W})$ constructed in the previous section.  In this sense $(\Lambda \tilde{Q}, d, \tilde{W})$ can be regarded as a generalized mirror of $X$.

Curved dg-modules were well studied in existing literature and the definition is as follows.

\begin{definition} \label{def:cdg}
A curved dg-module over a curved dg-algebra $(A,d,F)$ is a pair $(M, d_M)$ where $M$ is a graded $A$-module and $d_M$ is a degree one linear endomorphism of $M$ such that 
\begin{equation} \label{eq:Leibniz}
d_M (am) = a (d_M m)+ (-1)^{\deg m} (da) m 
\end{equation}
and $d_M^2 = F$.
\end{definition}
Note that our convention for $d$ and $d_M$ is ``differentiating from the right", and so the sign is defined as above. The same sign convention applies below.

\begin{definition}
Given two curved dg-modules $(M,d_M)$ and $(N,d_N)$, the morphisms from $M$ to $N$ form a graded vector space $\Hom_A (M,N)$ with a degree-one differential defined as
$$ d_{M,N} f := d_N \circ f - (-1)^{\deg f} (f \circ d_M) $$
with $d_{M,N}^2 = 0$.  Thus the category of curved dg-modules is a dg category.  
\end{definition}

The construction of the functor is similar to the one in Section \ref{sec:FuktoMF}.  Namely, we consider intersections between an object $L$ of $\Fuk(X)$ with the reference Lagrangian $\bL$ and their Floer theory to construct the functor.  Here since we consider extended deformations of $\bL$, we need to take account of extra contributions from the even-degree immersed generators $X_f$ and also the critical points $T_g$.  Also we need to be careful with signs since we use deformations of all degrees (instead of just odd degrees).

Let $U$ be a Lagrangian which intersects transversely with $\bL$.  Given $Y \in U \cap \bL$, as in Section \ref{sec:FuktoMF}, we want to define the differential acting on $Y$ to be
$$ m_1^{\tilde{b},0} (Y) = \sum_{n=0}^\infty m_{n+1} (\tilde{b},\ldots,\tilde{b},Y) = \sum_n \sum_{(X_1,\ldots,X_n)} (-1)^{\sum_{j=1
}^n |x_j|} x_n \ldots x_1 m_{n+1} (X_1,\ldots,X_n,Y).  $$
Due to the sign difference in dg-category and $\AI$-category, we define the dg-module structure as follows.
\begin{definition}
Let $P_v := \Lambda \tilde{Q} \cdot \pi_v$, the left $\Lambda \tilde{Q}$-module consisting of all paths of $\tilde{Q}$ beginning at a vertex $v$ (we read from right to left).  Given a Lagrangian $U$ (which intersects transversely with $\bL$), the corresponding module is defined as $M = \cF(U) = \bigoplus_i \bigoplus_{Y \in U \cap L_i} P_{v_i} \langle Y \rangle$.  Then the differential $d_M$ is defined by 
$$d_M \, Y = (-1)^{\deg Y} m_1^{\tilde{b},0} (Y)$$ 
and extending to the whole module by linearity and Leibniz rule \eqref{eq:Leibniz}.
\end{definition}

\begin{prop} \label{prop:fctor-obj}
$d_M^2 = \tilde{W}$ and hence $(M,d_M)$ is a curved dg-module.
\end{prop}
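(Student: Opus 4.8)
The plan is to mirror the argument of the earlier propositions that $\delta^2 = W \cdot \mathrm{Id}$ (Proposition in Section \ref{sec:singlelagfunctor} and its analogue in Section \ref{sec:FuktoMF}), but now keeping track of the extra odd-degree terms $P_e X_e$ appearing in $m_0^{\tilde{b}}$ and the sign bookkeeping forced by the non-even generators. First I would write down the relevant $A_\infty$-relation applied to the string $(\tilde{b},\ldots,\tilde{b},Y)$ with a coderivation formalism, exactly as in the proof of Theorem \ref{thm:central1}: expanding $0 = m \circ \hat m(e^{\tilde b}\otimes Y)$ produces the terms $m_1^{\tilde b,0}\circ m_1^{\tilde b,0}(Y)$, $m_2^{\tilde b,\tilde b,0}(m_0^{\tilde b},Y)$, and $m_2^{\tilde b,0,0}(Y,m_{0,U})$. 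Since $U$ is unobstructed, $m_{0,U}=0$ and the last term drops.

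Next I would substitute $m_0^{\tilde b} = \sum_i \tilde W_i \one_{L_i} + \sum_e P_e X_e + \sum_f P_f X_f + \sum_g P_g T_g$ into the middle term $m_2^{\tilde b,\tilde b,0}(m_0^{\tilde b},Y)$. The key point is that we are working over $A = \Lambda\tilde Q / (P_e,P_f,P_g)$ — that is, the generalized-mirror quotient — where the coefficients $P_e, P_f, P_g$ vanish, so only $\sum_i \tilde W_i \one_{L_i}$ survives. By the unit axioms, $m_2^{\tilde b,\tilde b,0}(\one_{L_i}\otimes Y) = \pi_i \cdot Y$ (nonzero only for the branch containing $Y$), so this term equals $\tilde W \cdot Y$ after summing, using $\tilde W = \sum_i \tilde W_i$ and $\pi_i v = 0$ for $v \notin L_i$, just as in the proof of the matrix factorization property in Section \ref{sec:FuktoMF}. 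Then rearranging the $A_\infty$-relation gives $m_1^{\tilde b,0}\circ m_1^{\tilde b,0}(Y) = \pm \tilde W \cdot Y$, and the sign-twisted definition $d_M Y = (-1)^{\deg Y} m_1^{\tilde b,0}(Y)$ together with $(-1)^{\deg Y}(-1)^{\deg Y + 1} = -1$ (cf.\ the computation $\delta^2(p) = (-1)^{\deg p}(-1)^{\deg p - 1} m_1^{b,0}\circ m_1^{b,0}(p)$) converts this to $d_M^2 Y = \tilde W \cdot Y$. Extending by Leibniz and $\Lambda\tilde Q$-linearity, one must also check $d_M^2$ is compatible with the module structure: since $(\Lambda\tilde Q,d,\tilde W)$ is a curved dg algebra with $d^2 = [\tilde W,\cdot]$, the relation \eqref{eq:Leibniz} forces $d_M^2(am) = a\, d_M^2 m + (d^2 a) m = a\tilde W m + [\tilde W,a]m = \tilde W a m = \tilde W (am)$, so the identity propagates consistently from generators to all of $M$.

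The main obstacle I anticipate is the sign analysis. Because $\tilde b$ now contains generators of arbitrary degree and the pull-out rule has been modified to \eqref{eqn:pulloutrule2}--\eqref{eqn:pulloutrule3} with Koszul signs, verifying that the signs in the expansion of the $A_\infty$-relation combine to give exactly $d_M^2 = +\tilde W$ (rather than $-\tilde W$ or a degree-dependent sign) requires care — in particular checking that the sign $(-1)^{\deg Y}$ in the definition of $d_M$ is the correct normalization and that it is consistent with the sign convention ``differentiating from the right'' used for both $d$ and $d_{M,N}$. I would handle this by the same device mentioned in the remark before the curved-dg-algebra proposition: realize $\{x_e,x_f,x_g\}$ (together with the auxiliary $x_{\be}$) as dual generators of $(TV)^*$ with its square-zero differential $\tilde d$, read off the desired identity from $\tilde d^2 = 0$ by setting $x_{\be}=0$, and then cross-check the surviving signs by the explicit expansion as in display \eqref{cvmd1}. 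Apart from signs, everything is a direct translation of the already-established matrix factorization argument, so the proof should be short.
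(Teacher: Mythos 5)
There is a genuine gap, and it comes from misidentifying the base ring of the construction. Proposition \ref{prop:fctor-obj} concerns the \emph{curved dg-module} construction of Section \ref{sec:ext-mir}: here $M$ is a free module over the full path algebra $\Lambda\tilde Q$, no quotient by $(P_e,P_f,P_g)$ is taken, and those coefficients instead define the nonzero differential $d$ of the curved dg-algebra $(\Lambda\tilde Q,d,\tilde W)$. Your key step — ``we are working over $A=\Lambda\tilde Q/(P_e,P_f,P_g)$, so only $\sum_i\tilde W_i\one_{L_i}$ survives in $m_0^{\tilde b}$'' — is therefore not available: the odd and even non-unit terms $\sum_X (dx)\,X$ of $m_0^{\tilde b}$ do \emph{not} vanish, and $m_2^{\tilde b,\tilde b,0}(m_0^{\tilde b},Y)$ contributes $\tilde W\cdot Y$ \emph{plus} the nontrivial term $\sum d(x_k\cdots x_1)\, m_{k+1}(X_1,\ldots,X_k,Y)$ (this is the identity \eqref{cvmd1}). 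Quotienting would prove a different statement (the ``extended matrix factorization'' alternative mentioned in the paper), not that $(M,d_M)$ is a curved dg-module over $(\Lambda\tilde Q,d,\tilde W)$.

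Correspondingly, your identification $d_M^2(Y)=(-1)^{\deg Y}(-1)^{\deg Y+1}(m_1^{\tilde b,0})^2(Y)$ on generators is false in this setting: $d_M Y$ has coefficients in $\Lambda\tilde Q$, so the second application of $d_M$ must differentiate those coefficients by the Leibniz rule \eqref{eq:Leibniz}, producing an extra term $-\sum d(x_k\cdots x_1)\,m_{k+1}(X_1,\ldots,X_k,Y)$; the paper's proof explicitly warns that $(d_M)^2\neq(m_1^{\tilde b,0})^2$. The actual content of the proposition is that this Leibniz contribution cancels exactly against the non-unit contribution of $m_0^{\tilde b}$ in the curved $A_\infty$-bimodule equation, leaving $d_M^2(Y)=\tilde W\cdot Y$. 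You drop both terms, each for an invalid reason, so the two errors happen to compensate numerically but the argument does not establish the claim. (Your final consistency check $d_M^2(am)=a\,d_M^2m+(d^2a)m=\tilde W\,(am)$ using $d^2=[\tilde W,\cdot]$ is correct and matches the paper's framework; the gap is entirely in the computation on generators, not in the sign bookkeeping you were worried about.)
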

\begin{proof}
We start by noting that $(d_M)^2 \neq (m_1^{\tilde{b},0})^2$ as we need to take $d$ of the coefficient by Leibniz rule
in the case of $(d_M)^2$. We need to prove $d_M(d_M(Y)) = \tilde{W} \cdot Y$ for any $Y \in \bL \cap U$.
We will use the following deformed curved $\AI$-bimodule equation:
$$(m_1^{\tilde{b},0})^2 + m_2^{\tilde{b}, \tilde{b}, 0}(m_0^{\tilde{b}}(1),Y) =0,$$
which implies that
\begin{eqnarray*}
-(m_1^{\tilde{b},0})^2 &=&  m_2(\tilde{W} \be, Y) + m_2^{\tilde{b}, \tilde{b}, 0}(\sum_X (dx)X, Y)\\
 &=&  \tilde{W}\cdot Y  + \sum_{k,X_i,X} m_{k+2}(x_1X_1, \cdots, (dx)X,\cdots, x_kX_k,Y) \\
 &=&   \tilde{W}\cdot Y + \sum_{k,X_i,} d(x_k \cdots x_1)m_{k+1}(X_1,\cdots, X_k,Y)
 \end{eqnarray*}
 where the last identify is explained in \eqref{cvmd1}. Now we can prove the desired identity as follows.
 \begin{eqnarray*}
 d_M^2(Y) &=&  d_M ( (-1)^{|Y|} m_1^{\tilde{b},0}(Y)) \\
                &=&  (-1)^{|Y|} (-1)^{|Y|+1 } (m_1^{\tilde{b},0})^2(Y) - \sum_{X_i,k}d(x_k\cdots x_1)m_{k+1}(X_1,\cdots,X_k,Y) \\
                &=&\tilde{W} \cdot Y. 
  \end{eqnarray*}

\end{proof}

In particular, when $\tilde{W} = 0$ Lagrangians are transformed to dg-modules over $\Lambda \tilde{Q}$, which can be identified as dg representations of $\tilde{Q}$.  By Proposition \ref{prop:tildeW=0} this is usually the case in Calabi-Yau situations.
Typically these representations have infinite dimensions since they consist of all paths of $\tilde{Q}$.  Taking the cohomologies of $d_M$, one obtains (graded) quiver representations.  

The functor on (higher) morphisms is defined as follows.  Given unobstructed Lagrangians $U_1,\ldots,U_k$ which intersect transversely with each other and also with $\bL$, we define a map $\Hom(U_1,U_2) \otimes \ldots \otimes \Hom(U_{k-1},U_k) \to \Hom(M_{U_1},M_{U_k})$ by sending $Y \in M_{U_1}$ to 
$$m^{\tilde{b},0,\cdots,0}_k(Y,\cdot,\ldots,\cdot) = \sum_{l=0}^\infty m_{k+l}(\tilde{b},\ldots,\tilde{b},Y,\cdot,\ldots,\cdot) = \sum_n \sum_{(X_i)} (-1)^{\sum_j |x_j|} x_n \ldots x_1 m_{k+n}(X_1,\ldots,X_n,Y,\cdot,\ldots,\cdot) \in M_{U_k}.$$  
Again the signs have to be modified to make it to be an $A_\infty$-functor.

\begin{definition}
Given unobstructed Lagrangians $U_1,\ldots,U_k$ which intersect transversely with each other and also with $\bL$, we define a map $\Hom(U_1,U_2) \otimes \ldots \otimes \Hom(U_{k-1},U_k) \to \Hom(M_{U_1},M_{U_k})$ by
$$\cF(A_1\otimes \ldots \otimes A_{k-1})(Y) := (-1)^{(|Y|+1) \cdot \sum_{i=1}^{k-1} (|A_i|+1)} m_k^{\tilde{b},0,\cdots,0}
(Y, A_1,\cdots, A_{k-1}).$$
\end{definition}

\begin{theorem}
The above definition of $\cF$ gives an $A_\infty$-functor $\Fuk(X) \to \mathrm{cdg}(\tilde{W})$
where $\mathrm{cdg}(\tilde{W})$ is the dg category of curved dg modules of $(\Lambda \tilde{Q}, d, \tilde{W})$.
\end{theorem}
\begin{proof}
The proof is almost the same as that of Theorem 2.18 \cite{CHL13} together with an argument that is used in the proof of Proposition \ref{prop:fctor-obj},
and we leave the proof as an exercise.
\end{proof}

%-----------------------------------------------------------------------
% Beginning of chap1.tex
%-----------------------------------------------------------------------
%
%  AMS-LaTeX sample file for a chapter of a monograph, to be used with
%  an AMS monograph document class.  This is a data file input by
%  chapter.tex.
%
%  Use this file as a model for a chapter; DO NOT START BY removing its
%  contents and filling in your own text.
% 
%%%%%%%%%%%%%%%%%%%%%%%%%%%%%%%%%%%%%%%%%%%%%%%%%%%%%%%%%%%%%%%%%%%%%%%%

%\part{This is a Part Title Sample}

\chapter{Mirror construction for punctured Riemann surface} \label{sec:Rs}
In \cite{Bocklandt}, Bocklandt considered a pair of ``mirror" dimers $Q$ and $Q^\vee$ equipped with perfect matchings, which are embedded in two different Riemann surfaces $\Sigma$ and $\Sigma^\vee$ (with the number of punctures $\geq 3$). He combinatorially defined a category from $Q$ which was proven to be isomorphic to the wrapped Fukaya category of $\Sigma - Q_0$, and found an embedding into the category of matrix factorizations associated to $Q^\vee$. In this way, Bocklandt generalized the construction of Abouzaid et al. \cite{AAEKO} of the homological mirror symmetry  for punctured spheres.
%Here, $Q \mapsto Q^\vee$ is an involution on the set of quivers whose definition will be given later. 

In this chapter, inspired by the work of Bocklandt, we apply our construction developed in Chapter \ref{sec:MFseverallag} to punctured Riemann surfaces and recover   the result of \cite{Bocklandt}, and also generalize them.  In particular, our approach gives a geometric explanation of the mysterious combinatorial construction of a mirror dimer in \cite{Bocklandt}. This gives a nice application of the theory developed in this paper.

\begin{definition}\label{defnquiv}
Let $\Sigma$ be a closed Riemann surface, with finitely many points $Q_0 = \{p_i\}$. We consider a polygonal decomposition of $(\Sigma,Q_0)$ with vertices only at $Q_0$. Choosing orientations of all edges defines a quiver $Q$
with the set of arrows  $Q_1$. The complement $\Sigma \setminus Q$ consists of polygons, and the closure of these open polygons are called faces of $Q$.
We assume that every vertex has valency at least 2.
% and each polygon has at least distinct 3 vertices for simplicity.
Also  by $Q$ we denote  the quiver or  the associated polygonal decomposition of $\Sigma$. 
\end{definition}
\begin{theorem}
We can take a family of Lagrangians $\bL$ canonically for a given polygonal decomposition $Q$ of $\Sigma$ to obtain a noncommutative mirror LG model $(\cA,W)$ so that the following holds:
\begin{enumerate}
\item There exists a graded quiver $Q^\mathbb{L}$ (whose arrows are $\Z_2$-graded) with a spacetime superpotential $\Phi$ such that
$\cA =\Jac(Q^\bL,\Phi)$ and we have a $\Z_2$-graded $A_\infty$-functor 
$$\cF_{\mathbb{L}} : \Fuk^\wrap (\Sigma \setminus Q_0) \to \MF (\Jac(Q^\bL,\Phi),W).$$
\item If $Q$ is a dimer, then  the quiver $Q^\mathbb{L}$ is also a dimer (with degree zero arrows)
embedded in a Riemann surface $\Sigma^\vee$. Here $\Sigma^\vee$ can be constructed from the spacetime superpotential
$\Phi$.
\item If $Q$ is a zig-zag consistent dimer , then $\cF_{\mathbb{L}}$ defines an $\Z_2$-graded derived equivalence 
 $$ \cF_{\mathbb{L}} : D\Fuk^\wrap (\Sigma \setminus Q_0)
\to \mathrm{Hmf}(\Jac(Q^\bL),W) \subset \mathrm{H}\MF (\Jac(Q^\bL,\Phi),W).$$
\item When $Q$ has a perfect matching, then  $\cF_{\mathbb{L}}$ defines a $\Z$-graded functor.
\item When $Q$ is zig-zag consistent dimer with a perfect matching,  $\cF_{\mathbb{L}}$ defines a $\Z$-graded 
derived equivalence to
$\mathrm{Hmf}(\Jac(Q^\bL),W)$. This $\Z$-graded functor agrees with that of Bocklandt \cite{Bocklandt}.
\end{enumerate}
\end{theorem}
\begin{remark}
%We can also apply the similar construction to a closed Riemann surface. In this case, the vertices of $Q$ are also contained in the Riemann surface, which results in more terms in both of potentials $\Phi$ and $W$. 4-punctured spheres and pillows cases in Chapter \ref{sec:4puncture2222} give one of such examples.  
The previous elliptic curve examples can be regarded as an analogue of this construction for closed surfaces without punctures.
\end{remark}
Here, $ \Fuk^\wrap  (\Sigma \setminus Q_0)$ is the wrapped Fukaya category of \cite{AS10}, and 
the definitions of a dimer model, and the dg-category $\mathrm{mf}(\Jac(Q^\bL),W)$ will be given in Definition \ref{def:dimer},  \ref{def:mfc}. Proof of this theorem will occupy the rest of the chapter.

The result (5) was proved by Bocklandt \cite{Bocklandt}  by computing enough of both wrapped Fukaya category (of generating non-compact Lagrangians) and matrix factorizations 
and proving a uniqueness theorem of $\AI$-structures using such computations and $\Z$-grading. Without a perfect matching condition of a dimer (that is a $\Z$-grading), it is difficult to mach $\AI$-structures of both sides.  On the other hand, in our formulation an $A_\infty$-functor is given canonically, once we determine $\bL$ and its Maurer-Cartan equations.

The result (3), (4) are also partly based on the work of Bocklandt.
For (3) we use his explicit calculation of basis on $B$-side for zig-zag consistent dimers to show that it is an equivalence
 and for (4), we use the $\Z$-grading defined by Bocklandt's explicit choice of vector fields on punctured Riemann surfaces.
%The result is more general than \cite{Bocklandt} in the sense that the functor is constructed for any polygonal decomposition of punctured Riemann surfaces.
%
%First, we define the family of Lagrangians $\bL$ canonically associated with $Q$ in $\Sigma$.  Then we apply the construction in Section \ref{sec:MFseverallag} and obtain a noncommutative mirror LG model $(\cA,W)$.  (In technical level, we may need to take nontrivial spin structures for some branches of $\mathbb{L}$. See proof of Lemma \ref{lem:signmatchBock}.) It turns out that the dual quiver $Q^\vee$ of $Q$ (which was constructed combinatorially by physicists and used in \cite{Bocklandt}) is equivalent to the quiver $Q^\mathbb{L}$ (Definition \ref{def:mirrorquiverseveral}). Consequently, our construction in Section \ref{sec:MFseverallag} produces an $A_\infty$-functor from the (wrapped) Fukaya category of $\Sigma$ to the category of matrix factorizations of $(\cA,W)$. We will see that this recovers the embedding given in \cite{Bocklandt}.

\begin{remark}
%\begin{enumerate}
% \item We show that the mirror functor is an embedding without grading requirement (assuming zigzag consistency), while in \cite{Bocklandt} the proof requires a $\Z$-grading by perfect matching.  On the torus zigzag consistency implies there exists a perfect matching, but for higher genus this may not hold.
%\item 
Throughout the paper, the endomorphism quiver $Q^\bL$ (Definition \ref{def:mirrorquiverseveral}) was often denoted as $Q$ for simplicity, but
just in this chapter, by $Q$ we denote a quiver embedded in a Riemann surface. We will see that $Q^\bL$ equals the mirror dimer $Q^\vee$ (as dimers).
%\end{enumerate}
\end{remark}

\section{Reference Lagrangian $\mathbb{L}$ from a polygonal decomposition}
The reference Lagrangian $\bL$ for a given polygonal decomposition $Q$ of $\Sigma$ will be
obtained from the inscribed polygons to every faces of $Q$. Namely, for each face $P$ of $Q$, we connect the midpoints of neighboring edges of $P$ to obtain $\check{P}$ which is inscribed to $P$. See Figure \ref{fig:rminusrev} (a). 
\begin{definition}\label{defnp'}
We may choose $\check{P}  \subset P$ for each face $P$ of $Q$ such that when two faces $P_1,P_2$ share an edge $e$,
then the edges of $\check{P}_1, \check{P}_2$ connect smoothly at the midpoint of $e$ (locally forming a triple intersection of smooth embedded curves). 
 In this way, edges of $\check{P}$ for faces $P$ of $Q$ define a family of (possibly immersed) curves  $\bL = \{L_1,\cdots, L_k\}$ in $\Sigma \setminus Q$. Each $L_i$ may be also obtained by starting from a mid-point of an edge of $Q$ and traveling to the midpoint of
the left  neighboring edge of a face, and next the right neighboring edge of the following face and continuing this process (see Figure \ref{fig:traversing_lag} (b)). 
Therefore, we call each $L_i$ a {\em zigzag } Lagrangian. 
\end{definition}

\begin{figure}[h]
\begin{center}
\includegraphics[height=2in]{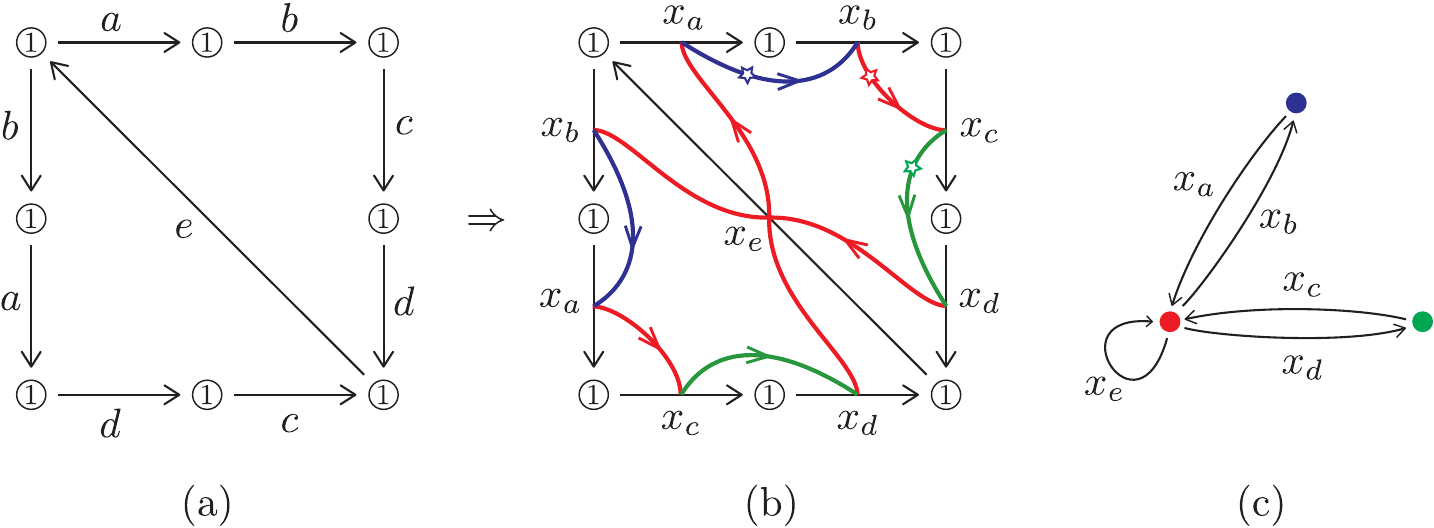}
\caption{(a) dimer $Q$ $\quad$  (b) $\mathbb{L} = \{L_1,L_2,L_3\}$ $\quad$  (c) $Q^{\mathbb{L}}$ }\label{fig:rminusrev}
\end{center}
\end{figure}

Note that  $\Sigma \setminus Q$ is an exact symplectic manifold, and we can also make $\bL$ exact.
\begin{lemma} $\mathbb{L}$ can be made an exact Lagrangian by a suitable smooth isotopy.
\end{lemma}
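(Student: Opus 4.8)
The statement is that the family of zigzag curves $\bL = \{L_1,\dots,L_k\}$ constructed from the polygonal decomposition $Q$ of $\Sigma$ can be made exact by a smooth isotopy inside the exact symplectic manifold $\Sigma\setminus Q_0$. The plan is to work directly with a primitive of the symplectic form. Since $\Sigma\setminus Q_0$ is an open Riemann surface (each puncture removed), $H^1$ is computed by the faces-edges-vertices combinatorics, but more importantly the symplectic form $\omega$ is exact: write $\omega = \der\theta$ for a Liouville one-form $\theta$. For a one-dimensional Lagrangian (an immersed curve) exactness means that the integral of $\theta$ over every loop in the curve vanishes; equivalently, each connected component $L_i$, which is a circle, must have $\int_{L_i}\theta = 0$ after we also account for the immersed points (one should phrase this using the lift to the normalization, as in the Morse–Bott setup of Section 2).

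The key steps I would carry out are as follows. First, fix the Liouville form $\theta$ on $\Sigma\setminus Q_0$; near each puncture $p_j$ one can arrange $\theta$ to be the standard form $r^2\,\der\vartheta$ in a punctured-disc coordinate, so the "action at infinity" is controlled. Second, for each zigzag curve $L_i$ compute $a_i := \int_{L_i}\theta \in \R$; this is a finite collection of real numbers. Third, observe that the construction of $\check P \subset P$ in Definition \ref{defnp'} has a large amount of freedom: the midpoints can be slid along the edges of $Q$, and the arcs of $\check P$ inside each face can be isotoped rel endpoints. A Hamiltonian-type isotopy supported in a neighborhood of a single arc changes the action $a_i$ of the curve(s) passing through that arc by a prescribed amount while not affecting curves disjoint from the support; because the dual graph $Q^\bL$ is connected (each $L_i$ meets others at the shared midpoints) one can propagate corrections and realize any target vector of actions — in particular the zero vector. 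Fourth, having arranged $\int_{L_i}\theta = 0$ for all $i$, the restriction $\theta|_{L_i}$ is exact on $L_i$, so $L_i$ is an exact Lagrangian; and one checks the correction isotopy can be taken $C^\infty$-small and to preserve transversality of self-intersections and mutual intersections, so the resulting $\bL$ still satisfies the hypotheses needed for the mirror construction of Section \ref{sec:MFseverallag}.

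Alternatively — and this may be cleaner to write — I would use the standard fact (Giroux/Seidel) that a closed one-dimensional submanifold $L$ of an exact surface $(S,\der\theta)$ can be made exact by a Hamiltonian isotopy precisely when it bounds; more robustly, any embedded circle can be $C^0$-isotoped to an exact one by "unwinding" excess action through a collar, and for a union of such circles with transverse intersections one applies this componentwise while keeping the perturbations supported away from the (finitely many) intersection points, then perturbs back to restore transversality. The punctures only help: they give the curves room to shed action. So the proof reduces to: (i) exactness of $\omega$ on $\Sigma\setminus Q_0$; (ii) the action of a curve is the only obstruction to being exact in dimension one; (iii) the zigzag curves have enough isotopy freedom, inherited from the freedom in choosing $\check P$, to kill these actions.

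\textbf{Main obstacle.} The genuine content is step three: making precise that the isotopy freedom in the choice of the inscribed polygons $\check P$ is rich enough to realize an arbitrary prescribed vector $(a_1,\dots,a_k)$ of period corrections simultaneously, while \emph{keeping the triple-point smoothness condition} of Definition \ref{defnp'} (the arcs of neighboring $\check P_1,\check P_2$ must still join smoothly at the midpoint of the shared edge) and keeping all intersections transverse for the Floer theory. I expect this to be a short but slightly fiddly argument: one isolates, for each edge $e$ of $Q$, a small "action knob" — a locally supported modification of the two arcs meeting at the midpoint of $e$ — computes its effect on the actions of the (one or two) zigzag curves through that midpoint, and then solves the resulting linear system over the connected incidence graph; surjectivity of that system onto $\R^k$ is where connectivity of $Q$ (equivalently of $\bL$) is used. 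The rest — exactness of $\omega$, the standard one-dimensional exactness criterion, and genericity to restore transversality — is routine and I would state it with references rather than prove it in detail.
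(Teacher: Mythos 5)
There is a genuine gap, and it comes in two places. First, your central mechanism is stated incorrectly: a Hamiltonian isotopy does \emph{not} change $\int_{L_i}\alpha$ for a closed curve $L_i$ (the change of the period under a Hamiltonian flow is $\int_{L_i} d(\iota_X \alpha + H) = 0$), so "a Hamiltonian-type isotopy supported near an arc changes $a_i$ by a prescribed amount" is false as written, and the "standard fact" you attribute to Giroux/Seidel is not correct either (exactness of a circle is a Hamiltonian-isotopy invariant, and non-bounding circles in a punctured surface can perfectly well be exact -- the zigzag curves are generally non-bounding). What you need, and what the lemma allows, is a genuinely smooth (non-Hamiltonian) isotopy, whose effect on the period is the signed area swept out. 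Second, once you phrase it this way, a modification supported in a small neighbourhood of an edge-midpoint can change the period only by at most the area of that neighbourhood, so your "action knobs plus a linear system over the incidence graph" scheme does not obviously realize arbitrary corrections $(a_1,\dots,a_k)$; connectivity of $Q$ is not the relevant input and is in fact not needed at all.

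The paper's proof uses exactly the feature you only gesture at ("the punctures give room to shed action"): near a puncture the Liouville form is $r\,d\theta$ with $r \to \infty$, so sliding an arc of $L_i$ toward the puncture changes $\int_{L_i}\alpha$ without bound, with sign determined by whether the arc runs counterclockwise or clockwise around the puncture. Because each zigzag Lagrangian contains arcs of both directions near the punctures, its period can be moved continuously to any value of either sign, in particular to $0$, and this can be done for each $L_i$ independently; the deformation takes place near the punctures, i.e.\ away from the midpoints of the edges of $Q$, so transversality of the intersections and the triple-point condition of the construction are untouched, which disposes of the "fiddly" part you were worried about. So the missing idea is the unboundedness (in both signs) of the area available near the cylindrical ends together with the two orientations of arcs on a single zigzag curve; with it the argument is per-component and elementary, and without it your compactly supported knobs do not suffice.
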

\begin{proof}
Let $\alpha$ be the fixed Liouville form, $d\alpha = \omega$.  For every $i$ we want to suitably deform $L_i$ such that $\int_{L_i} \alpha = 0$.
Near a puncture, $\alpha$ is locally given as $r d\theta$ where $r$ goes to $\infty$ approaching the puncture.  Hence the integral of $\alpha$ along a circular arc in counterclockwise (clockwise resp.) direction increases to $\infty$ (decreases to $-\infty$ resp.) when the arc moves closer to the puncture.  Since $L_i$ is a zigzag Lagrangian, it contains arcs in both directions.  Hence we can deform $L_i$ near a puncture to achieve $\int_{L_i} \alpha = 0$.  
%Figure \ref{fig:refexactness}) (a) gives an illustration.
\end{proof}

Let us recall the definition of a dimer.
%$Q$ embedded in a surface $\Sigma$.

\begin{definition}\label{def:dimer}
A quiver in the Definition \ref{defnquiv} is called a {\em dimer model} if it is embedded in a surface $\Sigma$, and for each face $P$ of $Q$, the boundary arrows of $Q$ are either all along with or all opposite to the boundary orientation of $\partial P$.  Hence the set of arrows bounding each face can be regarded as a cycle in $Q$ which we call a \emph{boundary cycle}. 
\end{definition}

\begin{definition}
The set of boundary cycles of $Q$ is decomposed into two sets $Q_2^+$ (positive cycles) and $Q_2^-$ (negative cycles).  We will call a face of $\Sigma$ bounded by a cycle in $Q_2^+$ (resp. $Q_2^-$) a positive (resp. negative) face.
\end{definition}
Each arrow in a dimer model $Q$ is contained in exactly one positive cycle and one negative cycle.

For a dimer model $Q$,  the inverse image $\tilde{Q}$ in the universal cover $\tilde{\Sigma}$ of $\Sigma$ gives rise to a quiver
in an obvious way. For a given arrow $\tilde{a}$ of $\tilde{Q}$, a zig ray (zag ray, resp.) is defined  as an infinite path in $\tilde{Q}$
$$ \cdots  \tilde{a}_2 \tilde{a}_1  \tilde{a}_0$$
where $\tilde{a}_0 =\tilde{a}$, $ \tilde{a}_{i+1} \tilde{a}_{i}$ is a part of a positive boundary cycle for even $i$ and is a part of a negative boundary cycle for odd $i$.  A zig or zag ray projects down to a cycle in the path algebra of $Q$ which we will call a zigzag cycle. The following lemma is easy to check. (See (a) of Figure \ref{fig:traversing_lag}.)

\begin{lemma}
For a dimer model $Q$, there is an one-to-one correspondence between zig-zag Lagrangian of $Q$ and zig-zag cycle in the path algebra of $Q$.
\end{lemma}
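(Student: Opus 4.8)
The statement to prove is the final Lemma: for a dimer model $Q$, there is a one-to-one correspondence between zigzag Lagrangians of $Q$ and zigzag cycles in the path algebra of $Q$. The plan is to construct the bijection directly from the geometry of the inscribed polygons $\check{P}$, using the local picture near a single edge of $Q$ as the basic building block.

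First I would recall the local model. Fix an edge $e$ of $Q$ and its midpoint $\mathfrak{m}_e$. Exactly two faces of $Q$ meet along $e$, one positive face $P^+$ and one negative face $P^-$ (this is where the dimer hypothesis is used: every arrow lies in exactly one positive and one negative boundary cycle). The inscribed polygons $\check{P}^+$ and $\check{P}^-$ each pass through $\mathfrak{m}_e$, and by Definition \ref{defnp'} they connect smoothly there; moreover, $e$ itself (or a small arc of $Q$ through $\mathfrak{m}_e$) forms the third branch of the triple intersection. Thus near $\mathfrak{m}_e$ there are exactly two smooth strands of $\bL$ through $\mathfrak{m}_e$, one entering from $\check{P}^+$ and continuing into $\check{P}^-$ (or vice versa). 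A zigzag Lagrangian $L_i$ is, by Definition \ref{defnp'}, the maximal smooth curve obtained by following this rule: arrive at a midpoint, turn to the left-neighboring edge within the current face, then at the next midpoint turn to the right-neighboring edge within the next face, alternating. I would make precise that "turning left in a positive face" and "turning right in a negative face" (or the mirror of this, depending on orientation conventions) is exactly the combinatorial rule that, when read on the arrows of $Q$ bounding these faces, produces the alternation "$\tilde a_{i+1}\tilde a_i$ lies in a positive boundary cycle for even $i$, in a negative boundary cycle for odd $i$" appearing in the definition of a zig/zag ray.

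The main step is then to set up the map. Given a zigzag Lagrangian $L_i$, it visits a cyclic sequence of edge-midpoints $\mathfrak{m}_{e_0}, \mathfrak{m}_{e_1}, \dots$; to each consecutive passage from $\mathfrak{m}_{e_j}$ to $\mathfrak{m}_{e_{j+1}}$ through a face $P$ I would assign the arrow $a_j$ of $Q$ that is the boundary edge of $P$ "cut off" by the chord of $\check{P}$ joining $\mathfrak{m}_{e_j}$ to $\mathfrak{m}_{e_{j+1}}$ — equivalently, the unique arrow between the two vertices of $P$ adjacent to that chord. The resulting infinite word $\cdots a_1 a_0$ is composable (heads match tails) because consecutive chords of $\check{P}$ and $\check{P}'$ share the midpoint lying on the shared edge $e_{j+1}$, and by the turning rule this word alternates between pieces of positive and negative boundary cycles; hence it is a zig (or zag) ray in $\tilde Q$, projecting to a zigzag cycle in the path algebra of $Q$. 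Conversely, given a zigzag cycle, reading off its arrows tells us which chords of the inscribed polygons to concatenate, and gluing them at the shared midpoints reconstructs a unique zigzag Lagrangian. I would check that these two assignments are mutually inverse — this is essentially the observation that both a zigzag Lagrangian and a zigzag cycle are determined by, and determine, a single "seed" datum: one midpoint together with a choice of one of the two strands through it (equivalently, one arrow together with a choice of zig versus zag). Finiteness/closure of the cyclic sequence for a genuine dimer on a compact $\Sigma$ (so that $L_i$ is an immersed circle, not an infinite line) follows from compactness and the finiteness of $Q_1$.

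I expect the main obstacle to be bookkeeping the orientation conventions so that "left/right turn of the zigzag Lagrangian" is matched precisely with "positive/negative boundary cycle," and in particular making sure the periodicity works out: a zigzag Lagrangian is a closed curve on $\Sigma$, a zigzag cycle is a cyclic path, but a zig ray and a zag ray through the same arrow are generally different, so I must be careful that the correspondence "Lagrangian $\leftrightarrow$ cyclic path" is well-defined independent of the starting midpoint and of the direction of traversal (which exchanges zig and zag and reverses the cycle, consistently on both sides). Once the local triple-point picture of Definition \ref{defnp'} is fixed, all of this is a finite check, and the rest is a routine "follow your nose" verification; no deep input beyond the dimer condition is needed.
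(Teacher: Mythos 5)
Your overall route is the right one; in fact the paper offers no argument at all for this lemma (it is left as ``easy to check'' with a reference to Figure \ref{fig:traversing_lag}), and the only sensible proof is the local matching you set up: at the midpoint of each arrow two strands of $\bL$ cross, the alternating left/right turning rule of Definition \ref{defnp'} pairs with the alternation of positive and negative faces forced by the dimer condition, and head-to-tail composability of the resulting cyclic word pins down the traversal direction, which also disposes of your worry about reversal exchanging zig and zag.

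There is, however, a concrete flaw in the labelling rule you use to extract the cycle. A chord of $\check P$ joins the midpoints of two \emph{adjacent} edges $e_j,e_{j+1}$ of the face $P$, so it cuts off only a corner (the shared vertex together with two half-edges), never a whole boundary edge; and the far endpoints of $e_j$ and $e_{j+1}$ are joined along $\partial P$ by a path of length $|P|-2$, so ``the unique arrow between the two vertices of $P$ adjacent to that chord'' does not exist once $P$ has more than three sides (already the square and pentagon faces appearing in the paper's examples), while for a triangular face it would return the edge \emph{not} crossed by $L_i$, and the cyclic word of those opposite edges is not the zigzag cycle. The correct bookkeeping is simpler: record the arrows whose midpoints $L_i$ crosses, in the order of traversal, orienting $L_i$ so that consecutive crossed arrows are composable. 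Consecutive crossed arrows $a_j, a_{j+1}$ are adjacent edges of the face being traversed with $h(a_j)=t(a_{j+1})$, and the left/right alternation makes that shared face alternately positive and negative, which is verbatim the definition of a zig/zag ray; the inverse map (follow the chords dictated by consecutive arrows of a given zigzag cycle, gluing at the shared midpoints) is exactly as you describe. With this one replacement, the rest of your argument --- mutual inverseness via the seed datum of one arrow plus a choice of strand, and closedness of each $L_i$ from compactness and finiteness of $Q_1$ --- goes through.
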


\begin{figure}
\begin{center}
\includegraphics[height=1.8in]{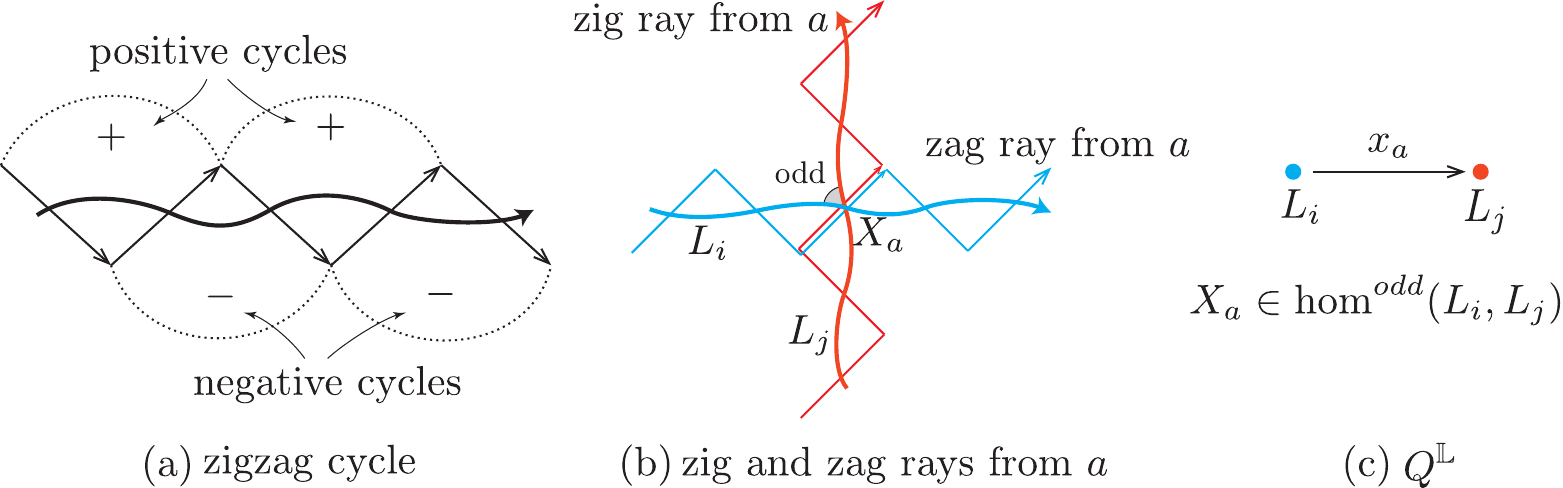}
\caption{Zigzag Lagrangians}\label{fig:traversing_lag}
\end{center}
\end{figure}

\section{Floer theory for $\bL$ and its Maurer-Cartan space for a dimer $Q$}
We will apply our formalism to the set $\bL$ of zigzag Lagrangians. Let us first discuss the case of a dimer $Q$, in which we can directly apply the result of
Chapter \ref{sec:MFseverallag}. For a quiver $Q$ which is not a dimer, we will consider somewhat extended deformation
theory to construct $Q^\bL$ later.

Lagrangians in $\bL$ are compact exact Lagrangians in  $\Sigma \setminus Q_0$, and we have an $\AI$-algebra
$\CF^\bullet(\bL, \bL)$  following \cite{Se}. Here, Lagrangians are oriented in a canonical way, following the orientation of
the corresponding zigzag cycles. 

\begin{lemma}\label{lem:dimmir1}
The endomorphism quiver $Q^\bL$ (in the Definition \ref{def:mirrorquiverseveral}) for a dimer $Q$ is described as follows. Its vertices $Q^\bL_0 = \{ v_1,\cdots, v_k\} $ corresponds to zigzag Lagrangians 
 $\bL= \{ L_1,\cdots, L_k\}$. 
There exist an one-to-one correspondence between the set of arrows $Q^\bL_1$ and $Q_1$, given by the following rule.
For an arrow $e \in Q_1$, there exist a (dual) arrow $x_e \in Q^\bL_1$ of degree zero from a zag ray of $e$ to the zig ray of $e$.
\end{lemma}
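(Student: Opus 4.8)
\textbf{Proof proposal for Lemma \ref{lem:dimmir1}.}

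The plan is to unwind the two definitions involved---the endomorphism quiver $Q^\bL$ of Definition \ref{def:mirrorquiverseveral} and the combinatorics of a dimer $Q$ of Definition \ref{def:dimer}---and to match them arrow by arrow. By construction the vertices of $Q^\bL$ are in bijection with the zigzag Lagrangians $L_1,\dots,L_k$, which in turn (by the preceding lemma) are in bijection with the zigzag cycles of $Q$. So the only content is the statement about $Q^\bL_1$: I must show that the odd-degree Floer generators of $\bL$ (equivalently, by the Morse-Bott setup of \cite{Se}, the self-intersection and pairwise-intersection points of $\bL$ that carry odd degree) are in canonical bijection with the arrows $Q_1$ of the original dimer, and that each such generator goes from the vertex labelled by the zag ray through $e$ to the vertex labelled by the zig ray through $e$.

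First I would recall the local geometry near an edge $e$ of $Q$. By Definition \ref{defnp'}, $e$ has a distinguished midpoint $m_e$, and the edges of the inscribed polygons $\check P_1,\check P_2$ of the two faces $P_1,P_2$ adjacent to $e$ pass through $m_e$ and are arranged to form a single transverse triple point (three smooth branches of $\bL$ crossing at $m_e$: the branch of $\check P_1$, the branch of $\check P_2$, and---if $e$ lies on a single zigzag Lagrangian twice---a third). Thus to each edge $e \in Q_1$ there is attached precisely one crossing point $m_e$ of $\bL$, and conversely every crossing of $\bL$ arises this way since $\bL$ lies in $\Sigma\setminus Q$ and its only self/pairwise intersections are forced at the edge-midpoints. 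This already gives the bijection on the level of intersection points; what remains is (i) to see that each $m_e$ contributes exactly one odd-degree generator (not zero, not two) to $\CF^\bullet(\bL,\bL)$, and (ii) to identify its source and target vertices.

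For (i) I would use the dimer hypothesis: because $Q$ is a dimer, at the midpoint $m_e$ the two relevant branches of $\bL$ are the two zigzag Lagrangians (or the two local strands of one zigzag Lagrangian) corresponding to the zig cycle and the zag cycle through $e$, and these meet at a single transverse point. Since $\Sigma\setminus Q_0$ is a surface and $\bL$ is oriented compatibly with the zigzag cycles, such a transverse intersection contributes one generator in degree $0$ and one in degree $1$ in the $\Z_2$-grading; of these exactly one is the odd-degree (degree-1) immersed generator $X_e$, with formal dual variable $x_e$. The orientation conventions of \cite{Se} (together with the canonical orientation of zigzag Lagrangians following their cycles) pin down which of the two co-oriented crossing points is the odd one, and hence pin down the head and tail: tracing the two zigzag rays $\cdots a_2a_1a_0$ with $a_0=e$ that meet at $m_e$, one ray is a zig ray and the other a zag ray (they alternate positive/negative boundary cycles starting from the opposite parity), and the odd generator runs from the Lagrangian containing the zag ray to the Lagrangian containing the zig ray---this is exactly the assignment of a degree-zero dual arrow $x_e \in Q^\bL_1$ claimed in the statement. (Note the degree of $x_e$ is $1 - \deg X_e = 1 - 1 = 0$ by Definition \ref{def:deg}, consistent with the assertion that all arrows of $Q^\bL$ have degree zero.)

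The main obstacle is step (i)–(ii): carefully bookkeeping the zig/zag alternation of boundary cycles at a shared midpoint and checking that the orientation/sign conventions of \cite{Se} indeed select the odd-degree crossing and give the head/tail as stated, rather than the reversed assignment. This is a finite local computation at a single edge, but it is exactly the point where a sign or orientation slip would flip the arrow, so I would do it explicitly in a model picture (a single edge with its two adjacent faces and their inscribed polygons, as in Figure \ref{fig:traversing_lag}(a) and Figure \ref{fig:rminusrev}), verify the claim there, and then invoke the locality of the construction to conclude it holds at every edge. Once this local model is settled, globalizing is immediate: the bijection $e \leftrightarrow m_e \leftrightarrow X_e$ over all $e\in Q_1$ is the required bijection $Q_1 \leftrightarrow Q^\bL_1$, and the source/target rule is the stated zag-ray-to-zig-ray rule.
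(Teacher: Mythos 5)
Your proposal is correct and takes essentially the same route as the paper's own proof: arrows of $Q^\bL$ are the odd-degree intersection points of $\bL$, these occur precisely at the midpoints of the edges of $Q$, and the orientation convention for degrees of transverse intersections of curves in a surface (the generator from $L_i$ to $L_j$ is odd exactly when the oriented tangent of $L_i$ rotates counterclockwise by an angle less than $\pi$ to that of $L_j$) combined with the dimer condition identifies the odd generator at each midpoint as running from the zag ray to the zig ray, which is the paper's short argument referring to Figure \ref{fig:traversing_lag}(b). The only cosmetic slip is your description of three branches of $\bL$ meeting at a midpoint: exactly two smooth strands of $\bL$ cross there (each formed by edges of the two adjacent inscribed polygons joined smoothly), and your subsequent argument correctly uses only these two, so nothing in the proof is affected.
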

\begin{proof}
An arrow of $Q^\bL_1$ is given by an odd degree intersection between $L_i$ and $L_j$. 
Recall that the generator $p$ in $\CF^\bullet(L_i,L_j)$ is odd (resp. even) if  at $T_p\Sigma$, one can rotate by an angle less than $\pi$, the oriented unit tangent vector of $L_i$  in a counter-clockwise (resp. clockwise) direction  to obtain the oriented unit tangent vector of $L_j$. Since $Q$ is a dimer, for each edge $e$, we have an odd degree morphism $X_e$
from the zag ray $L_i$ of $e$ to the zig ray $L_j$ of $e$. See the Figure \ref{fig:traversing_lag} (b).
\end{proof}
From now on, we will identify $Q^\bL_1$ and $Q_1$ using this Lemma.

Due to the punctures, and to our choice of Lagrangian $\bL$, the following lemma is immediate
\begin{lemma}
The only non-trivial holomorphic polygon with boundary on $\bL$ is
given by the polygon $\check{P}$ inscribed to each face $P$ of $Q$ (see Definition \ref{defnp'}).
\end{lemma}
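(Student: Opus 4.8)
The plan is to reduce the statement to a purely combinatorial description of the connected components of $\Sigma\setminus\bigcup_i L_i$ and then invoke standard facts about holomorphic maps of surfaces. First I would record that $\Sigma\setminus Q_0$ is exact and, by the preceding lemma, $\bL$ is an exact Lagrangian; hence there is no disc or sphere bubbling and every non-constant holomorphic polygon $u\colon (D,\partial D)\to(\Sigma\setminus Q_0,\bL)$, with boundary punctures at self-intersection points of $\bL$, has strictly positive energy. Since the target is a surface, the open mapping theorem shows $u$ is an orientation-preserving branched cover onto its image, $\mathrm{im}\,u$ is a compact union of closures of connected components (``regions'') of $\Sigma\setminus\bigcup_i L_i$, and each such region $R$ is covered with a well-defined multiplicity $m_R\in\Z_{>0}$ equal to the winding number of $u|_{\partial D}$ about any interior point of $R$. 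So the whole problem becomes: which regions occur, and with what multiplicity.

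Next I would classify the regions. The curves $L_i$ consist of the edges of the inscribed polygons $\check P$, all of which lie in the open faces of $Q$ with endpoints only at the edge-midpoints of $Q$; hence $\bigcup_i L_i$ is disjoint from $Q_0$ and meets each edge $e$ of $Q$ only at its midpoint $m_e$. Consequently, for an edge $e$ joining punctures $p,p'$, the set $e\setminus\{m_e\}$ lies in complementary regions, and a direct look near $p$ shows the segment from $p$ to $m_e$ lies in the region obtained by gluing the ``corner triangles'' of the faces incident to $p$ across the edges of $Q$ at $p$; call it $R_p$. Thus every edge and every vertex of $Q$ lies in $\bigcup_p R_p$, so any region not containing a puncture is disjoint from the $1$-skeleton of $Q$ and therefore lies in a single open face $P$; inside $P$ the only curves present are the edges of $\check P$, whose complement in $P$ is $\mathrm{int}\,\check P$ (a disc avoiding $Q_0$) together with corner triangles, each abutting a vertex of $P$, i.e.\ a puncture. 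Hence every region is either $\mathrm{int}\,\check P$ for a unique face $P$ or one of the punctured discs $R_p$ containing $p\in Q_0$ (counting shows there are $|Q_2|$ of the former and one $R_p$ per puncture).

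Finally I would conclude. No $R_p$ can be covered: $\mathrm{im}\,u$ is compact, hence closed in $\Sigma$, so $m_{R_p}>0$ would force $\overline{R_p\setminus\{p\}}=\overline{R_p}\ni p$ to lie in $\mathrm{im}\,u\subset\Sigma\setminus Q_0$, a contradiction — this is exactly where the punctures enter. So $u$ covers only regions $\mathrm{int}\,\check P$. Since $\mathrm{im}\,u$ is connected and the closures $\overline{\check P}$ of distinct inscribed polygons meet only at isolated midpoints $m_e$, where in the cyclic order of regions around $m_e$ the two inscribed polygons adjacent to $m_e$ are separated by the punctured regions $R_p,R_{p'}$, a connected branched-cover image cannot contain open parts of two distinct $\mathrm{int}\,\check P$; hence $\mathrm{im}\,u=\overline{\check P}$ for one $P$. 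Then $u\colon D\to\overline{\check P}$ is a holomorphic branched cover, and the multiplicity is $1$: by exactness the energy of $u$ equals the sum of the action jumps at its corners, which for multiplicity $m$ would be $m\cdot\mathrm{area}(\check P)$, forcing $u$ to have $\ge m$ corners at each vertex of $\check P$ — incompatible with $u$ being a polygon whose boundary arcs follow the embedded curves $L_i$ meeting transversally at each $m_e$. Thus $u$ is (a reparametrization of) the inclusion $\check P\hookrightarrow\Sigma\setminus Q_0$, as claimed. I expect the multiplicity-one step, together with the careful bookkeeping of the cyclic order of regions around each $m_e$ under the smooth-connection convention of Definition \ref{defnp'}, to be the only genuinely delicate points; the rest is the ``immediate'' consequence of the punctures and of the special choice of $\bL$.
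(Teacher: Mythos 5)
Your region-and-multiplicity argument supplies a genuine proof where the paper only asserts the lemma is ``immediate'', and its core is sound: the classification of the components of $\Sigma\setminus\bL$ into the interiors of the inscribed polygons $\check P$ and the punctured regions $R_p$, the exclusion of every $R_p$ by compactness of the image inside $\Sigma\setminus Q_0$, and the observation that at each midpoint $m_e$ the two adjacent inscribed corners are opposite quadrants separated by $R_p,R_{p'}$, so the open image of the disc interior lies in a single $\mathrm{int}\,\check P$. Up to that point your proof is correct and in fact more informative than the paper's treatment.

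The gap is in the final multiplicity-one step. Having $\geq m$ corners over each vertex of $\check P$ is \emph{not} incompatible with $u$ being a holomorphic polygon with boundary on $\bL$: the $A_\infty$ operations allow polygons with arbitrarily many boundary marked points, and several of them may map to the same geometric intersection point $m_e$. Concretely, an $m$-fold cover of $\check P$ branched at an interior point, with $mk$ convex corners (each pass through a vertex turning through the single quadrant of $\check P$), is a perfectly good holomorphic polygon with boundary on $\bL$ whose image is $\overline{\check P}$; your energy/corner count does not rule it out, so the sentence ``incompatible with $u$ being a polygon whose boundary arcs follow the embedded curves $L_i$'' is an unjustified leap. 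What actually excludes such covers is rigidity, not existence: for $m\geq 2$ Riemann--Hurwitz forces interior branch points (each pass of a vertex must be a convex corner, since turning through more than one quadrant would enter some $R_p$, and continuing straight through $m_e$ would enter the neighboring $\overline{\check P'}$), and the branch points can move, so these maps come in positive-dimensional regular families and do not contribute to the structure constants, which count rigid polygons. So either reformulate the conclusion as ``the image of any non-constant polygon is $\overline{\check P}$ for a single face, and the only rigid such polygons are the simple ones $\check P$'' with the dimension argument, or read the lemma (as the paper implicitly does in its Morse--Bott/combinatorial model) as a statement about the immersed convex polygons underlying all contributions; as written, your last step does not close.
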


Following Chapter \ref{sec:MFseverallag}, we set the Maurer-Cartan element $b = \sum_{e \in Q_1} x_e X_e$ and obtain
$$ m_0^b = \sum_i W_i \one_{L_i} + \sum_{e} P_{e} \bar{X}_{e} $$
where $P_{e}$ is counting discs with insertions of $b$'s with an output  $\bar{X}_{e}$, and $W_i$ is counting discs $U$ with output to be the minimum point of $L_i$.  By definition $\cA_\bL$ is the (completed) path algebra of $Q^\bL$ quotient by $P_{e}$ and $W_\bL = \sum_i W_i$.
Since we work in the exact setting, we can work over $\C$-coefficient rather than the Novikov ring $\Lambda_0$.  Indeed completing the path algebra is not necessary in this case.

From now on,  a holomorphic polygon $\check{P}$ contributing to $m_0^b$ will be said to be {\em positive}, denoted as $\check{P}_+$ if the induced boundary orientation match with that of $\mathbb{L}$, and {\em negative}, denoted as $\check{P}_-$ otherwise.

We will put non-trivial spin structure on $\bL$ to match our construction to the combinatorial mirror description of Bocklandt \cite{Bocklandt}.

\begin{definition}\label{def:nsp}
Assign a spin structure on $\bL$ by placing a marked point on one of the edges of every negative  $k$-gon $\check{P}$ for odd $k$.
\end{definition}

Recall that a marked point may be thought of as a place where trivialization of tangent bundle is twisted. Thus the sign of a holomorphic disc with boundary on $\bL$ passing through $k$ marked points (for spin structures) will have an additional sign twist by $(-1)^k$.

\begin{lemma}\label{lem:spcom}
The coefficients $P_{e}$ can be defined as partial derivative $\partial_e \Phi$ of the spacetime superpotential $\Phi$. 
With the spin structure given in Definition \ref{def:nsp}, we have $$\Phi = \sum_{\check{P}_+} (\textrm{boundary word of} \;\check{P}_+)
- \sum_{\check{P}_- } (\textrm{boundary word of}\; \check{P}_-)\big)$$
Here $|\check{P}|$ is the number of vertices in the polygon $\check{P}$, and  a boundary word of $\check{P}$ is recorded in clockwise direction starting from any point (it is well-defined in the cyclic quotient).
\end{lemma}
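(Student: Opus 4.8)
The plan is to unwind the definitions and show that the claimed formula for $\Phi$ reproduces, via cyclic derivatives, exactly the Maurer-Cartan coefficients $P_e$ computed from $m_0^b$. First I would recall from Section~\ref{sec:MFseverallag} (specifically Definition~\ref{def:Phi} and Proposition~\ref{prop:jac6}, which applies verbatim in the several-Lagrangian setting) that the spacetime superpotential is
$$\Phi = \sum_k \frac{1}{k+1} \langle m_k(b,\ldots,b),b\rangle \in \Lambda Q^{\mathbb{L}}/[\Lambda Q^{\mathbb{L}},\Lambda Q^{\mathbb{L}}]$$
and that $\partial_{x_e}\Phi = P_{\bar e}$, the coefficient of the complementary even-degree generator $\bar X_e$ in $m_0^b$. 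So the task reduces to evaluating $\langle m_k(b,\ldots,b),b\rangle$ geometrically for the zigzag Lagrangians $\bL$. By the preceding lemma, the only holomorphic polygons with boundary on $\bL$ are the inscribed polygons $\check P$, one for each face $P$ of $Q$; each such $\check P$ with $|\check P| = k+1$ vertices contributes a term $m_k(X_{e_1},\ldots,X_{e_k})$ landing on the appropriate generator, and pairing with $b$ via $\langle\cdot,\cdot\rangle$ picks out precisely the boundary word $x_{e_1}\cdots x_{e_k} x_{e_{k+1}}$ read cyclically around $\partial\check P$.

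Next I would track the combinatorial factor and the sign. Because the $A_\infty$-algebra is cyclic (Assumption stated before Proposition~\ref{prop:mccd}), the term associated to a given polygon $\check P$ appears with all $k+1$ of its cyclic rotations in the variables, so the $\frac{1}{k+1}$ in the definition of $\Phi$ exactly cancels this multiplicity, leaving a single copy of the (cyclic) boundary word for each face. The orientation dichotomy — positive versus negative faces — enters through whether the induced boundary orientation of $\check P$ agrees with that of $\mathbb{L}$; this governs the direction in which the word is read and, crucially, affects the sign coming from the Floer-theoretic orientation of the polygon's moduli space together with the chosen spin structure. Here I would invoke Definition~\ref{def:nsp}: placing one marked point on a negative odd-gon twists the trivialization, contributing an extra $(-1)$ precisely for the negative polygons of odd size, which — combined with the intrinsic sign of the disc count — yields the uniform sign $-1$ in front of $\sum_{\check P_-}(\text{boundary word})$ as claimed. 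I would verify this sign bookkeeping on a small model (say a triangle and a square face) to pin down the conventions, then argue it propagates.

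Finally, I would confirm that $\partial_{x_e}$ of this $\Phi$ indeed recovers $P_e$: differentiating a cyclic word $x_{e_1}\cdots x_{e_{k+1}}$ by $x_e$ via \eqref{def:pd} produces, for each occurrence of $e$ in the boundary of $\check P$, the complementary path obtained by cutting $\check P$'s boundary word at that edge — which is exactly the monomial recorded by the disc $\check P$ with input insertions $b$ and output $\bar X_e$, i.e. a term of $P_e = P_{\bar e}$ in the notation above. Summing over all faces and all occurrences gives $\partial_{x_e}\Phi = P_e$, so the weakly unobstructed relations are the cyclic derivatives of $\Phi$. The main obstacle I anticipate is the sign computation: reconciling the Floer orientation signs of the polygon moduli spaces (in the Morse–Bott model of \cite{Se}) with the spin-structure twist of Definition~\ref{def:nsp} so that \emph{every} negative face — regardless of parity once the marked-point rule is applied — contributes with the single overall minus sign, while positive faces contribute with $+1$. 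This requires care with the degree-shift conventions and the cyclic symmetry sign $(-1)^{|v_1|'(|v_2|'+\cdots+|v_{k+1}|')}$, and it is the only genuinely delicate point; the rest is a direct translation between disc counts and boundary words.
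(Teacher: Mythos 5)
Your overall route coincides with the paper's: reduce to the fact that the inscribed polygons $\check{P}$ are the only holomorphic polygons bounded by $\bL$, use cyclicity of the $A_\infty$ structure so that the factor $\frac{1}{k+1}$ in Definition \ref{def:Phi} cancels the cyclic multiplicity and $\partial_{x_e}\Phi$ reproduces the Maurer--Cartan coefficient $P_e$ (this is exactly the mechanism of Proposition \ref{prop:mccd} and Proposition \ref{prop:jac6}), and then fix the sign of each polygon's contribution. The first and last paragraphs of your proposal are fine and match what the paper does.

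The gap is in the sign analysis, which is the real content of the lemma beyond the general formalism. You correctly note that Definition \ref{def:nsp} contributes an extra $(-1)$ only to negative polygons with an odd number of sides, but you never pin down the intrinsic Floer-theoretic sign of a polygon \emph{before} the twist; you only promise to ``verify on a small model'' and ``argue it propagates.'' The missing input, which the paper simply quotes from the sign conventions of \cite{Se}, is that with the standard spin structure a positive polygon contributes $+1$ while a negative $k$-gon (with output an immersed generator) contributes $(-1)^{k-1}$. Once this is stated the bookkeeping is immediate: for odd $k$ the intrinsic sign is $+1$ and the single marked point gives $-1$; for even $k$ there is no marked point and the intrinsic sign is already $-1$; hence every negative face contributes with a uniform minus sign and every positive face with a plus sign. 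Without the $(-1)^{k-1}$ statement your argument does not treat the even-sided negative faces at all (they receive no spin twist), and checking a triangle or a square cannot substitute for the general formula, since the sign depends on $k$. So the proposal is structurally the same as the paper's proof but incomplete precisely at the step the lemma is asserting.
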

\begin{proof}
Since the only non-trivial holomorphic polygons bounded by $\bL$ come from the polygons $\check{P}$, it
only remains to check the sign. A positive polygon contributes with a positive sign, and the negative $k$-gon
contributes with $(-1)^{k-1}$ according to the sign convention of \cite{Se}, but we added further twisting by $(-1)^k$.
Therefore, any negative polygon $\check{P}_-$ always contribute with the negative sign. 
\end{proof}

For example, the spacetime potential $\Phi$ for the dimer $Q$ in Figure \ref{fig:rminusrev} is given by
\begin{equation*}
\Phi =  x_a x_b x_e x_c x_d  - x_a x_e x_d x_c x_b.
\end{equation*}
%The marked point (${\color{red}\times}$ in the Figure) for non-trivial spin structure is placed on the edge connecting $x_c$ and $x_d$ in the (unique) negative polygon.

The Lagrangian Floer potential $W$ can be also computed combinatorially as follows.
\begin{lemma}\label{lem:potrm}
The potential $W$ as an element in $\cA_\bL = \Jac(Q^\bL, \Phi)$ can be written as 
$$W = \sum_{i=1}^k W_i$$
where $W_i$ is the boundary word of the unique polygon $\check{P}$  passing through  the minimum $T_i \subset L_i$.
Here, the word is  recorded clockwise starting from $T_i$ (but $T_i$ itself is not recorded) and
with the spin structure given in Definition \ref{def:nsp}, all signs of $W_i$'s are positive.
\end{lemma}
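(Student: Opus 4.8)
The plan is to compute $W_i$, the coefficient of $\one_{L_i}$ in $m_0^b$, directly from the geometry. Recall $m_0^b = \sum_e m_{e+1}(b,\ldots,b)$ where $b = \sum_{e\in Q_1} x_e X_e$, and that by the previous lemma the only nonconstant holomorphic polygons with boundary on $\bL$ are the inscribed polygons $\check{P}$, one for each face $P$ of $Q$. First I would recall that in the Morse–Bott model (following \cite{Se}) $W_i$ counts, with sign and with monomial weight recording the boundary corners, the holomorphic polygons whose boundary passes through $T_i$, the minimum of $L_i$ (equivalently, the Poincar\'e dual of $\one_{L_i}$), as explained in Remark \ref{rmk:wcountst}. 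Since $T_i$ lies on $L_i$ and the only relevant polygons are the $\check{P}$'s, each $W_i$ is a sum over faces $P$ such that an edge of $\check{P}$ lies along $L_i$ and contains $T_i$ in its interior.

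The key geometric step is to show there is exactly one such face. Each zigzag Lagrangian $L_i$ traverses a sequence of faces of $Q$, cutting off the inscribed corner $\check{P}$ of each face it meets; by our choice of $\bL$ and of the position of $T_i$ (the minimum of the height function on the circle $L_i$), $T_i$ sits on exactly one of the minimal arcs of $L_i$ between two consecutive self-intersection points, and that arc is an edge of exactly one $\check{P}$. Hence $W_i$ is the contribution of that single polygon $\check{P}$. Next I would read off the monomial: as one travels along $\partial\check{P}$ one passes through the immersed corners of $\bL$, each of which is one of the degree-one generators $X_e$ and contributes the dual variable $x_e$; by the convention \eqref{eqn:pulloutrule} the variables are multiplied in the reverse order, which (after accounting for the cyclic placement of $T_i$) amounts to recording the boundary word clockwise starting at $T_i$, with $T_i$ itself not recorded since it is the output, not an input corner. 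This gives $W_i$ as the stated boundary word in $\cA_\bL = \Jac(Q^\bL,\Phi)$, where we use Lemma \ref{lem:spcom} (and Proposition \ref{prop:jac6}) to identify the relations.

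The remaining point is the sign. Here I would invoke the orientation conventions of \cite{Se}: a polygon with output $T_i$ contributes $(-1)^{|\check P|-1}$ for a negative polygon (and $+1$ for a positive one) under the \emph{standard} spin structure, exactly as in the proof of Lemma \ref{lem:spcom}; but with the modified spin structure of Definition \ref{def:nsp} we placed a marked point on an edge of every negative $k$-gon with $k$ odd, which twists the sign by an extra $(-1)^k$. One must check the parity bookkeeping: the polygon $\check P$ contributing to $W_i$, having output the minimum $T_i$ of a sphere, behaves in the sign computation like a $(|\check P|+1)$-gon's worth of corners or — more carefully — one tracks that the net twist makes every $W_i$ contribute with an overall $+$ sign. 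I expect the sign computation to be the main obstacle, since it requires carefully matching the Koszul signs coming from \eqref{eqn:pulloutrule}, the orientation sign of \cite{Se} for a disc with one output at a minimum point, and the spin-structure twist; the geometric count of polygons and the extraction of the monomial are comparatively routine. Once the sign is pinned down, the lemma follows: $W = \sum_i W_i$ with each $W_i$ the clockwise boundary word of the unique $\check P$ through $T_i$, all signs positive.
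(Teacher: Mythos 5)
The geometric part of your argument (only the inscribed polygons $\check{P}$ can contribute; $T_i$ lies on exactly one edge of exactly one $\check{P}$, so $W_i$ is the boundary word of that single polygon, read off clockwise with the reversed-order convention \eqref{eqn:pulloutrule}) is fine and matches what the paper treats as immediate from Remark \ref{rmk:wcountst} and the preceding lemma. The genuine gap is exactly the point you defer: the sign. You quote the sign of a negative $k$-gon with output $T_i$ as $(-1)^{|\check{P}|-1}$, ``exactly as in the proof of Lemma \ref{lem:spcom}''. But that sign is the one for the Maurer--Cartan computation, where the output is an immersed (even-degree) generator $\bar{X}_e$; it is not the sign for the case at hand, where the output is the unit/point class. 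The whole content of the paper's proof is the observation that, with the standard spin structure, a negative $k$-gon contributing to $m_k$ with output $\one_\bL$ (equivalently, passing through $T_i = PD(\one_{L_i})$) carries sign $(-1)^k$, i.e.\ it differs by one parity from the immersed-output case. Only then does the twist $(-1)^k$ coming from the marked points of Definition \ref{def:nsp} (a $-1$ precisely for odd negative polygons) cancel it and make every contribution positive. If one instead used your quoted $(-1)^{k-1}$, the twist would produce an overall $-1$ on every negative polygon, i.e.\ the opposite of the lemma's conclusion; so the discrepancy is not a bookkeeping nuisance but the decisive step.

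Concretely, what is missing is a verification, in Seidel's sign conventions, that changing the output from an odd immersed corner to the unit (or, dually, constraining the boundary through the minimum $T_i$) shifts the orientation sign of an orientation-reversing polygon by exactly one factor of $-1$. Your proposal acknowledges the issue (``one must check the parity bookkeeping'') but then asserts the desired outcome without deriving it, and the sign you do write down points the wrong way. Everything else in your outline can stay; you need to replace the hedged sign paragraph with this one comparison of output types, after which the cancellation against the spin twist of Definition \ref{def:nsp} is immediate.
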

\begin{proof}
It is enough to show that the signs of all holomorphic polygons contributing to $W_\bL$ are positive.  
A positive  polygon contribute positively. A negative $k$-gon contributing to $m_k$ with the output $\one_\bL$
contributes as $(-1)^k$ with the standard spin structure on $\bL$. Hence, the spin structure of 
 Definition \ref{def:nsp} additionally contributes $(-1)^k$ to the negative $k$-gon counting, and hence 
 even the negative $k$-gon is counted positively.
\end{proof}

The Lagrangian Floer potential for the example drawn in Figure \ref{fig:rminusrev} can be computed as
$$W =  x_c x_d x_e  x_a x_b +  x_b x_c x_d x_e x_a + x_d x_e x_a x_b x_c,$$
where the minimum $T_i$ is represented by $\star$ in the picture.

\section{Relation to the mirror dimer model $Q^\vee$}
We show that the combinatorial mirror dimer construction of Bocklandt \cite{Bocklandt} can be recovered from our construction $(Q^\bL, \Phi, W)$ (with the spin structure given in Definition \ref{def:nsp}).

We first recall the definition of the mirror quiver $Q^\vee$.
\begin{definition}[\cite{Bocklandt}] \label{def:mirrorquiver}
The dual quiver $Q^\vee$ of $Q$ is defined by the following data:
\begin{enumerate}
\item The vertices of $Q^\vee$ are zig-zag cycles in $Q$.
\item The sets of arrows for $Q$ and $Q^\vee$ are the same. $a \in Q_1$ gives the arrow of $Q^\vee$ with $h(a)$ the cycle coming from the zig ray and $t(a)$ that from the zag ray. 
\end{enumerate}
\end{definition}
From Lemma \ref{lem:dimmir1}, we have
\begin{cor}
The dual quiver $Q^\vee$ can be identified with $Q^\bL$.
\end{cor}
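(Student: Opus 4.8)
The statement to prove is the Corollary identifying $Q^\vee$ with $Q^\bL$ as quivers (with $Q^\vee$ the dual dimer of Bocklandt, Definition \ref{def:mirrorquiver}).

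\textbf{Plan of proof.} The claim is essentially a matching of combinatorial data on both sides, so the proof will be a direct bijection argument rather than anything analytic. First I would compare the vertex sets. By Definition \ref{def:mirrorquiver}(1), the vertices of $Q^\vee$ are the zig-zag cycles in the path algebra of $Q$. By the lemma stated just before (the one-to-one correspondence between zig-zag Lagrangians and zig-zag cycles for a dimer $Q$), this set is in canonical bijection with $\bL = \{L_1,\dots,L_k\}$, which by Definition \ref{def:mirrorquiverseveral} is exactly the vertex set $Q^\bL_0$. So the vertex identification is forced by results already in the excerpt.

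\textbf{Key step: the arrows.} Next I would compare arrows. Definition \ref{def:mirrorquiver}(2) says $Q^\vee_1 = Q_1$, with an arrow $a \in Q_1$ having head the zig-ray cycle through $a$ and tail the zag-ray cycle through $a$. On the other side, Lemma \ref{lem:dimmir1} establishes precisely that $Q^\bL_1$ is in one-to-one correspondence with $Q_1$, with the dual arrow $x_e$ associated to $e \in Q_1$ going \emph{from} the zag ray of $e$ \emph{to} the zig ray of $e$ — i.e., tail at the zag-ray Lagrangian/cycle, head at the zig-ray Lagrangian/cycle. Under the vertex identification of the previous paragraph (zig-zag Lagrangian $\leftrightarrow$ zig-zag cycle), this is literally the same head/tail assignment as in Definition \ref{def:mirrorquiver}(2). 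Hence the edge bijection $Q_1 \to Q^\bL_1$ matches the edge bijection $Q_1 \to Q^\vee_1$ and respects the incidence maps $h,t$. Therefore $Q^\bL$ and $Q^\vee$ are isomorphic as quivers.

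\textbf{Where the content really sits.} In this sense the Corollary is a formality: all the substantive work — identifying odd-degree Floer generators of $\bL$ with edges of $Q$ and pinning down which zig-zag Lagrangians they connect (via the tangent-rotation sign convention of \cite{Se}) — has already been done in Lemma \ref{lem:dimmir1}. The only genuinely checkable point is that the ``zig ray / zag ray'' terminology used in Definition \ref{def:mirrorquiver} for cycles in $Q$ agrees, under the Lagrangian--cycle correspondence, with the ``zig ray / zag ray'' used in Lemma \ref{lem:dimmir1} for the Lagrangians $L_i$; this is immediate from Definition \ref{defnp'}, where each zigzag Lagrangian $L_i$ is built by traversing exactly the edges of a zigzag cycle (alternately taking the left neighboring edge of a positive face and the right neighboring edge of a negative face). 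So the proof is a one-line invocation: combine the vertex bijection (zig-zag cycles $\leftrightarrow$ zigzag Lagrangians) with Lemma \ref{lem:dimmir1}'s arrow bijection and observe the two head/tail rules coincide. I do not anticipate an obstacle; the only thing to be careful about is orientation bookkeeping, i.e. confirming that ``head = zig, tail = zag'' is stated consistently in both Definition \ref{def:mirrorquiver}(2) and Lemma \ref{lem:dimmir1}, which it is.
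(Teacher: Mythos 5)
Your argument is correct and is exactly the paper's (implicit) proof: the paper simply deduces the corollary from Lemma \ref{lem:dimmir1} together with the bijection between zigzag Lagrangians and zigzag cycles, which is precisely the vertex and arrow matching you spell out. Your version just makes the head/tail bookkeeping explicit, which the paper leaves to the reader.
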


The dual dimer $Q^\vee$ embeds into a surface $\Sigma^\vee$.  Combinatorial construction of $\Sigma^\vee$ was given by physicists Feng-He-Kennaway-Vafa \cite{FHKV}.  

\begin{definition} \label{def:dualdimer}
The dual dimer $Q^\vee$ embedded in a surface $\Sigma^\vee$ consists of the following data:
\begin{enumerate}
\item The positive faces of $Q^\vee$ coincide with those of $Q$ in $\Sigma$;
\item The negative faces of $Q^\vee$ are obtained from the negative faces of $Q$ by flipping and reversing the order of their arrows. (See Figure \ref{fig:dualdimer}.)
\end{enumerate}
i.e. $\Sigma^\vee$ may be obtained as follows. First cut $\Sigma$ along the arrows of $Q$, and flip the negative faces, and reverse the direction of arrows in the flipped negative faces. One finally glues back the faces (the original positive faces and the flipped negative faces) by matching arrows 
%of positive and negative faces 
(forgetting the information of vertices). 
This defines an involution on the set of dimer models.
\end{definition}

\begin{figure}[h]
\begin{center}
\includegraphics[height=1.2in]{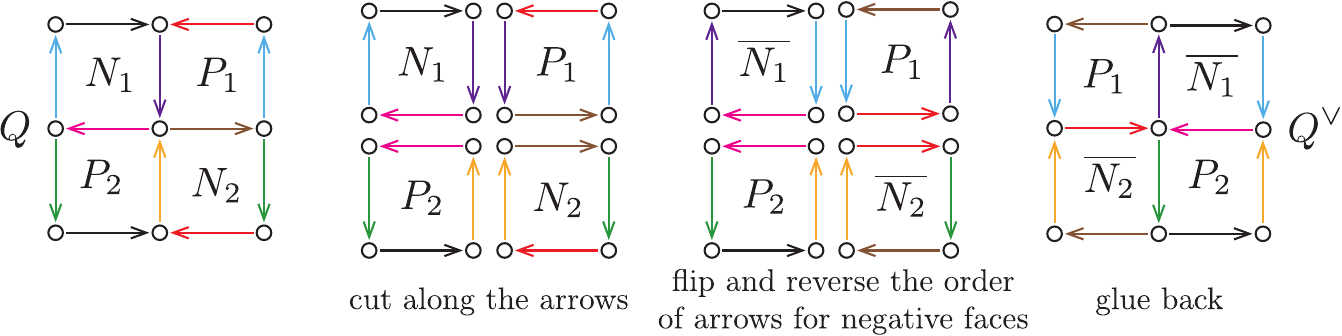}
\caption{Combinatorial construction of dual dimer}\label{fig:dualdimer}
\end{center}
\end{figure}

Let us explain how to obtain $\Sigma^\vee$ in our geometric construction. Roughly speaking, we patch together holomorphic
polygons $\check{P}$'s bounded by $\bL$ to obtain $\Sigma^\vee$.
%The Riemann surface $\Sigma^\vee$ where $Q^\bL$ is embedded, can be obtained by patching
%together holomorphic polygons $\check{P}$'s bounded by $\bL$. 
But we need to attach $\check{P}$ after suitable modification.

Let $F$ be a face of $Q$, and denote by $\check{P}_F$ the holomorphic polygon in $F$ bounded by $\bL$. 
Each polygon $\check{P}_F$ will be replaced by a dual polygon $F^\vee$ in the following way.
An edge of $\check{P}_F$ is given by  Lagrangian $L_i$, and hence corresponds to a vertex $v_i$ (of the quiver $Q^\bL$).
A vertex of $\check{P}_F$ is given by the intersection point of $CF(L_i, L_j)$ and hence can be replaced by an edge $e_{ij}$ (
which are arrows of the quiver $Q^\bL$).
This process provides the dual polygon $F^\vee$, which can be attached to the quiver $Q^\bL$. If we attach $F^\vee$
to $Q^\bL$ for each face $F$ of $Q$, we obtain the surface $\Sigma^\vee$.

Note that this dual polygon $F^\vee$ has the same shape as the face $F$, in fact, if $F$ is a positive face, 
edges of $F$ can be identified with edges of $F^\vee$ (but vertices cannot be identified). If $F$ is a negative face,
edges of $F$ can be identified with edges of $F^\vee$ but with reverse orientation. Since branches of $\bL$ crosses 
at the edges the polygonal decomposition $Q$, we need to glue the modifications of the holomorphic polygons via flipping
negative faces, which identifies vertices and edges of the neighboring tiles.

Formally, this procedure of attaching 2-cells can be naturally read from the spacetime superpotential $\Phi$ (see \eqref{def:Phi}),
which records the holomorphic polygons $\check{P}$, and  the sign determines the orientation of the cell.
\begin{lemma}
For each positive  term $(r_+)_{cyc}$ in $\Phi$, we glue a polygonal face to $Q^\bL$ along the edges of $(r_+)$.
For each negative term $- (r_-)_{cyc}$ in $\Phi$, we glue a polygonal face to $Q^\bL$ along the edges of $(r_-)$ in the
reverse order (See Figure \ref{fig:rminusrev} (b)). Then the resulting Riemann surface can be identified with $\Sigma^\vee$.
\end{lemma}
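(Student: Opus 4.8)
The plan is to match, term by term, the cell-attachment prescribed by $\Phi$ with the combinatorial recipe for $\Sigma^\vee$ in Definition~\ref{def:dualdimer}, using the dictionary between faces of $Q$, inscribed polygons $\check{P}$, and monomials of $\Phi$ established in Lemma~\ref{lem:spcom}. First I would record the precise correspondences. By Lemma~\ref{lem:spcom} and the preceding lemma, the non-trivial holomorphic polygons bounded by $\bL$ are exactly the inscribed polygons $\check{P}_F$, one for each face $F$ of $Q$, and $\Phi$ is the sum of the boundary words of the positive $\check{P}_F$ minus the boundary words of the negative ones; thus the terms of $\Phi$ are in bijection with the faces of $Q$, positive terms with positive faces and negative terms with negative faces. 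Next, from the construction of $\bL$ (connecting midpoints of neighbouring edges of each face, Definition~\ref{defnp'}) together with the identification $Q^\bL_1 \cong Q_1$ of Lemma~\ref{lem:dimmir1}, the vertices of $\check{P}_F$ sit at the midpoints of the edges of $F$, the vertex at the midpoint of $e$ carrying the immersed generator $X_e$ with dual variable $x_e$, while the edges of $\check{P}_F$ are arcs of the zigzag Lagrangians, i.e. vertices of $Q^\bL$. Hence the clockwise boundary word of $\check{P}_F$ is the cyclic word $x_{e_1}\cdots x_{e_m}$ where $e_1,\ldots,e_m$ is the boundary cycle of $F$ in $Q$ read in the appropriate direction, so attaching a $2$-cell to the graph $Q^\bL$ along the boundary word of a positive $\check{P}_F$ (resp.\ along the reverse of the word of a negative $\check{P}_F$) is the same as attaching a polygon to $Q^\bL$ along the boundary cycle $c_F^+$ (resp.\ along $c_F^-$ traversed in reverse order).

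With these identifications in hand, I would compare directly with Definition~\ref{def:dualdimer}. Since the dual quiver $Q^\vee$ is identified with $Q^\bL$ (Definition~\ref{def:mirrorquiver} and the corollary following Lemma~\ref{lem:dimmir1}), the positive faces of $Q^\vee$ glued into $\Sigma^\vee$ are precisely the positive faces of $Q$, attached along their boundary cycles $c_F^+$, and the negative faces of $Q^\vee$ are the negative faces of $Q$ attached with reversed boundary cycles $c_F^-$; this is exactly the cell-attachment dictated by $\Phi$, so the two CW structures coincide and the glued surface is $\Sigma^\vee$. To be thorough I would also check that the result is genuinely a closed orientable surface: in a dimer each arrow $e$ of $Q$ bounds exactly one positive cycle and one negative cycle, so under $Q^\bL_1 \cong Q_1$ each edge $x_e$ appears in exactly two of the attached $2$-cells — once from the positive face it bounds (traversed forward) and once from the negative face it bounds (traversed backward, because of the order reversal). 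Hence the edges are glued in matching pairs with opposite orientations, producing an orientable closed surface, and the link of each vertex $v_i$ of $Q^\bL$ is a single circle, obtained by running cyclically through the faces of $Q$ whose inscribed polygon has a side on the zigzag Lagrangian $L_i$.

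The bulk of the work — and the main obstacle — is the orientation and incidence bookkeeping: one must verify that when the positive $2$-cell and the negative $2$-cell both containing $x_e$ are glued to $Q^\bL$, the edge $x_e$ is shared by them in the way required (this realises the ``gluing back matching arrows of positive and negative faces'' step of the FHKV/Bocklandt recipe), and that reversing the order of the word for a negative term is precisely the ``flipping negative faces and reversing arrow directions'' of Definition~\ref{def:dualdimer}. This amounts to a careful local analysis around each midpoint of an edge of $Q$ and around each zigzag Lagrangian, checking that the zig/zag ray structure of the dimer (as in Figure~\ref{fig:traversing_lag}) produces the claimed incidences; everything else is formal once this local picture is pinned down. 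I would present this local analysis through a single representative computation on a picture such as Figure~\ref{fig:rminusrev}, together with the observation that the sign conventions in Lemma~\ref{lem:spcom} were arranged exactly so that a positive (resp.\ negative) polygon contributes a $2$-cell carrying the orientation of $Q$ (resp.\ the reversed orientation), which is what makes the two recipes agree on the nose.
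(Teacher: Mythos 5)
Your proposal is correct and follows exactly the route the paper intends: the paper itself gives only the informal sketch preceding the lemma (replacing each inscribed polygon $\check{P}_F$ by its dual polygon $F^\vee$ and gluing along $Q^\bL$) and explicitly leaves the proof as an exercise, and your argument fills this in by matching the terms of $\Phi$ with the faces of $Q$ via Lemma \ref{lem:spcom}, transporting their boundary cycles through the identification $Q^\bL \cong Q^\vee$, and comparing with the attachment rule of Definition \ref{def:dualdimer}. The edge-incidence and orientation check you flag (each $x_e$ lying on exactly one positive and one reversed negative cell, so the glued object is a closed oriented surface carrying the dimer $Q^\vee$) is precisely the local verification the paper expects the reader to carry out.
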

We leave the proof of the lemma as an exercise. It is interesting that the ``space"-time superpotential $\Phi$ contains additional
information about the mirror ``space''.

From Lemma \ref{lem:spcom}, it is easy to  identify combinatorial Jacobi algebra of the mirror dimer $Q^\vee$
with the Jacobi algebra of $Q^\bL$ from $\Phi$, where the combinatorial Jacobi algebra of a dimer is defined as follows.

\begin{definition}
For a dimer $Q$, $\Jac(Q)$ is defined to be the quotient of the path algebra by the ideal generated by $r_e := r_{e,+} - r_{e,-}$ (for every arrows $e$) where $e \cdot r_{e,+}$ is the unique positive cycle in $Q$ containing $e$, and $e \cdot r_{e,-}$ is the unique negative cycle containing $e$. 
\end{definition}

\begin{cor}\label{cor:jac}
With the spin structure given in Definition \ref{def:nsp}, we have 
$$\Jac (Q^\bL, \Phi) = \Jac(Q^\vee).$$
%Namely, for any edge $e \in Q^\vee_1 = Q^\bL_1$,  a Jacobi relation $\partial_e \Phi $ becomes the relation $r_e := r_{e,+} - r_{e,-}$ where $e \cdot r_{e,+}$ is the unique positive cycle in $Q^\vee$ containing $e$, and $e \cdot r_{e,-}$ is the unique negative cycle containing $e$. 
\end{cor}
Hence, our formalism explains why Jacobi relation appears in the mirror quiver algebra in \cite{Bocklandt}.

\begin{lemma}
With the spin structure given in Definition \ref{def:nsp},
the potential $W$ of Lemma \ref{lem:potrm} agrees with a combinatorial central element $l$, defined in \cite{Bocklandt} 
\begin{equation}\label{eqn:deflcv}
l:= \sum_{v \in Q^\vee_0} C_v
\end{equation}
where for each $v \in Q^\vee_0$, $C_v$ is an a cycle starting from $v$. 

Therefore,  the generalized mirror $(\cA_\bL,W_\bL)$ coincides with the Landau-Ginzburg model $(\Jac(Q^\vee),l)$.
\end{lemma}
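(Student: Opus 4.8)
The plan is to show that the worldsheet potential $W$ from Lemma \ref{lem:potrm}, which is a sum $\sum_i W_i$ where $W_i$ is the boundary word of the unique polygon $\check{P}$ passing through the minimum $T_i \subset L_i$, coincides termwise with the combinatorial element $l = \sum_{v \in Q^\vee_0} C_v$ of Bocklandt. Since the vertices of $Q^\vee$ are in bijection with the zigzag cycles of $Q$, which by the earlier lemma are in bijection with the zigzag Lagrangians $L_i \in \bL$, hence with the vertices $v_i$ of $Q^\bL = Q^\vee$, it suffices to match $W_i$ with $C_{v_i}$ for each $i$. First I would recall how Bocklandt defines $C_v$: for a vertex $v$ of $Q^\vee$ (a zigzag cycle $Z$ in $Q$), $C_v$ is a cyclic path in $Q^\vee$ based at $v$ obtained by following the boundary of the (positive and negative) faces of $\Sigma^\vee$ incident to $v$ — concretely, reading off the arrows dual to the edges of $Q$ visited by the zigzag cycle $Z$. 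I would then identify this with the boundary word of $\check P$: the minimum $T_i$ of $L_i$ lies in the interior of exactly one face-polygon $\check P$ of $Q$ (by our construction of $\bL$, $L_i$ passes through exactly one such $\check P$ on the side containing $T_i$), and its boundary word, read clockwise from $T_i$ and recording the dual arrows $x_e$ at the corners, is precisely $C_{v_i}$.

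The key steps, in order, are: (i) unwind the definition of $C_v$ in \cite{Bocklandt} and rewrite it in terms of the data of $Q$ (which arrows $e$ of $Q$ contribute, in which cyclic order) using the vertex-edge duality between $\check P$ and $F^\vee$ established in the previous subsection; (ii) observe that for each zigzag Lagrangian $L_i$, the Morse minimum $T_i$ is contained in a unique inscribed polygon $\check P$ bounded by $\bL$, so that $W_i$ is genuinely a single cyclic word and not a sum; (iii) check that the corners of $\check P$, when traversed clockwise from $T_i$, produce exactly the sequence of odd-degree immersed generators $X_e$ whose dual variables $x_e$ assemble into $C_{v_i}$; (iv) verify the signs agree — here I invoke Lemma \ref{lem:potrm}, which already says that with the spin structure of Definition \ref{def:nsp} every $W_i$ comes with a positive sign, matching the (unsigned) definition of $l$ in \eqref{eqn:deflcv}. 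Finally, combining (i)–(iv) with Corollary \ref{cor:jac}, which gives $\Jac(Q^\bL,\Phi) = \Jac(Q^\vee)$, yields the identification of the generalized mirror $(\cA_\bL, W_\bL)$ with the Landau-Ginzburg model $(\Jac(Q^\vee), l)$.

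The main obstacle I expect is step (iii), the bookkeeping of exactly which zigzag cycle each edge of $\check P$ corresponds to and in which cyclic order — that is, checking that "the boundary word of $\check P$ read clockwise from $T_i$" and "the path $C_{v_i}$ as Bocklandt defines it" are literally the same cyclic word in $Q^\vee_1 = Q^\bL_1 = Q_1$, not merely words of the same length. This requires carefully tracking the head/tail conventions of Definition \ref{def:mirrorquiver} (zig ray $\to$ head, zag ray $\to$ tail) against the way our immersed generators $X_e$ were oriented (from the zag-ray Lagrangian to the zig-ray Lagrangian, per Lemma \ref{lem:dimmir1}), and against the clockwise-versus-counterclockwise conventions in reading boundary words. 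Since all of these conventions were fixed consistently in the preceding subsections, the verification is a diagram chase rather than a computation, and I would present it by examining a single corner of $\check P$ and the local picture around it, as in Figure \ref{fig:traversing_lag}, then invoking cyclicity. I would leave the remaining routine checks to the reader, as the paper does for the analogous gluing lemma.
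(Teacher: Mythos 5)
There is a genuine gap, and it sits exactly where your proof puts all its weight on a literal identification. Bocklandt's element $l=\sum_v C_v$ is purely combinatorial: $C_v$ is (a priori any) boundary cycle of a single face of $Q^\vee$ based at $v$, and it carries no information about the Morse function you chose on $L_v$, hence no information about where the minimum $T_v$ sits. (Your step (i) also misdescribes it: $C_v$ is one face-boundary word, not the word obtained by following the whole zigzag cycle $Z$ dual to $v$; in the example after Lemma \ref{lem:potrm} each $C_v$ is a $5$-letter rotation of a pentagon boundary, not a zigzag word.) So your claim in (ii)--(iii) that ``the boundary word of the unique $\check{P}$ containing $T_i$, read clockwise, is precisely $C_{v_i}$'' cannot be established by convention-chasing at a corner: the Lagrangian $L_v$ has an edge on \emph{many} inscribed polygons $\check{P}$, Bocklandt's $C_v$ corresponds to one of them (or to an unspecified one), and nothing forces that polygon to be the one containing $T_v$. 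The two words are in general genuinely different elements of the free path algebra.

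The missing idea --- which is the actual content of the paper's proof --- is that the identification only holds in $\Jac(Q^\vee)$ and is forced by the Jacobi relations: if $\check{P}_1,\check{P}_2$ are the two polygons on either side of an edge $a$ of $Q$ whose midpoint lies on $L_v$, then their boundary words agree modulo $\partial\Phi$ (equivalently, modulo the relations $r_{e,+}=r_{e,-}$ of Corollary \ref{cor:jac}). Applying this repeatedly one slides the base polygon along $L_v$ until it is the one meeting $T_v$, so that whichever cycle $C_v$ Bocklandt's definition selects, it equals $W_v$ in the Jacobi algebra; summing over $v$ gives $W_\bL=l$. Your appeal to Lemma \ref{lem:potrm} for signs and to Corollary \ref{cor:jac} for the algebras is fine, but without this sliding argument the equality $W_i=C_{v_i}$ you assert is unjustified, and the remaining bookkeeping of head/tail and orientation conventions does not repair it.
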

\begin{proof}
A vertex $v \in Q^\vee_0$ corresponds to a Lagrangian $L_v \in \bL$, and a cycle $C_v$ corresponds to a boundary word of a polygon $\check{P}$ with an edge in $L_v$. But there may be several polygons $\check{P}$ with an edge in $L_v$. Let $P_1, P_2$ be the two faces of $Q$ sharing an edge $a$, and suppose that $L_v$ passes through the mid-point of $a$. See figure \ref{fig:rminusrev}. Then it is easy to check that the boundary word of $\check{P}_1$
and that of $\check{P}_2$ are the same up to Jacobi relation of $\Phi$. Hence, by applying this fact several times if necessary, we may assume that $\check{P}$ that corresponds to a cycle $C_v$ meet the minimum $T_v$ of the Morse function on the Lagrangian $L_v$. Therefore $W_\bL$ can be identified with $l$.
\end{proof}

\section{The case of a non-dimer $Q$}

The construction in this case  will be a generalized version of that in Chapter \ref{sec:MFseverallag}, as we also consider even degree immersed generators like in the case of extended mirror functors, but we will not take the full extended directions.
We will take only half of immersed generators of $\CF^\bullet(\bL, \bL) = \oplus_{i,j} \CF^\bullet(L_i, L_j)$ to form 
a formal deformation space.

Let us first choose any orientation for each $L_i$ of $\bL$. 
An intersection point $p \in L_i \cap L_j$ at the midpoint of an arrow $e$ of $Q_1$ gives rise to two generators $X_e$ (odd degree ) and $\bar{X}_e$ (even degree)
in $\CF^\bullet(\bL,\bL)$.

\begin{definition}
Consider a face $P$ of $Q$, and a corresponding polygon $\check{P}$ bounded by $\bL$.
For each vertex $p$ of $\check{P}$, we choose an immersed generator $Y_e$ in $\{ X_e, \bar{X}_e\}$ using the polygon $\check{P}$:
here $Y_e$ corresponds to the intersection $\CF^\bullet(L_a,L_b)$ so that the order of the branch of $L_a, L_b$ at $p$ is
counterclockwise with respect to $\check{P}$. We denote by $\bar{Y}_e$ the other generator in $\{X_e, \bar{X}_e\}$.
If $Q$ is a dimer, we have $Y_e=X_e$.
\end{definition}

%Recall that $C^\bullet(\bL, \bL)
%Then, Lagrangian Floer complex $C^\bullet(L_i,L_j)$ is generated by  each intersection point $p \in L_i \cap L_j$ for $i \neq j$, and $C^\bullet(L_i,L_i)$ is generated by  the maximum $\one_i$ and the minimum $T_i$ of a Morse function on $L_i$ (with
%no other critical points) and also by the two generators $X_p, \bar{X}_p$ for each self intersection points $p$.
%
%Note that any intersection point $p \in L_i \cap L_j$ contributes to both in $C^\bullet(L_i,L_j)$ and in $C^\bullet(L_j,L_i)$,
%but with different $Z/2$-gradings. The generator $p$ in $C^\bullet(L_i,L_j)$ is denoted as $X_p$ (resp. $\bar{X}_p$) 
%if it is of odd (resp. even) degree, which means that at $T_p\Sigma$, one can rotate the oriented unit tangent vector

%Here, for any intersection point $p$, we denote by $X_p$ (resp. $\bar{X}_p$) the odd (resp. even) degree generator
%which is determined canonically from the orientation of curves.
%
%Recall that $p \in C^\bullet(L_i,L_j)$ is 
%
%  $X_p, \bar{X}_p$ for 
%  
%

 We will only turn on $Y_e$  as a formal deformation direction, since $\bar{Y}_e$ do not play any significant role in the Floer theory of $\bL$. Since $Y_e$ is $\Z_2$-graded, the quiver $Q^\bL$ is $\Z_2$-graded.

\begin{definition}
We define the graded quiver $Q^\bL$ as follows. Its vertices $Q^\bL_0 = \{ v_1,\cdots, v_k\} $ corresponds to zigzag Lagrangians 
 $\bL= \{ L_1,\cdots, L_k\}$.
For each $Y_e$, we assign an (dual) arrow $x_e \in Q^\bL_1$ from the zag ray to the zig ray of $e$ of degree $$deg(e_p) = 1-deg(Y_p).$$
Here, zag ray (resp.  zig ray ) of an arrow $e$ is the Lagrangian which pass through the midpoint of the arrow $e$
and travel to the right (resp. left) side of the arrow.
\end{definition}

The following lemma holds by the definition.

\begin{lemma}
Since each midpoint of an edge in the original quiver $Q$ is an intersection point, 
there is an obvious one-to-one correspondence between the arrow set of $Q$ and $Q^\bL$,
\end{lemma}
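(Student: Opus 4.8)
The plan is to reduce the statement to a direct bookkeeping of where the family $\bL$ meets itself, which is dictated entirely by the construction in Definition \ref{defnp'}. First I would record the two geometric facts that make the correspondence ``obvious''. On the one hand, each zigzag Lagrangian $L_i$ is, inside $\Sigma\setminus Q_0$, a concatenation of embedded arcs, each running within a single face $P$ of $Q$ from the midpoint of one edge of $P$ to the midpoint of a neighbouring edge; concretely $\bL\cap\overline{P}$ is the inscribed polygon $\check P$, whose vertices are exactly the midpoints of the edges of $P$. On the other hand, the standing assumption (Definition \ref{defnquiv}) that every face of $Q$ has at least three distinct vertices forces each $\check P$ to be an embedded polygon, so distinct arcs of $\check P$ meet only at the vertices of $\check P$, i.e.\ at midpoints of edges of $Q$.

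The key step is to conclude from these facts that the set of self- and mutual intersection points of $\bL$ is precisely the set $\{m_e : e\in Q_1\}$, where $m_e$ denotes the midpoint of the edge $e$, and that $e\mapsto m_e$ is a bijection onto this set. Indeed, no point of $\bL$ lies on $Q$ (by construction $\bL\subset\Sigma\setminus Q$), and in the open interior of each face $P$ the set $\bL$ is the arc-collection $\check P$ with its vertices removed, hence has no intersection points there; so every intersection point of $\bL$ is a vertex of some $\check P$, that is, the midpoint of a unique edge of $Q$. Conversely, at each $m_e$ the ``connect smoothly'' clause of Definition \ref{defnp'} produces exactly two local branches of $\bL$ meeting transversally, namely the arc of the zag ray of $e$ and the arc of the zig ray of $e$ (arcs of zigzag Lagrangians $L_i$ and $L_j$, with possibly $i=j$), so $m_e$ really is an intersection point. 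Since distinct edges obviously have distinct midpoints, the assignment $e\mapsto m_e$ is a bijection from $Q_1$ onto the set of intersection points of $\bL$.

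To finish, I would invoke the definition of the graded quiver $Q^\bL$ just given in the non-dimer case: each intersection point $m_e$ carries the two immersed generators $X_e,\bar X_e\in\CF^\bullet(\bL,\bL)$, of which the preceding construction selects precisely one, $Y_e$, and this single $Y_e$ contributes the single arrow $x_e$ of $Q^\bL$ running from the zag ray to the zig ray of $e$; moreover every arrow of $Q^\bL$ arises in this way, since the vertices of $Q^\bL$ correspond to the $L_i$ and all its arrows are the $x_e$'s. Composing the bijection $e\mapsto m_e$ with $m_e\mapsto x_e$ then gives the asserted one-to-one correspondence $Q_1\to Q^\bL_1$. The only place where more than a one-line check is needed --- and it is the mildest of obstacles --- is the claim that exactly one generator $Y_e$ is attached to each edge $e$: since $e$ bounds two faces, one must verify that the counterclockwise-order convention used to single out $Y_e$ from $\check P$ is independent of which of the two incident faces is used, which follows from choosing the orientations of all the inscribed polygons $\check P$ coherently, as boundary orientations of the faces of $Q$. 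Everything else is immediate from the construction.
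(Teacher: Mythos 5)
Your argument is correct and coincides with the paper's treatment: the paper gives no proof beyond the remark that the lemma holds by the definition, and your write-up is precisely that definition-level bookkeeping (the intersection points of $\bL$ are exactly the edge-midpoints of $Q$, and each such point carries exactly one chosen generator $Y_e$, hence one arrow $x_e$ of $Q^\bL$). One cosmetic caveat: $\bL$ lives in $\Sigma\setminus Q_0$ and meets the edges of $Q$ precisely at their midpoints, so your phrase ``$\bL\subset\Sigma\setminus Q$'' should be read as saying that the interiors of the arcs avoid $Q$; this does not affect the argument, and your consistency check for the choice of $Y_e$ at the two faces adjacent to $e$ (opposite corners give the same counterclockwise ordering of the two branches) is the right, if peripheral, point.
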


\begin{definition}
We set $$b = \sum_{e \in Q_1} x_e Y_e.$$ As the degree of $x_e = 1-deg(Y_e)$,
$b$ has total degree one.
\end{definition}
Now, we may proceed as in \ref{sec:MFseverallag}, to obtain
$$ m_0^b = \sum_i W_i \one_{L_i} + \sum_{e} P_{e} \bar{Y}_{e}.$$
By definition $\cA_\bL$ is the (completed) path algebra of $Q^\bL$ quotient by $P_{e}$ and $W_\bL = \sum_i W_i$.
The computation of $\Phi$ and the central element $W_\bL$ can be carried out in a similar way as in the case of a dimer.
But the main difference from the case of dimers is that first, arrows in $Q^\bL$ may have non-trivial grading, and second
$\Phi$ and $W_\bL$ can be computed as in the Lemma \ref{lem:spcom}, \ref{lem:potrm}, but signs in the formula will not be simple  as dimers. 

\begin{figure}[h]
\begin{center}
\includegraphics[height=1.6in]{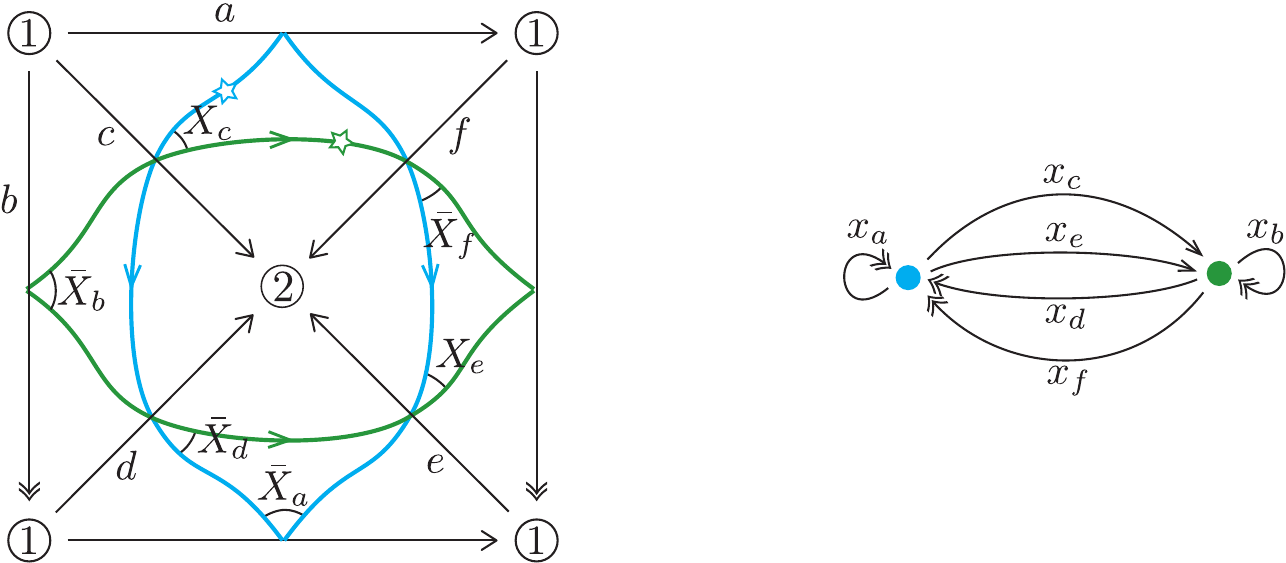}
\caption{Example of non-dimer}\label{fig:nondimer}
\end{center}
\end{figure}

In the non-dimer example of Figure \ref{fig:nondimer}, we turn on the half of immersed generators $\left\{\bar{X}_a, \bar{X}_b, X_c, \bar{X}_d, X_e, \bar{X}_f \right\}$ as formal deformation. Thus, four variables $x_a, x_b, x_d, x_f$ have odd degrees, and the other two variable $x_c$ and $x_e$ have even degrees. One can directly compute from the picture that
\begin{equation*}
\begin{array}{ll}
\Phi = &x_a x_f x_c  + x_c x_d x_b+x_e x_a x_d+x_e x_f x_b\\
W_\bL =& x_a x_f x_c + x_c x_a x_f
\end{array}
\end{equation*}
where $\star$ in the figure represents the location of minimum $T_i$. Here $\Phi$ defines a cyclic potential (in the graded sense).

By applying the (generalized) construction of Chapter \ref{sec:MFseverallag}, we obtain a $\Z_2$-graded $\AI$-functor $\mathcal{F}_\bL$ from the wrapped Fukaya category to the dg-category of matrix factorizations of $W_\bL$, and this finishes the proof of (1).

\section{Perfect matching and $\Z$-grading}

%
%For a given dimer $Q$ on a Riemann surface $\Sigma$, we have constructed a noncommutative LG mirror $(\cA,W_\bL) = (\Jac(Q^\vee),l)$ of $\Sigma \setminus Q_0$.  In this section we consider the wrapped Fukaya category.  We need to make sure every branch of $\bL$ is an exact Lagrangian, so that it is an object in $\Fuk^{\wrap}(\Sigma \setminus Q_0)$.  
%Then the construction in Section \ref{sec:FuktoMF} gives an $A_\infty$-functor
%%Let $Q$ be a quiver on a Riemann surface $\Sigma$ and $Q_0$ be the set of its vertices. We want to relate the Fukaya category of the punctured Riemann surface $\Sigma \setminus Q_0$ with combinatorial data of $Q$. 
%%
%%
%%
%% Following the construction in \cite{CHL13}, $\mathbb{L}$ induces an $A_\infty$ functor
%$$\cF_{\mathbb{L}} : \Fuk^{\wrap} (\Sigma \setminus Q_0) \to \MF (W,Q^\vee)$$
%where $\Fuk^{\wrap} (\Sigma \setminus Q_0)$ is the wrapped Fukaya category of $\Sigma \setminus Q_0$.   
%

 We will  show that a {\em perfect matching} on $Q$ induces a $\mathbb{Z}$-grading on $\mathbb{L}$,
% We will  show that $\mathbb{L}$ admits a $\mathbb{Z}$-grading when $Q$ has a so-called {\em perfect matching},
 and this also induces the $\Z$-grading for categories and functors.
 \begin{definition}
A perfect matching for a dimer $Q \subset \Sigma$ is a subset $\mathcal{P}$ of $Q_1$ such that every boundary cycle of a polygon contains exactly one arrow in $\mathcal{P}$.
\end{definition}
From a perfect matching, \cite{Bocklandt} constructed a nowhere-vanishing vector field $\mathcal{V}$ on $\Sigma \setminus Q_0$ which essentially plays the role of a Calabi-Yau volume form. 
Let $e$ be the unique edge of a positive face $F^+$ which belongs to $\mathcal{P}$. Over $F^+$, the vector field $\mathcal{V}$ is defined in such a way that its flow trajectories are loops starting and ending at $h(e)$ and turning in counterclockwise direction.  Over the negative face $F^-$ which has $e$ as the unique edge in $\mathcal{P}$, the flow trajectories are loops starting and ending at $t(e)$ and turning in clockwise direction.  See (a) of Figure \ref{fig:refexactness} for the illustration.

From the vector field $\mathcal{V}$, any Lagrangian $L$ (which is simply a curve) in $\Sigma \setminus Q_0$ has a phase function $L \to U(1)$ which is measuring the angle from $\mathcal{V}(p)$ to $T_pL$ at each $p \in L$.  A grading on $L$ is a lift of the phase function to $L \to \R$.   

\begin{lemma}
Suppose $Q$ is given a perfect matching.  Then each branch of $\mathbb{L}$ admits a grading. 
\end{lemma}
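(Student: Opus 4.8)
The plan is to show that the phase function of each zigzag Lagrangian branch $L_i$, measured against the vector field $\mathcal{V}$ coming from the perfect matching $\mathcal{P}$, lifts to a map $L_i \to \mathbb{R}$, i.e.\ that it has zero winding number as we traverse the closed curve $L_i$ once. The key point is that $L_i$ is a zigzag curve: it is assembled from arcs, one inside each face it meets, connecting midpoints of consecutive edges. So it suffices to compute the total angular variation of $T_pL_i$ relative to $\mathcal{V}(p)$ face by face and check that the contributions telescope to $0 \bmod 2\pi \mathbb{Z}$ (and in fact to an integer multiple of $2\pi$ that we must verify is $0$; if it is a nonzero multiple, the statement still gives a lift but we want consistency with the Calabi-Yau grading downstream, so I expect it to be exactly $0$).

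First I would recall the local structure of $\mathcal{V}$ over a face $F$: over a positive face with distinguished edge $e \in \mathcal{P}$, the trajectories of $\mathcal{V}$ are counterclockwise loops based at $h(e)$; over a negative face with distinguished edge $e$, they are clockwise loops based at $t(e)$. Inside such a face, the zigzag Lagrangian $L_i$ enters at the midpoint of one edge and leaves at the midpoint of an adjacent edge, following (up to isotopy rel endpoints) a concatenation of two segments of the inscribed polygon $\check{P}$. I would compute, for each combinatorial type of face (positive/negative) and each pair of adjacent edges, the change in the angle $\angle(\mathcal{V}(p), T_pL_i)$ as $p$ runs along that arc; this is a finite local computation using that $\mathcal{V}$ is (isotopic to) a rotation field about a single vertex. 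The crucial input here is exactly the perfect matching condition: it guarantees that the "rotation center" of $\mathcal{V}$ in each face is a single well-defined vertex, so the angle function is well-behaved and its variation along an arc is controlled by how the arc winds around that vertex.

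Next I would sum these local contributions around the whole loop $L_i$. Since $L_i$ crosses the edge set of $Q$ alternately between zig-type and zag-type turns (by the definition of a zigzag Lagrangian/zigzag cycle, and using the one-to-one correspondence between zigzag Lagrangians and zigzag cycles established earlier in the section), the positive-face and negative-face contributions alternate, and I expect the clockwise/counterclockwise alternation of $\mathcal{V}$ to cause the jumps at successive edge-midpoints to cancel in pairs, leaving total winding $0$. Concretely, crossing the midpoint of an edge $e$, the Lagrangian passes from a face where $\mathcal{V}$ rotates one way to a face where it rotates the other way, and the "defect" picked up is the same on both sides up to sign; telescoping these around the closed curve gives $0 \in 2\pi\mathbb{Z}$. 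I would phrase this as: the phase function $L_i \to U(1)$ is null-homotopic, hence lifts to $L_i \to \mathbb{R}$, which is the desired grading.

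The main obstacle I anticipate is bookkeeping the signs and the precise angular jumps at the edge midpoints where $L_i$ passes between two faces whose $\mathcal{V}$-fields have opposite chirality; one has to be careful that "adjacent to a matched edge or not" is handled correctly, since the behavior of $\mathcal{V}$ near the matched edge $e$ differs (trajectories are based at $h(e)$ versus $t(e)$). A clean way to organize this, which I would adopt, is to choose an explicit model for $\mathcal{V}$ on a standard polygon and an explicit model for the arc of $\check{P}$, reducing everything to a computation in a fixed picture, and then invoke that any two perfect-matching vector fields on the same face are homotopic rel the relevant boundary data so the winding count is a combinatorial invariant of the dimer with its matching. This matches the construction in \cite{Bocklandt}, and indeed I would expect the grading produced this way to coincide with (a shift of) the one used there; once the lift exists, part (4) and part (5) of the theorem — that $\mathcal{F}_{\mathbb{L}}$ becomes $\mathbb{Z}$-graded — follow from the general $\mathbb{Z}$-graded mirror functor machinery of Section \ref{sec:fingpsymm} applied to this grading on $\mathbb{L}$.
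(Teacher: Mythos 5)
Your overall strategy (show the phase function $L_i \to U(1)$ is null-homotopic, hence lifts to $\R$) is sound, but as written the proposal has a genuine gap: the decisive step, that the face-by-face angular contributions ``telescope to $0$,'' is only announced as an expectation, and the sign/jump bookkeeping you flag as the main obstacle is precisely where all the content lies --- nothing in the combinatorics of zig/zag alternation forces a formal pairwise cancellation without actually controlling the geometry of the arc of $\check{P}$ relative to $\mathcal{V}$ inside each face. Moreover, the parenthetical remark that if the total variation were a nonzero multiple of $2\pi$ ``the statement still gives a lift'' is wrong: the winding number of the phase map is exactly the obstruction to lifting, so a nonzero value would mean no grading exists; it cannot be shrugged off.

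The paper's proof avoids the winding computation altogether by a pointwise observation (this is what Figure \ref{fig:refexactness}(a) depicts): because $L_i$ is a zigzag branch, with a suitable orientation the angle from $T_pL_i$ to $\mathcal{V}(p)$ can be chosen to lie in the open interval $(-\pi/2,\pi/2)$ at \emph{every} point $p \in L_i$ --- the rotational model of $\mathcal{V}$ in each positive/negative face never turns the flow direction against the zigzag arc. Hence the phase function has image in a proper, contractible arc of $U(1)$, is automatically null-homotopic, and lifts to $\R$ with no bookkeeping at all. If you want to complete your route, note that the ``finite local computation'' you defer is essentially equivalent to this pointwise bound; it is cleaner to prove the bound directly in the local model of a single face and conclude immediately, rather than sum signed angular defects around the loop.
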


\begin{proof}
As illustrated in (a) of Figure \ref{fig:refexactness}, $\mathcal{V}(p)$ once makes a $2\pi$-rotation with respect to $T_p {L_i}$, and then rotates back to the opposite direction by $2 \pi$ as we travel along $L_i$. Therefore, after slight adjustment, the angles from $T_p {L_i}$ to $\mathcal{V}(p)$
can be chosen to lie in the interval $(-\pi, \pi)$, and hence the phase function can be lifted to $\R$.
\end{proof}
%To see that we can smoothly isotope each branch $L_i$ of $\mathbb{L}$ to be exact, observe that $L_i$ is topologically a union of several circular arcs around punctures and parts of arrows that lie in the zigzag cycle corresponding to it (see (a) of Figure \ref{fig:refexactness}).  We claim that $\int_\alpha L_i$ vanishes after adjusting radii of these circular arcs into suitable sizes.  Near a puncture, $\alpha$ is locally given as $r d\theta$ where $r$ goes to $\infty$ at the puncture, and hence if the circular arc around the puncture (which is close enough to the puncture) travels in counterclockwise (clockwise resp.) direction, the integral of $\alpha$ along this arc is positive (negative resp.). See $\gamma_1$ and $\gamma_2$ in Figure \ref{fig:refexactness}. 

%Note that if counterclockwise arc $\gamma_1$ moves toward the puncture, $\int_{\gamma_1} \alpha$ goes to $\infty$. The integral goes to $-\infty$ if clockwise arc $\gamma_2$ does so. By definition of zigzag cycles, we have both kinds of arcs in this topological model of $L_i$, and the integral along the arrows never changes along this process because $\alpha$ restricted to arrows near the picture is zero. Therefore, we can slightly deform $L_i$ so that the integral vanishes.

\begin{figure}
\begin{center}
\includegraphics[height=2.8in]{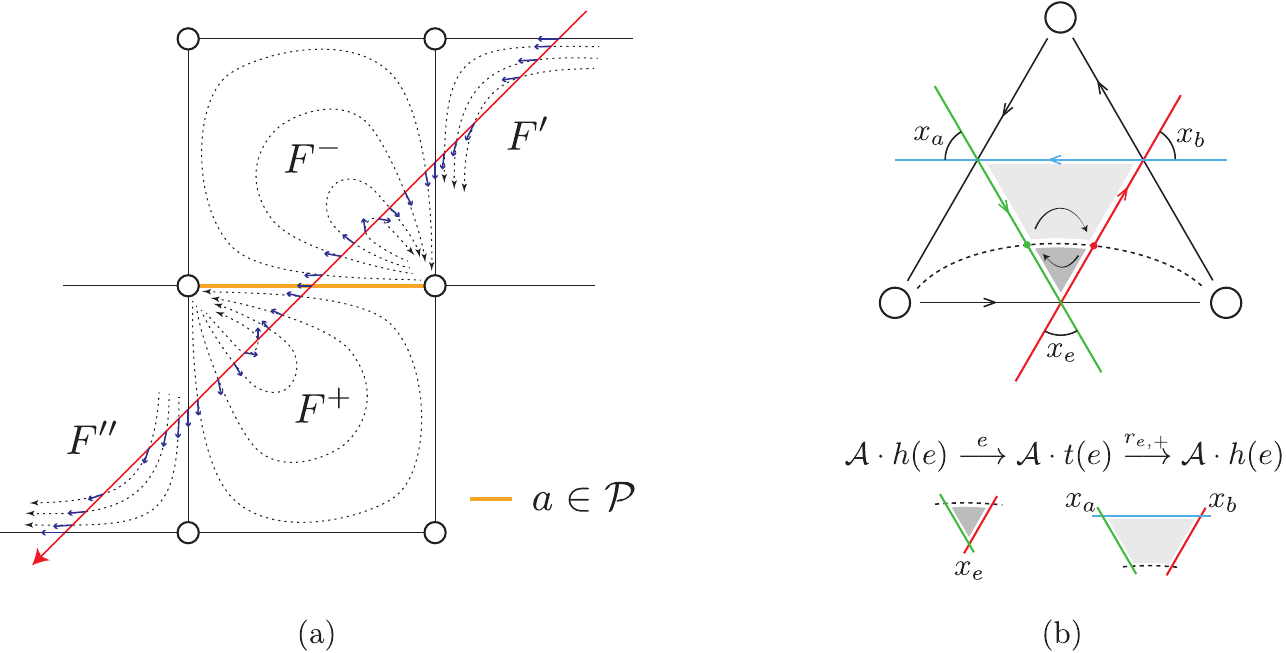}
\caption{(a) Perfect matching and the grading of $\bL$ $\quad$ (b) Mirror matrix factorization $P_e$}\label{fig:refexactness}
\end{center}
\end{figure}
%We next prove that $L_i$ admits a $\Z$-grading when there exists a perfect matching for $Q$. 

\begin{cor}
We have an $A_\infty$-functor $\cF_{\mathbb{L}} : \Fuk^\wrap (\Sigma \setminus Q_0) \to \MF (\Jac(Q^\vee),W)$ which is graded if there exists a perfect matching on $Q$.
\end{cor}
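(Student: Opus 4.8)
The statement to be proved is the final Corollary: there is an $A_\infty$-functor $\cF_{\mathbb{L}} : \Fuk^\wrap (\Sigma \setminus Q_0) \to \MF (\Jac(Q^\vee),W)$ which is graded when $Q$ admits a perfect matching.

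The plan is to assemble this from pieces already established in the excerpt rather than to rebuild anything. First I would recall that the reference Lagrangian $\bL = \{L_1,\dots,L_k\}$ is a collection of exact zigzag Lagrangians in the Liouville manifold $\Sigma \setminus Q_0$ (exactness was arranged by the isotopy lemma), so the general machinery of Section~\ref{sec:MFseverallag} applies verbatim in the dimer case: we have the endomorphism quiver $Q^\bL$, the Maurer--Cartan deformation $b = \sum_{e\in Q_1} x_e X_e$, the weakly unobstructed relations $P_e$, the central superpotential $W_\bL = \sum_i W_i$ (Theorem~\ref{thm:centralseveral}), and the mirror functor $\cF_\bL : \Fuk(X) \to \MF(\cA_\bL, W_\bL)$ of Theorem~\ref{thm:F} / Section~\ref{sec:FuktoMF}. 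Combining Lemma~\ref{lem:dimmir1} (identifying $Q^\bL_1$ with $Q_1$), Corollary~\ref{cor:jac} ($\Jac(Q^\bL,\Phi) = \Jac(Q^\vee)$ with the spin structure of Definition~\ref{def:nsp}), and Lemma~\ref{lem:spcom}--\ref{lem:potrm} (which identify $\Phi$ and $W_\bL$ with the combinatorial data of Bocklandt), we get $\cA_\bL = \Jac(Q^\vee)$ and the target category is $\MF(\Jac(Q^\vee),W)$ as claimed. Since $\Fuk^\wrap(\Sigma\setminus Q_0)$ is the Fukaya category of the exact/Liouville manifold and our construction in Section~\ref{sec:FuktoMF} is purely algebraic once $\CF^\bullet(\bL,\bL)$ and the $m_k$'s are fixed, the functor is well-defined on the wrapped category (the wrapped Floer complexes between compact $\bL$ and arbitrary wrapped objects are the relevant input). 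This yields the ungraded statement.

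For the graded refinement, the key input is the previous Lemma: a perfect matching $\mathcal{P}$ on $Q$ produces Bocklandt's nowhere-vanishing vector field $\mathcal{V}$ on $\Sigma\setminus Q_0$, which serves as (the argument of) a trivialization of the tangent bundle, i.e.\ a grading structure making $\Fuk^\wrap(\Sigma\setminus Q_0)$ a $\Z$-graded $A_\infty$-category; and each branch $L_i$ of $\bL$ admits a $\Z$-valued phase function because the angle from $T_pL_i$ to $\mathcal{V}(p)$ stays in $(-\pi/2,\pi/2)$. Then I would run the argument of Section~\ref{sec:CY-grading} (or rather its one-dimensional, no-group specialization): define $\deg x_e := 1 - \deg X_e$ where $\deg X_e$ is now the $\Z$-degree of the immersed generator computed from the phases of the two branches meeting at the midpoint of $e$; show the weakly unobstructed relations $P_e$ (hence the ideal $R$) are homogeneous so that the grading descends to $\cA_\bL = \Jac(Q^\vee)$, and $W$ is homogeneous of degree $2$; this makes $(\Jac(Q^\vee),W)$ a $\Z$-graded Landau--Ginzburg model with a $\Z$-graded matrix factorization category $\MF^\Z$. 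Because $b$ has total degree one, each $m_k^{(\bL,b),\dots}$ has degree $2-k$, exactly as in Proposition~\ref{prop:F^1/m}; hence $\delta = m_1^{(\bL,b),U}$ has degree one (so $\cF(U)$ is a $\Z$-graded matrix factorization), $\cF_1 = m_2^{(\bL,b),U_1,U_2}$ has degree zero, and the higher components $\cF_k$ have degree $2-k$. Therefore $\cF_\bL$ upgrades to a graded $A_\infty$-functor.

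The main obstacle — or really the only nontrivial point — is bookkeeping the $\Z_2$ vs.\ $\Z$ gradings consistently and checking that the grading defined combinatorially via $\mathcal{V}$ on the $A$-side matches the algebraic grading $\deg x_e = 1 - \deg X_e$ on the $B$-side, i.e.\ that the degree of each arrow of $Q^\vee$ prescribed by the perfect matching agrees with the Floer-theoretic degree of the corresponding immersed generator measured against $\mathcal{V}$. This is the analogue of the compatibility lemmas ($\alpha_{g_x} \equiv \deg x \bmod 2\Z$, and Lemma~\ref{lem:deg}) in Section~\ref{sec:CY-grading}, and it reduces to a local angle computation at each midpoint of an edge of $Q$, using the explicit description of $\mathcal{V}$ over positive and negative faces. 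Once this local matching is in hand, everything else is the formal degree-counting of the functor components, identical to the proof of Proposition~\ref{prop:F^1/m} and Proposition~\ref{prop:F-graded}, and the Corollary follows.
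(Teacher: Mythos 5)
Your proposal is correct and follows essentially the same route as the paper: the ungraded functor is just Theorem \ref{thm:F} (Section \ref{sec:FuktoMF}) applied to the exact zigzag Lagrangians $\bL$, with the target identified as $\MF(\Jac(Q^\vee),W)$ via Corollary \ref{cor:jac} and Lemmas \ref{lem:spcom}--\ref{lem:potrm}, and the graded refinement comes from Bocklandt's vector field $\mathcal{V}$ attached to the perfect matching (the preceding lemma grading each branch of $\bL$) together with the degree count $\deg x_e = 1 - \deg X_e$ and $m_k$ having degree $2-k$, exactly as in Propositions \ref{prop:F^1/m} and \ref{prop:F-graded}. The paper states the corollary without further argument, so your write-up simply makes explicit the same reasoning, including the one genuinely nontrivial compatibility check between the combinatorial and Floer-theoretic gradings at the midpoints of edges.
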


\section{Mirror functor} \label{sec:punctured-fctor}
Recall that  the wrapped Fukaya category $\Fuk^\wrap (\Sigma \setminus Q_0)$  has classical generators $\{U_e\}$ given by the arrows $e \in Q_1$,
which are considered as non-compact Lagrangians (see Abouzaid \cite{Ab}). For simplicity, we will assume that $Q$ is a dimer.
We will compute the images of these generators under our mirror functor $\mathcal{F}_\bL$, and show that it matches with the
result of Bocklandt. We will also show how the morphism $\Hom(U_{a}, U_{b})$ in wrapped Fukaya category maps
to that of the corresponding matrix factorizations through our functor too.
Note that  $\Hom(U_{a}, U_{b})$ is nonzero if and only if the two arrows $a$ and $b$ are incident to the same vertex $v$ in $Q$.

%In \cite{Bocklandt}, Bocklandt identified $\mathcal{W} (\Sigma \setminus Q_0)$ with a certain quiver category (algebra) from so called the rectified quiver of $Q$, and showed that there is a fully faithful embedding of this quiver category into $\mathcal{MF} (W,Q^\vee)$.  For this, he took a set $\{L_a\}_{a \in Q_1}$ of non-compact Lagrangians assigned to arrows of $Q$, which are known to generate $\mathcal{W} (\Sigma \setminus Q_0)$ by \cite{Ab}. Then he matched them by hand with a specific set $\{P_a\}_{a \in Q^\vee_1}$ of matrix factorizations of $W$. In this section, we show that the image of the noncompact Lagrangian $L_a$ under our mirror functor is precisely $P_a$, and compare a morphism space between $L_a$'s and that between $P_a$'s, using our morphism level functor.

%Since we removed the vertices of $Q$ from the Riemann surface, each arrow $a$ of $Q$ can be thought of as a (noncompact) Lagrangian path joining two punctures. We perturb it slightly so that it is still isomorphic to $a$ in $\mathcal{W} (\Sigma \setminus Q_0)$, but does not go through the self-intersection point $X_a$ of $\mathbb{L}$. Let us denote this Lagrangian by $L_a$. We orient $L_a$ by the direction of the original arrow $a$. (See the dashed curve in Figure \ref{fig:noncptmfa}. This is only a schematic picture to show the intersection pattern,  whereas a genuine Hamiltonian perturbation would make $L_a$ more wavy.)  Note that the morphism space between $L_a$ and $L_b$ in the wrapped Fukaya category is nonzero if and only if two arrows $a$ and $b$ are incident to the same vertex $v$ in $Q$.
Using the notation of Corollary \ref{cor:jac}, we define the relevant matrix factorizations of $W_\bL$.
\begin{definition}\label{def:mfc}
For any edge $e \in Q^\bL_1$, 
a matrix factorization $P_e$ of $(\cA = \Jac(Q^\bL, \Phi),W)$ is defined as follows.  The underlying module of $P_e$ is $\cA \cdot h(e) \oplus \cA \cdot t(e)$.  (Recall that $\cA \cdot v$ is spanned by paths starting from $v$.)  The matrix factorization is given by
$$ P_e: \cA \cdot h(e) \stackrel{\cdot e}{\longrightarrow} \cA \cdot t(e) \stackrel{\cdot r_{e,+}}{\longrightarrow} \cA \cdot h(e).$$
We denote by $\textrm{mf}( \Jac(Q^\bL, \Phi),W)$ the dg-subcategory of $MF( \Jac(Q^\bL, \Phi),W)$, generated by $\{P_e\}_{e \in Q^\bL}$.
\end{definition}
Note that $p \cdot e \cdot r_{e,+} = p \cdot W = W \cdot p$ for any $p \in \cA \cdot h(e)$, and $p' \cdot r_{e,+} \cdot e = p' \cdot W = W \cdot p'$ for any $p' \in \cA \cdot t(e)$, and hence $P_e$ defined above is a matrix factorization of $W$.  

\begin{prop}\label{prop:Bobjcorr}
We have $\cF(U_e) = P_e$ for every $e \in Q^\bL_1$.
\end{prop}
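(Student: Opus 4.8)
\textbf{Proof proposal for Proposition \ref{prop:Bobjcorr}.}

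The plan is to compute directly the image $\cF(U_e)$ using Definition \ref{def:mirMF}/\ref{def:defPL} adapted to the wrapped setting, and match it with the two-term matrix factorization $P_e$ of Definition \ref{def:mfc}. First I would recall that, after a small Hamiltonian perturbation, the non-compact Lagrangian arc $U_e$ dual to the edge $e$ intersects $\bL$ at exactly two points: one lies on the zig ray $L_{h(e)}$ of $e$ and the other on the zag ray $L_{t(e)}$ of $e$ (this is the standard picture, see (b) of Figure \ref{fig:refexactness}). Denote these two generators by $p_{h(e)} \in L_{h(e)} \cap U_e$ and $p_{t(e)} \in L_{t(e)} \cap U_e$. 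By \eqref{eqn:defPL} the underlying $\cA$-module of $\cF(U_e)$ is therefore $\cA\cdot\pi_{h(e)} \oplus \cA\cdot\pi_{t(e)}$, which already matches the underlying module of $P_e$; moreover the $\Z_2$-degrees of $p_{h(e)}$ and $p_{t(e)}$ are opposite, so this is a genuine rank-$(1,1)$ factorization.

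The core of the proof is the computation of the differential $\delta_{U_e} = (-1)^{\deg(\cdot)} m_1^{b,0}$. The strips bounded by $\bL$ and $U_e$ with boundary insertions of $b = \sum_f x_f X_f$ come in two families: those contributing to the component $\cA\cdot\pi_{h(e)} \to \cA\cdot\pi_{t(e)}$, and those contributing to $\cA\cdot\pi_{t(e)} \to \cA\cdot\pi_{h(e)}$. I would argue, as in the punctured-surface analysis of the previous subsections, that because $\Sigma\setminus Q_0$ is exact and $\bL$ consists of zigzag Lagrangians, the only rigid strips are: (i) a single triangle with one corner at an immersed point of $\bL$ corresponding to the arrow $e$ itself, giving the map ``$\cdot e$'' from the $h(e)$-summand to the $t(e)$-summand; and (ii) the strip that ``fills'' the inscribed polygon $\check P$ of the positive face containing $e$, whose corners read off exactly the word $r_{e,+}$, giving the map ``$\cdot r_{e,+}$'' from the $t(e)$-summand to the $h(e)$-summand. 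The bookkeeping of which intersection point is the input and which is the output, together with the sign $(-1)^{\deg(\cdot)}$ in Definition \ref{def:mirMF}, should reproduce precisely the two arrows of $P_e$ in Definition \ref{def:mfc}. The identity $\delta^2 = W\cdot\mathrm{Id}$ is then automatic from the general Proposition following Definition \ref{def:mirMF}, but it also serves as a consistency check: $e\cdot r_{e,+} = r_{e,+}\cdot e = W$ in $\cA = \Jac(Q^\bL,\Phi)$, which is exactly the relation noted right after Definition \ref{def:mfc}.

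The main obstacle I anticipate is the precise identification of the strips in family (ii) and the verification that no additional strips (e.g. those wrapping around a puncture, or those picking up insertions from $b$ in a more complicated configuration) contribute. This requires a careful local analysis near each vertex of $\check P$ and an energy/index count showing that only the inscribed polygon gives a rigid configuration; the choice of non-trivial spin structure in Definition \ref{def:nsp} must also be tracked to confirm the signs come out as ``$+$'' so that the differential is $\cdot r_{e,+}$ with no sign twist. Once this is done, collecting the two components gives $\cF(U_e) = \bigl(\cA\cdot\pi_{h(e)} \xrightarrow{\,\cdot e\,} \cA\cdot\pi_{t(e)} \xrightarrow{\,\cdot r_{e,+}\,} \cA\cdot\pi_{h(e)}\bigr) = P_e$, completing the proof. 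I would also remark that this matches Bocklandt's combinatorial object-level correspondence, which is the point of the proposition.
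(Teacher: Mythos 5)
Your proposal is correct and follows essentially the same route as the paper: perturb $U_e$, identify the two intersection points giving the rank-$(1,1)$ module $\cA\cdot h(e)\oplus\cA\cdot t(e)$, and observe that $U_e$ cuts the inscribed polygon of the positive face containing $e$ (the one with boundary cycle $e\,r_{e,+}$) into exactly two strips, producing the maps $\cdot e$ and $\cdot r_{e,+}$. The "no other strips" worry you flag is disposed of immediately by the earlier lemma that the only non-constant holomorphic polygons bounded by $\bL$ in the punctured surface are the inscribed polygons $\check{P}$, which is how the paper also concludes.
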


\begin{proof}
We compute $\cF(U_e)$.  
To avoid a triple intersection, we slightly perturb $U_e$ (to a dashed curve) as in (b) of Figure \ref{fig:refexactness}.
$U_e$ intersects two branches $\{L_{h(e)}, L_{t(e)}\}$ of $\mathbb{L}$ at exactly two points $p_{h(e)}$ and $p_{t(e)}$.  Hence the underlying module of $\cF(U_e)$ is $\cA \cdot h(e) \oplus \cA \cdot t(e)$, where the first and second summands are even and odd respectively by considering the degrees of $p_{h(e)}$ and $p_{t(e)}$.  

Let $U$ denote the holomorphic polygon whose boundary cycle is $e r_{e,+}$.  $U_e$ splits $U$ into two holomorphic strips, one from $p_{h(e)}$ to $p_{t(e)}$ which has a corner at $X_e$, and another one from $p_{t(e)}$ back to $p_{h(e)}$ which has corners at $X_{e'}$ where $e'$ are arrows contained in $r_{e,+}$.  These are the only two strips contributing to $m_1^{b,0}$, and hence the matrix factorization is the one given above. 
\end{proof}

%{\color{red}(This part is very confusing. Bocklandt seems to mean the subalgebra $A\cdot h(a)$ of $A$, but then it is neither projective nor consistent with our formulation.)}  

%As in \cite{Bocklandt}, we denote by $P_a$ the matrix factorization
%$$ (P_a)_0 \stackrel{a}{\longrightarrow} (P_a)_1 \stackrel{\bar{a}}{\longrightarrow} (P_a)_0$$
%where $\bar{a}$ satisfies the condition $a \bar{a} \in Q_2^\vee$. Here, the arrows ``$\stackrel{a}{\longrightarrow}$" and ``$\stackrel{\bar{a}}{\longrightarrow}$" represent the {\em right} multiplications by corresponding elements so that they can naturally extend to left module maps. Note that $\bar{a}$ is determined uniquely since $r_a=0$ in $A$. 

%\begin{figure}
%\begin{center}
%\includegraphics[height=2in]{noncptmfa}
%\caption{Matrix factorization of $F(U_e)$}\label{fig:noncptmfa}
%\end{center}
%\end{figure}

Next, we consider a morphism level functor.
Bocklandt already has found a class of morphisms between matrix factorizations $P_a, P_b$ when $a,b$ lie
on a  zig-zag cycle of $Q^\vee$ (caution: this is not $Q$), which we recall first.

\begin{definition} \label{def:zeta}[ \cite{Bocklandt} ]
Fix a zigzag cycle in $Q^\vee$ and let $a$ and $b$ be two arrows in the cycle.  Without loss of generality suppose it is a zig cycle with respect to $a$.  Take a lifting of the arrow $a$ to the universal cover and also denote it by $a$, and denote by $b^{(i)}$ the liftings of $b$ which are contained in the zig ray of $a$ for $i \in \Z$, where $b^{(0)}$ is the first one met by the zig ray.  ($b^{(0)} = a$ when $b=a$.)  Let $a_0 \ldots a_k$ be the zig path with $a_0 = a$ and $a_k = b^{(i)}$ (where $k$ depends on $i$).  $a_{j+1} a_{j}$ for even $j$ is a part of a positive cycle $F^+_j$ in $Q^\vee$.  Hence the complement of $ a_{j+1} a_{j}$ in this cycle gives a path from $h(a_{j+1})$ to $t(a_j)$. By composing all these paths over all even $j$, we obtain a path $\textnormal{opp}_1$, which is from $h(b^{(i)})$ ($t(b^{(i)})$) to $t(a)$ when $k$ is odd (even). Likewise, we take the complement of $a_{j+1} a_{j}$ for odd $j$ in the unique negative cycle $F^-_j$ containing it, and compose them together to have a path $\textnormal{opp}_2$, which is from $t(b^{(i)})$ ($h(b^{(i)})$) to $h(a)$ when $k$ is odd (even). 

%For two different arrows $a$ and $b$ in the same zig (or zag) ray in $Q^\vee$, Bocklandt picked a special class of morphisms between $P_a$ and $P_b$ which is constructed as follows. We only deal with the case when there is a {\em zig} path $a_k a_{k-1} \cdots a_1 a_0$ in $Q^\vee$ from $a=a_0$ to $b=a_k$ with $k$ being odd, and one can perform the similar construction for even $k$ or for a {\em zag} path from $a$. 

For any element $p$ in $(P_a)_0 = \cA \cdot h(a)$, $p \cdot \textnormal{opp}_2$ defines a path in $(P_b)_1 = \cA \cdot t(b)$ (or $(P_b)_0 = \cA \cdot h(b)$) when $k$ is odd (or even).  Similarly $\textnormal{opp}_1$ defines a map from $(P_a)_1= \cA \cdot t(a) $ to $(P_b)_0$ (or $(P_b)_1$) when $k$ is odd (or even). Thus we get the elements $\zeta^{(i)} = (\zeta_0^{(i)}, \zeta_1^{(i)})$ in $\Hom (P_a, P_b)$, where $\zeta_0: p \mapsto p \cdot \textnormal{opp}_2 $ and $\zeta_1: p \mapsto p \cdot \textnormal{opp}_1$.
\end{definition}

In the above definition, when $k$ is odd $\zeta^{(i)}$ takes the form
\begin{equation*}
\xymatrix{ (P_a)_0 \ar[dr]_{\zeta^{(i)}_0} \ar[r]& (P_a)_1 \ar[dr]^{\zeta^{(i)}_1}& \\ 
&(P_b)_1 \ar[r]& (P_b)_0
}
\end{equation*}
See the left side of Figure \ref{fig:imagewrapgen} for $\textnormal{opp}_1$ and $\textnormal{opp}_2$. 

\begin{figure}
\begin{center}
\includegraphics[height=4in]{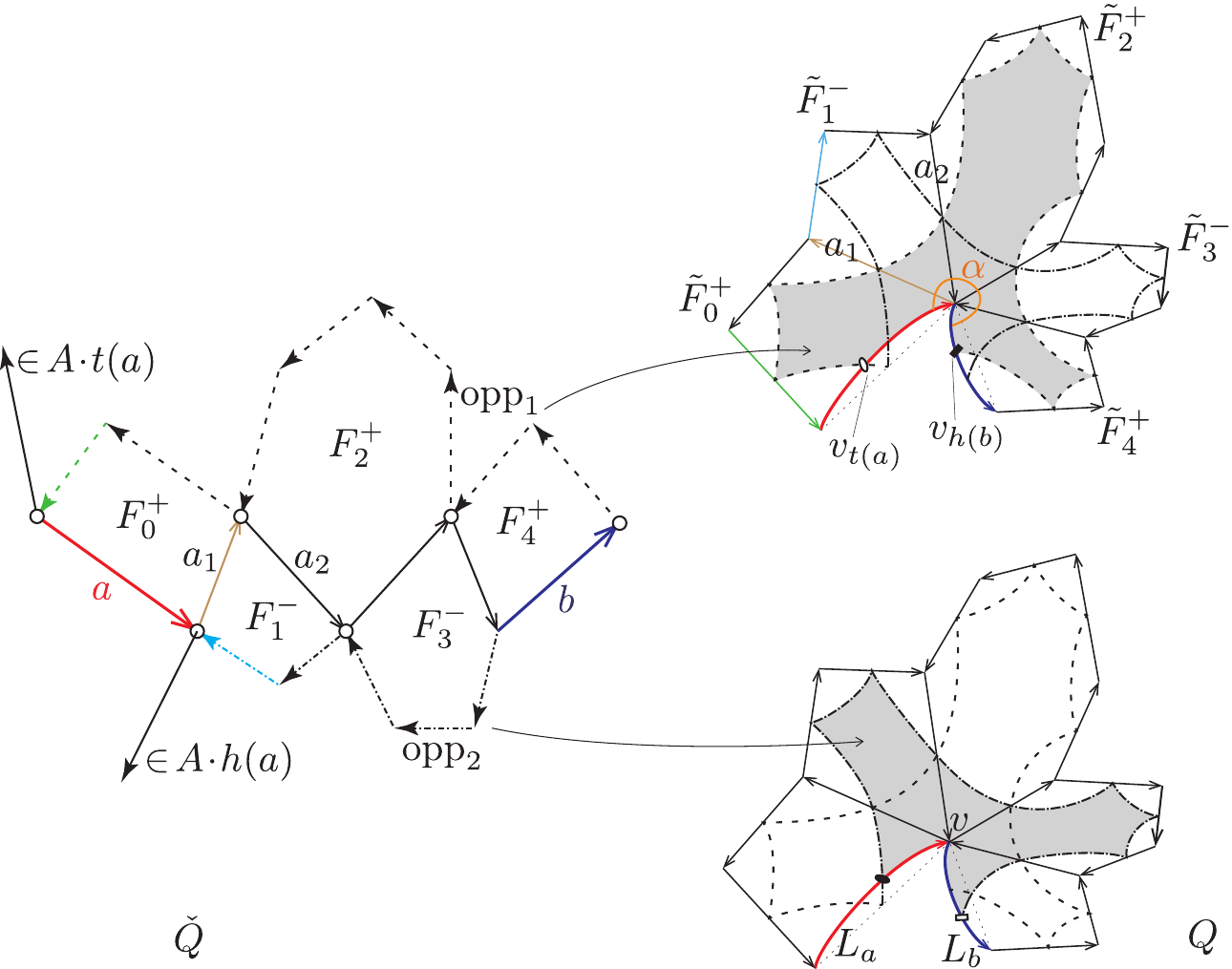}
\caption{Holomorphic polygons corresponding to paths ${\rm opp}_1$ and ${\rm opp}_2$ in $\check{Q}$}\label{fig:imagewrapgen}
\end{center}
\end{figure}

Using the fact that $(Q^\vee)^\vee = Q$, a zigzag cycle of $Q^\vee$ corresponds to a vertex $v$ in $Q$.
Therefore, the condition on $a,b$ to lie on a a zigzag cycle in $Q^\vee$ corresponds to the fact that 
the arrows $a$ and $b$ share the vertex $v$ in $Q$, and therefore, there exist a morphism in wrapped Fukaya category.

We show that generators of $\Hom(U_a,U_b)$ at that puncture are mapped to the above morphisms between matrix
factorizations.

%The morphism space from $L_a$ and $L_b$ is nonempty, and we can take the chords $\alpha$ from $L_a$ to $L_b$ which has a minimal winding number.  From the relative orientation of $L_a$ and $L_b$, $\alpha$ has an odd degree.

\begin{prop} \label{prop:hom}
The functor $\cF^{\mathbb{L}}$ sends the generators of $\Hom (U_a, U_b)$ to $\pm \zeta^{(i)} \in \Hom (P_a, P_b)$.
\end{prop}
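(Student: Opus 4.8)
The plan is to compute directly the image $\cF^{\mathbb{L}}_1 (q)$ for each generator $q$ of $\Hom(U_a,U_b)$, where $a,b$ are arrows of $Q$ sharing a vertex $v$, and match it with the morphisms $\zeta^{(i)}$ of Definition \ref{def:zeta}. Recall from Section \ref{sec:FuktoMF} that $\cF^{\mathbb{L}}_1(q)$ is the map sending $p \in \mathbb{L} \cap U_a$ to $\pm m_2^{b,0,0}(p,q) = \pm \sum_{k} m_{k+2}(b,\ldots,b,p,q)$, so I need to enumerate the holomorphic polygons bounded by $\mathbb{L}$, $U_a$, $U_b$ with one corner at $p$, one at $q$, and all other corners at the immersed generators $X_e$ of $\mathbb{L}$. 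First I would recall (as in \cite{Ab,Bocklandt}) the explicit description of the wrapped generators: for each puncture (equivalently each vertex $v$ of $Q$), $\Hom(U_a,U_b)$ has a basis indexed by $i \in \Z_{\geq 0}$ (the number of times one winds around the puncture), and the minimal generator is the short arc at $v$ joining $U_a$ to $U_b$. From Proposition \ref{prop:Bobjcorr} we already know $\cF(U_a) = P_a$ and $\cF(U_b) = P_b$, so the underlying modules are $\cA\cdot h(a) \oplus \cA \cdot t(a)$ and $\cA \cdot h(b) \oplus \cA \cdot t(b)$; I just need to identify the component maps of $\cF^{\mathbb{L}}_1(q)$.

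The key geometric step is the following: near the puncture corresponding to $v$, the Lagrangians $U_a$, $U_b$ together with the branches of $\mathbb{L}$ crossing the edges around $v$ cut the local picture into regions, and a holomorphic polygon contributing to $m_2^{b,0,0}(p_{h(a)},q)$ (resp. $m_2^{b,0,0}(p_{t(a)},q)$) is forced to follow either the positive boundary cycles or the negative boundary cycles of $Q^\vee$ emanating along the zig/zag ray of $a$ — exactly the polygons depicted on the left of Figure \ref{fig:imagewrapgen}. Reading off the boundary word of such a polygon between consecutive corners on $\mathbb{L}$ gives precisely the paths $\mathrm{opp}_1$ and $\mathrm{opp}_2$ of Definition \ref{def:zeta}: the even-index steps $a_{j+1}a_j$ sit in positive faces $F^+_j$ and their complements compose to $\mathrm{opp}_1$, the odd-index steps sit in negative faces $F^-_j$ and their complements compose to $\mathrm{opp}_2$. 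The winding number $i$ of the generator $q$ matches the index $i$ of $\zeta^{(i)}$, i.e. how far along the zig ray the lift $b^{(i)}$ of $b$ lies. Thus $\cF^{\mathbb{L}}_1(q)$ acts on $p_{h(a)}$ by right multiplication by $\mathrm{opp}_2$ and on $p_{t(a)}$ by right multiplication by $\mathrm{opp}_1$, which is the map $\zeta^{(i)}$ up to sign. The parity (whether $\zeta^{(i)}$ is the even-to-odd/odd-to-even diagram or the even-to-even/odd-to-odd one) is governed by the parity of the length $k$ of the zig path, which I would track through the degrees of the involved Floer generators.

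The signs are the main obstacle. I would fix them by the same bookkeeping used throughout the paper: the Koszul-type sign $(-1)^{(\deg q - 1)(\deg p - 1)}$ built into $\cF^{\mathbb{L}}_1$ in Section \ref{sec:singlelagfunctor}, the sign from the non-trivial spin structure of Definition \ref{def:nsp} (which contributes $(-1)^{\#\{\text{marked points on the polygon}\}}$ and, as in Lemma \ref{lem:spcom} and Lemma \ref{lem:potrm}, conspires with the intrinsic sign of a negative $k$-gon), and the sign conventions for $m_k$ on an immersed Lagrangian from \cite{Se}. Rather than chase every case, I would argue that the ambiguity is at worst an overall $\pm$ per generator — which is all that the statement claims ("$\pm\zeta^{(i)}$") — and that the relative signs among the components are forced by $\cF^{\mathbb{L}}_1$ being a chain map, i.e. by compatibility with the differentials $\delta_{P_a}$, $\delta_{P_b}$ (which are right multiplication by $a$, $r_{a,+}$ and $b$, $r_{b,+}$ respectively). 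Finally, I would note that $\{\zeta^{(i)}\}$ exhausts a spanning set of $\Hom(P_a,P_b)$ in the relevant degree (this is Bocklandt's computation for zig-zag consistent dimers), so combined with the injectivity Theorem \ref{thm:inj} the map on morphism spaces is an isomorphism — though strictly the present proposition only asserts the image identification, and the equivalence statements (3), (5) of the main theorem are proved separately.
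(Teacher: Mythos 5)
Your geometric core coincides with the paper's: for the minimal chord $\alpha$ at the common vertex $v$, one identifies (using $Q=(Q^\vee)^\vee$ and the puncture at $v$) exactly one holomorphic polygon $T_1$ contributing to $m_2^{b,0,0}(v_{t(a)},\alpha)$, whose corners along $\mathbb{L}$ spell $\textnormal{opp}_1$, and one polygon $T_2$ for $v_{h(a)}$ spelling $\textnormal{opp}_2$; the paper does this by building the curve $\gamma\subset\mathbb{L}$ out of $\textnormal{opp}_1$ and chopping the enclosed region along $\alpha$. Up to that point your proposal is fine, and treating only the minimal chord (the higher-winding generators being analogous) also matches the paper.

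The gap is in the sign argument. The statement ``$\pm\zeta^{(i)}$'' asserts a \emph{single overall} sign, so the substantive claim is that the two components $\eta(v_{h(a)})=\pm\,\textnormal{opp}_2$ and $\eta(v_{t(a)})=\pm\,\textnormal{opp}_1$ carry the \emph{same} sign, and you dismiss this by saying the relative sign is ``forced by $\cF^{\mathbb{L}}_1$ being a chain map.'' That inference is not automatic: if $\eta=(\epsilon_0\zeta_0,\epsilon_1\zeta_1)$ with $\zeta$ closed, then closedness of $\eta$ gives $(\epsilon_0-\epsilon_1)\,\zeta_1\circ\delta_{P_a}=0$ (and similarly for the other component), so it pins $\epsilon_0=\epsilon_1$ only if the coupling compositions such as $a\cdot\textnormal{opp}_1$ and $\textnormal{opp}_2\cdot r_{b,+}$ are nonzero in $\cA=\Jac(Q^\vee)$. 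You give no argument for this nonvanishing, and the proposition is stated without the zigzag-consistency hypothesis under which Bocklandt's basis computations (your appeal to Proposition \ref{prop:basis-hom}) are available; in a non-consistent dimer such products can a priori die in the Jacobi quotient. The paper instead settles the relative sign by a direct computation: (i) comparing boundary orientations, $T_1$ counts with sign $+1$ while $T_2$ counts with $(-1)^{|\textnormal{opp}_2|}=(-1)^{\sum_i|\tilde F_i^-|}$, and (ii) the special points of the spin structure of Definition \ref{def:nsp} contribute $(-1)^{\#\{\text{odd negative faces }\tilde F_i^-\}}$, and these two factors cancel. This is exactly where the particular choice of spin structure is used, so the sign step cannot be waved off as bookkeeping; as written, your proof is incomplete at precisely this point.
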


\begin{proof}
Here we only consider the chord $\alpha$ from $U_a$ to $U_b$ which has minimal winding number and assume that $h(a) = t(b)$ is the common vertex of $a$ and $b$ in $Q$. Then $\deg \alpha$ is odd.  The other cases are similar.

Set $\eta:=\cF^{\mathbb{L}}_1 (\alpha)$, the image of the morphism $\alpha$ under our functor. Since $\eta : \left(  A \cdot v_{h(a)} \oplus   A\cdot v_{t(a)} \right) \to \left( A \cdot v_{h(b)} \oplus   A\cdot v_{t(b)} \right)$ is linear over $A$ and so is $\zeta : \left(  A \cdot h(a)  \oplus   A \cdot t(a) \right) \to \left(   A \cdot h(b) \oplus  A \cdot t(b)  \right)$, it suffices to show that 
\begin{equation*}
\begin{array}{l}
\eta(v_{t(a)}) = \zeta (t(a)) \,\,(=  \textnormal{opp}_1, \,\,\, \mbox{identified with} \,\,\, \textnormal{opp}_1 \cdot v_{h(b)}) \quad \mbox{and} \\
 \eta(v_{h(a)}) = \zeta (h(a)) \,\,(= \textnormal{opp}_2 , \,\,\, \mbox{identified with} \,\,\, \textnormal{opp}_2  \cdot v_{t(b)} )
 \end{array}
 \end{equation*}
up to overall sign change. 
(Recall from the proof of Proposition \ref{prop:Bobjcorr} that the isomorphism $P_{L_{a}} \to P_a$ identifies $v_{h(a)}$ with $h(a)$ and $v_{t(a)}$ with $t(a)$, and does the similar for $b$.) We only show the first identity ``$\eta(v_{t(a)} ) =  \textnormal{opp}_1$" in detail as the proof for the second identity is essentially identical. Recall that $\eta (v_{t(a)}) = m_2^{b,0,0} (v_{t(a)}, \alpha) = \sum_k m_k (b, \cdots,b, v_{t(a)}, \alpha )$.

By definition of mirror quiver (Definition \ref{def:mirrorquiver}), the zigzag cycle containing $ a_k a_{k-1} \cdots a_1 a_0$ (with $a_0=a$ and $a_k=b$) in $Q^\vee$ corresponds to the vertex $v$ in $Q$, where we used the fact that $Q = (Q^\vee)^\vee$. In the mirror $Q$ of $Q^\vee$, the counter part of the zigzag path $a_0 \cdots a_k$ and $\textnormal{opp}_i$ ($i=1,2$) in $Q^\vee$ have the following configuration. The positive cycle $\tilde{F}^+_i$ in $Q$ mirror to each positive cycle $F_i^{+}$ (for even $i$) contains the vertex $v$, while no two such cycles in $Q$ share an arrow. The same happens for the mirror cycles $\tilde{F}^-_i$ to $F_i^-$ (for odd $i$). If we walk around the vertex $v$ in $Q$ counterclockwise, we meet $\tilde{F}^+_{k-1}$, $\tilde{F}^-_{k-2}$, $\tilde{F}^+_{k-3}$, $\cdots$, $\tilde{F}^+_2$, $\tilde{F}^-_1$, $\tilde{F}^+_{0}$ in order. The arrow $a_i$ in $Q$ sit between two consecutive cycles $\tilde{F}^{\pm}_{i}$, $\tilde{F}^{\mp}_{i-1}$ in this sequence. (See right side of Figure \ref{fig:imagewrapgen}).

Now, the path $\textnormal{opp}_1$ in $Q^\vee$ gives rise to a piecewise smooth curve $\gamma$  contained in $\mathbb{L}$ whose corners lie only on arrows of $\tilde{F}_{i}^+$ for even $i$. More precisely, $\gamma$ turns at each middle point of arrows in $\tilde{F}_{i}^+$ except $a_{i}$ and $a_{i+1}$. (See the dotted curve in upper right diagram in Figure \ref{fig:imagewrapgen}.) Observe that $\gamma$ intersects both $L_a$ and $L_b$ once. Checking the degrees of intersections, we see that they should be $v_{t(a)}$ and $v_{h(b)}$, respectively. Therefore, we have a bounded domain in $\Sigma$ enclosed by $\gamma$ and $L_a$ and $L_b$. Since $\alpha$ is a minimal Hamiltonian chord from $L_a$ to $L_b$ around the puncture $v$ in $\Sigma$, it lies in this bounded domain. Thus, after chopping off the corner around $v$ along $\alpha$, the domain become a holomorphic disc $T_1$ contributing to $m_2^{b,0,0} (v_{t(a)}, \alpha)$, and obviously there are no other contributions due to punctures. Note that the corners of $T_1$ along $\mathbb{L}$ is precisely $\textnormal{opp}_1$. Therefore, this disc defines a map
\begin{equation}\label{eqn:m2discspunctured1}
\eta : v_{t(a)} \mapsto \pm   \textnormal{opp}_1 \cdot v_{h(b)}.
\end{equation}

Similar procedure produces a holomorphic disc $T_2$ which gives rise to
\begin{equation}\label{eqn:m2discspunctured2}
\eta(v_{h(a)}) = \pm \textnormal{opp}_2 \cdot  v_{t(b)}  (=\pm \zeta (h(a)) ).
\end{equation}
We finally show that the signs in \eqref{eqn:m2discspunctured1} and \eqref{eqn:m2discspunctured2} are the same. We divide the computation for the sign difference in the following two steps.

\begin{enumerate}
\item[(i)] Ignoring special points,
the sign of $T_1$ is positive since its boundary orientation agrees with that of $L_a$, $L_b$ and $\mathbb{L}$. The boundary orientation of $T_2$ agree with those of Lagrangians only along $L_a$ and $L_b$. So, the sign of $T_2$ is $(-1)^{|\textnormal{opp}_2|}$ where $|\textnormal{opp}_2|$ is the number of arrows contained in $\textnormal{opp}_2$. 
 Observe that
\begin{equation}\label{eqn:morlevelsigndiff1}
|\textnormal{opp}_2| \equiv \sum_i |\tilde{F}_i^{-}|  \mod 2
\end{equation}
where $|\tilde{F}_i^{-}|$ is the number of edges of $\tilde{F}_i^{-}$. 
%The second term is then congruent to the sum over $\tilde{F}_i^{-}$ with the odd number of edges.

\item[(ii)] Now, we take the special points into account. Recall that precisely one special point is placed on the boundary of each negatively oriented polygon (for the potential) with the odd number of edges. Therefore, the sign difference between $T_1$ and $T_2$ due to special points is
\begin{equation}\label{eqn:morlevelsigndiff2}
(-1)^{\sum_{|\tilde{F}_i^{-}| \equiv 1 } 1}
\end{equation}
where the sum in the exponent is taken over all $\tilde{F}_i^{-}$ with the odd number of edges.
%
%can only occur at the $\mathbb{L}$-segments of $T_1$ and $T_2$ that lie inside $\tilde{F}_i^{-}$. 
\end{enumerate}

It is easy to see that the sign differences \eqref{eqn:morlevelsigndiff1} and \eqref{eqn:morlevelsigndiff2} cancel with each other, which completes the proof. 
\end{proof}

In what follows we show that the functor is an embedding assuming that $Q^\vee$ is zigzag consistent using the following
result of Bocklandt on matrix factorization category.
We first recall the zigzag consistency condition introduced by Bocklandt.
 He showed that the Jacobi algebra of a zigzag consistent dimer is 3 Calabi-Yau \cite[Theorem 4.4]{Bocklandt}.  Similar results were obtained by Ishii and Ueda \cite{Ishii-Ueda}.  For the construction of generalized mirrors and functors we do not need this condition.  However, the condition is needed in the B-side to ensure the functor is an embedding (see Section \ref{sec:punctured-fctor}).
\begin{definition}[Zigzag consistency \cite{B-consistency}] \label{def:consistent}
A dimer model is called zigzag consistent if for each arrow $e$, the zig and the zag rays at $e$ in the universal cover only meet at $e$.
\end{definition}

\begin{prop}[\cite{Bocklandt}] \label{prop:basis-hom}
Suppose $Q^\vee$ is zigzag consistent.  Then for every pair of arrows $a$ and $b$, the set $\{\zeta^{(i)}: i \in \Z\}$ defined in Definition \ref{def:zeta} forms a basis of the cohomology $H^*(\Hom(P_a,P_b),\delta)$.  In particular $H^*(\Hom(P_a,P_b),\delta)$ vanishes when $a$ and $b$ do not share a common vertex in $Q$.
\end{prop}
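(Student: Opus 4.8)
The plan is to follow, and slightly adapt, the $B$-side computation of Bocklandt \cite{Bocklandt}, now carried out over $\cA = \Jac(Q^\bL,\Phi) = \Jac(Q^\vee)$ (Corollary \ref{cor:jac}) with the spin structure of Definition \ref{def:nsp}. First I would make the $\Z_2$-periodic complex $(\Hom(P_a,P_b),\delta)$ completely explicit. Since each $P_e$ (Definition \ref{def:mfc}) is built from the free left $\cA$-modules $\cA\cdot h(e)$ and $\cA\cdot t(e)$, an $\cA$-module homomorphism between two such summands is right multiplication by a path of $Q^\vee$ connecting the relevant vertices; hence $\Hom(P_a,P_b)$ is, as a graded vector space, a direct sum of four path-spaces of the form $v\,\cA\,w$, and $\delta$ is the alternating sum of pre- and post-composition with the arrows $a$, $b$ and the complementary face-paths $r_{a,+}$, $r_{b,+}$. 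Computing $H^\bullet$ thus reduces to the purely combinatorial question of which $\cA$-linear combinations of paths are annihilated by, and which lie in the image of, these multiplication maps, i.e.\ to understanding the Jacobi relations $r_e = r_{e,+} - r_{e,-}$ between paths in $Q^\vee$.

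Second, I would invoke the structure theory of Jacobi algebras of \emph{zigzag consistent} dimers. By \cite{B-consistency} (see also Ishii--Ueda \cite{Ishii-Ueda}) such a $\Jac(Q^\vee)$ is $3$-Calabi--Yau and carries the canonical Ginzburg-type bimodule resolution; this yields a normal form for paths modulo the Jacobi ideal and controls the path-spaces $v\,\cA\,w$. The geometric content of consistency (Definition \ref{def:consistent}) --- that the zig and zag rays at any arrow meet only at that arrow --- is exactly what guarantees that the winding cocycles $\zeta^{(i)}$ of Definition \ref{def:zeta} are pairwise non-homotopic. I would check closedness of each $\zeta^{(i)}$ directly from the face relations: $\delta\zeta^{(i)}$ has two terms, coming from $\textnormal{opp}_1$ completed by $a$ and from $b$ completed by $\textnormal{opp}_2$, and these cancel because both complete the \emph{same} boundary cycle of $Q^\vee$ up to the Jacobi relation. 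Spanning is then obtained by a dimension count: using the bimodule resolution above (equivalently, the identification of $H^\bullet(\Hom(P_a,P_b),\delta)$ with a stable $\mathrm{Ext}$ group over $\cA/\langle W\rangle$ in the associated singularity category), one computes the graded dimension of the cohomology and matches it with the cardinality of $\{\zeta^{(i)}: i \in \Z\}$.

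Third, the vanishing statement follows at once: a zigzag cycle of $Q^\vee$ is by definition a vertex of $(Q^\vee)^\vee = Q$, so two arrows $a,b$ lie on a common zigzag cycle of $Q^\vee$ if and only if they are incident to a common vertex of $Q$ (as already used in Proposition \ref{prop:hom}). If no such vertex exists there are no $\zeta^{(i)}$ at all, and the same dimension computation shows $(\Hom(P_a,P_b),\delta)$ is acyclic.

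The main obstacle I anticipate is the spanning/acyclicity half of the second step: exhibiting the explicit homotopies showing there are no cohomology classes beyond the $\zeta^{(i)}$. This is where zigzag consistency must be used in full strength --- for a non-consistent dimer the statement fails --- and it is essentially the technical core of Bocklandt's comparison theorem; by contrast, the identification of the morphism complex and the closedness of the $\zeta^{(i)}$ are routine bookkeeping once the conventions are fixed.
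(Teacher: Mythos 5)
There is nothing in the paper to compare against here: Proposition \ref{prop:basis-hom} is stated with the citation [\cite{Bocklandt}] and no proof is given in the paper at all --- the authors import Bocklandt's computation of the morphism spaces between the matrix factorizations $P_a$ wholesale and only use its consequence (that $\cF$ is an embedding). So your proposal has to be judged as a standalone reconstruction of Bocklandt's argument, and as such it is incomplete at exactly the point you flag. The first and third steps are fine: the identification of $\Hom(P_a,P_b)$ with four path-spaces $v\,\cA\,w$ with $\delta$ given by pre/post-composition with $a$, $b$, $r_{a,+}$, $r_{b,+}$ is routine, closedness of the $\zeta^{(i)}$ does follow from the face relations (this is essentially the statement that $\textnormal{opp}_1$ and $\textnormal{opp}_2$ intertwine the two factorizations, which is how Bocklandt defines them), and the translation ``common zigzag cycle of $Q^\vee$ $\Leftrightarrow$ common vertex of $Q$'' is exactly the duality already used in Proposition \ref{prop:hom}.

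The genuine gap is the spanning/acyclicity half. Invoking the $3$-Calabi--Yau property or ``a dimension count via the bimodule resolution'' does not by itself compute $H^*(\Hom(P_a,P_b),\delta)$: you never say what the graded dimension of that cohomology is, nor how the resolution would produce it, and the identification with a stable $\mathrm{Ext}$ group over $\cA/\langle W\rangle$ is asserted rather than used. What Bocklandt actually needs from zigzag consistency is a structural normal form for the path spaces of the Jacobi algebra --- consistency gives a cancellation property, so that every element of $v\,\cA\,w$ is uniquely a minimal path times a power of the central element $l$ of \eqref{eqn:deflcv} --- and with that normal form in hand one computes kernel and image of $\delta$ directly, finds that the classes surviving are precisely the $\zeta^{(i)}$ (one for each lift $b^{(i)}$ along the zig/zag ray, which consistency guarantees are distinct), and sees acyclicity when $a$ and $b$ share no vertex. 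Without either carrying out that computation or explicitly quoting the relevant theorem of Bocklandt as the input, the central claim of the proposition remains unproved in your write-up; to be fair, the paper itself does no more than cite it, so the honest fix is either to do the normal-form computation or to state clearly that this step is taken from \cite{Bocklandt}.
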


As a consequence, when $a$ and $b$ share a common vertex in $Q$, the induced map on cohomologies $\Hom(U_a,U_b) \to H^*(\Hom(P_a,P_b),\delta)$ is an isomorphism.  When $a$ and $b$ do not share a common vertex in $Q$, the statement remains true since both sides are zero.  Hence we have

\begin{cor}
Suppose $Q^\vee$ is zigzag consistent.  Then the functor $\cF: \Fuk(\Sigma - Q_0) \to \MF(\Jac(Q^\vee),W)$ is an embedding.
\end{cor}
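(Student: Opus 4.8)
The statement follows by combining Proposition~\ref{prop:hom}, Proposition~\ref{prop:basis-hom} and the general injectivity/functoriality machinery already set up. First I would recall the overall strategy for showing an $A_\infty$-functor is an embedding (quasi-fully-faithful): it suffices to prove that for every pair of objects $U_1, U_2$ in a generating set of $\Fuk(\Sigma-Q_0)$, the chain map $\cF_1 : \Hom(U_1,U_2) \to \Hom(\cF(U_1),\cF(U_2))$ induces an isomorphism on cohomology. By Abouzaid's generation result \cite{Ab}, which is invoked earlier in this section, the non-compact Lagrangian arcs $\{U_e : e \in Q_1\}$ classically generate $D\Fuk^\wrap(\Sigma-Q_0)$, so it is enough to treat the pairs $(U_a, U_b)$ with $a,b \in Q_1$. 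An $A_\infty$-functor which is cohomologically fully faithful on a generating set and respects the triangulated structure extends to a fully faithful embedding of derived categories; this is a standard homological-algebra fact for which I would simply cite the relevant literature (e.g. \cite{Seidel-book}) rather than reprove.

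\textbf{Key steps.} The first step is to identify the source cohomology: the wrapped Floer cohomology $HW^*(U_a, U_b)$ is nonzero precisely when $a$ and $b$ are incident to a common vertex $v$ of $Q$ (a common puncture), and in that case it has a basis indexed by the Reeb chords winding around $v$, i.e. by $\Z$ (or the relevant winding index), as recalled at the start of Section~\ref{sec:punctured-fctor}. The second step is Proposition~\ref{prop:hom}, which shows that under $\cF^\bL$ the generators of $\Hom(U_a,U_b)$ are sent (up to sign) to the morphisms $\zeta^{(i)} \in \Hom(P_a,P_b)$ of Definition~\ref{def:zeta}. The third step is Proposition~\ref{prop:basis-hom}: under the hypothesis that $Q^\vee$ is zigzag consistent, $\{\zeta^{(i)} : i \in \Z\}$ is a basis of $H^*(\Hom(P_a,P_b),\delta)$, and this cohomology vanishes when $a$ and $b$ have no common vertex. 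Combining these: when $a,b$ share a vertex, $\cF_1$ sends a basis of $HW^*(U_a,U_b)$ to a basis of $H^*(\Hom(P_a,P_b),\delta)$, hence is an isomorphism on cohomology; when $a,b$ do not share a vertex, both cohomologies vanish and $\cF_1$ is trivially an isomorphism. One must also check that $\cF$ maps the generators $U_e$ to the generators $P_e$ of the B-side subcategory $\mathrm{mf}(\Jac(Q^\vee),W)$, which is exactly Proposition~\ref{prop:Bobjcorr}. Finally, since $\{P_e\}$ split-generate $\mathrm{Hmf}(\Jac(Q^\vee),W)$ by construction (Definition~\ref{def:mfc}), the essential image of the derived functor is all of that subcategory, so one actually gets an equivalence onto $\mathrm{Hmf}(\Jac(Q^\vee),W)$ — this is exactly part (3) of the main theorem of the section, of which the corollary is the faithfulness half.

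\textbf{Main obstacle.} The genuinely hard input is Proposition~\ref{prop:basis-hom}, i.e. the explicit computation of $H^*(\Hom(P_a,P_b),\delta)$ for zigzag consistent dimers and the verification that the $\zeta^{(i)}$ exhaust it; but this is quoted from Bocklandt \cite{Bocklandt} and so is not reproved here. Within the present argument the subtle point is matching the \emph{counts and signs}: one needs that the map on cohomology induced by $\cF_1$ is not merely ``sends generators to generators'' set-theoretically but is an honest linear isomorphism, which requires Proposition~\ref{prop:hom} to pin down the image $\zeta^{(i)}$ with no collapsing (different Reeb chords go to cohomologically independent classes) — this is guaranteed because the winding index $i$ on the A-side matches the index $i$ labelling the $\zeta^{(i)}$ on the B-side, a correspondence made explicit in the proof of Proposition~\ref{prop:hom} via the identification $(Q^\vee)^\vee = Q$. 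I would also remark that the statement as phrased gives faithfulness on the subcategory generated by the $U_e$; promoting this to a statement about the whole wrapped category is precisely where Abouzaid's generation theorem enters, and no further Floer-theoretic work is needed beyond citing it.
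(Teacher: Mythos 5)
Your proposal is correct and follows essentially the same route as the paper: the corollary is obtained by combining Proposition \ref{prop:hom} (generators of $\Hom(U_a,U_b)$ map to $\pm\zeta^{(i)}$) with Bocklandt's Proposition \ref{prop:basis-hom} (the $\zeta^{(i)}$ form a basis of $H^*(\Hom(P_a,P_b),\delta)$, which vanishes when $a,b$ share no vertex), giving an isomorphism on cohomology for the generating arcs $U_e$ and hence an embedding. The extra bookkeeping you supply (Abouzaid's generation of the wrapped category, the object-level match $\cF(U_e)=P_e$ from Proposition \ref{prop:Bobjcorr}) is exactly what the paper uses implicitly, so there is no substantive difference.
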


%-----------------------------------------------------------------------
% Beginning of chap1.tex
%-----------------------------------------------------------------------
%
%  AMS-LaTeX sample file for a chapter of a monograph, to be used with
%  an AMS monograph document class.  This is a data file input by
%  chapter.tex.
%
%  Use this file as a model for a chapter; DO NOT START BY removing its
%  contents and filling in your own text.
% 
%%%%%%%%%%%%%%%%%%%%%%%%%%%%%%%%%%%%%%%%%%%%%%%%%%%%%%%%%%%%%%%%%%%%%%%%

%\part{This is a Part Title Sample}

\chapter{Mirrors of Calabi-Yau threefolds associated with quadratic differentials} \label{sec:CY3}

The mirror construction and the natural functor formulated in this paper has important applications to Calabi-Yau threefolds associated to Hitchin systems constructed by Diaconescu-Donagi-Pantev \cite{DDP}.  Using the functor, Fukaya categories of the Calabi-Yau threefolds are transformed to derived categories of certain Calabi-Yau algebras known as the Ginzburg algebras \cite{Ginzburg}.  In this paper we will focus on the $\SL(2,\C)$ case.  Calabi-Yau algebra has the advantage of being more linear and was rather well-understood in existing literature, see for instance \cite{KS_Ainfty,KS-stab,CEG,Labardini}.  In particular stability conditions and wall-crossing of Donaldson-Thomas invariants of the Fukaya categories can be understood by using Ginzburg algebras and the functor.  Kontsevich-Soibelman \cite{KS-stab} developed a deep theory on the stability structures on Calabi-Yau algebras in dimension three and their relations with cluster algebras.

Recent mathematical studies in this direction were largely motivated by the works of Gaiotto-Moore-Neitzke \cite{GMN10,GMN13,GMN-network}, who studied wall-crossing of BPS states over the moduli spaces of Hitchin systems and the relations with hyper-K\"ahler metrics.  SYZ mirror symmetry for moduli spaces of Hitchin systems with structure groups $\SL_r$ and $\bP\SL_r$ was found by Hausel-Thaddeus \cite{HT}.  Mathematically the BPS states are stable objects of Fukaya categories of the associated (non-compact) Calabi-Yau threefolds.  Later Smith \cite{Smith} computed the Fukaya categories for $\SL(2)$ systems, and constructed an embedding by hand from the Fukaya categories to derived categories of Ginzburg algebras.  Moreover Bridgeland-Smith \cite{BS} provided a detailed study of stability conditions of derived categories of Ginzburg algebras.  Furthermore Fock-Goncharov \cite{FG1,FG2} provided a combinatorial approach to study $\SL(m)$ (or $\bP \GL(m)$) systems for general $m$ using techniques of cluster algebras. 

The approach taken in this paper is constructive and functorial.  We apply our general construction to produce the noncommutative mirror $\cA = \widehat{\Lambda Q} / (\partial \Phi)$.  Then it automatically follows from our general theory that there exists a god-given injective functor from the Fukaya category to the mirror derived category (which is in the reverse direction of the embedding by Smith).  %In particular we do not need to worry about the $A_\infty$ functorial properties and check them by hand case by case.  
The functor is the crucial object for studying stability conditions and Donaldson-Thomas invariants for the Fukaya category.

%In \cite{Smith}, Smith constructed a class of non-compact Calabi-Yau threeolds associated to quadratic differentials on Riemann surfaces, which can be regarded as a generalization of those studied in \cite{DDDHP}.  Moreover he studied the Fukaya categories of the threefolds using the quivers with potentials constructed by Labardini-Fragoso \cite{Labardini}.  More precisely, Smith embeds the derived category of the quiver with potential, which is well understood combinatorially, to the derived Fukaya category of the corresponding threefold.  Furthermore, combining with the study of Bridgeland-Smith \cite{BS} on stability conditions for derived categories of quivers with potentials, it gives a geometric description of stability conditions of (certain subcategories of) the corresponding derived Fukaya categories.  The mathematical works were motivated by the celebrated physical construction of Gaiotto-Moore-Neitzke \cite{GMN10,GMN13}.

%In this section we apply our mirror construction explained in Section \ref{sec:MFseverallag} to this class of non-compact Calabi-Yau threefolds $X$.  The construction gives us the quiver with potential $(Q,\Phi)$ mentioned above (or more precisely the corresponding path algebra with relations $\cA = \widehat{\Lambda Q} / (\partial \Phi)$), and produces a functor $\Fuk(X) \to \dg(\cA)$, where $\mathrm{dg}(\cA) = \MF(\cA,W=0)$ consists of dg modules of the graded algebra $\cA$.  

The computations in this chapter rely essentially on \cite{Smith}.  From the viewpoint of this paper, the quiver algebra $\cA$ can be regarded as a mirror of $X$, in the sense that Lagrangian submanifolds of $X$ (symplectic geometry) are transformed to quiver representations (algebraic geometry).  

\begin{remark}
The notations for the superpotentials here are different from that in \cite{Smith}.  The spacetime superpotential $\Phi$ here was denoted by $W$ in \cite{Smith}.  In this paper $W$ denotes the worldsheet superpotential, which vanishes in this case. 
\end{remark}

\section{Non-compact Calabi-Yau threefolds associated with quadratic differentials}
We first briefly recall the Calabi-Yau geometry from \cite{DDP, Smith}.
Let $S$ be a closed Riemann surface of genus $g(S) > 0$, with a set $M \not= \emptyset$ of marked points.  An $\SL(m,\C)$ Higgs bundle on $(S,M)$ is the data $(E,D,\phi)$ where $E$ is a holomorphic vector bundle with a Hermitian metric, $D$ is a metric connection, and $\phi \in \Omega^1(\End E)$ satisfies that $D + \phi$ is a flat connection (with certain parabolic asymptotic behavior at the marked points) \cite{Hitchin,Simpson}.  The set of eigenvalues of the one-form valued skew-Hermitian matrix $\phi$ produce a spectral curve $\Sigma \subset K_S$, whose defining equation takes the form
\begin{equation} \label{eq:spectral}
\eta^m + \sum_{r=2}^m \phi_r \cdot \eta^{m-r} = 0
\end{equation}
for $\eta \in K_S$, where $\phi_r$ are meromorphic $r$-differentials, that is sections of $K_S^{\otimes r}$, with poles at $P$.  The projection $K_S \to S$ restricts to a ramified $m:1$ covering map $\Sigma \to S$. 

The \emph{associated (non-compact) Calabi-Yau threefold $X$ is a conic fibration} over the total space of $K_S$ with discriminant locus being $\Sigma \subset K_C$.  It is defined by the equation
$$ uv = \eta^m + \sum_{r=2}^m \phi_r \cdot \eta^{m-r}. $$
To make sense of the above equation, $u, v$ are taken to be sections of certain vector bundles.  Moreover since $\phi_r$ are meromorphic, $u,v$ are also meromorphic.  This means we need to modify the bundles by tensoring the divisor line bundles associated to $P$.  This was carefully carried out by Smith \cite{Smith} for $m=2$, which is recalled below for the readers' convenience.  From now on we focus on the case $m=2$, and so $\phi = \phi_2$ is a quadratic differential.

Assume that the meromorphic quadratic differential $\phi$ only has zeroes of order one, and has poles of order two at the set of marked points $M$.  The Calabi-Yau threefold is locally defined by a quadratic equation $uv - \eta^2 = \varphi(z)$ (away from poles) where the quadratic differential on $S$ is locally written as $\phi(z) = \varphi(z) dz^{\otimes 2}$.  

In order to make sense of this equation globally, one can consider a rank-two vector bundle $V$ over $S$ with the property that $\det V = K_S(M)$.  Denote $S^2 V$ to be the symmetric square of $V$.  Any map $V \to V^*$ induces a map $\det V \to \det V^*$.  Hence we have the determinant map 
$ \det: S^2 V \to (\det V)^{\otimes 2} =  K_S(M)^{\otimes 2}$
which is a quadratic function over each fiber of $S^2 V$ (and $u,v,\eta$ above are fiber coordinates of $S^2 V$).  Since $\varphi$ can be identified as a section of $K_S(M)^{\otimes 2}$, the equation $$\det = \pi^*\varphi$$ makes sense where $\pi$ is the bundle map $K_S(M)^{\otimes 2} \to S$.

In order to specify the poles of $u, v, \eta$ and ensure the Calabi-Yau property, Smith made a modification of the bundle $S^2 V$ as follows.  Let $W$ be a rank 3 vector bundle defined by the exact sequence of sheaves
$$ 0 \to W \to S^2 V \to (\iota_M)_* \C \to 0 $$
where $(\iota_M)_* \C$ is the skyscraper sheaf supported at the poles of $\phi$, and the map $S^2 V \to (\iota_M)_* \C$ around each $p \in M$ is defined by $(u,v,\eta) \mapsto u(p)$, where a choice of local trivialization of $V$ at each $p \in M$ is fixed and $(u,v,\eta)$ are the corresponding local fiber coordinates of $S^2 V$.  It essentially means that the local section $u$ is allowed to have a simple pole at $p$.  The map $\det$ can be pulled back from $S^2V$ to $W$ and hence the above equation also makes sense for $W$. 

\begin{prop}[Lemma 3.5 of \cite{Smith}]
$X := \{\det = \pi^* \varphi\} \subset W$ is a Calabi-Yau threefold, namely it has a trivial canonical line bundle.
\end{prop}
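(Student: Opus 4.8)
The plan is to exhibit a nowhere-vanishing holomorphic three-form on $X = \{\det = \pi^*\varphi\} \subset W$, which by definition trivializes the canonical bundle $K_X$. The natural strategy is adjunction: since $X$ is a smooth hypersurface cut out by the section $s := \det - \pi^*\varphi$ of a suitable line bundle $\mathcal{L}$ over the total space of the rank-$3$ bundle $W$, one has $K_X \cong (K_{\mathrm{Tot}(W)} \otimes \mathcal{L})|_X$ by the adjunction formula. So the whole computation reduces to identifying $K_{\mathrm{Tot}(W)}$, identifying $\mathcal{L}$, and checking that their product is trivial along $X$.

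First I would compute $K_{\mathrm{Tot}(W)}$. For the total space of a vector bundle $\pi \colon \mathrm{Tot}(W) \to S$ of rank $r$, the relative cotangent sequence gives $K_{\mathrm{Tot}(W)} \cong \pi^*(K_S \otimes \det W)$ (the vertical cotangents contribute $\pi^* \det W^*$ dualized back, so the upshot is $\pi^*(K_S \otimes \det W)$; I would double check the sign/dual convention here). Then I need $\det W$. From the defining exact sequence
\begin{equation*}
0 \to W \to S^2 V \to (\iota_M)_* \C \to 0
\end{equation*}
and the fact that elementary modifications at the points of $M$ twist the determinant by $-[M]$, one gets $\det W \cong \det(S^2 V) \otimes \CO_S(-M)$. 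Since $V$ has rank $2$ with $\det V = K_S(M)$, a standard identity gives $\det(S^2 V) = (\det V)^{\otimes 3} = K_S(M)^{\otimes 3}$. Hence $\det W \cong K_S^{\otimes 3}(3M) \otimes \CO_S(-M) = K_S^{\otimes 3}(2M)$, so $K_{\mathrm{Tot}(W)} \cong \pi^*\bigl(K_S \otimes K_S^{\otimes 3}(2M)\bigr) = \pi^* \bigl(K_S^{\otimes 4}(2M)\bigr)$.

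Second, I would identify $\mathcal{L}$, the line bundle of which $s = \det - \pi^*\varphi$ is a section. The function $\det$ is the pullback to $\mathrm{Tot}(W)$ of the fiberwise quadratic determinant map $\det\colon S^2 V \to K_S(M)^{\otimes 2}$ (restricted to $W$); as a function on the total space valued in a line bundle it is a section of $\pi^* (K_S(M)^{\otimes 2}) = \pi^*(K_S^{\otimes 2}(2M))$. Likewise $\pi^*\varphi$ is a section of the same bundle since $\varphi \in H^0(S, K_S^{\otimes 2}(2M))$. So $\mathcal{L} = \pi^*(K_S^{\otimes 2}(2M))$. Now adjunction gives
\begin{equation*}
K_X \cong \bigl(K_{\mathrm{Tot}(W)} \otimes \mathcal{L}\bigr)\big|_X \cong \pi^*\bigl(K_S^{\otimes 4}(2M) \otimes K_S^{\otimes 2}(2M)\bigr)\big|_X.
\end{equation*}
This is $\pi^*(K_S^{\otimes 6}(4M))|_X$, which is \emph{not} obviously trivial — so the naive bundle bookkeeping must be off somewhere, and locating the correction is the main obstacle. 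The likely resolution is that $\det W$ is computed with the opposite twist (elementary modification \emph{adding} rather than subtracting, since $W \hookrightarrow S^2V$ with torsion cokernel means $W$ is a subsheaf, so $\det W = \det(S^2V) \otimes \CO(-M)$ is what I wrote, but the relevant map $S^2V \to (\iota_M)_*\C$ is $u \mapsto u(p)$ which rather enlarges the sheaf of sections allowed — i.e.\ $W$ could be a sheaf \emph{containing} $S^2V$, giving $\det W = K_S^{\otimes 3}(4M)$). With that sign, $K_{\mathrm{Tot}(W)} = \pi^*(K_S^{\otimes 4}(4M))$ and the normal bundle $\mathcal{L}$ should be recomputed too: $\det$ valued in $(\det V)^{\otimes 2} = K_S(M)^{\otimes 2}$ still, but restricted to $W$ where $u$ has a pole, the value $\det(u,v,\eta) = uv - \eta^2$ acquires at worst a simple pole (from $uv$), so $\det|_W$ is a section of $K_S^{\otimes 2}(M) \otimes \CO(M) $... — the precise pole order of $uv$ versus $\eta^2$ along $M$ is exactly the delicate point. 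The clean way to settle it, which I would ultimately adopt, is to bypass global bundle arithmetic: write an explicit local holomorphic volume form. Away from $M$, on $\{uv - \eta^2 = \varphi\}$, take $\Omega_X = \mathrm{Res}\, \frac{du \wedge dv \wedge d\eta \wedge dz}{uv - \eta^2 - \varphi(z)} = \frac{du \wedge d\eta \wedge dz}{\partial(\cdot)/\partial v} = \frac{du \wedge d\eta \wedge dz}{u}$; near $M$, using the trivialization in which $u$ has a simple pole, substitute $u = \tilde u / t$ (with $t$ a local coordinate vanishing at $p \in M$) and check that $\frac{du}{u} = \frac{d\tilde u}{\tilde u} - \frac{dt}{t}$ combines with the modification of $dz$ near the pole to produce a form with neither zero nor pole on $X$. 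I expect the calculation to work out precisely because Smith's elementary-modification recipe for $W$ was \emph{designed} to make the poles cancel; the content of the proof is just verifying this cancellation in local coordinates, chart by chart around each marked point, and checking compatibility on overlaps. The main obstacle is therefore purely one of careful local coordinate bookkeeping at the poles — getting every twist by $\CO(M)$ to land on the correct side — rather than any conceptual difficulty.
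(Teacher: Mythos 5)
The paper gives no proof of this statement (it is quoted directly from Smith's Lemma 3.5), so the benchmark is the standard adjunction argument, which is exactly the strategy you set up --- and which in fact closes immediately; the reason your version appears to fail is a duality error, not a defect of the geometry. For the total space of a rank-$r$ bundle $E \to S$ one has $K_{\mathrm{Tot}(E)} \cong \pi^*(K_S \otimes \det E^{\vee})$, because the vertical tangent bundle is $\pi^*E$, so $\det E$ enters \emph{dualized}; you wrote $\pi^*(K_S \otimes \det E)$ while flagging the convention but not resolving it. With the correct formula everything cancels on the nose: from $0 \to W \to S^2V \to (\iota_M)_*\C \to 0$ you correctly get $\det W = (\det V)^{\otimes 3}\otimes \CO_S(-M) = K_S^{\otimes 3}(2M)$, hence $K_{\mathrm{Tot}(W)} = \pi^*\bigl((K_S^{\otimes 2}(2M))^{-1}\bigr)$, while $X$ is the zero divisor of $\det - \pi^*\varphi$, a section of $\pi^*((\det V)^{\otimes 2}) = \pi^*(K_S^{\otimes 2}(2M))$, so adjunction gives $K_X \cong (K_{\mathrm{Tot}(W)} \otimes \CO(X))|_X \cong \CO_X$ directly. (Smoothness of $X$, which adjunction needs, is where the simple zeroes and exactly-order-two poles of $\phi$ enter.)

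Because of that artifact you then propose one incorrect fix and one unexecuted one. Reversing the elementary modification (taking $W \supset S^2V$, so $\det W = K_S^{\otimes 3}(4M)$) contradicts the exact sequence as stated, and it also produces the wrong geometry: in local coordinates the equation becomes $u''v - t\eta^2 = t\varphi(t)$, whose fiber over a pole is two planes meeting along a line and whose total space has conifold points at $u''=v=t=0$, $\eta^2=-\varphi(0)$ --- not the disjoint pair of planes described in the paper; with the kernel convention ($u = tu'$) the local equation $tu'v - \eta^2 = \varphi(t)$, $\varphi(0)\neq 0$, is manifestly smooth, so no sign change in $\det W$ is available to rescue your bookkeeping. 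Finally, the residue/local-volume-form route you retreat to is only a plan: "check that the poles cancel chart by chart" is precisely the bookkeeping that the global adjunction computation performs, and a local expression for $\Omega_X$ does not prove global triviality without the same line-bundle identification. As written the proposal therefore does not establish the proposition; keeping your original adjunction plan and fixing the single dualization does.
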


We have a natural fibration $X \to S$ which is the restriction of the bundle map $W \to S$.  A fiber over $p \in S$ is a smooth conic $u^2 + v^2 + \eta^2 = t$ for $t \not= 0$ when $p$ is neither a zero nor a pole; it is a degenerate conic $u^2 + v^2 + \eta^2 = 0$ when $p$ is a simple zero; it is a disjoint union of two complex planes $\C^2 \coprod \C^2$ defined by $u^2 = c$ when $p$ is a double pole.

\section{Generalized mirrors}

Recall that we need to fix a set of Lagrangians in order to construct the generalized mirror.  In the current situation the quadratic differential $\phi$ on $S$ gives a collection of Lagrangian spheres $\{L_i\}$ in $X$.  It is called to be a WKB collection of spheres whose associated $A_\infty$-algebra was computed by \cite{Smith}.  It is briefly summarized as follows.  See Figure \ref{fig:triangulation}.

\begin{figure}[htb!]
  \centering
   \begin{subfigure}[b]{0.3\textwidth}
   	\centering
    \includegraphics[width=\textwidth]{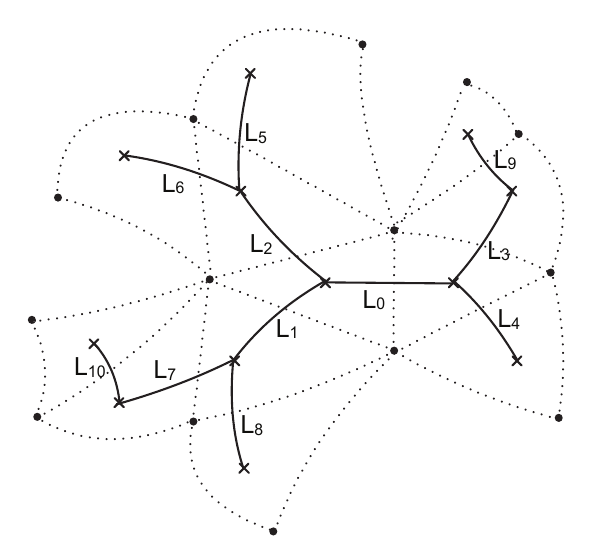}
    \caption{A triangulation induced from a quadratic differential.}
    \label{fig:triangulation}
   \end{subfigure}
   \hspace{10pt}
   \begin{subfigure}[b]{0.3\textwidth}
   	\centering
    \includegraphics[width=\textwidth]{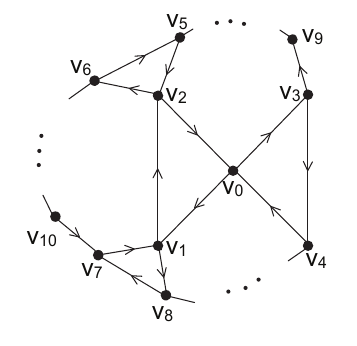}
    \caption{The corresponding quiver $Q$.}\label{fig:quiver}
   \end{subfigure}
	\caption{On the left, dots are double poles and crosses are simple zeroes of a quadratic differential.  The edges of the induced triangulation are shown by dotted lines.  The solid lines show the dual graph, which are images of Lagrangian spheres upstairs labeled by $L_k$.  $L_k$ correspond to the vertices $v_k$ of the quiver $Q$ on the right, and arrows in $Q$ correspond to degree-one morphisms among $L_k$'s.}
\end{figure}

It is classically known from the theory of quadratic differentials that a triangulation can be obtained from a quadratic differential.  This was a main ingredient in the work of Gaiotto-Moore-Neitzke \cite{GMN13} and Bridgeland-Smith \cite{BS}.

\begin{definition}[Triangulation induced from quadratic differential]
The triangulation induced from a quadratic differential $\phi$ is defined as follows.  The quadratic differential $\phi$ defines a horizontal foliation on $S$, which consists of horizontal trajectories $\gamma$ such that $\phi|_\gamma$ are real-valued.  Assume that $\phi$ is generically chosen such that it is saddle-free, which means that there is no horizontal trajectory connecting two zeroes.  Now consider the finite set of all separating trajectories, which are horizontal trajectories connecting double poles with zeroes.  The image divide the surface $S$ into finitely many chambers.  In each chamber fix one generic trajectory connecting two double poles.  The finite set of all such chosen generic trajectories gives a triangulation of $S$ whose vertices are double poles.  
\end{definition}

For simplicity assume that there is no self-folded triangle (see \cite[Lemma 3.21]{Smith}).  Then the WKB collection of spheres is defined as follows.

\begin{definition}[WKB collection of spheres] \label{def:L-CY3}
The WKB collection of Lagrangian spheres $\{L_i\}$ associated to a quadratic differential is defined as follows.  There is one simple zero in each triangle of the associated triangulation.  Take the embedded dual graph of the triangulation, whose vertices are simple zeroes and whose edges are paths connecting simple zeroes.  It gives the so-called Lagrangian cellulation of the Riemann surface $S$.  The edges are called to be matching paths, and each of them corresponds to a Lagrangian sphere in the threefold $X$.  The WKB collection is the set of all these Lagrangian spheres.
\end{definition}

By Lemma 4.4 of \cite{Smith}, $\{L_i\}$ are graded such that for each zero of $\phi$ corresponding to an intersection point of three Lagrangian spheres $(L_{i_1},L_{i_2},L_{i_3})$ ordered clockwise around the zero, the morphism from $L_{i_p}$ to $L_{i_{p+1}}$ has degree one and that from $L_{i_{p+1}}$ back to $L_{i_p}$ has degree two (where $p \in \Z_3$).  

Intuitively given a matching path, the Lagrangian is obtained by taking union of the real loci of the conic fibers over each point of the matching path.  The real loci are two-spheres over interior points of the path, which degenerate to points over the two boundary points of the path, and hence the resulting Lagrangian is topologically a three-sphere.  

Following Bridgeland-Smith \cite{BS}, define a quiver $Q$ below.  See Figure \ref{fig:quiver}.

\begin{definition}[The associated quiver $Q$] \label{def:quiverCY3}
For each triangle $\Delta$ of the triangulation mentioned above, take a triangle $\Delta'$ inscribed in $\Delta$.  Moreover the edges of $\Delta'$ are oriented clockwise.  Define a quiver $Q$ which is the union of all the inscribed triangles: a vertex of $Q$ is a vertex of one of the inscribed triangles, and an arrow of $Q$ is an oriented edge of an inscribed triangle.

For each zero $f$ of $\phi$, define $T(f)$ to be the boundary path (which has length three) of the corresponding inscribed triangle.  For each pole $p$ of $\phi$, define $C(p)$ to be the corresponding cycle of $Q$ which is counterclockwise oriented.
\end{definition}

Now take $\bL = \{L_i\}$ to be the reference Lagrangians and carry out the mirror construction given in Chapter \ref{sec:MFseverallag}.  The following essentially follows from the computations of \cite{Smith}.

\begin{theorem} \label{thm:mir-CY3}
The generalized mirror of the non-compact Calabi-Yau threefold $X$ corresponding to the above choice of reference Lagrangians $\bL = \{L_i\}$ is given by 
$$\cA = \widehat{\Lambda Q}/\langle \partial_e \Phi: e \textrm{ is an arrow of } Q \rangle$$
(with the worldsheet superpotential $W \equiv 0$) where $Q$ is defined in Definition \ref{def:quiverCY3} and 
$$ \Phi := \sum_{\textrm{zero} f} T(f) - \sum_{\textrm{pole } p} n_p \bT^{A_p} C(p) + \sum_{k \in K} n_k \bT^{A(U_k)} U_k , $$
$n_p, n_k \in \rat - \{0\}$ and $A_p, A(U_k) \in \R_{>0}$, $K$ is a certain index set, $U_k$ are loops in $\widehat{\Lambda Q}$ which are different from $C(p)$ and $T(f)$.
Note that $\Phi$ is cyclic.  It is understood that the above expression is given in terms of cyclic representatives, and differentiation is taken cyclically.
\end{theorem}

\begin{proof}
From the definition of $Q$ (Definition \ref{def:quiverCY3}) and the chosen grading on $\{L_i\}$ (Definition \ref{def:L-CY3}), it is clear that the vertices of $Q$ correspond to $L_i$ and the arrows of $Q$ correspond to degree-one morphisms among $\{L_i\}$.  Let's denote the generators of the Floer complex of $\bL$ by $X_e$ which are the degree-one morphisms corresponding to each arrow $e$ of $Q$, and $X_{\bar{e}}$ which are the degree-two morphisms corresponding to each reversed arrow $\bar{e}$, and $T_i$ which are the point classes of $L_i$, and $\one_{L_i}$ which are the fundamental classes of $L_i$.  The generalized mirror takes the form $(\widehat{\Lambda Q} / I,W)$, where $I$ is the ideal generated by weakly unobstructed relations, which are the coefficients $P_{\bar{e}}$ of $X_{\bar{e}}$ in 
$$m_0^{b = \sum_e x_e X_e} = \sum_{i} W_i \one_{L_i} + \sum_{e} P_{\bar{e}} X_{\bar{e}}$$
and $W = \sum_i W_i$.  In the above expression $e$ runs over all the arrows of $Q$, $X_e$ are the corresponding degree-one morphisms and $X_{\bar{e}}$ are the corresponding reversed degree-two morphisms.

In this Calabi-Yau situation, by Proposition \ref{prop:W=0} we have $W = 0$.  Fix an edge $e$ of $Q$ and consider 
$$P_{\bar{e}} = \sum_{k \geq 0} \sum_{(X_{e_1},\ldots,X_{e_k}))} x_{e_k} \ldots x_{e_1} \left(m_k(X_{e_1},\ldots,X_{e_k}), e \right).$$   
Suppose $\beta$ is a polygon class contributing to $m_k(X_{e_1},\ldots,X_{e_k})$.  The input morphisms have degree one and the output morphism has degree two.  If $\beta$ is a constant polygon class, then it is supported at certain an intersection point of three Lagrangian spheres in $\{L_i\}$ corresponding to a zero of $\phi$.  By degree reason, $\beta$ is a class of constant triangles with two inputs of degree one and an output of degree two, and the counting is one by \cite[Lemma 4.8]{Smith}.  It gives the term $x_{e_2} x_{e_1}$ in $P_{\bar{e}}$ where $(X_{e_1},X_{e_2},X_{e})$ are the three degree-one morphisms at the same zero $f$ of $\phi$ labeled clockwise.  It equals to $\partial_e T(f)$.

If $\beta$ is a non-constant polygon class, it is projected to a non-constant polygon class in the surface $S$ bounded by matching paths (since the intersection of $L_i$ with a fiber is the real locus in a smooth conic which do not bound any non-constant holomorphic disc).  In particular it includes the term $n_p \bT^{A_p} x_{e_k} \ldots x_{e_1}$, where $X_{e_1} \ldots X_{e_k} X_{e}$ is a cycle in $Q$ corresponding to a pole of $\phi$, $n_p$ is the counting and $A_p$ is the symplectic area of $\beta$.  By \cite[Lemma 4.10]{Smith}, $n_p \not= 0$.  The term equals to $\partial_e \, (n_p \bT^{A_p} C(p))$.  All the other terms are denoted by $U_k$.

Consequently, $P_{\bar{e}} = \partial_e \Phi$ and result follows.
\end{proof}

We can also carry out the extended mirror construction given by Chapter \ref{sec:ext-mir} which uses formal deformations in all degrees rather than just odd-degrees.

\begin{theorem} \label{thm:ext-mir-CY3}
The extended generalized mirror of the non-compact Calabi-Yau threefold $X$ is the Ginzburg dg algebra $(\Lambda \tilde{Q},d)$ where $\tilde{Q}$ is the doubling of $Q$ by adding to $Q$ a reversed edge $\bar{e}$ for each edge $e$ of $Q$, and adding a loop $t$ at each vertex of $Q$; $d$ is the derivation defined by
$$d t_i = \sum_e v_i [x_e, x_{\bar{e}}] v_i, \, d x_{\bar{e}} = \partial_{x_e} \Phi, \, d x_e = 0.$$
\end{theorem}

\begin{proof}
Let's denote the generators of the Floer complex of $\bL$ by $X_e$ which are the degree-one morphisms corresponding to each arrow $e$ of $Q$, $X_{\bar{e}}$ which are the degree-two morphisms corresponding to each reversed arrow $\bar{e}$, $T_i$ which are the point classes of $L_i$, and $\one_{L_i}$ which are the fundamental classes of $L_i$.  The extended generalized mirror uses deformations of $\bL$ in all directions except those along fundamental classes.  In particular from Definition \ref{def:extQ}, the corresponding quiver is $\tilde{Q}$ described above. 

Consider the deformation $\tilde{b}=\sum_e x_e X_e + \sum_e x_{\bar{e}} X_{\bar{e}} + \sum_i t_i T_i$ where $\deg x_e = 0$, $\deg x_{\bar{e}} = -1$ and $\deg t_i = -2$ such that $\tilde{b}$ has overall degree $1$.  We need to compute 
$$m_0^{\tilde{b}} = \sum_i \tilde{W}_i \one_{L_i} + \sum_i (d t_i) T_i + \sum_e (d x_{\bar{e}}) X_{\bar{e}} + \sum_e (d x_e) X_e $$
which has overall degree $2$.  By Proposition \ref{prop:tildeW=0}, $\tilde{W}_i = 0$.  Also by degree reason $d x_e = 0$.  

Consider the coefficient $d t_i$.  Only constant polygons can contribute to the output $T_i$ since the output marked point is unconstrained and no non-constant disc can be rigid.  Such constant polygons are supported at an intersection point of $L_i$ with another Lagrangian sphere.  By degree reason, only $m_2(X_e, X_{\bar{e}})$ or $m_2(X_{\bar{e}},X_e)$ contributes, where $e$ is an arrow from or to $v_i$ respectively, and hence $d t_i = \sum_e v_i [x_e, x_{\bar{e}}] v_i$.  

For $d x_{\bar{e}}$ which has degree zero, only degree-zero variables can appear.  This means we only need to consider polygons with degree one morphisms to be inputs (and the degree-two morphism $X_{\bar{e}}$ to be the output).  Thus we only need to consider $m_0^b$ and result follows from Theorem \ref{thm:mir-CY3}.
\end{proof}

The derived category on the mirror side can be described using either the Ginzburg algebra $(\Lambda \tilde{Q},d)$ or the Jacobi algebra (its $d$-cohomology) $\cA = \widehat{\Lambda Q} / \langle \partial_e \Phi \rangle$.  Namely by Lemma 5.2 of \cite{Keller-Yang}, the derived category of finite-dimensional dg modules of $(\Lambda \tilde{Q},d)$, which is a $CY^3$ triangulated category by \cite{Keller}, has a canonical bounded t-structure, whose heart is the category of nilpotent representations of $\cA$.

\section{Mirror functor}

From Section \ref{sec:FuktoMF} we have a mirror functor
$$ \Fuk(X) \to \dg(\cA) = \MF(\cA, W=0).$$
In this section we describe the mirror functor explicitly, namely we compute the quiver module mirror to a Lagrangian matching sphere $L_0$ in a WKB collection $\bL$.
%We also compute the quiver representations mirror to a non-compact Lagrangian $R_{0}$ corresponding to each edge (which we conveniently label as $0$) of the triangulation (which corresponds to a vertex of the quiver $Q$).

The mirror quiver module is by definition $\cF((\bL,b),L_0)$ together with the differential $m_1^{(b,0)}$.  $L_0$ intersects with five Lagrangian spheres in the WKB collection, namely, $L_0$ itself, and four other Lagrangian spheres $L_1, \ldots, L_4$ as shown in Figure \ref{fig:triangulation}.  Thus the mirror module is 
$$ \check{L}_0 = P_{0} \cdot \one_{0} \oplus P_{0} \cdot T_{0} \oplus P_{2} \cdot X_{\overrightarrow{v_2v_0}} \oplus P_{4} \cdot X_{\overrightarrow{v_4v_0}} \oplus P_{1} \cdot X_{\overleftarrow{v_0v_1}} \oplus P_{3} \cdot X_{\overleftarrow{v_0v_3}} $$
where $P_i := \widehat{\Lambda Q} \cdot v_i$ for each vertex $v_i$, and $\overrightarrow{vw}$ denotes the edge from vertex $v$ to vertex $w$ in $Q$ and $\overleftarrow{vw}$ denotes the reversed arrow in the extended quiver $\tilde{Q}$.  Recall that $X_{\overrightarrow{vw}}$ has degree $1$ and $X_{\overleftarrow{vw}}$ has degree $2$.  Moreover $\one_0$ has degree $0$ and $T_0$ has degree $3$.  (Also recall that $x_e$ has degree zero for all edges $e$ of $Q$, and hence elements in $P_i$ has degree $0$.)

We need to compute the differential $d_{\check{L}_0}$ of the module $\check{L}_0$, which is defined by $m_1^{(\bL,b),L_0}$.

\begin{prop}
$L_0$ is transformed to the dg module $(\check{L}_0,d_{\check{L}_0})$ under the mirror functor, where $\check{L}_0$ is given above and
$d_{\check{L}_0}: \check{L}_0 \to \check{L}_0$ is given by
\begin{align*}
d_{\check{L}_0} (\one_0) &= x_{\overrightarrow{v_2v_0}} X_{\overrightarrow{v_2v_0}} + x_{\overrightarrow{v_4v_0}} X_{\overrightarrow{v_4v_0}} \\
d_{\check{L}_0} (X_{\overrightarrow{v_2v_0}}) &= (x_{\overrightarrow{v_1v_2}} + \alpha \cdot x_{\overrightarrow{v_1v_8}}) X_{\overleftarrow{v_0v_1}} - \mathfrak{p}_1 X_{\overleftarrow{v_0v_3}} \\
d_{\check{L}_0} (X_{\overrightarrow{v_4v_0}}) &= - \mathfrak{p}_2 X_{\overleftarrow{v_0v_1}} + (x_{\overrightarrow{v_3v_4}} + \beta \cdot x_{\overrightarrow{v_3v_9}}) X_{\overleftarrow{v_0v_3}} \\
d_{\check{L}_0} (X_{\overleftarrow{v_0v_1}}) &= x_{\overrightarrow{v_0v_1}} T_0 \\
d_{\check{L}_0} (X_{\overleftarrow{v_0v_3}}) &= x_{\overrightarrow{v_0v_3}} T_0 \\
d_{\check{L}_0} (T_0) &= 0
\end{align*}
for some $\alpha \in v_2 \cdot \cA \cdot v_8$, $\beta \in v_4 \cdot \cA \cdot v_9$, $\mathfrak{p}_1 \in v_2 \cdot \cA \cdot v_3$, $\mathfrak{p}_2 \in v_4 \cdot \cA \cdot v_1$.  The leading order terms of $\mathfrak{p}_i$ are $n_i \bT^{A_i} p_i$ respectively, where $n_i \in \rat$, $A_i \in \R_{>0}$, $p_1$ is the path from $v_3$ to $v_2$ such that $p_1 \cdot \overrightarrow{v_0v_3} \cdot \overrightarrow{v_2v_0}$ is a loop in $Q$ which contributes to a pole term in $\Phi$, and $p_2$ is the path from $v_1$ to $v_4$ such that $p_2 \cdot \overrightarrow{v_0v_2} \cdot {\overrightarrow{v_4v_0}}$ is a loop in $Q$ which contributes to a pole term in $\Phi$.
\end{prop}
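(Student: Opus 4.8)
The plan is to compute $m_1^{(\bL,b),L_0}$ directly by identifying which holomorphic strips bounded by $\bL$ and $L_0$ can contribute, organizing the bookkeeping around the combinatorics of the triangulation rather than around individual discs. First I would record the module structure: $L_0$ meets exactly five members of the WKB collection, namely itself (giving $\one_0$ and $T_0$ by taking a Morse function on the sphere $L_0$ with one maximum and one minimum, as in Proposition \ref{prop:Ginzburg}) and the four neighbours $L_1,\dots,L_4$; the generators of $\CF^\bullet(L_0,L_0)$ and $\CF^\bullet(L_i,L_0)$ are read off from the local picture at the two zeroes of $\phi$ sitting at the endpoints of the matching path defining $L_0$, using the grading convention of \cite{Smith} (Lemma 4.4) that at each zero the clockwise morphisms have degree $1$ and the counterclockwise ones degree $2$. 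This produces the six summands listed, with $X_{\overrightarrow{v_iv_0}}$ of degree $1$, $X_{\overleftarrow{v_0v_j}}$ of degree $2$, $\one_0$ of degree $0$, $T_0$ of degree $3$.

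Next I would compute $d_{\check L_0}$ summand by summand, using $d_{\check L_0}(p)=(-1)^{\deg p}m_1^{b,0}(p)=(-1)^{\deg p}\sum_{k\ge 0}m_{k+1}(b,\dots,b,p)$ and the pull-out rule \eqref{eqn:pulloutrule}. For $\one_0$: by degree reasons the output of $m_{k+1}(b,\dots,b,\one_0)$ has degree $k\cdot 0 + 0 + 2 - (k+1)$ shifted appropriately, forcing output in degree $1$, i.e. the two incoming degree-one generators $X_{\overrightarrow{v_2v_0}},X_{\overrightarrow{v_4v_0}}$; the contributing strips are the small constant-area bigons at the two zeroes, giving coefficients $x_{\overrightarrow{v_2v_0}}$ and $x_{\overrightarrow{v_4v_0}}$. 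For $X_{\overrightarrow{v_2v_0}}$ and $X_{\overrightarrow{v_4v_0}}$ (degree $1$): the output lies in degree $2$, so it is a combination of $X_{\overleftarrow{v_0v_1}},X_{\overleftarrow{v_0v_3}}$; the strips either stay near a single zero (giving the linear terms $x_{\overrightarrow{v_1v_2}},x_{\overrightarrow{v_3v_4}}$, plus $\alpha\cdot x_{\overrightarrow{v_1v_8}}$ resp.\ $\beta\cdot x_{\overrightarrow{v_3v_9}}$ coming from longer paths around the adjacent triangle — these are exactly the higher-order corrections analogous to the $U_k$-terms of $\Phi$ in Theorem \ref{thm:mir-CY3}) or wrap once around a pole of $\phi$, contributing $-\mathfrak p_1$, $-\mathfrak p_2$ with leading term $n_i\bT^{A_i}p_i$ by \cite[Lemma 4.10]{Smith}; the sign is governed by the same orientation/spin bookkeeping as in the proof of Theorem \ref{thm:mir-CY3}. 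For $X_{\overleftarrow{v_0v_1}},X_{\overleftarrow{v_0v_3}}$ (degree $2$): the output is in degree $3$, i.e.\ $T_0$, and the unique contributing strip is the small triangle with one corner at the zero giving $x_{\overrightarrow{v_0v_1}}$ resp.\ $x_{\overrightarrow{v_0v_3}}$ times $T_0$ (counting one by \cite[Lemma 4.8]{Smith}). Finally $d_{\check L_0}(T_0)=0$ since the output would have degree $4$, which is out of range; equivalently the maximum-degree generator has no further differential. Module-linearity over $\cA$ extends each of these to the whole summand via the pull-out rule, and the identity $d_{\check L_0}^2=W\cdot\mathrm{Id}=0$ is automatic from the general proposition that $\cF(U)$ is a (curved) dg-module, here with $W=0$ by Proposition \ref{prop:W=0}.

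I would close by pinning down the membership statements $\alpha\in v_2\cdot\cA\cdot v_8$, $\beta\in v_4\cdot\cA\cdot v_9$, $\mathfrak p_1\in v_2\cdot\cA\cdot v_3$, $\mathfrak p_2\in v_4\cdot\cA\cdot v_1$, which just follow from matching source and target vertices of the paths that appear as strip corners, together with the fact that each strip projects to a polygon in $S$ bounded by matching paths (the fibre intersections of the $L_i$ being real loci in smooth conics, which bound no non-constant discs) — exactly as in the proof of Theorem \ref{thm:mir-CY3}. The main obstacle I anticipate is not the degree counting (which cuts down the possibilities sharply) but the precise sign computation for the two ``pole'' terms $-\mathfrak p_1,-\mathfrak p_2$ and the verification that they are consistent with $d_{\check L_0}^2=0$: one must check that the orientations of the relevant strips, the spin structures on the $L_i$, and the Koszul signs in \eqref{eqn:pulloutrule2} conspire so that the wrapped contributions cancel against the constant-triangle contributions when composing $d_{\check L_0}(X_{\overleftarrow{v_0v_j}})\circ d_{\check L_0}(X_{\overrightarrow{v_iv_0}})$; this is where I would rely most heavily on the sign conventions already fixed in \cite{Smith} and in Section \ref{sec:ext-mir}.
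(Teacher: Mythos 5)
Your proposal is correct and follows essentially the same route as the paper: both determine the differential by degree constraints plus the identical disc counts (unitality giving the $\one_0$ and $T_0$-output terms via $m_2(X,\one_0)=X$ and $m_2(X_{\overrightarrow{v_0v_i}},X_{\overleftarrow{v_0v_i}})=T_0$, and the constant-triangle, higher-order, and pole-cycle contributions already organized in Theorem \ref{thm:mir-CY3}). The only cosmetic difference is that the paper simply reads these coefficients off the expression of $m_0^{\tilde b}$ in Theorem \ref{thm:ext-mir-CY3}, since $L_0$ is itself a member of $\bL$, whereas you re-derive the same strip counts geometrically.
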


\begin{proof}
The differential is computed using Theorem \ref{thm:ext-mir-CY3}, which gives an explicit expression of $m_0^{\tilde{b}}$.  The first equality follows from $m_2(X, \one_0) = X$.  The second and third equalities follows from the expressions of $m_k(X_{e_1},\ldots,X_{e_k})$.  The higher order terms of $\Phi$ correspond to the terms $\alpha \cdot x_{\overrightarrow{v_1v_8}}$, $\beta \cdot x_{\overrightarrow{v_3v_9}}$, $\mathfrak{p}_1, \mathfrak{p}_2$.
The fourth and fifth equalities come from $m_2(X_{\overrightarrow{v_0v_i}},\overleftarrow{v_0v_i}) = T_0$ for $i=1,3$.  The last equality is due to $\deg T_0 = 3$ which is the highest degree.
\end{proof}

Taking the cohomology, we have

\begin{prop} \label{prop:coho-mod}
The cohomology of the dg module $(\check{L}_0,d_{\check{L}_0})$ mirror to $L_0$ equals to
$$ H^i(d_{\check{L}_0}) = \left\{ \begin{array}{ll}
\Lambda \cdot T_0 & \textrm{ for } i = 3;\\
0 & \textrm{ otherwise.}
\end{array}
\right.$$
\end{prop}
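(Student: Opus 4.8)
The plan is to read off from the preceding proposition that $(\check L_0, d_{\check L_0})$ is the complex of projective left $\cA$-modules
\[
0 \longrightarrow \cA v_0 \xrightarrow{\, d^0 \,} \cA v_2 \oplus \cA v_4 \xrightarrow{\, d^1 \,} \cA v_1 \oplus \cA v_3 \xrightarrow{\, d^2 \,} \cA v_0 \longrightarrow 0
\]
where $\cA v_j := \cA \cdot v_j$, the two copies of $\cA v_0$ carry the generators $\one_0$ (degree $0$) and $T_0$ (degree $3$), the copies $\cA v_2,\cA v_4$ carry $X_{\overrightarrow{v_2v_0}},X_{\overrightarrow{v_4v_0}}$ (degree $1$), and $\cA v_1,\cA v_3$ carry $X_{\overleftarrow{v_0v_1}},X_{\overleftarrow{v_0v_3}}$ (degree $2$); here $\deg x_e = 0$ for all arrows $e$ is used, and it is a complex because $d_{\check L_0}^2 = W = 0$ by Proposition \ref{prop:fctor-obj}. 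First I would compute $H^3$ directly: it is the cokernel of $d^2$, i.e.\ the quotient of $\cA v_0$ by the submodule generated by the two outgoing arrows $x_{\overrightarrow{v_0v_1}},x_{\overrightarrow{v_0v_3}}$ of $v_0$. By Definitions \ref{def:L-CY3} and \ref{def:quiverCY3} (see Figures \ref{fig:triangulation} and \ref{fig:quiver}) the matching sphere $L_0$ meets exactly two triangles of the triangulation, so $v_0$ has precisely two outgoing arrows in $Q$, namely $\overrightarrow{v_0v_1}$ and $\overrightarrow{v_0v_3}$; hence every path of positive length out of $v_0$ factors through one of them, and since the relations $\partial_e\Phi$ lie in the square of the arrow ideal they do not affect the quotient by the arrow ideal. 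Therefore $H^3 \cong \Lambda\cdot v_0 = \Lambda\cdot T_0$.

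It remains to show the complex is acyclic in degrees $0,1,2$. The key point is that, after the shift placing the surjection $\cA v_0 \twoheadrightarrow S_0$ onto the simple $\cA$-module $S_0$ at $v_0$ in cohomological degree $3$, this complex is exactly the canonical Koszul-type projective resolution of $S_0$ over the Jacobi algebra $\cA = \Jac(Q,\Phi)$ attached to the Ginzburg dg algebra of Theorem \ref{thm:ext-mir-CY3}: the map $d^2$ is multiplication by the outgoing arrows of $v_0$, the map $d^1$ is multiplication by the relevant cyclic derivatives $\partial_e\Phi$ — this is precisely what the correction terms $\alpha\cdot x_{\overrightarrow{v_1v_8}}$, $\beta\cdot x_{\overrightarrow{v_3v_9}}$ and the higher-order tails of $\mathfrak p_1,\mathfrak p_2$ are, namely pieces of $\partial_e\Phi$ — and $d^0$ is dual to the loop relations $d t_i = \sum_e v_i[x_e,x_{\bar e}]v_i$ of the Ginzburg algebra. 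Such a resolution is exact except at its last term, where the cohomology is $S_0$; this is the standard fact that the Jacobi algebra of a quiver with potential coming from a $3$-Calabi--Yau Ginzburg algebra carries the expected minimal resolution of its simples (cf.\ \cite{Ginzburg,Keller,Keller-Yang}, as used in \cite{BS}). As a safety net in the Novikov-graded setting, one can instead filter each $\cA v_j$ by powers of $\bT$: the terms $\alpha,\beta$ and the tails of $\mathfrak p_i$ carry strictly positive energy relative to the displayed leading monomials, so the associated graded complex is the Koszul resolution of the simple module over the leading-order local Jacobi algebra, which is exact; since each module is complete for the energy filtration, acyclicity of the associated graded in degrees $\le 2$ lifts to acyclicity of the original complex there, and the spectral sequence converges.

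The main obstacle is the identification in the second paragraph on the nose — verifying, with the correct signs, that $d^0,d^1,d^2$ are exactly the differentials of the canonical resolution built from the Ginzburg data of Theorem \ref{thm:ext-mir-CY3}, and in particular that the $\bT$-higher-order parts of $\mathfrak p_1,\mathfrak p_2,\alpha,\beta$ assemble into the full cyclic derivatives $\partial_e\Phi$ rather than a truncation. This is exactly where the explicit disc counts of \cite{Smith} (Lemmas 4.8 and 4.10) enter: they determine which constant and non-constant polygons contribute, hence the precise form of $\Phi$ and of $d_{\check L_0}$, and also pin down the nonvanishing of the leading coefficients so the filtration argument applies. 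Once this is settled, invoking — or, via the filtration argument, re-deriving — the exactness of the canonical resolution of a simple module over a $CY_3$ Jacobi algebra completes the computation of $H^\bullet(d_{\check L_0})$.
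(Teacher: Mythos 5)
Your computation of $H^3$ is fine and is essentially the paper's own argument: $v_0$ has exactly two outgoing arrows, the relations lie in the square of the arrow ideal, so the cokernel of $d^2$ is $\Lambda\cdot T_0$. The gap is in degrees $0,1,2$. The ``standard fact'' you invoke is not a theorem: for a Jacobi algebra $\Jac(Q,\Phi)$ the four-term complex of projectives attached to a vertex (arrows out, second cyclic derivatives of $\Phi$ in the middle, arrows in) is always exact at its last two spots, but exactness at the first two spots is an additional, nontrivial property of the particular quiver with potential --- it amounts to the Jacobi algebra itself being $3$-Calabi--Yau, equivalently to the Ginzburg dg algebra having cohomology concentrated in degree zero --- and it fails in very simple examples (one loop with potential $x^3$ gives $\C[x]/(x^2)$, where the leftmost map $\cdot x$ has nonzero kernel; for dimer algebras exactness is precisely the consistency condition). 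What is standard in \cite{Keller-Yang,Keller} concerns cofibrant resolutions of the simples over the Ginzburg \emph{dg} algebra of Theorem \ref{thm:ext-mir-CY3}, not over its zeroth cohomology $\cA=\widehat{\Lambda Q}/(\partial\Phi)$, and $\check{L}_0$ is a complex of projective $\cA$-modules. So for the WKB potential $\Phi$ (triangle terms plus pole cycles plus the tails $U_k$) the exactness in degrees $\le 2$ is exactly what has to be proved; the paper does this by a hands-on computation in $\widehat{\Lambda Q}/R$, identifying which relations $\partial_e\Phi$ can produce terms ending in $x_{\overrightarrow{v_2v_0}}$, $x_{\overrightarrow{v_1v_2}}$, $x_{\overrightarrow{v_1v_8}}$, etc., and this analysis uses the positive-energy (pole) contributions and the nonvanishing of their leading coefficients from \cite{Smith} in an essential way.

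Your filtration ``safety net'' does not close the gap, because it replaces one unproved exactness claim by another of the same kind: the energy-zero part of $\Phi$ is just $\sum_f T(f)$, and you would need the analogous four-term complex over $\Jac(Q,\sum_f T(f))$ to be exact in degrees $\le 2$, which is again not a standard fact and is particularly doubtful here, since the paper's direct proof of $H^0=H^1=0$ leans on the higher-order terms (e.g.\ the term $p\,x_{\overrightarrow{v_2v_0}}$ in $\partial_{x_{\overrightarrow{v_5v_2}}}\Phi$ and the leading paths of $\mathfrak{p}_1,\mathfrak{p}_2$), which disappear in the associated graded. (A small additional slip: the entries of $d^1$ are second cyclic derivatives of $\Phi$, not the relations $\partial_e\Phi$ themselves.) To make your approach work you would either have to prove that this particular completed Jacobi algebra is $3$-CY (or that its degenerate, energy-zero version has the required partial exactness), or else carry out the explicit kernel/cokernel analysis with the relations, which is the route the paper takes.
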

\begin{proof}
First we compute $H^0$, which is the kernel of the map $d_{\check{L}_0}: P_0 \one_0 \to P_2 X_{\overrightarrow{v_2v_0}} \oplus  P_4 X_{\overrightarrow{v_4v_0}}$ defined by $c \one_0 \mapsto c x_{\overrightarrow{v_2v_0}} X_{\overrightarrow{v_2v_0}} + c x_{\overrightarrow{v_4v_0}} X_{\overrightarrow{v_4v_0}}$ for $c \in P_0$.  Suppose $c \not= 0$.  Since $c x_{\overrightarrow{v_2v_0}} = 0$ but $c \not= 0$ in $\widehat{\Lambda Q} / R$, $c x_{\overrightarrow{v_2v_0}} \in \widehat{\Lambda Q} \cdot \{ \partial_i \Phi \}$.  The only elements in $\{\partial_i \Phi\}$ containing terms of the form $\ldots x_{\overrightarrow{v_2v_0}}$ are 
$$ \partial_{x_{\overrightarrow{v_5v_2}}} \Phi = x_{\overrightarrow{v_6v_5}} x_{\overrightarrow{v_2v_6}} + p x_{\overrightarrow{v_2v_0}} + \ldots $$
and
$$ \partial_{x_{\overrightarrow{v_1v_2}}} \Phi = x_{\overrightarrow{v_0v_1}} x_{\overrightarrow{v_2v_0}} + q x_{\overrightarrow{v_2v_6}} + \ldots $$
where the first two terms correspond to polygons of the quiver.  In order to get rid of the term $x_{\overrightarrow{v_6v_5}} x_{\overrightarrow{v_2v_6}}$ or $q x_{\overrightarrow{v_2v_6}}$, we have to subtract by
$$ (\partial_{x_{\overrightarrow{v_2v_6}}} \Phi) \cdot x_{\overrightarrow{v_2v_6}} = x_{\overrightarrow{v_5v_2}} x_{\overrightarrow{v_6v_5}} x_{\overrightarrow{v_2v_6}} + x_{\overrightarrow{v_1v_2}} q x_{\overrightarrow{v_2v_6}} + \ldots$$
Hence $c x_{\overrightarrow{v_2v_0}}$ must be of the form $u \cdot \left(x_{\overrightarrow{v_5v_2}} \partial_{x_{\overrightarrow{v_5v_2}}} \Phi + x_{\overrightarrow{v_1v_2}} \partial_{x_{\overrightarrow{v_1v_2}}} \Phi - (\partial_{x_{\overrightarrow{v_2v_6}}} \Phi) x_{\overrightarrow{v_2v_6}} \right)$ for some $u$.  But 
$$x_{\overrightarrow{v_5v_2}} \partial_{x_{\overrightarrow{v_5v_2}}} \Phi + x_{\overrightarrow{v_1v_2}} \partial_{x_{\overrightarrow{v_1v_2}}} \Phi = (\partial_{x_{\overrightarrow{v_2v_6}}} \Phi) x_{\overrightarrow{v_2v_6}} + (\partial_{x_{\overrightarrow{v_2v_0}}} \Phi) x_{\overrightarrow{v_2v_0}} $$
since both the left and right hand sides are sums of all the polygons with one of its sides bounded by $L_2$.  Thus $c x_{\overrightarrow{v_2v_0}} = u \cdot (\partial_{x_{\overrightarrow{v_2v_0}}} \Phi) x_{\overrightarrow{v_2v_0}}$, implying that
$c$ itself is zero in $\widehat{\Lambda Q} / R$, a contradiction.  Hence $H^0 = 0$.

Now we compute $H^1$.  First consider the kernel of the map $d_{\check{L}_0}: P_2 \cdot X_{\overrightarrow{v_2v_0}} \oplus P_4 \cdot X_{\overrightarrow{v_4v_0}} \to P_{1} \cdot X_{\overleftarrow{v_0v_1}} \oplus P_{3} \cdot X_{\overleftarrow{v_0v_3}}$ given by 
$$h X_{\overrightarrow{v_2v_0}} + k X_{\overrightarrow{v_4v_0}} \mapsto (h (x_{\overrightarrow{v_1v_2}} + \alpha \cdot x_{\overrightarrow{v_1v_8}}) - k \mathfrak{p}_2)X_{\overleftarrow{v_0v_1}} + (- h \mathfrak{p}_1 + k (x_{\overrightarrow{v_3v_4}} + \beta \cdot x_{\overrightarrow{v_3v_9}})) X_{\overleftarrow{v_0v_3}}.  $$  
We only need to consider the case $(h,k) \not= (0,0) \in (\widehat{\Lambda Q} / R)^2$.  In order to be in the kernel, $h=0$ if and only if $k=0$.  Thus neither $h$ nor $k$ is zero.  We have $h (x_{\overrightarrow{v_1v_2}} + \alpha \cdot x_{\overrightarrow{v_1v_8}}) - k \mathfrak{p}_2, - h \mathfrak{p}_1 + k (x_{\overrightarrow{v_3v_4}} + \beta \cdot x_{\overrightarrow{v_3v_9}}) \in \widehat{\Lambda Q} \cdot \{ \partial_i \Phi \}$.  

Note that since $\mathfrak{p}_2 \in v_4 \cdot \widehat{\Lambda Q} \cdot v_1$, it can be written as
$$ \mathfrak{p}_2 = a \cdot x_{\overrightarrow{v_1v_8}} + b \cdot x_{\overrightarrow{v_1v_2}} $$
for some $a \in v_4 \cdot \widehat{\Lambda Q} \cdot v_8$ and $b \in v_4 \cdot \widehat{\Lambda Q} \cdot v_2$.  The leading order term of $a$ is the path $a_0$ such that $x_{\overrightarrow{v_4v_0}}a_0 x_{\overrightarrow{v_1v_8}}x_{\overrightarrow{v_0v_1}}$ corresponds to a polygon of $Q$.
The only elements in $\{\partial_i \Phi\}$ containing terms of the form $\ldots x_{\overrightarrow{v_1v_2}}$ or $\ldots x_{\overrightarrow{v_1v_8}}$ are 
\begin{align*}
\partial_{x_{\overrightarrow{v_0v_1}}} \Phi =& x_{\overrightarrow{v_2v_0}}x_{\overrightarrow{v_1v_2}} - x_{\overrightarrow{v_4v_0}} \mathfrak{p}_2 + x_{\overrightarrow{v_2v_0}} \cdot \alpha \cdot x_{\overrightarrow{v_1v_8}},\\
\partial_{x_{\overrightarrow{v_7v_1}}} \Phi =& x_{\overrightarrow{v_8v_7}} x_{\overrightarrow{v_1v_8}} - x_{\overrightarrow{v_{10}v_7}} \mathfrak{p} + x_{\overrightarrow{v_8v_7}} \gamma \cdot x_{\overrightarrow{v_1v_2}}.
\end{align*} 
Thus
$$ (h-kb) x_{\overrightarrow{v_1v_2}} + (-ka + h\alpha) x_{\overrightarrow{v_1v_8}} = \lambda (x_{\overrightarrow{v_2v_0}}x_{\overrightarrow{v_1v_2}} - x_{\overrightarrow{v_4v_0}} \mathfrak{p}_2 + x_{\overrightarrow{v_2v_0}} \cdot \alpha \cdot x_{\overrightarrow{v_1v_8}}) + \mu (x_{\overrightarrow{v_8v_7}} x_{\overrightarrow{v_1v_8}} - x_{\overrightarrow{v_{10}v_7}} \mathfrak{p} + x_{\overrightarrow{v_8v_7}} \gamma \cdot x_{\overrightarrow{v_1v_2}})$$
for some $\lambda \in \widehat{\Lambda Q} \cdot v_0$ and $\mu \in \widehat{\Lambda Q} \cdot v_7$.  Consider the terms $(\ldots x_{\overrightarrow{v_1v_8}})$ in the above equation.  It gives
$$ (k-\lambda x_{\overrightarrow{v_4v_0}}) a + (\lambda x_{\overrightarrow{v_2v_0}} - h) \alpha + \mu (x_{\overrightarrow{v_8v_7}} - x_{\overrightarrow{v_{10}v_7}} \mathfrak{p}') = 0 $$
where $\mathfrak{p} = \mathfrak{p}' x_{\overrightarrow{v_1v_8}} + \mathfrak{p}'' x_{\overrightarrow{v_1v_2}}$.  The leading order terms of $x_{\overrightarrow{v_8v_7}} - x_{\overrightarrow{v_{10}v_7}} \mathfrak{p}'$ is $x_{\overrightarrow{v_8v_7}}$, while the leading order term $a_0$ of $a$ does not begin with $x_{\overrightarrow{v_8v_7}}$.  If the leading order term of $\alpha$ does not begin with $x_{\overrightarrow{v_8v_7}}$, then the coefficient of $a$ has to be zero: $k = \lambda x_{\overrightarrow{v_4v_0}}$.  Otherwise $\mu=0$.

Now consider the terms $(\ldots x_{\overrightarrow{v_1v_2}})$.  In the first case that $k = \lambda x_{\overrightarrow{v_4v_0}}$, it gives 
$$h - \lambda x_{\overrightarrow{v_2v_0}} = \mu (x_{\overrightarrow{v_8v_7}} \gamma - x_{\overrightarrow{v_{10}v_7}}\mathfrak{p}'').$$
Substituting back to the above equation $(\lambda x_{\overrightarrow{v_2v_0}} - h) \alpha + \mu (x_{\overrightarrow{v_8v_7}} - x_{\overrightarrow{v_{10}v_7}} \mathfrak{p}') = 0$, we obtain
$$\mu (x_{\overrightarrow{v_8v_7}} \gamma - x_{\overrightarrow{v_{10}v_7}}\mathfrak{p}'') \alpha = \mu (x_{\overrightarrow{v_8v_7}} - x_{\overrightarrow{v_{10}v_7}} \mathfrak{p}').$$
The right hand side has the term $\mu \cdot x_{\overrightarrow{v_8v_7}}$ while the left hand side does not.  It forces $\mu = 0$.  As a result, we have
$$ k = \lambda x_{\overrightarrow{v_4v_0}}, h = \lambda x_{\overrightarrow{v_2v_0}}, \mu = 0. $$
Hence $h X_{\overrightarrow{v_2v_0}} + k X_{\overrightarrow{v_4v_0}} \in \mathrm{Im} (d_{\check{L}_0})$.

In the second case that $\mu=0$, it gives
$$ h - \lambda x_{\overrightarrow{v_2v_0}} = (k - \lambda x_{\overrightarrow{v_4v_0}}) b. $$
Substituting back to the above equation $(\lambda x_{\overrightarrow{v_2v_0}} - h) \alpha + (k - \lambda x_{\overrightarrow{v_4v_0}})a = 0$, we obtain
$$(k - \lambda x_{\overrightarrow{v_4v_0}}) b \alpha = (k - \lambda x_{\overrightarrow{v_4v_0}}) a.$$
But $b \alpha \not= a$ since the leading order term $a_0$ does not pass through $v_2$.  Hence $k - \lambda x_{\overrightarrow{v_4v_0}}$, and the same conclusion follows.

A similar argument shows that $H^2 = 0$ and we will not repeat the argument.  Now consider $H^3$, which is the cokernel of $d_{\check{L}_0}:P_{1} \cdot X_{\overleftarrow{v_0v_1}} \oplus P_{3} \cdot X_{\overleftarrow{v_0v_3}} \to P_0 T_0$ defined by $$d_{\check{L}_0} (a X_{\overleftarrow{v_0v_1}} + b X_{\overleftarrow{v_0v_3}}) = (a x_{\overrightarrow{v_0v_1}} + bx_{\overrightarrow{v_0v_3}}) T_0.$$
Any element in $P_0$ is of the form $c v_0 + a x_{\overrightarrow{v_0v_1}} + bx_{\overrightarrow{v_0v_3}}$for some $c \in \Lambda$ and $a,b \in \widehat{\Lambda Q}$.  Hence the cokernel is one dimensional, which is $H^3 = \Lambda \cdot T_0$.
\end{proof}

In summary, on cohomology level a Lagrangian sphere in a WKB collection is transformed to a simple module at the corresponding vertex under the mirror functor.  Combining this result together with Theorem \ref{thm:inj} on injectivity of the functor, and the result of \cite{Keller-Yang} on the morphism spaces of the simple modules, we can conclude that

\begin{theorem} \label{thm:WKB}
The mirror functor restricts to be an equivalence from the derived category generated by a WKB collection of Lagrangian spheres in the Fukaya category to the derived category of modules of the mirror.
\end{theorem}

\begin{proof}
By Theorem \ref{thm:inj}, the induced functor on ${\rm HF}^*(L_i,L_j) \to H^*(\check{L}_i,\check{L}_j)$ is injective.  By Proposition \ref{prop:coho-mod}, we have $\check{L}_i$ being resolutions of the corresponding simple module $S_i$ of the quiver (up to shifting).  By Lemma 2.15 of \cite{Keller-Yang}, the morphism space from $S_i$ to $S_j$ is one-dimensional when $v_i$ and $v_j$ are adjacent vertices in $Q$ with $i \not= j$, and is two-dimensional when $i=j$.  These dimensions agree with ${\rm HF}^*(L_i,L_j)$ where $L_i$ are unobstructed Lagrangian three-spheres.  Hence the induced maps on ${\rm HF}^*(L_i,L_j) \to H^*(\check{L}_i,\check{L}_j)$ are indeed isomorphisms.  Furthermore, the simple modules generate the derived category of the Ginzburg algebra corresponding to $Q$ (see the proof of Theorem 2.19 in \cite{Keller-Yang}.  Hence the functor gives an equivalence between the derived category of WKB Lagrangian spheres and the derived category of the Ginzburg algebra.
\end{proof}

\appendix
%    Include appendix "chapters" here.
%-----------------------------------------------------------------------
% Beginning of chap1.tex
%-----------------------------------------------------------------------
%
%  AMS-LaTeX sample file for a chapter of a monograph, to be used with
%  an AMS monograph document class.  This is a data file input by
%  chapter.tex.
%
%  Use this file as a model for a chapter; DO NOT START BY removing its
%  contents and filling in your own text.
% 
%%%%%%%%%%%%%%%%%%%%%%%%%%%%%%%%%%%%%%%%%%%%%%%%%%%%%%%%%%%%%%%%%%%%%%%%

%\part{This is a Part Title Sample}

\chapter{Theta function calculations}
\section{Weak Maurer-Cartan relations for $\mathbb{P}^1_{3,3,3}$}\label{appsubsec:333theta}
To obtain simpler expressions of $A,B,C$ \eqref{eqn:333ABCbef} in terms of theta functions, observe first that
%\begin{equation*}
%\begin{array}{lcl}
%h_{\bar{X}} &=& \left(\sum_{k \in \Z} \lambda^{3k-1} q_0^{(6k+1+3t)^2} \right) \tilde{y}z + \left(\sum_{k \in \Z} \lambda^{3k+1} q_0^{(6k+5+3t)^2} \right) z\tilde{y} + \left(\sum_{k \in \Z} \lambda^{3k} q_0^{(6k+3+3t)^2} \right) x^2 \\
%\lambda^{-1} h_{\bar{Y}} &=& \left(\sum_{k \in \Z} \lambda^{3k-1} q_0^{(6k+1+3t)^2} \right) xy + \left(\sum_{k \in \Z} \lambda^{3k+1} q_0^{(6k+5+3t)^2} \right) yx + \left(\sum_{k \in \Z} \lambda^{3k} q_0^{(6k+3+3t)^2} \right) \tilde{y}^2\\
%h_{\bar{Z}} &=& \left(\sum_{k \in \Z} \lambda^{3k-1} q_0^{(6k+1+3t)^2} \right) x \tilde{y} + \left(\sum_{k \in \Z} \lambda^{3k+1} q_0^{(6k+5+3t)^2} \right) \tilde{y} x + \left(\sum_{k \in \Z} \lambda^{3k} q_0^{(6k+3+3t)^2} \right) z^2.
%\end{array}
%\end{equation*}
\begin{equation*}
\begin{array}{lcl}
A(\lambda,t) &=& q_0^{9t^2 + 6t+1}  \lambda^{\frac{1+3t}{2}}  \sum_{k\in \Z}  (q_0^{36})^{k^2}  \left( \lambda^3 (q_0^{36})^{(t+\frac{1}{3})} \right)^{k} \\
B(\lambda,t) &=& q_0^{9t^2 + 6t+1}  \lambda^{\frac{1+3t}{2}}  \sum_{k\in \Z}  (q_0^{36})^{(k+\frac{2}{3})^2}  \left( \lambda^3 (q_0^{36})^{(t+\frac{1}{3})} \right)^{ (k + \frac{2}{3})}  \\
C(\lambda,t) &=& q_0^{9t^2 + 6t+1}  \lambda^{\frac{1+3t}{2}}  \sum_{k\in \Z}  (q_0^{36})^{(k+\frac{1}{3})^2}  \left( \lambda^3 (q_0^{36})^{(t+ \frac{1}{3})} \right)^{(k + \frac{1}{3})} .
\end{array}
\end{equation*}
Since $\lambda = e^{2 \pi i s}$ and $q_{orb}^3 = e^{ 2 \pi i \tau}$ (for $q_{orb} = q_0^8$, we have 
$$q_0^{36} = e^{ \pi i (3 \tau)} \quad \mbox{and} \quad \lambda^3 \left(q_0^{36} \right)^{ (t+ \frac{1}{3}) } = e^{6 \pi i \left( s + \tau \frac{t}{2}  +\frac{\tau}{6} \right) } = e^{2\pi i (3u)}$$
where $u= s+ \tau\frac{t}{2} + \frac{\tau}{6}$. Therefore, dividing $A,B,C$ by a common factor, we obtain
\begin{equation*}
\begin{array}{llllll}
a(\lambda,t)&:=& q_0^{-(3t+1)^2} \lambda^{-\frac{1+3t}{2}} A(\lambda,t) &=& \sum_{k\in \Z}  e^{\pi i  (3 \tau) k^2}   e^{2\pi i k (3u)} &=\theta \left[0,0]\right (3u,3\tau)\\
b(\lambda,t)&:=&q_0^{-(3t+1)^2}  \lambda^{-\frac{1+3t}{2}}  B(\lambda,t) &=&  \sum_{k\in \Z}  e^{\pi i (3 \tau) (k+\frac{2}{3})^2}  e^{2\pi i  (k+\frac{2}{3}) (3u)} &=  \theta \left[\frac{2}{3},0\right] (3u,3\tau)\\
c(\lambda,t)&:=& q_0^{-(3t+1)^2}  \lambda^{-\frac{1+3t}{2}}  C(\lambda,t) &=& \sum_{k\in \Z}  e^{\pi i (3 \tau) (k+\frac{1}{3})^2}  e^{2\pi i  (k+\frac{1}{3}) (3u)} &= \theta \left[\frac{1}{3},0\right] (3u,3\tau).
\end{array}
\end{equation*}
Recall that the theta function above is defined as
$$\theta[a,b](w,\tau) = \sum_{m\in \Z} e^{\pi i \tau (m+a)^2} e^{2 \pi i (m+a)(w+b)}.$$

Define $\Theta_j (u, \tau)_s := \theta \left[ \frac{j}{s}, 0 \right] ( s u, s \tau)$ so that 
$$a(u) = \Theta_0 (u, \tau)_3, \quad b(u) = \Theta_2 (u, \tau)_3, \quad c(u) = \Theta_1 (u, \tau)_3.$$
See for e.g. \cite[Theorem 3.2]{Dol} to observe that $(a(u) : b(u) : c(u)) : E_\tau \to \mathbb{P}^2$ gives an embedding of the elliptic curve into the projective plane.

\section{Weak Maurer-Cartan relations for $\mathbb{P}^1_{2,2,2,2}$}\label{appsubsec:theta2222}
In order to simplify $A,B,C,D$ \eqref{eqn:ABCD2222bef},
%in the weak Maurer-Cartan relations for $(\mathbb{L}_t ,\lambda)$ in $\mathbb{P}^{2,2,2,2}$,
we use the following meromorphic function
$$ f( z_1 ,z_2 ;\tau) = \displaystyle\sum_{ (\alpha(z_1) +l)(\alpha(z_2) +k) >0 } {\rm sign} (\alpha(z_1) + l) e^{2\pi i ( \tau kl + l z_1 + k z_2) }$$
where $\alpha(z) = {\rm Im} (z) /{\rm Im} (\tau)$. $f$ can be written as (constant multiple of) a ratio involving three theta functions as
\begin{equation}\label{eqn:meroftheta}
f(z_1,z_2;\tau) = \frac{\theta'(\tau+1/2,\tau)}{2 \pi i } \cdot \frac{\theta(z_1 + z_2 - (\tau+1)/2 , \tau)}{\theta(z_1 - (\tau+1)/2, \tau) \theta ( z_2 - (\tau+1)/2, \tau)}
\end{equation}
This function is well known from the literature (see for e.g. \cite{Kr}), and appeared in \cite{Pol} in connection with $m_3$-products on the Fukaya category of an elliptic curve. 

Observe that $A$ can be rewritten as
\begin{eqnarray*}
A(\lambda,t) &=& \displaystyle\sum_{k,l \geq 0}^{\infty} \lambda^{-2l-\frac{1}{2}} q_d^{(4k+1 -2t)(4l+1)}- \sum_{k,l \geq 0}^{\infty} \lambda^{2l + \frac{3}{2} } q_d^{(4k+3 +2t)(4l+3)} \\
 &=&\displaystyle\sum_{k,l \geq 0}^{\infty} \lambda^{-2l-\frac{1}{2}} q_d^{(4k+1 -2t)(4l+1)}- \sum_{k,l \leq -1}^{-\infty} \lambda^{-2l-\frac{1}{2}} q_d^{(4k+1 -2t)(4l+1)}
\end{eqnarray*}
We set $\tilde{\tau} = 2 \tau$ and $\tilde{u}'= -2s - \tau t = -2s - \tilde{\tau} \frac{t}{2}$. Then
\begin{equation*}
\begin{array}{lcl}
A(\lambda,t) &=&  q_d^{1-2t} e^{-\pi i s} \displaystyle\sum_{k , l \geq 0} \exp \left(2 \pi i \left( \tilde{\tau} k l + l \left( -2s - \tilde{\tau} \left( \frac{t}{2} - \frac{1}{4} \right) \right) + k \left(\frac{\tilde{\tau}}{4} \right) \right) \right) \\
&&-  q_d^{1-2t} e^{- \pi i s}  \displaystyle\sum_{k,l \leq -1} \exp \left(2 \pi i \left( \tilde{\tau} k l + l \left( -2s - \tilde{\tau} \left( \frac{t}{2} - \frac{1}{4} \right) \right) + k \left(\frac{\tilde{\tau}}{4} \right) \right) \right)  \\
&=& 
q_d e^{  \frac{\pi i}{2} \tilde{u}' } \displaystyle\sum_{k , l \geq 0} \exp \left(2 \pi i \left( \tilde{\tau} k l + l \left( \tilde{u}' + \frac{\tilde{\tau}}{4} \right)  + k \left(\frac{\tilde{\tau}}{4} \right) \right) \right) \\
&& - q_d e^{  \frac{\pi i}{2} \tilde{u}' }  \displaystyle\sum_{k,l \leq -1} \exp \left(2 \pi i \left( \tilde{\tau} k l + l \left( \tilde{u}' + \frac{\tilde{\tau}}{4} \right)  + k \left(\frac{\tilde{\tau}}{4} \right) \right) \right) \\
%&=&  q_d^{6t +1} e (\tilde{u}') \displaystyle\sum_{\substack{k , l \geq 0 \\ k,l \leq -1}} e \left( \tilde{\tau} k l + l \left( \tilde{u}' + \frac{\tilde{\tau}}{4} \right)  + k \left(\frac{\tilde{\tau}}{4} \right) \right) \\
&=&  q_d e^{  \frac{\pi i}{2} \tilde{u}' } f\left(\tilde{u}' + \frac{\tilde{\tau}}{4},\frac{\tilde{\tau}}{4};\tilde{\tau} \right) 
\end{array}
\end{equation*}
since $\alpha\left(\tilde{u}' + \frac{\tilde{\tau}}{4} \right) = {\rm Im} \left(-2s - \tilde{\tau} \frac{t}{2} + \frac{\tilde{\tau}}{4} \right) / {\rm Im} (\tilde{\tau}) = \frac{1}{4}-\frac{t}{2}$ and $\alpha \left(\frac{\tilde{\tau}}{4} \right) = \frac{1}{4}$.
Now, we apply \eqref{eqn:meroftheta} to obtain
\begin{equation*}
\begin{array}{lcl}
A(\tilde{u}) &=& \tilde{K} \dfrac{ \theta ( \tilde{u}' - \frac{1}{2} , \tilde{\tau}) }{\theta ( \tilde{u}' - (\tilde{\tau} +2) /4 , \tilde{\tau})  \theta ( - (\tilde{\tau} +2) /4 , \tilde{\tau}) } \\
&=& \tilde{K} \dfrac{ \theta ( \tilde{u} , \tilde{\tau}) }{\theta \left( \tilde{u} - \frac{\tilde{\tau}}{4} , \tilde{\tau} \right)  \theta \left( \tilde{u}_0 -\frac{\tilde{\tau}}{4}, \tilde{\tau} \right) } \\
%&=& \tilde{K} \dfrac{ \theta ( \tilde{u}' - \frac{1}{2} , \tilde{\tau}) }{\theta ( \tilde{u}'_1, \tilde{\tau})  \theta (\tilde{u}'_0, \tilde{\tau}) } \\
%&=& \tilde{K} \dfrac{ \theta ( \tilde{u}' - \frac{1}{2} , \tilde{\tau}) }{\Theta_0 ( w_1, \tau)_2  \Theta (w_0,\tau)_2 }
&=& \tilde{K} e^{-\pi i \tilde{\tau}} e^{-2\pi i (\tilde{u} + \tilde{u}_0)}  \dfrac{ \theta ( \tilde{u} , \tilde{\tau}) }{\theta \left( \tilde{u}_1 + \frac{3 \tilde{\tau}}{4} , \tilde{\tau} \right)  \theta \left( \tilde{u}_0 +\frac{\tilde{3 \tau}}{4}, \tilde{\tau} \right) } \\
&=& \tilde{K} e^{ \pi i (\frac{9}{16} \tilde{\tau} + \frac{3}{2} \tilde{u})} e^{ \pi i (\frac{9}{16} \tilde{\tau} + \frac{3}{2} \tilde{u}_0)} e^{-\pi i \tilde{\tau}} e^{-2\pi i (\tilde{u} + \tilde{u}_0)}  \dfrac{ \theta ( \tilde{u} , \tilde{\tau}) }{\theta \left[\frac{3}{4},0 \right]\left( \tilde{u} , \tilde{\tau} \right)  \theta \left[\frac{3}{4},0 \right] \left( \tilde{u}_0 , \tilde{\tau} \right) } \\
%&=& K \dfrac{ \theta ( \tilde{u}' - \frac{1}{2} , \tilde{\tau}) }{\theta ( \tilde{u}'_1, \tilde{\tau})  \theta (\tilde{u}'_0, \tilde{\tau}) } \\
%&=& K \dfrac{ \theta ( \tilde{u}' - \frac{1}{2} , \tilde{\tau}) }{\Theta_0 ( w_1, \tau)_2  \Theta (w_0,\tau)_2 }
&=& \tilde{K} e^{ -\frac{\pi i}{2} (\tilde{u} + \tilde{u}_0)} e^{\frac{\pi i \tilde{\tau}}{8}}
\dfrac{ \theta ( \tilde{u}  , \tilde{\tau}) }{\theta \left[\frac{3}{4},0 \right]\left( \tilde{u} , \tilde{\tau} \right)  \theta \left[\frac{3}{4},0 \right] \left( \tilde{u}_0 , \tilde{\tau} \right) }= K' \dfrac{ \theta ( \tilde{u} , \tilde{\tau}) }{\theta \left[\frac{3}{4},0 \right]\left( \tilde{u} , \tilde{\tau} \right)  \theta \left[\frac{3}{4},0 \right] \left( \tilde{u}_0 , \tilde{\tau} \right) } 
\end{array}
\end{equation*}
where we made a change of coordinate by $\tilde{u} := \tilde{u}'-\frac{1}{2}= -2s - \tau t -\frac{1}{2} = -2s - \tilde{\tau} \frac{t}{2} -\frac{1}{2}$ and $\tilde{u}_0 := - 1/2 $.  Note that $u := \tilde{u}/2 = -s - \tau \frac{t}{2} - \frac{1}{4}$ is a variable on $E_\tau= \C / (\Z \oplus \Z \langle \tau \rangle)$. The precise constants are given by
$$\tilde{K} := -i \frac{q_d}{2 \pi}   \theta' ( (\tilde{\tau} +1)/2, \tilde{\tau}) e^{  \frac{\pi i \tilde{u}}{2}  },\quad  K':=\tilde{K} e^{ -\frac{\pi i}{2} (\tilde{u} + \tilde{u}_0)} e^{\frac{\pi i \tilde{\tau}}{8}} = \frac{1}{2\pi} e^{\frac{\pi i \tilde{\tau}}{4}}  \theta' ( (\tilde{\tau} +1)/2, \tilde{\tau}).$$
%$$
%\begin{eqnarray*}
%K_{\rm fin}&:=&\tilde{K} e^{ -\frac{\pi i}{2} (\tilde{u} + \tilde{u}_0)} e^{\frac{\pi i \tilde{\tau}}{8}} \\
%&=& q_d e^{\frac{\pi i \tilde{\tau}}{8}} \frac{ \theta' ( (\tilde{\tau} +1)/2, \tilde{\tau})}{2\pi } \\
%&=& \frac{1}{2\pi} e^{\frac{\pi i \tilde{\tau}}{4}}  \theta' ( (\tilde{\tau} +1)/2, \tilde{\tau})
%\end{eqnarray*}

%we put  $\tilde{u}'_0 := 2 w_0 :=- (\tilde{\tau} +2) /4$ and $\tilde{u}'_1:=2 w_1 := \tilde{u}' - (\tilde{\tau} +2) /4$ 

%
%$u = \tilde{u}/2 = -s - \tau \frac{t}{2} - \frac{1}{4}$ is a variable on $\C / (\Z \oplus \Z \langle \tau \rangle)$.

We set $\theta_0$ to be the constant $\theta \left[\frac{3}{4},0 \right] \left( \tilde{u}_0 , \tilde{\tau} \right)$ ($\tilde{u}_0 =-1/2$) to get
$$\theta_0 (K')^{-1} A=   \dfrac{\theta [0,0] ( \tilde{u} , \tilde{\tau}) }{ \theta \left[\frac{3}{4}, 0 \right] (\tilde{u}, \tilde{\tau}) }  $$
which we denote by $a$. Setting $K = \theta_0 (K')^{-1}$, we have $a := K A = \dfrac{\theta [0,0] ( \tilde{u} , \tilde{\tau}) }{ \theta \left[\frac{3}{4}, 0 \right] (\tilde{u}, \tilde{\tau}) }  $.
%
%We denote the final expression to be $a$ 
%$$a := \underbrace{\theta_0 F^{-1} \tilde{K}^{-1}}_{=:K} A=\dfrac{\theta [0,0] ( \tilde{u} , \tilde{\tau}) }{ \theta \left[\frac{3}{4}, 0 \right] (\tilde{u}, \tilde{\tau}) }.$$
%By the same manner, 
%\begin{equation*}
%\begin{array}{l}
%% \theta_0 \tilde{K}^{-1} A =   \dfrac{ \theta ( \tilde{u} , \tilde{\tau}) }{\theta \left( \tilde{u} - \frac{\tilde{\tau}}{4} , \tilde{\tau} \right)  } \\
% \tilde{b}:=  \theta_0  \tilde{K}^{-1} B=  i e^{- \pi i \tilde{u}} \dfrac{ \theta ( \tilde{u} , \tilde{\tau}) }{\theta \left( \tilde{u} + \frac{\tilde{\tau}}{4} , \tilde{\tau} \right) } \\
%\tilde{c}:=\theta_0  \tilde{K}^{-1} C=    e^{ \frac{\pi i \tilde{\tau}}{4}} \dfrac{ \theta \left( \tilde{u}  + \frac{\tilde{\tau}}{2}  , \tilde{\tau} \right) }{\theta \left( \tilde{u} + \frac{\tilde{\tau}}{4}  , \tilde{\tau}\right)  }\\
%\tilde{d}:= \theta_0  \tilde{K}^{-1} D=   i e^{\pi i \tilde{u} +  \frac{\tilde{\pi i \tau}}{4}} \dfrac{ \theta \left( \tilde{u} + \frac{\tilde{\tau}}{2} , \tilde{\tau} \right) }{\theta \left( \tilde{u}-\frac{\tilde{\tau}}{4}, \tilde{\tau} \right)  }
%\end{array}
%\end{equation*}
%
%We finally set 
\begin{equation*}
\begin{array}{l}
%a:=  KA =  \dfrac{\theta [0,0] ( \tilde{u} , \tilde{\tau}) }{ \theta \left[\frac{3}{4}, 0 \right] (\tilde{u}, \tilde{\tau}) } 
%\\
b:=  K B= -i \dfrac{\theta [0,0]( \tilde{u} , \tilde{\tau}) }{ \theta \left[\frac{1}{4},0 \right] ( \tilde{u}  , \tilde{\tau})} \\
c:= K C=  i \dfrac{ \theta \left[\frac{2}{4},0 \right] \left( \tilde{u}  , \tilde{\tau} \right) }{ \theta  \left[\frac{1}{4}, 0 \right] \left( \tilde{u} , \tilde{\tau}\right) } \\
d:= K D= \dfrac{ \theta \left[ \frac{2}{4},0\right]\left( \tilde{u}  , \tilde{\tau} \right) }{ \theta \left[\frac{3}{4},0 \right] \left( \tilde{u}, \tilde{\tau} \right) }  
\end{array}
\end{equation*}
It directly follows that $ac + bd =0$.

\section{Embedding of $E_\tau$ into $\mathbb{P}^3$}\label{appsubsec:theta2222e}

We next show that $(a:b:c:d) : E_\tau = \C / (\Z \oplus \Z \langle \tau \rangle ) \to \mathbb{P}^3$ is an embedding of the elliptic curve. Define two maps $\Phi_1, \Phi_2 : E_\tau \to \mathbb{P}^1$ by
$$\Phi_1 := (b: c)=\left( \theta [0,0]( \tilde{u} , \tilde{\tau}) :  - \theta \left[1/2,0 \right](\tilde{u}, \tilde{\tau}) \right]  = (\Theta_0 (u, \tau)_2 : - \Theta_1 (u, \tau)_2 )$$
 and
\begin{eqnarray*}
\Phi_2 &:=& (a: b) = \left(  \theta [1/4,0]( \tilde{u} , \tilde{\tau}) :  -i \theta \left[3/4,0 \right] (\tilde{u}, \tilde{\tau}) \right) \\
&=& \left(  \theta [0,0] \left( \tilde{u} +\frac{\tilde{\tau}}{4}, \tilde{\tau} \right) :  -\theta \left[1/2,0 \right] \left(\tilde{u} +\frac{\tilde{\tau}}{4}, \tilde{\tau} \right) \right)=\left(\Theta_0 \left(u + \frac{\tau}{4}, \tau \right)_2 : -i\Theta_1 \left(u +\frac{\tau}{4} , \tau \right)_2 \right).
\end{eqnarray*}
(Recall $\Theta_j (u, \tau)_2 := \theta \left[ \frac{j}{2}, 0 \right] ( 2u, 2\tau)$ for $j=0,1$ and $\tilde{\tau} = 2 \tau$ and $\tilde{u} = 2u$.)

Consider the map $\Phi : \C / (\Z \oplus \Z \langle \tau \rangle ) \to \mathbb{P}^1 \times \mathbb{P}^1$ given by 
$$\Phi = (\Phi_1 : \Phi_2) =\left( (b:c),\,(a:b) \right).$$
Composing with the embedding
$$ \iota : \mathbb{P}^1 \times \mathbb{P}^1 \hookrightarrow \mathbb{P}^3 \qquad ([z_0 : z_1], [w_0:w_1] ) \mapsto (z_0 w_0 : z_0 w_1 : z_1 w_1 : -z_1 w_0) $$
we obtain back the original map $(a:b:c:d) $ since
$$ \iota \circ \Phi = (ba: bb : cb : -ca) =  ( a:b:c  :-ca / b) = (a:b:c:d) $$
where we used $ac = -bd$. Thus it suffices to show that $\Phi$ gives an embedding of $E_\tau$ into $\mathbb{P}^1 \times \mathbb{P}^1$. 

Note that $\Theta_j (-u) = \Theta_j (u)$ for all $j$. 
Using this property, it is easy to check that $\Phi_1 : E_\tau \to \mathbb{P}^1$ is a branched double cover whose fiver is $\{ u, 1+\tau - u\}$ where $u \in E_\tau$. On the other hand, $\Phi_2 : E_\tau \to \mathbb{P}^1$ is also a branched double cover by the same reason, but the fiber is $\{u, 1+ \frac{\tau}{2} -u\}$ in this case. This shows that $\Phi = (\Phi_1:\Phi_2)$ is injective.

\backmatter
%    Bibliography styles amsplain or harvard are also acceptable.
\bibliographystyle{amsalpha}
\bibliography{geometry}
%    See note above about multiple indexes.
\printindex

\end{document}